\begin{document}

\title[Picard groupoids and $\Gamma$-categories]{Picard groupoids and $\Gamma$-categories}
\author[A. Sharma]{Amit Sharma}

\email{asharm24@kent.edu}
\address {Department of Mathematical sciences\\ Kent State University\\
  Kent, OH}

\date{Dec. 14, 2019}
%

\newcommand{\CONT}{\noindent}
\newcommand{\FIG}{Fig.\ }
\newcommand{\FIGS}{Figs.\ }
\newcommand{\SEC}{Sec.\ }
\newcommand{\SECS}{Secs.\ }
\newcommand{\TAB}{Table }
\newcommand{\TABS}{Tables }
\newcommand{\EQ}{Eq.\ }
\newcommand{\EQS}{Eqs.\ }
\newcommand{\APP}{Appendix }
\newcommand{\APPS}{Appendices }
\newcommand{\CHP}{Chapter }
\newcommand{\CHPS}{Chapters }

\newcommand{\OFF}{\emph{G2off}~}
\newcommand{\TOO}{\emph{G2Too}~}
\newcommand{\CatS}{Cat_{\bigS}}
\newcommand{\PCat}{\mathbf{Perm}}
\newcommand{\PCatOL}{\mathbf{Perm}_{\textit{OL}}}
\newcommand{\PCatSM}{\mathbf{Perm}_{\otimes}}
\newcommand{\mdlPCat}{\left( \PCat, \mathbf{E} \right)}
\newcommand{\PicS}{{\underline{\pic}}^{\oplus}}
\newcommand{\HPicS}{{Hom^{\oplus}_{\pic}}}
\newcommand{\HPerm}{{Hom^{\oplus}_{\PCat}}}
\newcommand{\HopL}{{Hom^{\text{un-opLax}}_{\PCat}}}

\newtheorem{thm}{Theorem}[section]
\newtheorem{lem}[thm]{Lemma}
\newtheorem{conj}[thm]{Conjecture}
\newtheorem{coro}[thm]{Corollary}
\newtheorem{prop}[thm]{Proposition}

\theoremstyle{definition}
\newtheorem{df}[thm]{Definition}
\newtheorem{nota}[thm]{Notation}

\newtheorem{ex}[thm]{Example}
\newtheorem{exs}[thm]{Examples}

\theoremstyle{remark}
\newtheorem*{note}{Note}
\newtheorem*{rem}{Remark}
\newtheorem{ack}{Acknowledgments}

\newcommand{\ChI}{{\textit{\v C}}\textit{ech}}
\newcommand{\Ch}{{\v C}ech}

\newcommand{\ChZG}{hermitian line $0$-gerbe}
\renewcommand{\theack}{$\! \! \!$}

\newcommand{\Leins}{\mathfrak{L}}
\newcommand{\ChG}{flat hermitian line $1$-gerbe}
\newcommand{\ChC}{hermitian line $1$-cocycle}
\newcommand{\ChGG}{flat hermitian line $2$-gerbe}
\newcommand{\ChCC}{hermitian line $2$-cocycle}
\newcommand{\id}{id}
\newcommand{\SigA}{\Sigma^\A}
\newcommand{\totSigInf}[1]{\mathbb{L}\Sigma^{\infty}{#1}}
\newcommand{\SigAvec}[1]{\Sigma^\A_{\length{#1}}}
\newcommand{\length}[1]{\mid \vec{#1} \mid}
\newcommand{\underMap}[1]{\mid #1 \mid}
\newcommand{\bProd}{\text{box product}}
\newcommand{\EPs}{\E^{\textit{Ps}}}
\newcommand{\ePs}{\epsilon^{\textit{Ps}}}
\newcommand{\EStr}{\E_{\textit{str}}}
\newcommand{\Pnor}{\Lbb}
\newcommand{\elR}{\textit{el}^{R}(\gn{n}){\mid}_{\N}}
\newcommand{\elRGen}[1]{\textit{el}^{R}(\gn{#1}){\mid}_{\N}}
\newcommand{\elRGeneral}[1]{\textit{el}^{R}(#1){\mid}_{\N}}
\newcommand{\elGen}[2]{\textit{el}^{#1}{#2}{\mid}_{\N}}
\newcommand{\elKbar}{\textit{el}^{\Kbar}(\gn{n}){\mid}_{\N}}
\newcommand{\unit}[1]{\mathrm{1}_{#1}}
\newcommand{\bike}{\text{bicycle}}
\newcommand{\bikes}{\text{bicycles}}
\newcommand{\psbike}{\text{pseudo bicycle}}
\newcommand{\psbikes}{\text{pseudo bicycles}}
\newcommand{\PsBikes}[2]{\textbf{Bikes}^{\textit{Ps}}(#1,#2)}
\newcommand{\strbike}{\text{strict bicycle}}
\newcommand{\strbikes}{\text{strict bicycles}}
\newcommand{\StrBikes}[2]{\textbf{Bikes}^{\textit{Str}}(#1,#2)}
\newcommand{\Bikes}[2]{\textbf{Bikes}(#1,#2)}
\newcommand{\NLC}[2]{\textbf{NorLaxCones}(#1,#2)}
\newcommand{\LCns}[2]{\textbf{LaxCones}(#1,#2)}
\newcommand{\LC}{\mathfrak{C}}
\newcommand{\Coker}{Coker}
\newcommand{\Com}{Com}
\newcommand{\Hom}{Hom}
\newcommand{\Mor}{Mor}
\newcommand{\Map}{Map}
\newcommand{\alg}{alg}
\newcommand{\an}{an}
\newcommand{\Ker}{Ker}
\newcommand{\Ob}{Ob}
\newcommand{\SymMon}{\mathbf{SymMon}}
\newcommand{\Kbar}{\overline{\K}}
\newcommand{\KSeg}{\K}
\newcommand{\KSegLax}{{\Kbb}}
\newcommand{\Proj}{\mathbf{Proj}}
\newcommand{\topo}{\mathbf{Top}}
\newcommand{\inrt}{\mathbf{Inrt}}
\newcommand{\act}{\mathbf{Act}}
\newcommand{\n}{\underline{n}}
\newcommand{\pn}{\underline{n}^+}
\newcommand{\kan}{\mathcal{K}}
\newcommand{\pkan}{\mathcal{K}_\bullet}
\newcommand{\Kan}{\mathbf{Kan}}
\newcommand{\pKan}{\mathbf{Kan}_\bullet}
\newcommand{\gp}{\mathcal{A}_\infty}
\newcommand{\mdl}{\mathcal{M}\textit{odel}}
\newcommand{\sSets}{\mathbf{sSets}}
\newcommand{\sSetsQ}{(\mathbf{sSets, Q})}
\newcommand{\sSetsK}{(\mathbf{sSets, \Kan})}
\newcommand{\pSSets}{\mathbf{sSets}_\bullet}
\newcommand{\pSSetsK}{(\mathbf{sSets}_\bullet, \Kan)}
\newcommand{\pSSetsQ}{(\mathbf{sSets_\bullet, Q})}
\newcommand{\cyl}{\mathbf{Cyl}}
\newcommand{\lin}{\mathcal{L}_\infty}
\newcommand{\Vect}{\mathbf{Vect}}
\newcommand{\Aut}{Aut}
\newcommand{\Ein}{E_\infty}
\newcommand{\EinS}{E_\infty{\text{- space}}}
\newcommand{\EinSs}{E_\infty{\text{- spaces}}}
\newcommand{\EinC}{\text{coherently commutative monoidal category}}
\newcommand{\EinCs}{\text{coherently commutative monoidal categories}}
\newcommand{\EinLO}{E_\infty{\text{- local object}}}
\newcommand{\EinSLO}{\E_\infty\S{\text{- local object}}}
\newcommand{\pic}{\mathcal{P}\textit{ic}}
\newcommand{\Dlin}{\pic}
\newcommand{\Gmn}{\Gamma \left(m_n \right)}
\newcommand{\bigS}{\mathbf{S}}
\newcommand{\bigA}{\mathbf{A}}
\newcommand{\bhom}{\mathbf{hom}}
\newcommand{\bHom}[3]{\mathbf{hom}_{#3}(#1, #2)}
\newcommand{\bhomK}{\mathbf{hom}({\textit{K}}^+,\textit{-})}
\newcommand{\Bhom}{\mathbf{Hom}}
\newcommand{\bhomk}{\mathbf{hom}^{{\textit{k}}^+}}
\newcommand{\CatHom}[3]{[#1,#2]^{#3}}
\newcommand{\pCatHom}[3]{[#1,#2]_\bullet^{#3}}
\newcommand{\pHomCat}[2]{[#1,#2]_{\bullet}}
\newcommand{\Dlino}{\pic^{\textit{op}}}
\newcommand{\lino}{\mathcal{L}^{\textit{op}}_\infty}
\newcommand{\lind}{\mathcal{L}^\delta_\infty}
\newcommand{\linK}{\mathcal{L}_\infty(\kan)}
\newcommand{\linC}{\mathcal{L}_\infty\text{-category}}
\newcommand{\linCs}{\mathcal{L}_\infty\text{-categories}}
\newcommand{\ainCs}{\text{additive} \ \infty-\text{categories}}
\newcommand{\ainC}{\text{additive} \ \infty-\text{category}}
\newcommand{\inC}{\infty\text{-category}}
\newcommand{\inCs}{\infty\text{-categories}}
\newcommand{\gS}{{\Gamma}\text{-space}}
\newcommand{\ggS}{\Gamma \times \Gamma\text{-space}}
\newcommand{\gSs}{\Gamma\text{-spaces}}
\newcommand{\ggSs}{\Gamma \times \Gamma\text{-spaces}}
\newcommand{\gO}{\Gamma-\text{object}}
\newcommand{\gSCat}{{\Gamma}\text{-space category}}
\newcommand{\gCat}{{\Gamma}\text{- category}}
\newcommand{\gCats}{{\Gamma}\text{- categories}}
\newcommand{\gGpd}{{\Gamma}\text{- groupoid}}
\newcommand{\gGpds}{{\Gamma}\text{- groupoids}}
\newcommand{\gCAT}{{\Gamma}\Cat}
\newcommand{\gCATop}{\gCAT^{\textit{op}}}
\newcommand{\OplaxExp}[2]{\left({#2}^{#1}\right)^{\textit{OL}}}
\newcommand{\OplaxNatExp}[2]{\left({#2}^{#1}\right)^{\textit{Ps}}}
\newcommand{\SMExp}[2]{\left(#2^{\Leins(#1)}\right)^{\textit{OL}}}
\newcommand{\SMNatExp}[2]{\left(#2^{\Leins(#1)}\right)^{\textit{Ps}}}
\newcommand{\OplaxSec}[2]{{\Gamma}^{\textit{OL}} \left(\N, \OplaxExp{#1}{#2}\right)}
\newcommand{\SMSec}[2]{{\Gamma}_{\otimes}^{\textit{str}}\left(\Leins, \OplaxExp{\Leins(#1)}{#2}\right)}
\newcommand{\SMHom}[2]{[#1,#2]^\otimes}
\newcommand{\SMHomNor}[2]{[#1,#2]_\bullet^\otimes}
\newcommand{\OLHom}[2]{[#1,#2]^{\textit{OL}}}
\newcommand{\OLHomNor}[2]{[#1,#2]^{\textit{OL}}}
\newcommand{\StrSMHom}[2]{[#1,#2]_\otimes^{\textit{str}}}
\newcommand{\parMHom}[2]{\mathbf{SBikes}(#1,#2)}
\newcommand{\StrSec}[2]{{\Gamma}^{Str}(#1,#2)}
\newcommand{\StrExp}[2]{\left(#2^{A{#1}}\right)^{\textit{Str}}}
\newcommand{\Odot}[2]{\underset{#1=1}{\overset{#2} \odot}}
\newcommand{\Prod}[2]{\underset{#1=1}{\overset{#2} \prod}}
\newcommand{\Otimes}[2]{\underset{#1=1}{\overset{#2} \otimes}}
\newcommand{\OtimesC}[3]{\underset{#1=1}{\overset{#2} \underset{#3} \otimes}}
\newcommand{\pss}{\mathbf{S}_\bullet}
\newcommand{\gSC}{{{{\Gamma}}\mathcal{S}}}
\newcommand{\ggSC}{{\Gamma\Gamma\mathcal{S}}}
\newcommand{\gSD}{\mathbf{D}(\gSC^{\textit{f}})}
\newcommand{\sCat}{\mathbf{sCat}}
\newcommand{\pSCat}{\mathbf{sCat}_\bullet}
\newcommand{\Dhom}{\mathbf{R}Hom_{\pic}}
\newcommand{\gop}{\Gamma^{\textit{op}}}
\newcommand{\gn}[1]{\Gamma^{#1}}
\newcommand{\gnk}[2]{\Gamma^{#1}({#2}^+)}
\newcommand{\gnf}[2]{\Gamma^{#1}({#2})}
\newcommand{\ggn}[1]{\Gamma\Gamma^{#1}}
\newcommand{\fU}{\mathbf{U}}
\newcommand{\cDN}{\underset{\mathbf{D}[\textit{n}^+]}{\circ}}
\newcommand{\cDK}{\underset{\mathbf{D}[\textit{k}^+]}{\circ}}
\newcommand{\cDL}{\underset{\mathbf{D}[\textit{l}^+]}{\circ}}
\newcommand{\cD}{\underset{\gSD}{\circ}}
\newcommand{\cDT}{\underset{\gSD}{\widetilde{\circ}}}
\newcommand{\ppsSets}{\sSets_{\bullet, \bullet}}
\newcommand{\gdHom}{\underline{Hom}_{\gSD}}
\newcommand{\HomU}{\underline{Hom}}
\newcommand{\ominf}{\Omega^\infty}
\newcommand{\ev}{ev}
\newcommand{\PNat}{\overline{\L}}
\newcommand{\PStr}{\L}
\newcommand{\PStrA}{\P^{\textit{str}}_\A}
\newcommand{\PNatnor}{\P^{\textit{Nat}}_{\textit{nor}}}
\newcommand{\PStrnor}{\P^{\textit{str}}_{\textit{nor}}}
\newcommand{\cu}{C(X;\mathfrak{U}_I)}
\newcommand{\Sing}{Sing}
\newcommand{\gSR}{{\Gamma}\Gamma \mathcal{S}}
\newcommand{\gSRP}{{\Gamma}\Gamma \mathcal{S}^{\textit{proj}}}
\newcommand{\tensPGSR}[2]{#1 \underset{\gSR}\wedge #2}
\newcommand{\pTensP}[3]{#1 \underset{#3}\wedge #2}
\newcommand{\MGCat}[2]{\underline{\map}_{\gCAT}({#1},{ #2})}
\newcommand{\MGBoxCat}[2]{\underline{\map}_{\gCAT}^{\Box}({#1},{ #2})}
\newcommand{\TensPFunc}[1]{- \underset{#1} \otimes -}
\newcommand{\TensP}[3]{#1 \underset{#3}\otimes #2}
\newcommand{\odotPFunc}[1]{- \underset{#1} \odot -}
\newcommand{\odotP}[3]{#1 \underset{#3}\odot #2}
\newcommand{\EgSRP}{\Ein\gSRP}
\newcommand{\Catop}{\Cat^{\textit{op}}}
\newcommand{\Catl}{\Cat_{\textit{l}}}
\newcommand{\SQCup}[2]{\underset{#1 = 1}{\overset{#2} \sqcup}}
\newcommand{\Ind}{\textit{Ind}}

\newcommand{\liminj}{\varinjlim}
\newcommand{\limproj}{\varprojlim}
\newcommand{\Lbb}{\overline{\overline{\L}}}
\newcommand{\Kbb}{\overline{\overline{\K}}}
\newcommand{\Nat}{\mathbb{N}}
\newcommand{\PLbb}{\P\overline{\overline{\L}}}
\newcommand{\gpd}{\mathbf{Gpd}}
\newcommand{\PGpd}{\mathbf{PGpd}}
\newcommand{\ugrph}{\mathbf{Graph_0}}
\newcommand{\MdlCatG}{(\mathbf{\Cat}, \mathbf{\gpd})}
\newcommand{\MdlPCatG}{(\mathbf{\PCat}, \mathbf{\gpd})}
\newcommand{\ud}[1]{\underline{#1}}
\newcommand{\MdlPCatP}{(\mathbf{\PCat}, \mathbf{Pic})} 
\newcommand{\PicH}{(\mathbf{Pic}, \textit{Str})}
\newcommand{\CCPicH}{(\gCAT^{\textit{f}}, \textit{Str})}
\newcommand{\SOTPicH}{(\pGSC^{\textit{f}}[1], \textit{Str})}
\newcommand{\pGSC}{{{{\Gamma}}\mathcal{S}}_\bullet}
\newcommand{\norN}[1]{N^{\textit{nor}}(#1)}
\newcommand{\unorT}[1]{\tau^{\textit{un}}(#1)}

\def\Pic{\mathbf{2}\mathcal P\textit{ic}}
\def\nc{\mathbb C}

\def\Z{\mathbb Z}
\def\P{\mathcal P}
\def\J{\mathcal J}
\def\I{\mathcal I}
\def\nC{\mathbb C}
\def\H{\mathcal H}
\def\A{\mathcal A}
\def\C{\mathcal C}
\def\D{\mathcal D}
\def\E{\mathcal E}
\def\G{\mathcal G}
\def\B{\mathcal B}
\def\L{\mathcal L}
\def\U{\mathcal U}
\def\K{\mathcal K}
\def\N{\mathcal N}
\def\M{\mathcal M}
\def\O{\mathcal O}
\def\R{\mathcal R}
\def\S{\mathcal S}
\def\F{\mathcal F}

\newcommand{\undertilde}[1]{\underset{\sim}{#1}}
\newcommand{\abs}[1]{{\lvert#1\rvert}}
\newcommand{\mC}[1]{\mathfrak{C}(#1)}
\newcommand{\sigInf}[1]{\Sigma^{\infty}{#1}}
\newcommand{\x}[4]{\underset{#1, #2}{ \overset{#3, #4} \prod }}
\newcommand{\mA}[2]{\textit{Add}^n_{#1, #2}}
\newcommand{\mAK}[2]{\textit{Add}^k_{#1, #2}}
\newcommand{\mAL}[2]{\textit{Add}^l_{#1, #2}}
\newcommand{\Mdl}[2]{\L_\infty}
\newcommand{\inv}[1]{#1^{-1}}
\newcommand{\Lan}[2]{\mathbf{Lan}_{#1}(#2)}
\newcommand{\Bike}[3]{{#1}:{#2} \leadsto {#3}}
\newcommand{\POb}[1]{\mathbf{P}(#1)}
\newcommand{\SuppBike}[1]{{#1}_{\textit{Supp}}}
\newcommand{\Supp}[1]{{\textit{Supp}(#1)}}
\newcommand{\nBoxProd}[2]{\underset{i=1}{\overset{#1} \boxtimes} #2}
\newcommand{\BoxProd}[1]{\overset{#1} \boxtimes}
\newcommand{\nOdotProd}[2]{\underset{i=1}{\overset{#1} \odot} #2}
\newcommand{\OdotBike}[3]{\SuppBike{#1}(#2) \odot \SuppBike{#1}(#3)}
\newcommand{\partition}[2]{\delta^{#1}_{#2}}
\newcommand{\inclusion}[2]{\iota^{#1}_{#2}}

\newcommand{\del}{\partial}
\newcommand{\sCatO}{\mathcal{S}Cat_\O}
\newcommand{\FCgop}{\mathbf{F}\mC{N(\gop)}}
\newcommand{\hProd}{{\overset{h} \oplus}}
\newcommand{\hProdn}{\underset{n}{\overset{h} \oplus}}
\newcommand{\hProdk}[1]{\underset{#1}{\overset{h} \oplus}}
\newcommand{\map}{\mathcal{M}\textit{ap}}
\newcommand{\MapC}[3]{\mathcal{M}\textit{ap}_{#3}(#1, #2)}
\newcommand{\MGS}[2]{\underline{\map}_{\gSC}({#1},{ #2})}
\newcommand{\MGSR}[2]{\underline{\map}_{\gSR}({#1},{ #2})}
\newcommand{\MGSBox}[2]{\underline{\map}^{\Box}_{\gSC}({#1},{ #2})}
\newcommand{\Aqcat}[1]{\underline{#1}^\oplus}
\newcommand{\Cat}{\mathbf{Cat}}
\newcommand{\pCat}{\mathbf{CAT}_\bullet}
\newcommand{\SMCat}{\mathbf{SMCat}}
\newcommand{\Sp}{\mathbf{Sp}}
\newcommand{\SpStb}{\mathbf{Sp}^{\textit{stable}}}
\newcommand{\SpStr}{\mathbf{Sp}^{\textit{strict}}}
\newcommand{\Sspec}{\mathbb{S}}
\newcommand{\pC}[1]{{#1}_\bullet}

\newcommand{\gM}{\Gamma \mathcal{M}}
\newcommand{\gMC}[1]{\Gamma \mathcal{#1}}
\newcommand{\gMP}{\gM^{\textit{proj}}}
\newcommand{\tensPGSC}[2]{#1 \underset{\gSC}\wedge #2}
\newcommand{\tensPGM}[2]{#1 \underset{\gM}\wedge #2}
\newcommand{\EgM}{\Ein\gM}
\newcommand{\EgSC}{\Ein\gSC}
\newcommand{\EgMP}{\Ein\gMP}

\begin{abstract}
 In this paper we construct a symmetric monoidal closed model category of coherently commutative Picard groupoids. We construct another model category structure on the category of (small) permutative categories whose fibrant objects are (permutative) Picard groupoids.
 The main result is that the Segal's nerve functor induces a Quillen equivalence between the two aforementioned model categories.
Our main result implies the classical result that Picard groupoids model stable homotopy one-types.
\end{abstract}

\maketitle

\tableofcontents

 \section[Introduction]{Introduction}
\label{Introduction}
\begin{sloppypar}
Picard groupoids are interesting objects both in topology and algebra. A major reason for interest in topology is because they classify stable homotopy $1$-types which is a classical result appearing in various parts of the literature \cite{JO}\cite{Pat12}\cite{GK11}. The category of Picard groupoids is the archetype example of a $2$-Abelian category, see \cite{DP08}. A theory of $2$-chain complexes of Picard
groupoids was developed in \cite{dr-mm-v}. A simplicial cohomology with coefficients in Picard groupoids was introduced in the paper \cite{carrasco-martinez-moreno}. This cohomology was used in \cite{SV} to construct a TQFT called the \emph{Dijkgraaf-Witten} theory. This (Picard) groupoidification of cohomology played a vital role in explaining a mysterious \emph{integration theory} introduced in \cite{fq} which is pivotal in constructing the aforementioned TQFT functor. A model category structure on (permutative) Picard groupoids was constructed in \cite{GM97}.
\end{sloppypar}
A tensor product of Picard groupoids was defined in \cite{Schmitt2}. However, a shortcoming of the category of Picard groupoids remains: unlike the category of abelian groups, it is not a symmetric monoidal closed category. In this paper we address this problem by proposing another model for Picard groupoids based on $\gCats$. A $\gCat$ is a functor from the (skeletal) category of finite based sets $\gop$ into the category of all (small) categories $\Cat$. We denote the category of all $\gCats$ and natural transformations between them by $\gCAT$. Along the lines of the construction of the stable Q-model category in \cite{Schwede} we construct a symmetric monoidal closed model category structure on $\gCAT$ which we refer to as the model category structure of \emph{coherently commutative Picard groupoids}. A $\gCat$ $X$ is called a \emph{coherently commutative Picard groupoid} if it satifies the Segal condition, see \cite{segal} and moreover it has homotopy inverses. These $\gCats$ are fibrant objects in our model category of coherently commutative Picard groupoids. The main objective of this paper is to compare the category of all (small) Picard groupoids with our model category of coherently commutative Picard groupoids. There are many variants of the category of Picard groupoids. All of these variant categories are \emph{fibration categories} but they do not have a model category structure. Due to this  shortcoming, in this paper we will work with permutative Picard groupoids, namely symmetric monoidal groupoids which are strictly associative and unital and each object is invertible upto isomorphism. We construct another model category structure on the category of all (small) permutative categories and strict symmetric monoidal functors $\PCat$ whose fibrant objects are (permutative) Picard groupoids. This model category is denoted by $\MdlPCatP$ and called the model category of Picard groupoids. The category of (permutative) Picard groupoids is a reflective subcategory of $\PCat$ and the inclusion functor is a right Quillen functor of a Quillen equivelance between $\MdlPCatP$ and the model category constructed in \cite{GM97}. The main result of this paper is that the classical Segal's nerve functor induces a Quillen equivalence between the model category of coherently commutative Picard groupoids and the model category of Picard groupoids. The left adjoint to the classical Segal's nerve does not have a simple description therefore we first show that the \emph{thickened} Segal's nerve functor \cite{Sharma} induces a Quillen equivalence between the aforementioned model categories. There is a natural equivalence between thickened Segal's nerve functor and the classical Segal's nerve functor which now implies the desired result. Our main result implies that the homotopy category of $\MdlPCatP$ is equivalent to the the full subcategory of the homotopy category of the stable $Q$-model category structure on (normalized) $\gSs$, constructed in \cite{Schwede}, whose objects are $\gSs$ having upto two non-zero homotopy groups only in degree $0$ or $1$. Thus our main result implies the classical result that Picard groupoids model stable homotopy one-types.

We establish another Quillen equivalence between a second pair of model category structures on the same two underlying categories. We first construct another cartesian closed (combinatorial) model category structure on $\Cat$, denoted by $\MdlCatG$, whose fibrant objects are groupoids. 
We then transfer this model category structure on the category of all permutative categories $\PCat$. The fibrant objects in this model category are permutative groupoids. This model category a simplicial model category and it is denoted by $\MdlPCatG$. Since the model category $\MdlCatG$ is combinatorial, it induces a \emph{projective} model category structure on $\gCAT$. Along the lines of the construction of the model category of coherently commutative monoidal categories in \cite{Sharma}, we localize this projective model category to get a symmetric monoidal closed model category structure on $\gCAT$. The fibrant objects of this model category can be described as coherently commutative monoidal groupoids. These model categories are instrumental in the construction of the model categories featuring in our main result. We go on to show that the classical Segal's nerve functor is the right Quillen functor of a Quillen equivalence between the aforementioned model categories.

  \section{The Setup}
In this section we will collect the machinery needed for various constructions in this paper. We begin with a review of permutative categories.
We will also give a quick review of $\gCats$ and collect some useful results about them. We will also construct a cartesian closed (simplicial) model category structure on the category of (small) categories $\Cat$ which will be used throughout this paper.
\subsection[Review of Permutative categories]{Review of Permutative categories}
\label{Preliminaries}
 In this subsection we will briefly review the theory of permutative categories and monoidal and oplax functors between them. The definitions reviewed here and the notation specified here will be used throughout this paper.
 
 \begin{df}
 	A symmetric monoidal category $C$ is called a \emph{permutative} category or a \emph{strict} symmetric monoidal category if it's monoidal structure is strictly associative and unital.
 	\end{df}

 \begin{df}
 \label{oplax-sym-mon-functor}
 An  \emph{oplax symmetric monoidal} functor $F$ is a triple $(F, \lambda_F, \epsilon_F)$, where $F:C \to D$ is a functor  between symmetric monoidal categories $C$ and $D$, 
  \begin{equation*}
  \label{oplax-nat-trans}
  \lambda_F:F \circ (- \underset{C} \otimes -) \Rightarrow (- \underset{D} \otimes -) \circ (F \times F)
  \end{equation*}
  is a natural transformation 
  and $\epsilon_F:F(\unit{C}) \to \unit{D}$ is a morphism in $D$, such that the following
  three conditions are satisfied
  \begin{enumerate}[label = {OL.\arabic*}, ref={OL.\arabic*}]
 \item \label{OpL-unit} For each objects $c \in Ob(C)$, the following diagram commutes
 \[
  \xymatrix@C=12mm{
 F(\unit{C} \underset{C} \otimes c) \ar[r]^{\lambda_F(\unit{C}, c) \ \ \ }  \ar[d]_{F(\beta_C(c))}  
 &F(\unit{C}) \underset{D} \otimes F(c) \ar[d]^{\epsilon_F \underset{D} \otimes id_{F(c)}} \\
 F(c) \ar[r]_{\inv{\beta_D(F(c))} \ \ \ \ } & \unit{D} \underset{D} \otimes F(c)
 }
 \]
 \item \label{OpL-symmetry} For each pair of objects $c_1, c_2 \in Ob(C)$, the following diagram commutes
 \[
  \xymatrix@C=22mm{
 F(c_1\underset{C} \otimes c_2) \ar[r]^{\lambda_F(c_1, c_2) \ \ \ }  \ar[d]_{F(\gamma_C(c_1,c_2))}  
 &F(c_1) \underset{D} \otimes F(c_2) \ar[d]^{\gamma_D(F(c_1), F(c_2)) \ } \\
 F(c_2 \underset{C} \otimes c_1) \ar[r]_{\lambda_F(c_2, c_1) } & F(c_2) \underset{D} \otimes F(c_1)
 }
 \] 
\item \label{OpL-associativity} For each triple of objects $c_1, c_2, c_3 \in Ob(C)$, the following diagram commutes
 \[
  \xymatrix@C=10mm{
  & F(c_1\underset{C} \otimes c_2) \underset{D} \otimes F(c_3) \ar[rd]^{\lambda_F(c_1, c_2) \underset{D} \otimes id_{F(c_3)}} \\
 F((c_1 \underset{C} \otimes c_2) \underset{C} \otimes c_3) \ar[ru]^{\lambda_F(c_1\underset{C} \otimes c_2, c_3) \ \ \ }  \ar[d]_{F(\alpha_C(c_1,c_2, c_3))}  
 &&(F(c_1) \underset{D} \otimes F(c_2)) \underset{D} \otimes F(c_3) \ar[d]^{\alpha_D(F(c_1), F(c_2),  F(c_3)) \ } \\
 F(c_1 \underset{C} \otimes (c_2 \underset{C} \otimes c_3)) \ar[rd]_{\lambda_F(c_1, c_2 \underset{C} \otimes c_3) \ \ \ \ } && F(c_1) \underset{D} \otimes (F(c_2) \underset{D} \otimes F(c_3)) \\
 & F(c_1) \underset{D} \otimes (F(c_2 \underset{C} \otimes c_3)) \ar[ru]_{ \ \  \ \ \ \ \ id_{F(c_1)} \underset{D} \otimes  \lambda_F(c_2, c_3)}
 }
 \]

\end{enumerate}

 \end{df}
 \begin{nota}
 \label{unital-SM-Func}
 We will say that a functor $F:C \to D$ between two symmetric monoidal categories is \emph{unital} or \emph{normalized} if it preserves the unit of the symmetric monoidal structure \emph{i.e.} $F(\unit{C}) = \unit{D}$. In particular, 
 we will say that an oplax symmetric monoidal functor is a unital (or normalized) oplax symmetric monoidal functor if the morphism $\epsilon_F$ is the identity.
\end{nota}

\begin{df}
	An oplax symmetric monoidal functor $F = (F, \lambda_F, \epsilon_F)$ is called a \emph{strong} symmetric monoidal functor (or just a symmetric monoidal functor) if $\lambda_F$ is a natural isomorphism and $\epsilon_F$ is also an isomorphism.
\end{df}

\begin{df}
	An oplax symmetric monoidal functor $F = (F, \lambda_F, \epsilon_F)$ is called a \emph{strict} symmetric monoidal functor if it is unital and $\lambda_F$ is the identity natural transformation.
	\end{df}

\begin{df}
 We define a category $\Catl$ whose objects
are pairs $(C, c)$, where $C$ is a category and $c:\ast \to C$ is a functor whose value is
$c \in C$. A morphism from $(C, c)$ to $(D, d)$ in $\Catl$ is a pair $(F, \alpha)$, where
$F:C \to D$ is a functor and $\alpha:F(c) \to d$ is a map in $D$. The category $\Catl$ is
equipped with an obvious projection functor
\begin{equation}
\label{univ-proj-cat}
p_l:\Catl \to \Cat.
\end{equation}
We will refer to the functor $p_l$ as the \emph{universal left fibration over} $\Cat$.
\end{df}
Let $(F, \alpha):(C, c) \to (D, d)$ and $(G, \beta):(D, d) \to (E, e)$ be  A pair of composable arrows in $\Catl$. Then their composite is defined as follows:
\[
(G, \beta) \circ (F, \alpha) := (G \circ F, \beta \cdot (id_G \circ \alpha)),
\]
where $\cdot$ represents vertical composition and $\circ$ represents horizontal
composition of 2-arrows in $\Cat$.
\begin{df}
The \emph{category of elements} of a $\Cat$ valued functor $F:C \to \Cat$, denoted by
$\int^{c \in C} F(c)$ or $\textit{el} F$, is a category which is defined by the following pullback square in $\Cat$:
\begin{equation*}
\xymatrix{
\int^{c \in C} F(c) \ar[r]^{p_2} \ar[d]_{p_1}& \Catl \ar[d]^{p_l} \\
C \ar[r]_{F \ \ \  } & \Cat
 }
\end{equation*}
\end{df}
The category $\int^{c \in C} F(c)$ has the following description:

The object set of $\int^{c \in C} F(c)$ consists of all pairs $(c, d)$, where $c \in Ob(C)$ and $d :\ast \to F(c)$ is a functor.
A map $\phi:(c_1, d_1) \to (c_2, d_2)$ is a pair $(f, \alpha)$, where $f:c_1 \to c_2$ is a
map in $C$ and $\alpha:F(f) \circ d_1 \Rightarrow d_2$ is a natural transformation. The category of elements of $F$
is equipped with an obvious projection functor $p:\int^{c \in C} F \to C$.
\begin{rem}
\label{simp-des-cat-el}
We observe that a functor $d:\ast \to F(c)$ is the same as an object $d \in F(c)$.
Similarly a natural transformation $\alpha:F(f) \circ d \Rightarrow b$ is the same as an
arrow $\alpha:F(f)(d) \to b$ in $F(a)$, where $f:c \to a$ is an arrow in $C$. This observation leads to
a simpler equivalent description of $\int^{c \in C} F(c)$. The objects of $\int^{c \in C} F(c)$ are
pairs $(c, d)$, where $c \in C$ and $d \in F(c)$. A map from $(c, d)$ to $(a, b)$ in $\int^{c \in C} F(c)$
is a pair $(f, \alpha)$, where $f:c \to a$ is an arrow in $C$ and $\alpha:F(f)(d) \to b$ is an arrow in $F(a)$.
\end{rem}

\subsection[Review of $\gCats$]{Review of $\gCats$}
\label{Rev-gamma-cats}
In this subsection we will briefly review the theory of $\gCats$. We begin by introducing some notations which will be used throughout the paper.
\begin{nota}
We will denote by $\ud{n}$ the finite set $\lbrace 1, 2, \dots, n \rbrace$ and by $n^+$ the based set $\lbrace 0, 1, 2, \dots, n \rbrace$ whose basepoint is the element $0$.
\end{nota}
\begin{nota}
 We will denote by $\N$ the skeletal category of finite unbased sets whose objects are $\ud{n}$ for all $n \ge 0$ and maps are functions of unbased sets. The category $\N$ is a (strict) symmetric monoidal category whose symmetric monoidal structure will be denoted by $+$. For to objects $\ud{k}, \ud{l} \in \N$ their \emph{tensor product} is defined as follows:
 \[
 \ud{k} + \ud{l} := \ud{k + l}.
 \]
\end{nota}
\begin{nota}
 We will denote by $\gop$ the skeletal category of finite based sets whose objects are $n^+$ for all $n \ge 0$ and maps are functions of based sets.
\end{nota}

\begin{nota}
 We denote by $\inrt$ the subcategory of $\gop$ having the same set of objects as $\gop$
 and intert morphisms.
\end{nota}
\begin{nota}
 We denote by $\act$ the subcategory of $\gop$ having the same set of objects as $\gop$
 and active morphisms.
\end{nota}
\begin{nota}
A map $f:\ud{n} \to \ud{m}$ in the category $\N$ uniquely determines an active map in $\gop$ which we will denote by $f^+:n^+ \to m^+$.
This map agrees with $f$ on non-zero elements of $n^+$.
\end{nota}
\begin{nota}
 Given a morphism $f:n^+ \to m^+$ in $\gop$, we denote by $\text{Supp}(f)$ the largest
 subset of $\n$ whose image under $f$ does not caontain the basepoint of $m^+$.
 The set $\text{Supp}(f)$ inherits an order from $\n$ and therefore could be regarded as
 an object of $\N$. We denote by $\text{Supp}(f)^+$ the based set $\text{Supp}(f) \sqcup \lbrace 0 \rbrace$
 regarded as an object of $\gop$ with order inherited from $\n$.
\end{nota}

\begin{prop}
 Each morphism in $\gop$ can be uniquely factored into a composite of an inert map followed
 by an active map in $\gop$.
\end{prop}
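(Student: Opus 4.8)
The plan is to produce the factorization by hand and then show that it is forced. Given a morphism $f:n^+\to m^+$ in $\gop$, observe that $\text{Supp}(f)\subseteq\n$ is precisely the complement of $f^{-1}(0)$, i.e.\ the set of $i\in\n$ with $f(i)\neq 0$, regarded as the object $\text{Supp}(f)^+$ of $\gop$. First I would write down the inert map $p:n^+\to\text{Supp}(f)^+$, namely the canonical order-preserving projection which is the identity on $\text{Supp}(f)$ and sends $f^{-1}(0)$ to the basepoint, together with the map $a:\text{Supp}(f)^+\to m^+$ obtained by restricting $f$ along the inclusion $\text{Supp}(f)\hookrightarrow\n$. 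By the definition of $\text{Supp}(f)$ no element of $\text{Supp}(f)$ is sent to the basepoint by $f$, so $a^{-1}(0)=\{0\}$ and $a$ is active. A short check by cases --- $i\in\text{Supp}(f)$ versus $i\in f^{-1}(0)$ --- shows $a\circ p=f$, which establishes existence.

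For uniqueness, suppose $f=a'\circ p'$ with $p':n^+\to k^+$ inert and $a':k^+\to m^+$ active. Since $a'$ is active we have $(a')^{-1}(0)=\{0\}$, hence $f^{-1}(0)=(p')^{-1}\bigl((a')^{-1}(0)\bigr)=(p')^{-1}(0)$; passing to complements in $\n$ gives $\text{Supp}(f)=(p')^{-1}(\underline{k})$. Because $p'$ is inert it restricts to an order-preserving bijection $(p')^{-1}(\underline{k})\to\underline{k}$, so $k=\abs{\text{Supp}(f)}$ and, $\gop$ being skeletal, $k^+$ is literally the object $\text{Supp}(f)^+$ while $p'$ is forced to equal the canonical projection $p$. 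Finally $p$ is surjective and hence an epimorphism in $\gop$, so $a'\circ p=f=a\circ p$ yields $a'=a$. Thus the factorization is unique.

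The step requiring the most care --- and the natural place for an obstacle --- is the uniqueness claim. The activity of $a'$ is what rigidifies the intermediate object to be $\text{Supp}(f)^+$, and the definition of an inert morphism as an order-preserving projection onto its support, together with skeletality of $\gop$, is what then rigidifies $p'=p$; once this is in place, the epimorphism property of $p$ forces $a'=a$ immediately. Existence is a direct construction, so apart from this bookkeeping with the two classes of maps I expect no further difficulty.
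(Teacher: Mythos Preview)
Your existence argument is exactly the paper's: factor $f$ through $\text{Supp}(f)^+$ via the canonical projection and the restriction of $f$. The paper in fact stops there and does not spell out uniqueness at all, so your proposal goes further than the paper's own proof.

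Your uniqueness argument is fine, but note that it relies on a specific convention for ``inert'', namely that an inert map is the \emph{order-preserving} projection onto a based subset (so that the restriction $(p')^{-1}(\underline{k})\to\underline{k}$ is forced to be the unique order-preserving bijection). The paper never actually writes down its definition of inert and active; it only uses them in the phrase ``the projection of $n^+$ onto the support of $f$'', which is consistent with your reading. If one instead took the weaker Lurie-style definition (preimage of every nonzero element is a singleton, no order condition), strict uniqueness would fail---for instance the identity on $2^+$ would factor both as $\mathrm{id}\circ\mathrm{id}$ and as $\sigma\circ\sigma$ for the swap $\sigma$---and one would only get uniqueness up to unique isomorphism. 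So your proof is correct, but it would be worth flagging explicitly which definition of inert you are using, since the paper leaves this implicit.
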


\begin{proof}
Any map $f:n^+ \to m^+$ in the category $\gop$ can be factored as follows:
\begin{equation}
\label{fact-in-gamma-op}
  \xymatrix@C=11mm{
 n^+  \ar[rd]_{f_{\textit{inrt}}}   \ar[rr]^{f}  &&m^+   \\
  &\text{Supp}(f)^+  \ar[ru]_{f_{\textit{act}}}
 }
\end{equation}
where $\text{Supp}(f) \subseteq n$ is the \emph{support} of the function $f$
\emph{i.e.}  $\text{Supp}(f)$ is the largest subset of $n$ whose elements are mapped
by $f$ to a non zero element of $m^+$. The map $f_{inrt}$ is the projection of
$n^+$ onto the support of $f$ and therefore $f_{inrt}$ is an inert map. The map
$f_{act}$ is the restriction of $f$ to $\text{Supp}(f) \subset \n$,
therefore it is an \emph{active} map in $\gop$.

\end{proof}

\subsection[Natural model category structure on $\Cat$]{The model category structure of groupoids on $\Cat$}
\label{Nat-Mdl-Str-CAT}
In this subsection we will construct another model categort structure on on the category of all small categories $\Cat$ which we will refer to as the \emph{ model category structure of groupoids}.
 We will show that the weak equivalences in this model structure are those functors which induce a weak homotopy equivalence on their nerve. The model category structure is obtained by a \emph{left Bousfield localization} of the \emph{natural} model category structure on $\Cat$ with respect to the set of functors $\lbrace i:0 \to I, i \times I \rbrace$, where $I$ is the category $0 \to 1$ and $i(0) = 0$.
 \begin{prop}
 	\label{char-fib-mdl-gpd}
 	A category $C$ is local with respect to the set of functors $\lbrace i:0 \to I, i \times I \rbrace$ if and only if it is a groupoid.
 	\end{prop}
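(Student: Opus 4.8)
The plan is to translate the locality condition into an elementary statement about the arrow category $C^{I}$ and then do a two-line diagram chase in each direction. Recall that the model category structure here is obtained by left Bousfield localization of the natural structure on $\Cat$, in which every object is both fibrant and cofibrant and the homotopy function complex $\map(A,B)$ is (weakly equivalent to) the nerve of the maximal subgroupoid $\mathrm{core}(B^{A})$ of the functor category. Hence $C$ being local with respect to $\{i\colon 0\to I,\ i\times I\}$ means exactly that the two restriction functors
$i^{*}\colon \mathrm{core}(C^{I})\to \mathrm{core}(C^{0})=\mathrm{core}(C)$ and $(i\times I)^{*}\colon \mathrm{core}(C^{I\times I})\to \mathrm{core}(C^{I})$ are equivalences of groupoids, where $C^{I}$ is the arrow category of $C$ and $i^{*}$ is evaluation at the source object $0$. (If one prefers to read ``local'' via the self-enrichment of $\Cat$, i.e. as $C^{I}\to C$ being an equivalence of \emph{categories}, the argument below goes through verbatim, using that a full functor $C^{I}\to C$ forces every arrow of $C$ to have a left inverse and hence to be invertible.)

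For the implication ``local $\Rightarrow$ groupoid'' I would use only the functor $i$, and in fact only that $i^{*}$ is \emph{full}. Given an arbitrary morphism $f\colon a\to b$ of $C$, regard $f$ and $\id_{a}$ as objects of $\mathrm{core}(C^{I})$; both lie over $a$ under $i^{*}$. Fullness applied to $\id_{a}\in \Aut_{C}(a)$ produces a morphism $(\eta_{0},\eta_{1})\colon f\to \id_{a}$ in $\mathrm{core}(C^{I})$ with $\eta_{0}=\id_{a}$, i.e. a commuting square whose naturality relation reads $\eta_{1}\circ f=\id_{a}$, with $\eta_{1}$ an isomorphism. Therefore $f=\eta_{1}^{-1}$ is an isomorphism; since $f$ was arbitrary, $C$ is a groupoid.

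For the converse, assume $C$ is a groupoid. Then for every small category $A$ the functor category $C^{A}$ is again a groupoid (a natural transformation into a groupoid is automatically a natural isomorphism), so $\mathrm{core}(C^{A})=C^{A}$, and it suffices to show $\mathrm{ev}_{0}\colon C^{I}\to C$ is an equivalence of categories. It has the section $s\colon C\to C^{I}$, $x\mapsto \id_{x}$, with $\mathrm{ev}_{0}\circ s=\id_{C}$, and the assignment $f\mapsto(\id_{a},f)$ for $f\colon a\to b$ defines a natural isomorphism $s\circ \mathrm{ev}_{0}\cong \id_{C^{I}}$, where invertibility of $f$ is exactly what makes this square an isomorphism; naturality is a routine check. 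For the second functor, cartesian closedness of $\Cat$ gives $C^{I\times I}\cong (C^{I})^{I}$, under which $(i\times I)^{*}$ is identified with $\mathrm{ev}_{0}\colon (C^{I})^{I}\to C^{I}$; since $C^{I}$ is itself a groupoid, the preceding paragraph (applied to $C^{I}$ in place of $C$) shows this is an equivalence. Hence $C$ is $\{i,\ i\times I\}$-local.

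The only genuinely delicate point is the first one — correctly identifying what locality in the (as yet unconstructed) localized structure amounts to, namely the condition on cores of functor categories above; after that, each direction is the short argument just sketched, and $i\times I$ contributes nothing beyond being automatically satisfied once $i$ is, via $C^{I\times I}\cong(C^{I})^{I}$.
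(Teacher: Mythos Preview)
Your proof is correct and follows essentially the same strategy as the paper: reduce locality to the statement that evaluation at $0$ gives an equivalence $[I,C]\to C$, then argue each direction by an elementary diagram chase. Two small differences are worth noting. First, the paper invokes \cite[Lemma E.4]{Sharma} to identify locality with respect to $\{i,\,i\times I\}$ directly with $[i,C]:[I,C]\to C$ being an equivalence of \emph{categories}, so the $i\times I$ map is absorbed into that lemma rather than handled separately via $C^{I\times I}\cong (C^{I})^{I}$ as you do. Second, for the forward direction the paper packages your fullness argument as the counit of the adjunction $L\dashv [i,C]$ (with $L(c)=\id_c$) being an isomorphism, and for the converse it uses the slicker observation that when $C$ is a groupoid one has $[I,C]\cong [J,C]$ and $[J,C]\simeq C$ for any $C$, rather than building the section and natural isomorphism by hand. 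Your explicit construction has the advantage of being self-contained; the paper's route is shorter but leans on the cited lemma and the standard fact about $[J,-]$.
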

 \begin{proof}
 	By Lemma \cite[Lemma E.4]{Sharma}, $C$ is $\lbrace i:0 \to I, i \times I \rbrace$-local if and only if the functor
 	\[
 	[i, C]:[I, C] \to [0, C] \cong C
 	\]
 	is an equivalence of categories. We observe that the category $[I, C]$ is the arrow category of $C$ and the functor $[i, C]$ has a left adjoint $L:C \to [I, C]$ which is defined on objects by $L(c) := id_c$ and on morphisms in the obvious way. If the functor $[i, C]$ is an equivalence of categories then, for any map $f:c \to d$ in $C$, the counit map $L[i, C](f):id_c \to f$ which is the following commutative diagram:
 	\begin{equation*}
 	\xymatrix{
 	c \ar@{=}[d] \ar@{=}[r] & c \ar[d]^f \\
 	c \ar[r]_f & d
   }
 	\end{equation*}
 	is an isomorphism in $[I, C]$. This implies that $f:c \to d$ is invertible and hence $C$ is a groupoid.
 	Conversely, if $C$ is a groupoid then $[I, C] \cong [J, C]$ where $J$ is the groupoid $0 \cong 1$ and for any category $C$, the functor category $[J, C]$ is equivalent to $C$.
 	\end{proof}

 \begin{thm}
 	\label{Gpd-mdl-str}
 	There is a combinatorial model category structure of the category of (small) categories $\Cat$ in which a functor $F:A \to B$ is
 	\begin{enumerate}
 	\item  a cofibration if it is monic on objects.
 	\item a weak equivalence if the following functor
 	\[
 	[i, F]:[B, Z] \to [A, Z]
 	\]
 	is an equivalence of categories for each groupoid $Z$.
 	\item a fibration if it has the right lifting property with respect to functors which satisfy both $(1)$ and $(2)$. 
 	  \end{enumerate}
 	\end{thm}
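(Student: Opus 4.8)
The plan is to realize the asserted model structure as the left Bousfield localization of the natural (canonical) model structure on $\Cat$ at the set $S = \{i\colon 0 \to I,\ i\times I\}$. Recall that in the natural model structure the weak equivalences are the equivalences of categories, the cofibrations are the functors injective on objects, and the fibrations are the isofibrations; this model structure is cofibrantly generated by finite sets of maps and $\Cat$ is locally presentable, so it is combinatorial, and since every category is cofibrant it is also left proper. Consequently the general existence theorem for left Bousfield localizations of combinatorial left proper model categories applies to the (two-element) set $S$ and yields a combinatorial, left proper model structure $L_S\Cat$ with the same class of cofibrations, whose weak equivalences are the $S$-local equivalences and whose fibrant objects are the $S$-local objects.

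I would then match the three clauses. Clause (1) is immediate: left Bousfield localization does not alter the cofibrations, so they remain the functors monic on objects. Clause (3) is also immediate once (2) is in hand, because in any model category the fibrations are exactly the maps with the right lifting property against the acyclic cofibrations, i.e.\ against the functors satisfying both (1) and (2). So the real content is clause (2), the identification of the $S$-local equivalences.

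For clause (2), start from Proposition \ref{char-fib-mdl-gpd}: the $S$-local objects are precisely the groupoids, and since every category is already fibrant in the natural structure, the fibrant objects of $L_S\Cat$ are exactly the groupoids. The general theory tells us that $F\colon A \to B$ is an $S$-local equivalence if and only if, for every $S$-local $Z$, the induced map of homotopy function complexes $\operatorname{Map}(B,Z) \to \operatorname{Map}(A,Z)$ is a weak equivalence. The key point is that the natural model structure on $\Cat$ is cartesian (it satisfies the pushout product axiom with respect to $\times$ and has cofibrant unit), every object is cofibrant, and every object is fibrant, so the internal hom category $[B,Z]$ computes this homotopy function complex; moreover, when $Z$ is a groupoid the category $[B,Z]$ is itself a groupoid (any natural transformation into a groupoid is invertible), so a map $[B,Z] \to [A,Z]$ is a weak equivalence exactly when it is an equivalence of categories. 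Hence $F$ is an $S$-local equivalence iff $F^{*}\colon [B,Z]\to[A,Z]$ is an equivalence of categories for every groupoid $Z$, which is clause (2). Assembling these identifications and invoking the localization existence theorem gives the stated combinatorial model structure.

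The step I expect to be the main obstacle is precisely this last identification: verifying that, in the natural model structure on $\Cat$, the internal hom $[B,Z]$ genuinely serves as a homotopy function complex into the fibrant object $Z$ — equivalently, checking the pushout product axiom for $\times$ and that ``equivalence of categories'' is the correct notion of weak equivalence to detect on the hom-categories. Everything else is a formal consequence of the combinatorial left Bousfield localization machinery together with Proposition \ref{char-fib-mdl-gpd}.
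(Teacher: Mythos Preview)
Your proposal is correct and follows essentially the same route as the paper: both obtain the model structure as the left Bousfield localization of the natural model structure on $\Cat$ at $S=\{i,\,i\times I\}$, invoke the general existence theorem for combinatorial localizations, and use Proposition~\ref{char-fib-mdl-gpd} to identify the local objects as groupoids. The only difference is that where the paper cites \cite[Lemma~E.4]{Sharma} for clause~(2), you spell out directly why the cartesian closed structure and cofibrancy/fibrancy of all objects make $[B,Z]$ a homotopy function complex.
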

 \begin{proof}
 We want to carry out a left Bousfield localization of the natural model category of (small) categories with respect to the set $\lbrace i:0 \to I, i \times I \rbrace$. The existence of this localization follows from theorem \ref{local-tool}. $(1)$ follows from the aforementioned theorem. $(2)$ follows from proposition \ref{char-fib-mdl-gpd} and \cite[Lemma E.4]{Sharma}. $(3)$ follows from the fact that fibrations in any model category are completely determined by cofibrations and weak equivalences.
 	
 \end{proof}

 \begin{nota}
 	We will refer to the above model category structure as the  model category structure of \emph{groupoids} on $\Cat$ and denote the model category by $\MdlCatG$.
 	We will refer to a fibration in this model category as a \emph{path fibration} of categories and refer to a weak equivalence as a \emph{groupoidal equivalence} of categories.
 	\end{nota}
 \begin{rem} 
 	Every category is cofibrant in the model category of groupoids. A category is fibrant if and only if it is a groupoid.
 	\end{rem}
 \begin{rem} 
 	A groupoidal equivalence between groupoids is an equivalence of categories.
 \end{rem}

 \begin{prop}
 	\label{Kan-fib-btw-gpds}
 	The nerve of a path fibration of categories between two groupoids is a Kan fibration of simplicial sets.
 	\end{prop}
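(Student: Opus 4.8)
The plan is to reduce the claim to the \emph{natural} (folk) model structure on $\Cat$, whose fibrations are precisely the isofibrations, and then to verify the horn--lifting conditions on nerves directly. Since $\MdlCatG$ is a left Bousfield localization of the natural model structure, it has the same cofibrations and a larger class of weak equivalences, hence a larger class of trivial cofibrations, and therefore a smaller class of fibrations; so every path fibration $p \colon C \to D$ is in particular a fibration of the natural model structure, that is, an isofibration: for every object $c$ of $C$ and every isomorphism $g \colon p(c) \to d$ of $D$ there is an isomorphism $f \colon c \to c'$ of $C$ with $p(f) = g$. Moreover $N(C)$ and $N(D)$ are Kan complexes because $C$ and $D$ are groupoids. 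It therefore suffices to solve the lifting problems of $N(p)$ against the horn inclusions $\Lambda^n_k \hookrightarrow \Delta^n$ for all $n \ge 1$ and $0 \le k \le n$.

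For $n \ge 2$ I would argue that the existence of a lift is forced by the groupoid hypothesis alone and uses nothing about $p$. For $n \ge 2$ the nerve of a groupoid $G$ has the feature that every horn $\Lambda^n_k \hookrightarrow \Delta^n$ admits a \emph{unique} filler: for inner horns ($0 < k < n$) this holds for the nerve of any category, since the horn contains the spine $\Delta^{\{0,1\}} \cup \cdots \cup \Delta^{\{n-1,n\}}$, hence a composable string of morphisms, which extends uniquely to a functor $[n] \to C$; and for the two outer horns it follows from a short diagram chase using invertibility of the morphisms of $G$ together with the fact that the nerve of a category is $2$-coskeletal. Hence, given a lifting problem $(\sigma \colon \Lambda^n_k \to N(C),\ \tau \colon \Delta^n \to N(D))$ with $n \ge 2$, one chooses any filler $\Phi \colon \Delta^n \to N(C)$ of the horn $\sigma$ (available since $N(C)$ is Kan); then $N(p) \circ \Phi$ and $\tau$ are $n$-simplices of $N(D)$ with the same $k$-th horn, so they coincide by uniqueness of fillers in $N(D)$, and $\Phi$ is the required lift.

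The one genuinely new case is $n = 1$, namely the outer horns $\Lambda^1_0 = \Delta^{\{0\}} \hookrightarrow \Delta^1$ and $\Lambda^1_1 = \Delta^{\{1\}} \hookrightarrow \Delta^1$, and this is exactly where the isofibration property and invertibility of morphisms in a groupoid come in. A lifting problem for $\Lambda^1_0$ amounts to an object $c$ of $C$ together with a morphism $g \colon p(c) \to d$ of $D$; since $D$ is a groupoid, $g$ is an isomorphism, so the isofibration property supplies $f \colon c \to c'$ in $C$ with $p(f) = g$, which is the required lift. For $\Lambda^1_1$, given an object $c'$ of $C$ and a morphism $g \colon d \to p(c')$ of $D$, apply the isofibration property to $g^{-1}$ and the object $c'$ to obtain $h \colon c' \to c$ in $C$ with $p(h) = g^{-1}$, and set $f := h^{-1}$; this morphism lies in $C$ because $C$ is a groupoid, and $p(f) = g$.

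The main obstacle will be conceptual rather than computational: one will need to recognize that the $n = 1$ horns are the single place where the hypothesis is used, and that the nerve of an arbitrary functor of groupoids is \emph{not} in general a Kan fibration --- for instance $N(\ast \to B(\Z/2))$ is not --- so the reduction from ``path fibration'' to ``isofibration'' through the localization is indispensable. A cleaner packaging of the same argument, which I would also mention, is that once $p$ is known to be an isofibration of groupoids one may invoke directly that the nerve functor from the canonical model category of groupoids to the Kan--Quillen model category of simplicial sets is right Quillen, and hence preserves fibrations.
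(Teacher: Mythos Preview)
Your proof is correct and follows essentially the same architecture as the paper's: first reduce to showing that $p$ is an isofibration, then deduce that $N(p)$ is a Kan fibration using that both $N(C)$ and $N(D)$ are Kan complexes. The difference lies in how each step is executed.

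For the first step, the paper argues concretely that $p$ has the right lifting property against the localizing map $0 \to I$ (an acyclic cofibration in $\MdlCatG$), and then uses that $C$ and $D$ are groupoids to convert this into lifting against $0 \hookrightarrow J$, i.e.\ the isofibration condition. Your argument via the general fact that a left Bousfield localization has fewer fibrations is slicker and avoids this detour.

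For the second step, the paper appeals to two black boxes: that the nerve sends isofibrations to fibrations in the Joyal model structure, and that a Joyal fibration between Kan complexes is a Kan fibration \cite[Cor.~4.28]{AJ2}. Your approach instead fills horns by hand, exploiting that horn fillers in the nerve of a groupoid are \emph{unique} for $n \ge 2$ (so any filler in $N(C)$ is automatically compatible with the given simplex in $N(D)$), and handling $n=1$ directly from the isofibration property. This is more elementary and self-contained, at the cost of a small combinatorial verification for outer horns. Your closing remark that one could alternatively invoke the right Quillen property of $N \colon \gpd \to \sSetsK$ is exactly the abstract repackaging of the paper's argument.
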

 \begin{proof}
 	Let $p:C \to D$ be a path fibration of categories such that both $C$ and $D$ are groupoids. Since the inclusion functor $i:0 \to I$ is an acyclic fibration in the model category of groupoids therefore $p$ has the right lifting property with respect to $i$. Since both $C$ and $D$ are groupoids by assumption, this is equivalent to $p$ having the right lifting property with respect to $0 \hookrightarrow J$, where $J$ is the groupoid $0 \cong 1$. This implies that $p$ is an isofibration \emph{i.e.} a fibration in the natural model category structure on $\Cat$, see \cite{Sharma}. Now the nerve functor takes an isofibrations to a pseudo-fibration \emph{i.e.} a fibration in the Joyal model category on simplicial sets so $N(p):N(C) \to N(D)$ is a pseudo-fibration. However both $N(C)$ and $N(D)$ are Kan complexes and a pseudo-fibration between Kan complexes is a Kan fibration, see \cite[Cor. 4.28]{AJ2}.
 	\end{proof}
  Next we are interested in providing a characterization of weak equivalences and fibrations in the model category of groupoids. In doing so we will be using the following adjunction:
  \[
  \tau_1:\Cat \rightleftarrows \sSets:N
  \]
  \begin{lem}
  	\label{N-htpy-ref}
  	The adjunction $(\tau_1, N)$ is a Quillen adjunction between the model category of groupoids and the Kan model category of simplicial sets.
  	Further the adjunction is also a homotopy reflection.
  	\end{lem}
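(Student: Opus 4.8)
The plan is to verify the two assertions separately: first that $(\tau_1, N)$ is a Quillen adjunction between $\MdlCatG$ and $\sSetsK$, and then that it is a homotopy reflection. Recall that $\MdlCatG$ is a left Bousfield localization of the natural model category on $\Cat$, and $\sSetsK$ is a left Bousfield localization of the Joyal model structure on $\sSets$. We already know (this is classical, and implicit in the references to \cite{Sharma}) that $(\tau_1, N)$ is a Quillen adjunction between the natural model structure on $\Cat$ and the Joyal model structure on $\sSets$. So the first step is to invoke the universal property of left Bousfield localization: to check that $(\tau_1, N)$ remains left Quillen after localizing both sides, it suffices to check that $\tau_1$ sends the localizing set $\{i:0 \to I,\ i \times I\}$ to weak equivalences in $\sSetsK$ — equivalently (by adjunction and the characterization of local objects), that $N$ sends fibrant objects of $\sSetsK$, i.e. Kan complexes, to $\{i:0 \to I,\ i \times I\}$-local objects of $\Cat$. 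By Proposition \ref{char-fib-mdl-gpd} the latter local objects are exactly the groupoids, so I would argue that if $X$ is a Kan complex then $\tau_1(X)$ is a groupoid: every $1$-simplex of a Kan complex has a homotopy inverse up to a $2$-simplex, and these $2$-simplices become commuting triangles in the fundamental category, so every morphism of $\tau_1(X)$ is invertible. That establishes the Quillen adjunction.

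For the homotopy reflection claim, I must show that the derived counit $\tau_1 \mathbb{L} \circ N \to \mathrm{id}$ — more precisely, for every fibrant $C \in \MdlCatG$ (i.e. every groupoid $C$), the counit $\tau_1(N(C)) \to C$ — is a weak equivalence in $\MdlCatG$. Since $N(C)$ is already cofibrant in $\sSetsK$ (everything is), the derived counit is just the ordinary counit $\epsilon_C: \tau_1 N(C) \to C$. For any category $C$ this counit is an isomorphism, because $\tau_1$ is left adjoint to the fully faithful nerve $N: \Cat \to \sSets$ (the nerve of a category determines the category on the nose, and $\tau_1 N \cong \mathrm{id}$). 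Hence $\epsilon_C$ is an isomorphism, a fortiori a groupoidal equivalence, and the adjunction is a homotopy reflection. (Equivalently one may phrase homotopy reflection as: the right adjoint $N$ is homotopically fully faithful, which follows from $\tau_1 N \cong \mathrm{id}$ together with $N$ preserving weak equivalences between fibrant objects, the latter by Proposition \ref{Kan-fib-btw-gpds} applied to path objects, or simply because an equivalence of groupoids induces a homotopy equivalence of nerves.)

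The only genuinely load-bearing step is the first one: checking that $N$ carries Kan complexes into groupoids, or dually that $\tau_1$ carries the two localizing maps to weak homotopy equivalences. Concretely, $\tau_1(I) = I$ and $\tau_1$ of the map $i: 0 \to I$ is the inclusion $0 \hookrightarrow I$; in $\sSetsK$ this is a weak equivalence because $N(I) = \Delta^1$ is contractible. Likewise $\tau_1(i \times I)$ is $i \times I$ as a functor, and its nerve $\Delta^1 \hookrightarrow \Delta^1 \times \Delta^1$ is again a weak homotopy equivalence. So the localizing maps are sent to weak equivalences, and the universal property of left Bousfield localization (as in \cite[Appendix E]{Sharma}) does the rest. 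I would present the argument in this order: (i) recall the unlocalized Quillen adjunction; (ii) check $\tau_1$ kills the localizing set in $\sSetsK$, invoking the localization principle to conclude $(\tau_1, N)$ is Quillen for the localized structures; (iii) observe $\tau_1 N \cong \mathrm{id}_{\Cat}$, so the counit at any groupoid is an isomorphism, giving the homotopy reflection.
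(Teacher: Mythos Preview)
Your argument for the Quillen adjunction has the directions of the adjoints swapped, so several steps do not type-check. The left adjoint is $\tau_1:\sSets\to\Cat$ and the right adjoint is $N:\Cat\to\sSets$; hence ``$\tau_1$ sends the localizing set $\{i:0\to I,\ i\times I\}$ to weak equivalences in $\sSetsK$'' applies $\tau_1$ to maps of categories and lands in the wrong category, and ``$N$ sends Kan complexes to $\{i,i\times I\}$-local objects of $\Cat$'' applies $N$ to simplicial sets. The concrete check at the end inherits the same problem: $\tau_1(I)$ is not defined for the category $I$, and the image of $i$ under the left adjoint does not live in $\sSetsK$. If you want to run a localization argument, the correct condition (after freely localizing the target $\Cat$) is that the right adjoint $N$ carries fibrant objects of $\MdlCatG$, i.e.\ groupoids, to Kan complexes; this is the classical fact that the nerve of a groupoid is a Kan complex. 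Your true observation ``if $X$ is a Kan complex then $\tau_1(X)$ is a groupoid'' is not the hypothesis the localization principle asks for here.

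The paper's route is more direct: it invokes the criterion of \cite[Prop.~E.2.14]{AJ1} that a pair is Quillen provided the left adjoint preserves cofibrations and the right adjoint preserves fibrations between fibrant objects. The first is immediate (monomorphisms of simplicial sets go to functors injective on objects), and the second is exactly Proposition~\ref{Kan-fib-btw-gpds}. No localizing-set bookkeeping is needed.

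Your treatment of the homotopy reflection, via the counit $\tau_1 N\Rightarrow\mathrm{id}_{\Cat}$ being an isomorphism so that the derived counit at any groupoid is a weak equivalence, is correct and in fact supplies what the paper omits: the paper's own proof stops after establishing the Quillen adjunction and never addresses the second sentence of the lemma.
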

  \begin{proof}
  	In order to prove the first statement we will use \cite[Prop. E.2.14]{AJ1}. Clearly the functor $\tau_1$ takes cofibrations to cofibrations. The right adjoint functor $N$ takes fibrations between fibrant objects in the Kan model category of (small) categories to Kan fibrations of simplicial sets by proposition \ref{Kan-fib-btw-gpds}. Thus $(\tau_1, N)$ is a Quillen adjunction between the aforementioned model category structures.
  	
  	\end{proof}
  
  The inclusion functor $\gpd \to \Cat$, where $\gpd$ is the full subcategory whose objects are groupoids, has a left adjoint which we denote by $\Pi_1:\Cat \to \gpd$. We now describe the groupoid $\Pi_1(C)$. Let $G = U(C)$ denote the underlying graph of the category $C$. The arrows of $C$ are the edges of the graph $G$. For each non-invertible arrow $f:c_1 \to c_2$ of $C$ we add an edge $\inv{f}:c_2 \to c_1$ to $G$. The objects of $\Pi_1(C)$ would be the same as those of $C$. An arrow of $\Pi_1(C)$ is a $k$-tuple of composable, non-identity, arrows in $C$ namely
  $(f_1, f_2, \dots, f_k)$ where $\textit{dom}(f_{r}) = \textit{cod}(f_{r-1})$, for $1 < r \le k$. In addition, for each $c \in Ob(C)$ we have an arrow $(id_c)$. Further, no two adjacent arrows are inverses \emph{i.e.} $f_r \ne \inv{f_{r-1}}$, for $1 < r \le k$. The composition operation is given by concatenation.
  \begin{nota}
  	A morphism in $\Pi_1(C)$ is a tuple. We will refer to the size of this tuple as the \emph{length} of the morphism, namely the lenth of $f = (f_1, f_2, \dots, f_k)$ is $k$ and we denote the length of a morphism by $|f|$.
  	\end{nota}
  
  \begin{rem}
  	\label{QEq-mdl-gpd}
  	In the paper \cite{JT1} a model category structure was constructed on the full subcategory of $\Cat$ whose objects are groupoids $\gpd$. We will refer to this model category as the \emph{natural model category of groupoids}
  	The functor $\Pi_1$ is a left Quillen functor of a Quillen adjunction
  	\[
  	\Pi_1:\Cat \rightleftharpoons \gpd:i
  	\]
  	where $\Cat$ is endowed with the model category structure of groupoids and $\gpd$ is the natural model category of groupoids. This Quillen adjunction is a Quillen equivalence.
  \end{rem}

 The following proposition will be used repeatedly in this paper:
 \begin{prop}
 	\label{Pi1-pres-prod}
 	The free groupoid functor $\Pi_1:\Cat \to \gpd$ preserves products.
 \end{prop}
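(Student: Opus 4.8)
The plan is to recognize this as an instance of the principle that the reflector onto a reflective subcategory which is moreover an \emph{exponential ideal} preserves finite products. The three hypotheses are in place here: $\Cat$ is cartesian closed; the full subcategory $\gpd \subseteq \Cat$ is reflective with reflector $\Pi_1$, and the inclusion $i \colon \gpd \to \Cat$ is fully faithful and preserves products (a product of groupoids is a groupoid); and $\gpd$ is an exponential ideal in $\Cat$, i.e.\ $[A, G]$ is a groupoid whenever $G$ is. The last point is immediate: a natural transformation $\alpha \colon F \Rightarrow F'$ between functors $A \to G$ has every component $\alpha_a$ invertible in $G$, and the inverses $\alpha_a^{-1}$ are the components of a natural transformation $F' \Rightarrow F$ two-sided inverse to $\alpha$. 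Since the terminal category is a groupoid, $\Pi_1$ fixes the terminal object, so it suffices to treat binary products.

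Fix small categories $A$ and $B$, and write $\eta$ for the unit of $\Pi_1 \dashv i$. I would show that $\eta_A \times \eta_B \colon A \times B \to i\Pi_1 A \times i\Pi_1 B \cong i(\Pi_1 A \times \Pi_1 B)$ is a reflection of $A \times B$ into $\gpd$; uniqueness of reflections then forces the canonical comparison functor $\Pi_1(A \times B) \to \Pi_1 A \times \Pi_1 B$ to be an isomorphism. To verify the universal property one fixes a groupoid $G$ and builds, from the cartesian-closed adjunction of $\Cat$ and the reflection $\Pi_1 \dashv i$, the chain of bijections
\[
\Hom_\Cat(A \times B, iG) \cong \Hom_\Cat(A, [B, iG]) \cong \Hom_\Cat(i\Pi_1 A, [B, iG]) \cong \Hom_\Cat(i\Pi_1 A \times B, iG),
\]
followed by the analogous chain in the $B$-variable
\[
\Hom_\Cat(i\Pi_1 A \times B, iG) \cong \Hom_\Cat(B, [i\Pi_1 A, iG]) \cong \Hom_\Cat(i\Pi_1 B, [i\Pi_1 A, iG]) \cong \Hom_\Cat(i\Pi_1 A \times i\Pi_1 B, iG);
\]
here the middle bijection of each chain is $\Pi_1 \dashv i$ applied to $[B, iG]$ (resp.\ $[i\Pi_1 A, iG]$), legitimate because these are groupoids by the exponential-ideal property. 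Since the middle bijections are precomposition with $\eta_A$ (resp.\ $\eta_B$) and the exponential transpose turns precomposition into precomposition, the composite bijection $\Hom_\Cat(i\Pi_1 A \times i\Pi_1 B, iG) \to \Hom_\Cat(A \times B, iG)$ is, after absorbing the canonical symmetry isomorphisms, precomposition with $\eta_A \times \eta_B$ — precisely the universal property exhibiting $\eta_A \times \eta_B$ as a reflection of $A \times B$ into $\gpd$.

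For readers who prefer a hands-on proof, one can also use the explicit description of $\Pi_1$ recalled above. Define $\Theta \colon \Pi_1 A \times \Pi_1 B \to \Pi_1(A \times B)$ to be the identity on objects and to send $(u, v) \colon (a_1, b_1) \to (a_2, b_2)$ to $\Pi_1(\iota_{b_2})(u) \circ \Pi_1(\rho_{a_1})(v)$, where $\iota_{b_2} \colon A \to A \times B$ and $\rho_{a_1} \colon B \to A \times B$ are the slice inclusions $a \mapsto (a, b_2)$ and $b \mapsto (a_1, b)$. On reduced words this \emph{pads} a word in $A$ (resp.\ $B$) by identities in the other coordinate; a short check with formal inverses shows padding preserves reducedness, so $\Theta$ is a bijection on hom-sets, inverse to the canonical comparison functor (which reduces each coordinate separately). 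Functoriality of $\Theta$ comes down to the interchange identity $\Pi_1(\iota_{b'})(\phi) \circ \Pi_1(\rho_{a})(\psi) = \Pi_1(\rho_{a'})(\psi) \circ \Pi_1(\iota_{b})(\phi)$ for $\phi \colon a \to a'$ in $A$ and $\psi \colon b \to b'$ in $B$, which holds because both sides equal $\Pi_1$ of the morphism $(\phi, \psi)$ of $A \times B$.

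I do not anticipate a real obstacle: both routes are essentially formal. The single point demanding attention in the first argument is confirming that the chain of adjunction isomorphisms genuinely computes precomposition with $\eta_A \times \eta_B$ (a routine diagram chase with units, counits and symmetry isomorphisms); in the second, it is the combinatorics with formal inverses verifying that padded words are reduced and that $\Theta$ respects composition. I would present the first argument as the proof and include the second, if at all, as a remark.
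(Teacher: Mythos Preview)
The paper states this proposition without proof, so there is no argument to compare against. Your first argument is correct and is the standard one: $\gpd$ is an exponential ideal in the cartesian closed category $\Cat$, and the reflector onto an exponential ideal always preserves finite products. The chain of bijections you wrote is exactly the usual verification, and your identification of the composite with precomposition by $\eta_A \times \eta_B$ is the right thing to check.

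A small caution on the hands-on alternative: the sentence ``padding preserves reducedness, so $\Theta$ is a bijection on hom-sets'' only gives injectivity of $\Theta$ (equivalently, that the comparison followed by $\Theta$ is the identity). For surjectivity you must show that \emph{every} reduced word in $\Pi_1(A\times B)$ equals one in block form, and that is precisely where the interchange identity does the work---not merely for functoriality of $\Theta$, but to slide all $A$-letters past all $B$-letters. As written, you invoke interchange only for functoriality; if you keep this paragraph, say explicitly that repeated use of interchange rewrites an arbitrary word into the image of $\Theta$. Since you plan to present the exponential-ideal argument as the proof and relegate this to a remark, this is a minor point.
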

%
%
  
  \begin{prop}
  	\label{char-weak-eq}
  	A functor $F:C \to D$ is a groupoidal equivalence if and only if the induced functor $\Pi_1(F):\Pi_1(C) \to \Pi_1(D)$ is an equivalence of categories.
  	\end{prop}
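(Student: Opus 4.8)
The plan is to reduce the statement to an elementary manipulation inside $\gpd$, the one substantive input being that $\gpd$ is a reflective \emph{exponential ideal} of the cartesian closed category $\Cat$, with reflector $\Pi_1$. The starting observation is that for any category $E$ and any groupoid $Z$ the functor category $[E,Z]$ is again a groupoid, since a natural transformation between functors into a groupoid is objectwise invertible and hence invertible. Consequently the reflection bijection $\Hom_{\Cat}(E,Z)\cong\Hom_{\Cat}(\Pi_1E,Z)$ upgrades to an isomorphism of categories
\[
\eta_E^{*}\colon[\Pi_1E,Z]\xrightarrow{\ \cong\ }[E,Z],\qquad Z\in\gpd,
\]
where $\eta_E\colon E\to\Pi_1E$ is the unit. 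I would justify this either via the chain $[E'\times E,Z]\cong[\Pi_1(E'\times E),Z]\cong[\Pi_1E'\times\Pi_1E,Z]$, valid for $Z$ a groupoid by Proposition~\ref{Pi1-pres-prod}, specialized at $E'=\ast$ after first replacing $E$ by $\Pi_1E$; or directly, by noting that $\eta_E^{*}$ is bijective on objects by the universal property of the localization $\Pi_1E$ and fully faithful because the same universal property applies to the arrow category $[I,Z]$, which is again a groupoid.

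With this in hand the reduction is formal. By Theorem~\ref{Gpd-mdl-str}(2), a functor $F\colon C\to D$ is a groupoidal equivalence iff the restriction functor $[D,Z]\to[C,Z]$ is an equivalence of categories for every groupoid $Z$. Naturality of the unit gives $\eta_D\circ F=\Pi_1(F)\circ\eta_C$, hence a commuting square identifying, via the isomorphisms $\eta_C^{*}$ and $\eta_D^{*}$, the functor $[D,Z]\to[C,Z]$ with $\Pi_1(F)^{*}\colon[\Pi_1D,Z]\to[\Pi_1C,Z]$. Since $\eta_C^{*},\eta_D^{*}$ are isomorphisms of categories, $F$ is a groupoidal equivalence iff $\Pi_1(F)^{*}\colon[\Pi_1D,Z]\to[\Pi_1C,Z]$ is an equivalence of categories for every groupoid $Z$. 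So it suffices to prove: for a functor $g\colon\A\to\B$ between groupoids, $g$ is an equivalence of categories iff $g^{*}\colon[\B,Z]\to[\A,Z]$ is an equivalence of categories for all groupoids $Z$.

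The forward implication is immediate, precomposition along an equivalence of categories being an equivalence of categories with any target. For the converse I would run a two-sided Yoneda argument: taking $Z=\A$, the functor $g^{*}\colon[\B,\A]\to[\A,\A]$ is essentially surjective, so there is $h\colon\B\to\A$ with $h\circ g\cong\mathrm{id}_{\A}$; taking $Z=\B$, both $\mathrm{id}_{\B}$ and $g\circ h$ lie in $[\B,\B]$ and $g^{*}(g\circ h)=g\circ h\circ g\cong g=g^{*}(\mathrm{id}_{\B})$, so since the equivalence $g^{*}$ is in particular fully faithful, and hence reflects isomorphy of objects, $g\circ h\cong\mathrm{id}_{\B}$. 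Thus $h$ is a quasi-inverse of $g$ and $g$ is an equivalence of categories.

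I expect the only real content to be the enriched reflection isomorphism $[\Pi_1E,Z]\cong[E,Z]$ of the first paragraph, specifically verifying it at the level of natural transformations and not merely of objects; everything downstream is routine. An alternative, more model-categorical route bypasses even this and uses the Quillen equivalence $\Pi_1\colon\Cat\rightleftarrows\gpd\colon i$ between the model category of groupoids and the natural model category of groupoids recalled above: $\Pi_1$ is left Quillen and every object of $\MdlCatG$ is cofibrant, so by Ken Brown's lemma $\Pi_1$ carries every groupoidal equivalence to a weak equivalence of groupoids, i.e.\ to an equivalence of categories; conversely $\eta_C\colon C\to\Pi_1C$ is the derived unit at the cofibrant object $C$ and is a genuine (not merely ``up to fibrant replacement'') groupoidal equivalence because $\Pi_1C$ is already fibrant, so the naturality square together with two-out-of-three and the fact that an equivalence of categories is a groupoidal equivalence (the groupoid model structure being a left Bousfield localization of the natural model structure on $\Cat$) yields the converse.
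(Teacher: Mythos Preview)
Your proposal is correct and essentially coincides with the paper's proof. The paper also shows that the unit $\iota_C\colon C\to\Pi_1(C)$ is a groupoidal equivalence by establishing that $[\iota_C,G]\colon[\Pi_1(C),G]\to[C,G]$ is an equivalence (in fact an isomorphism) of categories for every groupoid $G$, via exactly your ``enriched reflection'' bijections on objects and on arrows, and then concludes by two-out-of-three in the unit square; your Yoneda argument at the end simply makes explicit the remark, used tacitly by the paper, that a groupoidal equivalence between groupoids is the same thing as an equivalence of categories. Your alternative route through the Quillen equivalence $\Pi_1\dashv i$ and Ken Brown is a legitimate shortcut, but the paper does not take it: it proves the unit is a groupoidal equivalence by hand rather than by appealing to the Quillen equivalence (indeed, the Quillen equivalence in Remark~\ref{QEq-mdl-gpd} is only asserted, so relying on it would be circular in spirit).
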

  \begin{proof}
  	The unit of the adjunction $\Pi_1 \dashv i$ gives the following commutative diagram:
  	\begin{equation*}
  	\xymatrix{
  	C \ar[r] \ar[d]_F & \Pi_1(C) \ar[d]^{\Pi_1(F)} \\
  	D \ar[r]  & \Pi_1(D)
  }
  	\end{equation*}
  	where both vertical functors are inclusions. We will first prove that these two inclusion maps are both weak equivalences. Since $\Pi_1$ is a left adjoint to the inclusion functor $i$ therefore the inclusion functor $\iota_C:C \to \Pi_1(C)$ induces the following bijection for each groupoid $G$:
  	\[
  	\Cat(\Pi_1(C), G) \cong \Cat(C, G).
  	\]
  	Conside the following chain of bijections:
  	\begin{multline*}
  	\Cat(I, [\Pi_1(C), G]) \cong \Cat(I \times \Pi_1(C), G) \cong \Cat(\Pi_1(C), [I, G]) \\
  	 \cong \Cat(C, [I, G]) \cong \Cat(I \times C, G) \cong \Cat(I, [C, G]).
  	\end{multline*}
  The above two bijections together imply that we have the following equivalence of functor categories:
  \begin{equation*}
  [\iota_C, G]:[\Pi_1(C), G] \to [\Pi_1(C), G].
  \end{equation*}
  Now Theorem \ref{Gpd-mdl-str} $(2)$ implies that the two inclusion maps are weak equivalences in the model category structure of groupoids. Now the theorem follows from the two out of three property of weak equivalences in a model category.
  	\end{proof}
  
%
%
  Finally we would like to show that the groupoidal model category structure on $\Cat$ is cartesian closed.
  
  \begin{thm}
  	\label{cart-closed-gp-cat}
  	The groupoidal model category structure on $\Cat$ is cartesian closed.
  	\end{thm}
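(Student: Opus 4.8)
Since $\Cat$ is cartesian closed as a category and the terminal category $\mathbf 1$ is cofibrant in the model category of groupoids (every object is cofibrant), it suffices to verify the pushout--product axiom for the cartesian product. So let $f\colon A\to B$ and $g\colon C\to D$ be cofibrations, i.e.\ functors monic on objects, and write $f\Box g\colon (A\times D)\cup_{A\times C}(B\times C)\to B\times D$ for their pushout--product; we must show $f\Box g$ is again a cofibration, and a groupoidal equivalence as soon as $f$ or $g$ is.

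That $f\Box g$ is a cofibration is formal: the functor $\Ob\colon\Cat\to\mathbf{Set}$ has both adjoints, hence preserves the colimit defining the domain of $f\Box g$, so $\Ob(f\Box g)$ is the pushout--product in $\mathbf{Set}$ of the monomorphisms $\Ob f$ and $\Ob g$, which is again a monomorphism. (Alternatively, one may invoke cartesian closedness of the natural model structure on $\Cat$, of which the model structure of groupoids is a left Bousfield localization, since the two structures have the same cofibrations.)

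For the ``acyclic'' part we may assume by symmetry of the product that $g$ is an acyclic cofibration, and we must show $f\Box g$ is a groupoidal equivalence. The plan is to transport the question to $\gpd$ along the free--groupoid functor $\Pi_1\colon\Cat\to\gpd$. Being a left adjoint, $\Pi_1$ preserves the pushout in the definition of $f\Box g$, and by Proposition~\ref{Pi1-pres-prod} it also preserves the binary products occurring there; hence there is a natural isomorphism $\Pi_1(f\Box g)\cong \Pi_1(f)\,\Box\,\Pi_1(g)$, the right-hand side being the pushout--product formed in $\gpd$. By Remark~\ref{QEq-mdl-gpd}, $\Pi_1$ is the left Quillen functor of a Quillen equivalence from the model structure of groupoids on $\Cat$ to the natural model category of groupoids $\gpd$, so $\Pi_1(f)$ is a cofibration and $\Pi_1(g)$ an acyclic cofibration in $\gpd$. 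Since the natural model structure on $\gpd$ is cartesian closed (classical; in any event it follows by an argument of the same kind as the present theorem), $\Pi_1(f)\Box\Pi_1(g)$ is an acyclic cofibration in $\gpd$, in particular an equivalence of categories. Therefore $\Pi_1(f\Box g)$ is an equivalence of categories, and Proposition~\ref{char-weak-eq} then yields that $f\Box g$ is a groupoidal equivalence, completing the verification.

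The step most in need of care is the identification $\Pi_1(f\Box g)\cong\Pi_1(f)\Box\Pi_1(g)$: it rests precisely on $\Pi_1$ commuting with the pushout (automatic, as $\Pi_1$ is a left adjoint) and with the two products in the pushout--product (Proposition~\ref{Pi1-pres-prod}, which is exactly why that statement was isolated beforehand). If one would rather not appeal to cartesian closedness of $\gpd$, an alternative route is the general principle that a left Bousfield localization $L_S\mathcal M$ of a left proper, combinatorial, cartesian closed model category $\mathcal M$ is again cartesian closed as soon as $s\times\mathrm{id}_C$ is an $S$-local equivalence for every $s\in S$ and every object $C$; for $S=\lbrace i\colon 0\to I,\ i\times I\rbrace$ this comes down to checking, for every groupoid $Z$, that the source--evaluation functor $[I\times C,Z]=[I,[C,Z]]\to[C,Z]$ is an equivalence of categories, which holds because $[C,Z]$, and therefore also its arrow category $[I,[C,Z]]$, is a groupoid, while the source--evaluation $[I,\mathcal G]\to\mathcal G$ is an equivalence for every groupoid $\mathcal G$.
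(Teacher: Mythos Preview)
Your proof is correct, but it follows a genuinely different route from the paper's. The paper argues by adjointness: to see that $u\Box i$ is an acyclic cofibration when $u$ is, it suffices (since the model structure is a left Bousfield localization) to lift against every path fibration $p:G\to H$ between groupoids; by adjunction this becomes the problem of lifting $u$ against $(i^\ast,p_\ast):[D,G]\to [C,G]\times_{[C,H]}[D,H]$. Now $p$ is an isofibration between groupoids, so the \emph{natural} cartesian closed model structure on $\Cat$ makes $(i^\ast,p_\ast)$ an isofibration between groupoids, hence a path fibration, and the acyclic cofibration $u$ lifts. Thus the paper never leaves $\Cat$ and uses only the cartesian closedness of the unlocalized (natural) model structure together with the standard fact that fibrations between fibrant objects coincide in a localization and its base.

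Your main argument instead transports the pushout--product along $\Pi_1$ to $\gpd$, using that $\Pi_1$ is a left adjoint (so preserves the pushout) and preserves finite products (Proposition~\ref{Pi1-pres-prod}), then invokes cartesian closedness of the natural model structure on $\gpd$ and pulls the conclusion back via Proposition~\ref{char-weak-eq}. This is clean and makes nice use of the product-preservation of $\Pi_1$, at the cost of importing the (admittedly classical) cartesian closedness of the model structure on $\gpd$ as an external input; the paper's argument is more self-contained in that respect and is an instance of a reusable pattern for checking that a localization inherits a monoidal model structure. Your closing ``alternative route'' via the general principle for localizations is in fact much closer in spirit to what the paper actually does.
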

  \begin{proof}
  	Let $u:C \to D$ be a cofibration and $i:X \to Y$ be another cofibration in the model category of groupoids. We will like to show that their pushout product
  	\[
  u	\Box i: D \times X \underset{C \times X} \coprod C \times Y \to D \times Y
  	\]
  	is a cofibration which is acyclic if either $u$ or $i$ is acyclic in the model category of groupoids. The cofibrations in the groupoidal model structure are the same as the cofibrations in the natural model structure on $\Cat$ and so are the acyclic fibrations. Further every path fibration is a fibration in the natural model category structure. Now the cartesian closed natural model category structure implies that $u	\Box i$ is a cofibration. Let us now assume that $u$ is an acyclic cofibration. In order to show that $u \Box i$ is an acyclic cofibration it is sufficient to show that it has the left lifting property with respect to all path fibrations between groupoids, see \cite[]{Sharma}. Let $p:G \to H$ be such a path fibration. By adjointness, it is sufficient to show the existence of a lifting arrow $L$ whenever we have the following commutative diagram:
  	\begin{equation*}
  	\xymatrix{
  	C \ar[r] \ar[d]_u & [D, G] \ar[d]^{(i^\ast, p_\ast)} \\
  	D \ar[r] \ar@{-->}[ru]_L & [C, G] \underset{[C, H]} \times [D, H]
  }
  	\end{equation*}
  	Since $p:G \to H$ is a path fibration between groupoids therefore it is a fibration in the natural model category. The cartesian closed natural model category structure on $\Cat$ implies that the right vertical map is a fibration between groupoids and therefore it is a path fibration. Since $u$ is an acyclic cofibration in the groupoidal model category structure on $\Cat$ therefore it has the left lifting property with respect to all path fibrations. Thus we have shown that $u \Box i$ is an acyclic cofibration whenever $u$ is an acyclic fibration and $i$ is a fibration.
  	\end{proof}
  
  \begin{prop}
  	\label{simp-mdl-cat}
  	The model category of groupoids is a simplicial model category.
  	\end{prop}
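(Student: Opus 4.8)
The plan is to realize $\Cat$, equipped with the model structure of groupoids $\MdlCatG$, as a simplicial model category by transporting the standard enrichment of $\Cat$ by nerves of functor categories across the adjunction $(\tau_1, N)$, and then to reduce the pushout-product axiom to two facts already in hand: that $\MdlCatG$ is cartesian closed (Theorem~\ref{cart-closed-gp-cat}) and that $\tau_1\colon \sSets \to \Cat$ is a left Quillen functor into $\MdlCatG$ from the Kan model structure on $\sSets$ (Lemma~\ref{N-htpy-ref}). Concretely, for categories $A,B$ I would take the simplicial hom-object to be $N[A,B]$, whose $n$-simplices are the functors $A\times[n]\to B$ (where $[n]$ denotes the poset $0\to 1\to\cdots\to n$ viewed as a category); for $A\in\Cat$ and a simplicial set $K$ the tensor is $A\otimes K := A\times\tau_1(K)$ and the cotensor is $B^{K} := [\tau_1(K),B]$. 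Composition is obtained by applying the (product-preserving) nerve to the composition functors $[B,C]\times[A,B]\to[A,C]$, so this data assembles into a simplicial category, and the natural bijections
\[
\sSets\bigl(K,N[A,B]\bigr)\ \cong\ \Cat\bigl(A\times\tau_1 K,\,B\bigr)\ \cong\ \Cat\bigl(A,[\tau_1 K,B]\bigr)
\]
are immediate from $\tau_1\dashv N$ together with the ordinary cartesian closedness of $\Cat$, exhibiting $\Cat$ as tensored and cotensored over $\sSets$.

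The one ingredient that is not pure formalism is the coherence isomorphism $(A\otimes K)\otimes L\cong A\otimes(K\times L)$ required of a tensored simplicial category. Unwinding $\otimes=(-)\times\tau_1(-)$, it amounts to $\tau_1(K)\times\tau_1(L)\cong\tau_1(K\times L)$, i.e.\ to the assertion that $\tau_1$ preserves finite products. I would dispatch this as a short lemma: both $\tau_1(-)\times\tau_1(-)$ and $\tau_1(-\times-)$ preserve colimits separately in each variable (using that $\Cat$ and $\sSets$ are cartesian closed), so it suffices to treat representables $K=\Delta[m]$, $L=\Delta[n]$, and there $\Delta[m]\times\Delta[n]=N([m]\times[n])$ because $N$ preserves products, whence $\tau_1(\Delta[m]\times\Delta[n])=[m]\times[n]=\tau_1\Delta[m]\times\tau_1\Delta[n]$ since $\tau_1 N\cong\mathrm{id}$.

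It then remains to verify the pushout-product axiom. Let $u\colon A\to B$ be a cofibration in $\MdlCatG$ and $j\colon K\to L$ a cofibration in the Kan model structure on $\sSets$. Unravelling $\otimes=(-)\times\tau_1(-)$ identifies the simplicial pushout-product
\[
u\,\Box\,j\colon\ (B\times\tau_1 K)\underset{A\times\tau_1 K}{\coprod}(A\times\tau_1 L)\ \longrightarrow\ B\times\tau_1 L
\]
with the cartesian pushout-product of $u$ with $\tau_1(j)$ formed inside $(\Cat,\times)$. Since $\tau_1$ is left Quillen into $\MdlCatG$, the map $\tau_1(j)$ is a cofibration of $\MdlCatG$, acyclic whenever $j$ is; and since $\MdlCatG$ is a cartesian closed model category, $u\,\Box\,\tau_1(j)$ is a cofibration, acyclic as soon as either $u$ or $\tau_1(j)$ --- hence either $u$ or $j$ --- is acyclic. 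The unit axiom is automatic because $\Delta[0]$ is cofibrant in $\sSets$ (and $A\otimes\Delta[0]\cong A$). This shows $\MdlCatG$ is a simplicial model category; as a sanity check, when $B$ is fibrant, i.e.\ a groupoid, the mapping space $N[A,B]$ is a Kan complex since $[A,B]$ is then itself a groupoid.

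I do not expect a real obstacle: once Theorem~\ref{cart-closed-gp-cat} and Lemma~\ref{N-htpy-ref} are available, everything is enriched-categorical bookkeeping plus the brief observation that $\tau_1$ preserves finite products. The only place requiring care is making the two identifications --- the coherence isomorphism $(A\otimes K)\otimes L\cong A\otimes(K\times L)$ and $u\,\Box\,j\cong u\,\Box\,\tau_1(j)$ --- precise enough that the cartesian-closed structure of $\MdlCatG$ can be invoked verbatim.
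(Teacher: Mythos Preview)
Your proposal is correct and follows essentially the same approach as the paper: define the simplicial structure via $\MapC{A}{B}{\sSets}=N[A,B]$, $A\otimes K=A\times\tau_1(K)$, $B^K=[\tau_1(K),B]$, establish the two-variable adjunction from $\tau_1\dashv N$ and cartesian closedness of $\Cat$, and then reduce SM7 to Theorem~\ref{cart-closed-gp-cat} together with $\tau_1$ being left Quillen (Lemma~\ref{N-htpy-ref}). The paper's proof is terser---it gestures at verifying condition~(3) of Lemma~\ref{Q-bifunctor-char} ``along the lines of the proof of the previous theorem''---whereas you verify condition~(1) directly and are more careful about the coherence isomorphism $(A\otimes K)\otimes L\cong A\otimes(K\times L)$ via $\tau_1$ preserving finite products, a point the paper leaves implicit.
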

  \begin{proof}
  	We will verify the conditions of \cite[Def. 4.1.12]{Hovey}. The bifunctor
  	\[
  	\MapC{-}{-}{\sSets}:\Cat^{op} \times \Cat \to \sSets
  	\]
  	is defined to be the composite:
  	\[
  	\Cat^{op} \times \Cat \overset{[-, -]} \to \Cat \overset{N} \to \sSets.
  	\]
  	This bifunctor is the left hom in the sense of the aforementioned definition. The right hom bifunctor
  	\[
  	\sSets^{op} \times \Cat \to \Cat
  	\]
  	is defined as follows:
  	\[
  	\sSets^{op} \times \Cat \to \Cat^{op} \overset{\tau_1 \times id} \times \Cat \overset{[-, -]} \to \Cat
  	\]
  	The \emph{tensor product} bifunctor
  	\[
  	\Cat \times \sSets \to \Cat.
  	\]
  	is defined by the following composite:
  	\[
  	\Cat \times \sSets \overset{id \times \tau_1} \to \Cat  \times \Cat \overset{- \times -} \to \Cat .
  	\]
  	Let $C$ and $D$ be two categories and let $S$ be a simplicil set. We have the following chain of bijections:
  	\[
  	\Cat(C, [\tau_1(S), D]) \cong \Cat(C \times \tau_1(S), D) \cong \Cat(\tau_1(S), [C, D]) \cong \sSets(S, N([C, D])).
  	\]
  	The above chain of bijections verifies \cite[Def. 4.1.12]{Hovey} and thus establishes the $2$-variable adjunction. Along the lines of the proof of the previous theorem, one can verify Lemma \ref{Q-bifunctor-char} $(3)$.
  	\end{proof}

  \section{Two model category structures on $\PCat$}
\label{perm-gpds}
We denote by $\PCat$ the category whose objects are \emph{permutative} categories namely symmetric monoidal categories which are strictly unital and strictly associative.
The morphisms of this category are \emph{strict} symmetric monoidal functors, namely those symmetric monoidal functors which preserve the symmetric monoidal structure strictly.
A model category structure on $\PCat$ was described in \cite[Thm. 3.1]{Sharma}. This model category structure can be obtained by transfering the natural model category structure on $\Cat$ to $\PCat$ and therefore it is aptly called the \emph{natural} model category structure of permutative categories. In this sectiopn we will describe two new model category structures on $\PCat$ which can be described as model category of permutative groupoids and the model category of (permutative) Picard groupoids.
\subsection{The model category structure of Permutative groupoids}
In this subsection we will construct the desired model category structure of permutative groupoids on $\PCat$. We will do so by transferring the model category structure of groupoids on $\Cat$ along the adjunction
\begin{equation}
\label{Free-pCat-adj}
\F:\Cat \rightleftharpoons \PCat:i
\end{equation}
where $i$ is the forgetful functor and $F$ is its left adjoint namely the \emph{free permutative category} functor. The following lemma will be useful in the proposed construction:

\begin{lem}
	\label{Pi1-perm-gpd}
	The fundamental groupoid of a permutative category is a permutative groupoid.
\end{lem}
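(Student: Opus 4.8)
The plan is to show that the fundamental groupoid construction $\Pi_1$ lifts from $\Cat$ to $\PCat$, i.e.\ that if $C$ is a permutative category then $\Pi_1(C)$ inherits a canonical permutative structure and the inclusion $\iota_C : C \to \Pi_1(C)$ is a strict symmetric monoidal functor. The key observation, already recorded as Proposition \ref{Pi1-pres-prod}, is that $\Pi_1 : \Cat \to \gpd$ preserves products. Since a permutative category is an internal monoid in $(\Cat, \times)$ which is strictly associative and unital together with a symmetry isomorphism, and $\Pi_1$ is a product-preserving functor, it carries such an internal monoid to an internal monoid of the same flavour in $(\gpd, \times)$.

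More concretely, I would proceed as follows. Let $C$ be permutative with tensor $\otimes_C : C \times C \to C$, unit object $\unit{C}$, and symmetry $\gamma_C$. Applying $\Pi_1$ and using the natural isomorphism $\Pi_1(C \times C) \cong \Pi_1(C) \times \Pi_1(C)$ from Proposition \ref{Pi1-pres-prod}, we obtain a functor
\[
\otimes_{\Pi_1(C)} : \Pi_1(C) \times \Pi_1(C) \cong \Pi_1(C \times C) \xrightarrow{\Pi_1(\otimes_C)} \Pi_1(C),
\]
a unit $\unit{\Pi_1(C)} := \iota_C(\unit{C})$ (obtained by applying $\Pi_1$ to $\unit{C} : \ast \to C$ and using $\Pi_1(\ast) = \ast$), and a symmetry obtained by applying $\Pi_1$ to the natural isomorphism $\gamma_C$. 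Because $\Pi_1$ preserves products it also preserves the commuting diagrams that express strict associativity, strict unitality, and the hexagon/symmetry axioms: each such diagram is a diagram of functors between finite products of $C$, so applying $\Pi_1$ and transporting along the coherence isomorphisms of Proposition \ref{Pi1-pres-prod} yields the corresponding diagram for $\Pi_1(C)$. Hence $\Pi_1(C)$ is a permutative category, and since it is also a groupoid it is a permutative groupoid. Finally, the unit $\iota_C : C \to \Pi_1(C)$ of the adjunction $\Pi_1 \dashv i$ is by construction compatible with $\otimes_C$ and $\otimes_{\Pi_1(C)}$ on the nose (it sends generating arrows to length-one tuples and is the identity on objects), so it is a strict symmetric monoidal functor.

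The main point requiring care — and the step I expect to be the real obstacle — is verifying that the multiplication $\otimes_{\Pi_1(C)}$ defined via the abstract isomorphism $\Pi_1(C\times C) \cong \Pi_1(C)\times\Pi_1(C)$ agrees with a hands-on description on the explicit model of $\Pi_1(C)$ (objects of $C$, arrows given by reduced tuples of composable non-identity arrows and formal inverses). One must check that tensoring two reduced tuples, after inserting identities and reducing, is well defined and strictly associative and unital; equivalently, one checks that the canonical comparison functor $\Pi_1(C) \times \Pi_1(C) \to \Pi_1(C \times C)$ really is invertible and that $\Pi_1(\otimes_C)$ precomposed with its inverse lands in the expected normal form. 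This is essentially a bookkeeping argument with reduced words, and it is exactly the content that Proposition \ref{Pi1-pres-prod} is designed to absorb, so once that proposition is in hand the verification of the permutative axioms is purely formal. I would therefore present the proof at the level of "apply the product-preserving functor $\Pi_1$ to the defining data and axioms of a permutative category," invoking Proposition \ref{Pi1-pres-prod} for the identification of $\Pi_1$ of a product.
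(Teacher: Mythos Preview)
Your proposal is correct and follows essentially the same approach as the paper: both invoke Proposition~\ref{Pi1-pres-prod} to identify $\Pi_1(C\times C)\cong \Pi_1(C)\times\Pi_1(C)$ and then apply $\Pi_1$ to the tensor bifunctor and the symmetry to obtain the permutative structure on $\Pi_1(C)$. The paper presents the symmetry as a functor $C\times C\times J \to C$ and uses $\Pi_1(J)=J$, but this is just a repackaging of your ``apply $\Pi_1$ to $\gamma_C$'' step; your additional remarks about the axioms transferring along a product-preserving functor and about $\iota_C$ being strict symmetric monoidal are correct elaborations that the paper leaves implicit.
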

\begin{proof}
	Let $C$ be a permutative category and let $-\otimes-:C \times C \to C$ be bifunctor giving the permutative structure.
	From proposition \ref{Pi1-pres-prod}, we have the isomorphism $\Pi_1(C \times C) \cong \Pi_1(C) \times \Pi_1(C)$. Since $\Pi_1(C)$ is a groupoid, the universal property of $\Pi_1(C \times C)$ and the above isomorphism imply that we have a dotted arrow in the following diagram:
	\begin{equation*}
	\xymatrix{
		C \times C \ar[r]^{-\otimes-} \ar[d] & C \ar[d] \\
		\Pi_1(C) \times \Pi_1(C) \ar@{-->}[r] & \Pi_1(C)
	}
	\end{equation*}
	
	which makes the entire diadram commutative. This bifunctor, represented by the dotted arrow in the above diagram, provides a permutative structure on the groupoid $\Pi_1(C)$.
	The symmetry natural transformation of $C$ is a functor
	\begin{equation*}
	\gamma_C:C \times C \times J \to C
	\end{equation*}
	Once again by proposition \ref{Pi1-pres-prod} the free groupoid generated by $C \times C \times J$ is $\Pi_1(C) \times \Pi_1(C) \times J$. Again the universal property of $\Pi_1(C \times C \times J)$ and the above isomorphism imply that we have a dotted arrow in the following diagram:
	\begin{equation*}
	\xymatrix{
		C \times C \times J \ar[r]^{\gamma_C} \ar[d] & C \ar[d] \\
		\Pi_1(C) \times \Pi_1(C) \times J \ar@{-->}[r]_{\ \ \ \ \ \  \ \ \ \gamma_{\Pi_1(C)}} & \Pi_1(C)
	}
	\end{equation*}
	which is the symmetry natural isomorphism of $\Pi_1(C)$.
\end{proof}
\begin{rem}
	\label{symm-mon-Pi1}
	The functor $\Pi_1$ restricts to a functor on $\PCat$ such that the following diagram commutes:
	\begin{equation*}
	\xymatrix{
	\Cat \ar[r]^{\Pi_1} & \gpd  \\
	\PCat \ar[u]^i \ar[r]_{\Pi_1} & \PGpd \ar[u]_i
    }
	\end{equation*}
	where $\PGpd$ denotes the category of permutative groupoids \emph{i.e.} the full subcategory of $\PCat$ having objects those permutative categories whose underlying categories are groupoids.
	\end{rem}

Now we state the main theorem of this section:
\begin{thm} 
	\label{model-str-Perm-Gpds}
	There is a model category structure on the category of all small
	permutative categories and strict symmetric monoidal functors $\PCat$ in which
	\begin{enumerate}
		\item A fibration is a strict symmetric monoidal functor whose underlying functor is a path fibration of (ordinary) categories and
		\item A weak-equivalence is a strict symmetric monoidal functor whose underlying functor is a groupoidal equivalence of (ordinary) categories.
		\item A cofibration is a strict symmetric monoidal functor having the left lifting property with respect to all maps which are both fibrations and weak equivalences.
	\end{enumerate}
	Further, this model category structure is combinatorial.
\end{thm}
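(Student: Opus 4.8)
The plan is to obtain this model structure by right--transferring, in the manner of Kan, the model category structure of groupoids on $\Cat$ (Theorem \ref{Gpd-mdl-str}) along the free--forgetful adjunction $\F : \Cat \rightleftharpoons \PCat : i$ of \eqref{Free-pCat-adj}, exactly as the natural model structure of \cite[Thm.~3.1]{Sharma} was transferred from the natural model structure on $\Cat$. One declares a strict symmetric monoidal functor $f$ to be a weak equivalence (resp.\ a fibration) precisely when its underlying functor $i(f)$ is a groupoidal equivalence (resp.\ a path fibration), so that (1) and (2) hold by definition and (3) is then forced by the general theory. The category $\PCat$ is locally presentable --- it is the category of algebras for a finitary $2$-monad on $\Cat$ --- hence it is complete and cocomplete, all of its objects are small, and the forgetful functor $i$ preserves filtered colimits; consequently the images $\F(I_0)$ and $\F(J_0)$ of chosen sets $I_0$, $J_0$ of generating cofibrations and generating acyclic cofibrations of $\MdlCatG$ permit the small object argument in $\PCat$. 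By the standard transfer criterion it therefore suffices to check the one nonformal point: that $i$ sends every relative $\F(J_0)$-cell complex to a groupoidal equivalence.

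For this I would run Quillen's path-object argument, which requires two functorial constructions in $\PCat$. First, a fibrant replacement: by Remark \ref{symm-mon-Pi1} the fundamental groupoid functor restricts to $\Pi_1 : \PCat \to \PGpd$, and each $\Pi_1(C)$ is a permutative groupoid by Lemma \ref{Pi1-perm-gpd}, hence has groupoid underlying category and is fibrant in the transferred structure; the unit $\iota_C : C \to \Pi_1(C)$ is a strict symmetric monoidal functor --- its underlying square with the tensor product commutes on the nose by the construction carried out in the proof of Lemma \ref{Pi1-perm-gpd} --- whose underlying functor is a groupoidal equivalence, this last fact being precisely what is established inside the proof of Proposition \ref{char-weak-eq}. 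Second, functorial path objects for fibrant objects: given a permutative groupoid $X$, endow $[J, X]$, where $J$ is the free-standing isomorphism $0 \cong 1$, with the pointwise permutative structure (its underlying category being the cotensor $X^J$, a limit created by $i$). The constant-diagram functor $X \to [J, X]$ is strict symmetric monoidal and an equivalence of underlying categories, hence a weak equivalence; the restriction $[J, X] \to [\ast \sqcup \ast, X] \cong X \times X$ along $\ast \sqcup \ast \hookrightarrow J$ is strict symmetric monoidal and, since $\MdlCatG$ is cartesian closed (Theorem \ref{cart-closed-gp-cat}), its underlying functor is the Leibniz exponential of the cofibration $\ast \sqcup \ast \hookrightarrow J$ against the fibration $X \to \ast$ --- here $X \to \ast$ is a fibration exactly because $X$ is fibrant, i.e.\ a groupoid --- and is therefore a path fibration. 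Thus $X \to [J, X] \to X \times X$ is a path object for $X$ in the transferred sense.

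With a functorial fibrant replacement and functorial path objects on fibrant objects in hand, the path-object argument yields that the transferred classes of maps satisfy the model category axioms, producing the structure described by (1)--(3); and since the generating sets $\F(I_0)$, $\F(J_0)$ are small while $\PCat$ is locally presentable, this model category is combinatorial. The only genuine obstacle is the acyclicity step just outlined, whose crux is keeping the strict symmetric monoidal structure under control on the path object $[J, X]$ and on the fibrant replacement $\Pi_1(C)$ while appealing to the cartesian closedness of $\MdlCatG$ to identify $[J, X] \to X \times X$ as a path fibration; once these compatibilities are arranged, the remaining verifications are purely formal.
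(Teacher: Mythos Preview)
Your approach is correct and is essentially the same as the paper's: both transfer $\MdlCatG$ along $\F \dashv i$ and verify the acyclicity condition via the path-object argument with fibrant replacement $\Pi_1$. The only difference is packaging: you invoke Quillen's path-object argument abstractly after exhibiting the fibrant replacement $C \to \Pi_1(C)$ and the path object $X \to [J,X] \to X\times X$, whereas the paper writes out the resulting lifting diagram explicitly---it factors $\Pi_1(u)$ through the mapping path object $\mathbf{P}(\Pi_1(u))$ of Appendix~\ref{path-object-Cat}, lifts $u$ against the resulting path fibration $P_{\Pi_1(D)}$, and then applies two-out-of-six directly. Your $[J,X]$ and the paper's $\mathbf{P}(\Pi_1(u))$ are built from the same ingredient, so the two arguments are the same one unpacked to different depths.
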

\begin{proof}
	The main tool in proving the above theorem will be Theorem \ref{mdl-str-transfer-tool}. The first condition of this theorem follows from the fact that the adjunction \eqref{Free-pCat-adj} is between locally presentable categories. Now we verify \ref{mdl-str-transfer-tool} $(2)$.  Let $u:C \to D$ be a cofibration which has the left lifting property with respect to all fibrations in $\PCat$. The we get the following commutative diagram in $\PCat$:
	\begin{equation*}
	\xymatrix{
	C \ar[d]_u \ar[r] & \Pi_1(C) \ar[d]_{\Pi_1(u)} \ar[r] & \mathbf{P}(\Pi_1(u)) \ar[d]^{P_{\Pi_1(D)}} \\
	D \ar[r] \ar@{-->}[rru]_L  & \Pi_1(D) \ar@{=}[r] & \Pi_1(D)
    }
	\end{equation*}
	 where $P_{\Pi_1(D)}$ is an isofibration between groupoids and hence a path fibration and $i_{\Pi_1(u) }$ is an acyclic cofibration and hence a groupoidal equivalence from lemma \eqref{fact-acy-cof-perm}. Now by assumption, there exists a dotted lifting arrow which makes the entire diagram commutative. The top and the bottom (composite)  arrows, in the above diagram, are groupoidal equivalences. Now the two out of six property of model categories implies that $u$ is a weak equivalence. The symmetry of the cartesian product shows that if $i$ is an acyclic cofibration, the map $u \Box i$ is an acyclic cofibration.
	\end{proof}
 \begin{nota}
 	We will refer to the above model catgory as the model category of \emph{permutative groupoids} and denote it by $\MdlPCatG$.
 	\end{nota}
  
  The above theorem provides a very clear description of fibrations and weak equivalences in the model category $\MdlPCatG$. The next proposition provides a description of cofibrations:
  \begin{prop}
  	\label{char-cof}
  	The cofibrations in the model category $\MdlPCatG$ are the same as those in the natural model category of permutative categories.
  	\end{prop}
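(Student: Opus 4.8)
The plan is to observe that both model category structures on $\PCat$ being compared are obtained by transferring a model category structure on $\Cat$ along the \emph{same} adjunction \eqref{Free-pCat-adj}, and that the two model structures on $\Cat$ which get transferred have precisely the same class of cofibrations.

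First I would recall the standard fact that in any model category the acyclic fibrations are exactly the morphisms having the right lifting property against all cofibrations (the pair (cofibrations, acyclic fibrations) being a weak factorization system). By Theorem \ref{Gpd-mdl-str}, the model category structure of groupoids $\MdlCatG$ on $\Cat$ is a left Bousfield localization of the natural model category structure on $\Cat$, hence it has the same class of cofibrations, namely the functors that are monic on objects. Applying the previous observation to both structures on $\Cat$, we conclude that $\MdlCatG$ and the natural model category structure on $\Cat$ have the same class of acyclic fibrations.

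Next I would invoke the transfer theorem \ref{mdl-str-transfer-tool} in both situations. The model category $\MdlPCatG$ is transferred from $\MdlCatG$ along $\F \dashv i$, so a strict symmetric monoidal functor $f$ is an acyclic fibration in $\MdlPCatG$ if and only if its underlying functor $i(f)$ is an acyclic fibration in $\MdlCatG$; likewise, by \cite[Thm. 3.1]{Sharma}, the natural model category structure of permutative categories is transferred from the natural model structure on $\Cat$ along the same adjunction, so $f$ is an acyclic fibration there if and only if $i(f)$ is an acyclic fibration in the natural model structure on $\Cat$. Combining this with the coincidence of the two classes of acyclic fibrations on $\Cat$ established above, it follows that $\MdlPCatG$ and the natural model category of permutative categories have the same class of acyclic fibrations. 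Since a cofibration is precisely a morphism with the left lifting property against all acyclic fibrations, the two model structures on $\PCat$ have the same cofibrations, which is the assertion. Equivalently, one may phrase the whole argument in terms of generating cofibrations: a common set $I$ of generators serves for both model structures on $\Cat$ (a left Bousfield localization of a cofibrantly generated model category keeps the same generating cofibrations), so $\{\F(\iota) : \iota \in I\}$ is a common set of generating cofibrations for both model structures on $\PCat$.

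The only delicate points are the verification that a left Bousfield localization preserves the class of cofibrations, and that the transfer along $\F \dashv i$ creates acyclic fibrations via the forgetful functor $i$; both are standard, the former being part of the general theory of left Bousfield localizations and the latter being built into the transfer theorem \ref{mdl-str-transfer-tool}, whose hypotheses were already verified for $\MdlPCatG$ in the proof of Theorem \ref{model-str-Perm-Gpds} and for the natural model structure in \cite{Sharma}. I do not anticipate any genuine obstacle here; the content of the proposition is essentially the bookkeeping remark that localizing then transferring, versus transferring the unlocalized structure, leaves the cofibrations untouched.
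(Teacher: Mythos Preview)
Your proposal is correct and follows essentially the same route as the paper: show that the acyclic fibrations in the two model structures on $\PCat$ coincide (because both are created by the forgetful functor from the two model structures on $\Cat$, which share acyclic fibrations since one is a left Bousfield localization of the other), and conclude that the cofibrations coincide. The paper's proof is just a terser version of your argument, omitting the alternative phrasing via generating cofibrations.
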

  \begin{proof}
  	It is sufficient to show that the acyclic fibrations are the same in the two model category structures in context. A strict symmetric monoidal functor is an acyclic fibration in $\MdlPCatG$ if it's underlying (ordinary) functor is an acyclic fibration in $\MdlCatG$. However, acyclic fibrations in  $\MdlCatG$ are the same as those in the natural model category structure on $\Cat$.
  	We recall that a strict symmetric monoidal functor is an acyclic fibration in the natural model category structure of permutative categories if it's underlying functor is one in the natural model category structure on $\Cat$.
  	This shows that acyclic fibrations in $\MdlPCatG$ are the same as those in the natural model category of permutative categories.
  	\end{proof}

  In \cite[Appendix A]{Sharma} it was shown that the natural model category structure on $\PCat$ is enriched over the natural model category $\Cat$. We recall that the \emph{tensor product} of this enrichment is the following composite:
  \begin{equation}
  \label{tens-PCat-G}
  -\boxtimes-:\Cat \times \PCat \overset{id \times U} \to \Cat \times \Cat \overset{\times}\to \Cat \overset{\F} \to \PCat
  \end{equation}
  The \emph{cotensor} of this enrichment is given by the bifunctor
  \begin{equation}
  \label{cotens-PCat-G}
  [-,-]:\Cat^{op} \times \PCat \overset{id \times U} \to \Cat \times \Cat \overset{[-,-]}\to \PCat
  \end{equation}
 where $[-,-]$ is the internal Hom of $\Cat$ but it takes values in $\PCat$ if the codomain category is permutative.
 The internal Hom is given by the bifunctor
 \begin{equation}
 \label{int-Hom-PCat-G}
 \StrSMHom{-}{-}:\PCat^{op} \times \PCat \to  \Cat
 \end{equation}
 
 \begin{prop}
 	\label{tens-Quillen-bifunc-PCat-G}
 	The tensor product bifunctor $-\boxtimes-$ is a (left-) Quillen bifunctor with respect to the model category structure of groupoids on $\Cat$ and the model category structure of permutative groupoids on $\PCat$.
 \end{prop}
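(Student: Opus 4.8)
The plan is to verify the pushout--product (SM7) axiom for $-\boxtimes-$ by hand, using the three mutually adjoint bifunctors \eqref{tens-PCat-G}, \eqref{cotens-PCat-G}, \eqref{int-Hom-PCat-G} together with the fact, recorded in \cite[Appendix A]{Sharma}, that $-\boxtimes-$ is already a Quillen bifunctor for the \emph{natural} model structures on $\Cat$ and $\PCat$. So I would fix a cofibration $u:C\to D$ in $\MdlCatG$ and a cofibration $i:X\to Y$ in $\MdlPCatG$ and aim to show that $u\boxtimes i:(D\boxtimes X)\underset{C\boxtimes X}\coprod(C\boxtimes Y)\to D\boxtimes Y$ is a cofibration in $\MdlPCatG$ which is acyclic whenever $u$ or $i$ is. The cofibration claim is immediate: by Theorem \ref{Gpd-mdl-str}$(1)$ and Proposition \ref{char-cof} the cofibrations of $\MdlCatG$ and of $\MdlPCatG$ coincide with the natural ones, so $u\boxtimes i$ is a (natural, hence $\MdlPCatG$-) cofibration by the cited result.

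For acyclicity I would argue exactly as in the proof of Theorem \ref{cart-closed-gp-cat}: it suffices to check that $u\boxtimes i$ has the left lifting property against every path fibration $p:G\to H$ between permutative groupoids, so fix such a $p$. By the two--variable adjunction of the $\Cat$-enrichment, this lifting problem is equivalent both to the lifting problem of $u$ (in $\Cat$) against $\StrSMHom{i}{p}:\StrSMHom{Y}{G}\to\StrSMHom{X}{G}\underset{\StrSMHom{X}{H}}\times\StrSMHom{Y}{H}$ and to the lifting problem of $i$ (in $\PCat$) against $[u,p]:[D,G]\to[C,G]\underset{[C,H]}\times[D,H]$. The key point I would isolate here is that because $G$ and $H$ are groupoids, every monoidal natural transformation between strict symmetric monoidal functors into $G$ or $H$ is invertible componentwise; hence $\StrSMHom{X}{G}$, $\StrSMHom{Y}{G}$, $\StrSMHom{X}{H}$, $\StrSMHom{Y}{H}$ and their pullback are all groupoids, and likewise the cotensors $[D,G]$ and $[C,G]\underset{[C,H]}\times[D,H]$ are permutative groupoids, since they are built from functor categories with groupoid target. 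Because $p$ is in particular a natural fibration of $\PCat$ and $-\boxtimes-$ is a Quillen bifunctor for the natural structures, $\StrSMHom{i}{p}$ is a natural isofibration; being a map between groupoids, i.e.\ between fibrant objects of $\MdlCatG$, it is therefore a path fibration of categories. Dually $[u,p]$ is a natural fibration of $\PCat$ between permutative groupoids, and so (its underlying isofibration being a path fibration of $\MdlCatG$) a path fibration of $\MdlPCatG$.

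With these identifications in place both cases close at once: if $u$ is an acyclic cofibration of $\MdlCatG$ it lifts against the path fibration $\StrSMHom{i}{p}$, and if $i$ is an acyclic cofibration of $\MdlPCatG$ it lifts against the path fibration $[u,p]$; in either case $u\boxtimes i$ lifts against $p$, and since $p$ ranged over all path fibrations between permutative groupoids, $u\boxtimes i$ is an acyclic cofibration. The step I expect to be the main obstacle is not any computation but the two structural inputs that actually use the hypothesis on $p$: the reduction (as in Theorem \ref{cart-closed-gp-cat}, following \cite{Sharma}) of ``acyclic cofibration'' to ``left lifting property against fibrations between fibrant objects'', and the identification, valid for the left Bousfield localization $\MdlCatG$, of fibrations between fibrant objects with natural isofibrations between groupoids — it is precisely this that lets me upgrade the merely natural fibration $\StrSMHom{i}{p}$ to a path fibration, and the analogous remark lets me upgrade $[u,p]$.
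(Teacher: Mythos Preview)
Your argument is correct, but the paper takes a shorter route. Rather than verifying the pushout--product condition for $-\boxtimes-$ directly, the paper invokes the equivalent characterization of Quillen bifunctors via the cotensor (Lemma~\ref{Q-bifunctor-char}, i.e.\ \cite[Lemma 4.2.2]{Hovey}): it suffices to show that for a cofibration $i:W\to X$ in $\MdlCatG$ and a fibration $p:Y\to Z$ in $\MdlPCatG$, the pullback--hom
\[
[X,Y]\to [X,Z]\underset{[W,Z]}\times[W,Y]
\]
is an (acyclic) fibration in $\MdlPCatG$. Since (acyclic) fibrations in $\MdlPCatG$ are \emph{detected} by the forgetful functor $U:\PCat\to\Cat$, and $U$ commutes with the cotensor $[-,-]$ and with the pullback, this reduces immediately to the corresponding statement in $\MdlCatG$, which is Theorem~\ref{cart-closed-gp-cat}. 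This bypasses entirely your reduction to fibrations between fibrant objects, the identification of the hom-categories $\StrSMHom{X}{G}$ etc.\ as groupoids, and the Bousfield-localization fact that natural fibrations between local objects are path fibrations. What your approach buys is a more hands-on verification that makes the role of the groupoid hypothesis explicit at each step; what the paper's approach buys is economy---one application of $U$ collapses the problem to the already-established enrichment of $\MdlCatG$ over itself.
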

 \begin{proof}
 	The pentuple $(- \boxtimes -, [-,-], \StrSMHom{-}{-}, \inv{\eta_r}, \eta_l)$ defines an adjunction of two variables. In light of \cite[lemma 4.2.2]{Hovey} it would be sufficient to show that whenever we have a functor $i:W \to X$ which is monic on objects and a strict symmetric monoidal functor $p:Y \to Z$ which is a fibration in $\MdlPCatG$, the following induced functor is a fibration in $\MdlPCatG$ which acyclic whenever $i$ or $p$ is acyclic:
 	\begin{equation*}
 	i \Box p:[X, Y] \to [X, Z] \underset{[W, Z]} \times [W, Y]
 	\end{equation*}
 	The strict symmetric monoidal functor $i \Box p$ is a (acyclic) fibration if and only if the (ordinary) functor 
 	\[
 	U(i \Box p):U([X, Y]) \to U([X, Z] \underset{[W, Z]} \times [W, Y]) = U([X, Z]) \underset{U([W, Z])} \times U([W, Y])
 	\]
 	is a (acyclic) fibration in $\MdlCatG$. Since the model category of groupoids $\MdlCatG$ is enriched over itself, therefore $U(i \Box p)$ is a fibration in $\MdlCatG$ which is acyclic whenever $i$ or $p$ are acyclic because $i$ is a cofibration in $\MdlCatG$ and $p$ is a fibration in $\MdlCatG$.
 \end{proof}
The cartesian closed structure of the model category of groupoids gave us a simplicial structure on the same model category.
Along the lines the above proposition gives us a simplicial model category structure on the model category of permutative groupoids:
\begin{prop}
	\label{simp-mdl-Pcat-G}
	The model category of permutative groupoids is a simplicial model category.
\end{prop}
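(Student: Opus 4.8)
The plan is to mirror the proof of Proposition~\ref{simp-mdl-cat}, using the $\Cat$-enriched structure on $\PCat$ recorded in \eqref{tens-PCat-G}, \eqref{cotens-PCat-G} and \eqref{int-Hom-PCat-G} in place of the cartesian closed structure on $\MdlCatG$, and transporting it to $\sSets$ along the adjunction $(\tau_1, N)$ of Lemma~\ref{N-htpy-ref}. Concretely, I would take the simplicial mapping space bifunctor $\MapC{-}{-}{\sSets} : \PCat^{op} \times \PCat \to \sSets$ to be the composite of the internal hom $\StrSMHom{-}{-}$ of \eqref{int-Hom-PCat-G} with the nerve $N : \Cat \to \sSets$; the cotensor $\sSets^{op} \times \PCat \to \PCat$ to be $(S, D) \mapsto [\tau_1(S), D]$, built from \eqref{cotens-PCat-G}; and the tensor to be the composite
\[
\sSets \times \PCat \overset{\tau_1 \times id}{\longrightarrow} \Cat \times \PCat \overset{-\boxtimes-}{\longrightarrow} \PCat.
\]

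First I would establish the required two-variable adjunction by splicing the $\Cat$-enriched two-variable adjunction $(-\boxtimes-, [-,-], \StrSMHom{-}{-})$ of \cite[Appendix A]{Sharma} with $\tau_1 \dashv N$: for every simplicial set $S$ and all $C, D \in \PCat$ one has a chain of natural bijections
\begin{align*}
\PCat(\tau_1(S) \boxtimes C, D) &\cong \PCat(C, [\tau_1(S), D]) \\
&\cong \Cat(\tau_1(S), \StrSMHom{C}{D}) \cong \sSets(S, N(\StrSMHom{C}{D})).
\end{align*}
Together with the coherence isomorphisms inherited from the $\Cat$-enriched structure of \cite[Appendix A]{Sharma}, this verifies the closed $\sSets$-module part of \cite[Def. 4.1.12]{Hovey}.

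It then remains to verify the pushout-product axiom, and here I would combine two inputs. By Lemma~\ref{N-htpy-ref}, $\tau_1 : \sSetsK \to \MdlCatG$ is a left Quillen functor, so it sends cofibrations of simplicial sets to functors monic on objects and acyclic cofibrations to groupoidal equivalences, and, being a left adjoint, it preserves pushouts; and by Proposition~\ref{tens-Quillen-bifunc-PCat-G}, $-\boxtimes-$ is a Quillen bifunctor for $\MdlCatG$ and $\MdlPCatG$ which is cocontinuous in each variable. Hence, for a cofibration $i : S \to T$ of simplicial sets and a cofibration $p : C \to D$ in $\MdlPCatG$, the pushout-product $i \Box p$ formed with the simplicial tensor is, by the very definition of that tensor and the cocontinuity of $\tau_1$, the pushout-product $\tau_1(i) \Box p$ formed with $-\boxtimes-$; so Proposition~\ref{tens-Quillen-bifunc-PCat-G} makes it a cofibration in $\MdlPCatG$ which is acyclic whenever $\tau_1(i)$ or $p$ is, hence whenever $i$ or $p$ is. Exactly as at the end of the proof of Proposition~\ref{simp-mdl-cat}, this is Lemma~\ref{Q-bifunctor-char}~$(3)$, and it completes the verification of \cite[Def. 4.1.12]{Hovey}.

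I do not expect a serious obstacle: once Proposition~\ref{tens-Quillen-bifunc-PCat-G} and Lemma~\ref{N-htpy-ref} are in hand, the pushout-product axiom for $\MdlPCatG$ as an $\sSets$-model category is simply the instance of the Quillen bifunctor property of $-\boxtimes-$ obtained by restricting its first argument along the left Quillen functor $\tau_1$, and all the homotopical content has already been spent in proving those two statements. The only points demanding any attention are the (routine) unwinding that the simplicial pushout-product really is a $\boxtimes$-pushout-product of $\tau_1$-images, and the transcription of the coherence checks for the $\sSets$-module structure from the $\Cat$-enriched case — precisely the passage that was made from Theorem~\ref{cart-closed-gp-cat} to Proposition~\ref{simp-mdl-cat}.
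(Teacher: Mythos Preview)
Your proposal is correct and follows essentially the same approach as the paper: define the simplicial tensor, cotensor, and mapping space by precomposing the $\Cat$-enriched structure \eqref{tens-PCat-G}--\eqref{int-Hom-PCat-G} with $\tau_1$ (respectively postcomposing with $N$), verify the two-variable adjunction via the same chain of bijections, and then deduce SM7 from Proposition~\ref{tens-Quillen-bifunc-PCat-G} together with the fact that $\tau_1$ is left Quillen. The only difference is cosmetic---you spell out slightly more explicitly why the simplicial pushout-product coincides with the $\boxtimes$-pushout-product along $\tau_1$.
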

\begin{proof}
	We will verify the conditions of \cite[Def. 4.1.12]{Hovey} to establish a two varable adjunction between the aforementioned bifunctors. The simplicial Hom bifunctor
	\[
	\MapC{-}{-}{\PCat}:\PCat^{op} \times \PCat \to \sSets
	\]
	is defined to be the composite:
	\begin{equation}
	\label{simp-Hom-PCat-G}
	\PCat^{op} \times \PCat \overset{\StrSMHom{-}{-}} \to \Cat \overset{N} \to \sSets.
	\end{equation}
	This bifunctor is the left hom in the sense of the aforementioned definition. The right hom bifunctor
	\[
	\sSets^{op} \times \PCat \to \PCat
	\]
	is defined as follows:
	\begin{equation}
	\label{simp-cotens-PCat-G}
	\sSets^{op} \times \PCat  \overset{\tau_1^{op} \times id} \to \Cat^{op}  \times \PCat \overset{[-, -]} \to \PCat
	\end{equation}
	The \emph{tensor product} bifunctor
	\[
	\sSets \times \Cat \to \Cat.
	\]
	is defined by the following composite:
	\begin{equation}
	\label{simp-tens-PCat-G}
	 \sSets \times \PCat \overset{\tau_1 \times id} \to \Cat  \times \PCat \overset{- \boxtimes -} \to \PCat .
   \end{equation}
	Let $C$ and $D$ be two permutative categories and let $S$ be a simplicil set. We have the following chain of bijections:
	\[
	\PCat(C, [\tau_1(S), D]) \cong \PCat(C \boxtimes \tau_1(S), D) \cong \Cat(\tau_1(S), [C, D]) \cong \sSets(S, N([C, D])).
	\]
	The above chain of bijections verifies that we have an adjunction of two variables.
	
	Now we need to verify the so called $(SM7)$ axiom. Let $u:C \to D$ be a cofibration in $\PCat$ and $i:S \to T$ be a cofibration in $\sSets$. We want to show that the pushout product:
	\[
	u \Box i:D \boxtimes \tau_1(S) \underset{C \boxtimes \tau_1(S)} \coprod C \boxtimes \tau_1(T) \to D \boxtimes \tau_1(T)
	\]
	is a cofibration in $\PCat$ which is acyclic when either $u$ or $i$ is acyclic.
	The functor $\tau_1$ is a left Quillen functor therefore it preserves cofibrations, therefore $\tau_1(i)$ is a cofibration in $\Cat$ which is acyclic if $i$ is acyclic. Now the result follows from theorem \ref{tens-Quillen-bifunc-PCat-G}.
	which says that $\MdlPCatG$ is a $\MdlCatG$-model category.
	
\end{proof}

\subsection[The model category of Picard groupoids]{The model category of Picard groupoids}
\label{Mdl-Pic}
In this subsection we will construct yet another model category structure on $\PCat$ in which the fibrant objects are Picard groupoids. We obtain the desired model category by carrying out a left Bousfield localization of the model category constructed in the previous subsection, namely $\MdlPCatG$. The model category we construct inherits an enrichment over $\MdlCatG$ and the Kan model category of simplicial sets from its parent model category.

\begin{df}
	A \emph{Picard groupoid} $G$ is a permutative groupoid such that one of the following two functors is an equivalences of categories:
	\begin{equation}
	G \times G \overset{(- \underset{G} \otimes -, p_1)} \to G \times G \ \ \ \ \ \textit{and} \ \ \ \ \ \
	G \times G \overset{(- \underset{G} \otimes -, p_2)} \to G \times G,
	\end{equation}
	where $p_1$ and $p_2$ are the two obvious projection maps.
	\end{df}
\begin{rem}
	If one of the two functors in the above definition is an equivalence of categories then the permutative structure on the groupoid $G$ in the above definition implies the other functor is also an equivalence.
	\end{rem}

\begin{prop}
	A permutative groupoid is a Picard groupoid if and only if for each object $g \in Ob(G)$ there exists another object $\inv{g} \in Ob(G)$ and the following two isomorphisms in $G$:
	\[
	g \underset{G} \otimes \inv{g} \cong \unit{G}, \ \ \ \ \ \ \inv{g} \underset{G} \otimes g \cong \unit{G}
	\]
	\end{prop}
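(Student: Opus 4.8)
The plan is to analyse the two functors appearing in the definition directly; by the preceding Remark it suffices to treat $F := (- \otimes -, p_1)\colon G \times G \to G \times G$, the case of $p_2$ being entirely symmetric. First I would record the elementary observation that drives both implications: if $g \in \Ob(G)$ admits an object $\inv{g}$ together with isomorphisms $g \otimes \inv{g} \cong \unit{G}$ and $\inv{g} \otimes g \cong \unit{G}$, then the left translation functor $L_g := g \otimes - \colon G \to G$ is an equivalence of categories with weak inverse $L_{\inv{g}}$. Indeed, since the permutative structure is strictly associative and strictly unital we have $L_g \circ L_{\inv{g}} = L_{g \otimes \inv{g}}$ and $L_{\unit{G}} = \id_G$ on the nose, and whiskering the isomorphism $g \otimes \inv{g} \cong \unit{G}$ by an object $x$ (i.e.\ tensoring on the right with $\id_x$) produces a natural isomorphism $L_{g \otimes \inv{g}} \cong L_{\unit{G}}$; the composite $L_{\inv{g}} \circ L_g$ is handled the same way. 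The identical argument shows $R_g := - \otimes g$ is an equivalence with weak inverse $R_{\inv{g}}$.

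For the \emph{if} direction, assume every object of $G$ has such an inverse; I would show $F$ is essentially surjective and fully faithful. Essential surjectivity: given $(x,y) \in \Ob(G \times G)$, the object $(y, \inv{y} \otimes x)$ satisfies $F(y, \inv{y} \otimes x) = ((y \otimes \inv{y}) \otimes x,\ y) \cong (\unit{G} \otimes x,\ y) = (x, y)$, using the chosen isomorphism $y \otimes \inv{y} \cong \unit{G}$ and strict unitality. Full faithfulness: a morphism $F(g_1, h_1) \to F(g_2, h_2)$ is a pair $(\alpha \colon g_1 \otimes h_1 \to g_2 \otimes h_2,\ \phi \colon g_1 \to g_2)$, and any preimage under $F$ must have the form $(\phi, \psi)$ with $\phi \otimes \psi = \alpha$. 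By bifunctoriality of $\otimes$ one has $\phi \otimes \psi = (\phi \otimes \id_{h_2}) \circ (\id_{g_1} \otimes \psi)$, and since $G$ is a groupoid $\phi \otimes \id_{h_2}$ is invertible with inverse $\inv{\phi} \otimes \id_{h_2}$; hence $\phi \otimes \psi = \alpha$ is equivalent to $\id_{g_1} \otimes \psi = (\inv{\phi} \otimes \id_{h_2}) \circ \alpha =: \beta$, a fixed morphism of $G$ not depending on $\psi$. As $L_{g_1}$ is fully faithful by the observation above, there is a unique $\psi$ with $L_{g_1}(\psi) = \beta$, which gives the required unique preimage. Thus $F$ is an equivalence and $G$ is a Picard groupoid.

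For the \emph{only if} direction, suppose $G$ is a Picard groupoid, so $F$ is in particular essentially surjective. Fix $g \in \Ob(G)$ and choose $(a,b) \in \Ob(G \times G)$ with $F(a,b) = (a \otimes b,\ a)$ isomorphic to $(\unit{G}, g)$; this yields an isomorphism $a \otimes b \cong \unit{G}$ and an isomorphism $\mu \colon g \to a$. Tensoring $\mu$ on the right with $\id_b$ transports the first isomorphism to $g \otimes b \cong \unit{G}$, and composing with the symmetry isomorphism $\gamma_G$ of the permutative structure gives $b \otimes g \cong g \otimes b \cong \unit{G}$. Setting $\inv{g} := b$ completes this direction.

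The routine parts are the diagram chases turning monoidal coherence data into natural isomorphisms of functors. The only point needing a little care is the reduction in the full-faithfulness step, where one must invoke the interchange law for $\otimes$ together with invertibility of $\phi$ in order to peel $\phi$ off and land exactly in the image of the fully faithful functor $L_{g_1}$; I do not anticipate a genuine obstacle beyond this bookkeeping.
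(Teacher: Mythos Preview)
Your argument is correct. The ``only if'' direction matches the paper's proof almost verbatim: both use essential surjectivity of $F$ applied to the pair $(\unit{G},g)$, and you are merely more explicit about transporting along the isomorphism $\mu\colon g \to a$ and invoking the symmetry $\gamma_G$ for the second isomorphism.

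The ``if'' direction is where you genuinely diverge. The paper dispatches this by writing down a would-be inverse functor $(g_1,g_2)\mapsto (g_2,\inv{g_2}\otimes g_1)$ and declaring it an inverse equivalence; this is terse and leaves the reader to supply functoriality of $g\mapsto \inv{g}$ and the requisite natural isomorphisms. You instead prove that $F$ is essentially surjective and fully faithful, the latter by reducing to full faithfulness of the left translation $L_{g_1}$. Your route sidesteps the need to make the inverse assignment into an honest functor, at the cost of the small interchange-law computation you flag. Both approaches work; yours is more self-contained, while the paper's is shorter but assumes the reader will fill in the coherence data for the explicit quasi-inverse.
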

\begin{proof}
	Let us first assume that $G$ is a permutative Picard groupoid, then the functor $G \times G \overset{(- \underset{G} \otimes -, p_1)} \to G \times G$ is an equivalence of categories. This implies that for a pair $(\unit{G}, g)$ of objects of $G$, there exists another pair $(g, \inv{g})$ of objects of $G$, such that
	\[
     (\unit{G}, g) \cong (- \underset{G} \otimes -, p_1)((g, \inv{g})) = (g \underset{G} \otimes \inv{g}, g).
	\]
	This implies that for each $g \in Ob(G)$ there exists another object $\inv{g} \in Ob(G)$ such that $g \underset{G} \otimes \inv{g} \cong \unit{G}$. The secong isomorphism follows similarly.
	
	Conversely, let us assume that inverses exist upto isomorphism. Under this assumption we can construct an inverse functor to $(- \underset{G} \otimes -, p_1)$ as follows:
	\[
	\inv{(- \underset{G} \otimes -, p_1)} (g_1, g_2) := (g_2, \inv{g_2} \underset{G} \otimes g_1).
	\]
	Similarly one can construct an inverse to the second functor.
	
	\end{proof}

We recall the construction of the permutative categories $\L(1)$ and $\L(2)$ from \cite{Sharma}. The permutative category $\L(1)$ is a groupoid whose object set consists of all finite sequences $(s_1, s_2, \dots, s_r)$, where either $s_i = {1}$ or $s_i = 0$ for all $1 \le i \le r$. For an object $S = (s_1, s_2, \dots, s_r)$ in $\L(1)$ we denote by $\ud{S}$ the sum $\underset{i=1}{\overset{r} +} s_i$.
A map $S = (s_1, s_2, \dots, s_r) \to T=(t_1, t_2, \dots, t_k)$ in $\L(1)$ is a bijection $f:\ud{S} \to \ud{T}$.
\begin{prop}
	\label{free-pCat-one-gen}
	For any permutative groupoid $G$, the evaluation map
	\[
	ev_{(1)}:\StrSMHom{\L(1)}{G} \to G
	\]
	is an equivalence of categories.
	\end{prop}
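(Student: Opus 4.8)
The plan is to realise $\L(1)$ as a thickened model of the free permutative category on one object, and then to transport along this identification the (essentially trivial) computation of the category $\StrSMHom{\F(\ast)}{G}$, whose objects are the strict symmetric monoidal functors and whose morphisms are the monoidal natural transformations.

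First I would exhibit an equivalence between $\L(1)$ and the free permutative category on one object $\F(\ast)$ (here $\ast$ is the terminal category, so that $\F(\ast)\cong\coprod_{n\ge 0}\Sigma_n$, with generating object $\underline 1$), realised by \emph{strict} symmetric monoidal functors. Define $p\colon\L(1)\to\F(\ast)$ to be $S\mapsto\underline S$ on objects, regarding $\underline S$ as an object of $\F(\ast)$, and the identity on morphisms — a morphism $S\to T$ of $\L(1)$ being by definition a bijection $\underline S\to\underline T$, which is precisely a morphism $\underline S\to\underline T$ of $\F(\ast)$ — and define $j\colon\F(\ast)\to\L(1)$ by $\underline n\mapsto(1)^{\otimes n}=(1,\dots,1)$ and, again, the identity on bijections. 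Testing $p$ and $j$ against the definitions of the concatenation product and of the symmetry of $\L(1)$ shows that both are strict symmetric monoidal, that $p\circ j=\mathrm{id}_{\F(\ast)}$ on the nose, and that $q:=j\circ p$ is the ``delete the zeros'' endofunctor of $\L(1)$, with $q(S)=(1)^{\otimes\underline S}$. Finally, the identity bijections $u_S\colon S\to q(S)$ (legitimate since $\underline{q(S)}=\underline S$) assemble into a \emph{monoidal} natural isomorphism $u\colon\mathrm{id}_{\L(1)}\xrightarrow{\cong}q$: monoidality is the identity $\mathrm{id}=\mathrm{id}\otimes\mathrm{id}$ on underlying sets, naturality is $\sigma\circ\mathrm{id}=\mathrm{id}\circ\sigma$, and each $u_S$ is invertible because $\L(1)$ is a groupoid.

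Next, precomposition with $j$ and with $p$ gives functors back and forth between $\StrSMHom{\L(1)}{G}$ and $\StrSMHom{\F(\ast)}{G}$ satisfying $(-\circ j)\circ(-\circ p)=-\circ(p\circ j)=\mathrm{id}$, while whiskering each object with $u$ produces a natural isomorphism $(-\circ p)\circ(-\circ j)=-\circ q\cong\mathrm{id}$; hence $-\circ j\colon\StrSMHom{\L(1)}{G}\to\StrSMHom{\F(\ast)}{G}$ is an equivalence of categories. Then I would check that evaluation at the generator, $\mathrm{ev}\colon\StrSMHom{\F(\ast)}{G}\to G$, $F\mapsto F(\underline 1)$, $\alpha\mapsto\alpha_{\underline 1}$, is an \emph{isomorphism} of categories: it is bijective on objects by the universal property $\PCat(\F(\ast),G)\cong\Ob(G)$ of $\F$, and it is fully faithful because a monoidal natural transformation $\alpha$ between strict symmetric monoidal functors out of $\F(\ast)$ is forced, via $\alpha_{\underline n}=\alpha_{\underline 1}^{\otimes n}$ and $\alpha_{\underline 0}=\mathrm{id}$, to be recovered from $\alpha_{\underline 1}$, whereas conversely any morphism $\phi\colon F(\underline 1)\to F'(\underline 1)$ of $G$ determines such an $\alpha$ by $\alpha_{\underline n}:=\phi^{\otimes n}$ — the sole point needing care being naturality with respect to the permutations in $\F(\ast)(\underline n,\underline n)=\Sigma_n$, which is Mac Lane's coherence theorem (naturality of the symmetry of $G$) applied to $\phi$. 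Since $ev_{(1)}=\mathrm{ev}\circ(-\circ j)$ — both send $F$ to $F((1))=F(j(\underline 1))$ — it is a composite of an isomorphism and an equivalence, hence an equivalence of categories.

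I expect no real obstacle here: the argument is essentially bookkeeping, and the only verifications that are not immediate are those of the first step — that $p$, $j$ and above all $u$ strictly respect the permutative structure of $\L(1)$, all of which follow at once from the ``underlying bijection'' description of its morphisms and from concatenation — together with the elementary coherence argument in the last step. The groupoid hypothesis on $G$ is in fact not needed: it enters only to see that $u$ is invertible, which already follows from $\L(1)$ being a groupoid, so the same proof gives the statement for an arbitrary permutative category $G$.
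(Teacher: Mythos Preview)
Your argument is correct and follows essentially the same strategy as the paper's proof: both reduce the computation to the free permutative category on one generator via a strict symmetric monoidal equivalence $\F(\ast)\simeq\L(1)$, then invoke the universal property of $\F(\ast)$ to identify $\StrSMHom{\F(\ast)}{G}$ with $G$. The paper merely records the inclusion $i\colon\F(\underline{1})\hookrightarrow\L(1)$, asserts it is an equivalence, and concludes by $2$-out-of-$3$ in the evident triangle; your version is more explicit in that you exhibit a strict monoidal retraction $p$ and the monoidal natural isomorphism $u$ witnessing $j\circ p\cong\mathrm{id}$, which makes transparent why precomposition induces an equivalence on $\StrSMHom{-}{G}$ without appealing to any ambient model structure. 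Your closing remark that the groupoid hypothesis on $G$ is superfluous is also correct and in line with the paper, which in the body of its proof works with an arbitrary permutative category $C$.
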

\begin{proof}
	The free permutative category $\F(1)$, see \eqref{Free-pCat-adj}, can be described as follow: The objects are finte sets $\ud{n}$ for all $n \ge 0$. A morphism is a bijection. This category has the property that the evaluation functor on the object $\ud{1}$:
	\[
	ev_{\ud{1}}:\StrSMHom{\F(\ud{1})}{C} \to C
	\]
	is an isomorphism for any permutative category $C$. This category is equipped with an inclusion functor
	\[
	i:\F(\ud{1}) \to \PStr(1),
	\]
	such that $i(\ud{1}) = (1)$, which is an equivalence of categories. Now the $2$ out of $3$ and the following commutative diagram proove the proposition:
	\[
	\xymatrix{
	\StrSMHom{\L(1)}{G} \ar[r]^{ \ \ \ \ ev_{(1)}} \ar[d]_{\StrSMHom{i}{G}} & G \\
	\StrSMHom{\F(\ud{1})}{G} \ar[ru]_\cong
     }
	\]

	\end{proof}
The maps of finite sets $m_2:2^+ \to 1^+$, $\delta^2_1:2^+ \to 1^+$ and $\delta^2_2:2^+ \to 1^+$ together induce the following two maps in $\PCat$
\begin{equation}
\L(1) \vee \L(1) \overset{(\L(m_2), \L(\delta^2_1))} \to \L(2) \ \ \ \ \textit{and} \ \ \ \  \L(1) \vee \L(1) \overset{(\L(\delta^2_1), \L(\delta^2_2))} \to \L(2) 
\end{equation}
By \cite[]{Sharma} the strict symmetric monoidal functor $(\L(\delta^2_1), \L(\delta^2_2))$ is an acyclic cofibration in the natural model category structure on $\PCat$. This implies that for any permutative category $C$, we have the following equivalence of categories:
\begin{equation}
\label{Two-partition-equiv}
\StrSMHom{(\L(\delta^2_1), \L(\delta^2_2))}{C}:\StrSMHom{\L(2)}{C} \to \StrSMHom{\L(1)}{C} \times \StrSMHom{\L(1)}{C}.
\end{equation}
\begin{lem}
	\label{Pic-gpd-as-loc-ob}
	A groupoid $G$ is a $\lbrace (\L(m_2), \L(\delta^2_1)) \rbrace$-local object if and only if it is a picard groupoid.
	\end{lem}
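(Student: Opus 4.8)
The plan is to unwind what it means for $G$ to be a $\lbrace (\L(m_2), \L(\delta^2_1)) \rbrace$-local object of $\MdlPCatG$, transport the relevant map across the equivalences furnished by Proposition \ref{free-pCat-one-gen} and \eqref{Two-partition-equiv}, and recognise the outcome as the defining functor of a Picard groupoid. A local object is in particular fibrant, so throughout $G$ is a permutative groupoid. Since $\MdlPCatG$ is a simplicial model category whose function complexes are the nerves of the internal hom categories $\StrSMHom{-}{-}$ (Proposition \ref{simp-mdl-Pcat-G}), and since $\L(1)$, $\L(2)$ and $\L(1) \vee \L(1)$ are cofibrant (Proposition \ref{char-cof}, together with the corresponding fact in the natural model structure), the object $G$ is local precisely when
\[
\StrSMHom{(\L(m_2), \L(\delta^2_1))}{G} \colon \StrSMHom{\L(2)}{G} \longrightarrow \StrSMHom{\L(1) \vee \L(1)}{G} \cong \StrSMHom{\L(1)}{G} \times \StrSMHom{\L(1)}{G}
\]
induces a weak homotopy equivalence on nerves. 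All three categories here are groupoids, because $G$ is, so by Theorem \ref{model-str-Perm-Gpds} (and the fact that a groupoidal equivalence between groupoids is an equivalence of categories) this happens exactly when the displayed functor is an equivalence of categories.

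The next step is to identify this functor concretely. By Proposition \ref{free-pCat-one-gen} the evaluation $ev_{(1)} \colon \StrSMHom{\L(1)}{G} \to G$ is an equivalence, and composing the equivalence \eqref{Two-partition-equiv} with $ev_{(1)} \times ev_{(1)}$ yields an equivalence $\StrSMHom{\L(2)}{G} \longrightarrow G \times G$ carrying a strict symmetric monoidal functor $F$ to the pair $(F(x_1), F(x_2))$, where $x_i := \L(\delta^2_i)\big((1)\big)$ is the image in $\L(2)$ of the generating object $(1) \in \L(1)$. Tracking the generator $(1)$ through the explicit description of $\L$ and of the maps $m_2, \delta^2_1$ of $\gop$ (see \cite{Sharma}) gives $\L(\delta^2_1)\big((1)\big) = x_1$ and $\L(m_2)\big((1)\big) \cong x_1 \underset{\L(2)} \otimes x_2$; since $F$ is strict symmetric monoidal this forces $F\big(\L(m_2)((1))\big) = F(x_1) \underset{G} \otimes F(x_2)$. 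Hence, under the equivalences above, $\StrSMHom{(\L(m_2), \L(\delta^2_1))}{G}$ is identified with the functor
\[
\big(- \underset{G} \otimes -,\, p_1\big) \colon G \times G \longrightarrow G \times G .
\]

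Putting the two steps together, and using the two-out-of-three property (the evaluation functors being equivalences), $G$ is a $\lbrace (\L(m_2), \L(\delta^2_1)) \rbrace$-local object if and only if $(- \otimes -, p_1) \colon G \times G \to G \times G$ is an equivalence of categories, which is exactly the definition of a Picard groupoid. I expect the middle step to be the only real obstacle: one must verify, using the concrete combinatorial model of $\L$ from \cite{Sharma}, that the generator of $\L(1)$ is sent by $\L(m_2)$ to an object isomorphic to the tensor of its images under $\L(\delta^2_1)$ and $\L(\delta^2_2)$, and that this identification is natural enough to be compatible with the chosen equivalences. The outer two steps are formal, and the remark following the definition of a Picard groupoid shows it is harmless that $m_2$ is paired with $\delta^2_1$ rather than $\delta^2_2$.
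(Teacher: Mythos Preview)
Your argument is essentially the paper's proof. Both reduce locality to the functor $\StrSMHom{(\L(m_2),\L(\delta^2_1))}{G}$ being an equivalence of groupoids, transport it across the evaluation equivalences of Proposition~\ref{free-pCat-one-gen} and \eqref{Two-partition-equiv}, identify the result with $(-\otimes-,p_1)$ on $G\times G$, and conclude by two-out-of-three. The only place where the paper is more explicit than you is precisely the point you flagged: since $\L(m_2)\big((1)\big)=(\underline{2})$ is only \emph{isomorphic} to $x_1\otimes x_2=(\{1\},\{2\})$ in $\L(2)$, one does not get literal equality of the two composites $\StrSMHom{\L(2)}{G}\to G\times G$ but a natural isomorphism between them; the paper writes this down as $H(F)=F(p_{12})$ for $p_{12}\colon(\underline{2})\cong(\{1\},\{2\})$ and checks naturality. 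Your phrase ``since $F$ is strict symmetric monoidal this forces $F(\L(m_2)((1)))=F(x_1)\otimes F(x_2)$'' should accordingly be read as an isomorphism, and you would do well to record that this isomorphism is natural in $F$ (it is, because it comes from applying $F$ to a fixed map of $\L(2)$).
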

\begin{proof}
	The groupoid $G$ is $\lbrace (\L(m_2), \L(\delta^2_1)) \rbrace$-local if and only if we have the following weak homotopy equivalence of simplicial sets:
	\[
	\Map^h((\L(m_2), \L(\delta^2_1)), G):\Map^h(\L(2), G) \to \Map^h(\L(1) \vee \L(1), G)
	\]
	We recall that the function complex bifunctor for $\MdlPCatG$ is defined as follows:
	\[
	\Map^h(-, -) := N(\StrSMHom{-}{-})
	\]
 which implies that $\Map^h((\L(m_2), \L(\delta^2_1)), G)$ is a homotopy equivalence if and only if the functor:
 \[
 \StrSMHom{(\L(m_2), \L(\delta^2_1))}{G} :\StrSMHom{\L(2)}{G} \to \StrSMHom{\L(1) \vee \L(1)}{G} \cong \StrSMHom{\L(1)}{G} \times \StrSMHom{\L(1)}{G}.
 \]
 is an equivalence of categories. Thus we get the following (composite) weak equivalence in $\MdlPCatG$:
  \begin{equation}
 \StrSMHom{\L(2)}{G} \overset{p} \to \StrSMHom{\L(1)}{G} \times \StrSMHom{\L(1)}{G} \overset{(ev_{(1)}, ev_{(1)})}\to G \times G
 \end{equation}
 where $p = \StrSMHom{(\L(m_2), \L(\delta^2_1))}{G}$.
 There is another composite map in $\PCat$ which is the following:
 \begin{equation}
 \StrSMHom{\L(2)}{G} \overset{q} \to \StrSMHom{\L(1)}{G} \times \StrSMHom{\L(1)}{G} \overset{(ev_{(1)}, ev_{(1)})}\to G \times G \overset{r} \to G \times G
 \end{equation}
 where $q = \StrSMHom{(\L(\delta^2_1), \L(\delta^2_2))}{G}$ and the map $r = (-\underset{G} \otimes -, p_2)$. We will now construct a natural isomorphism (in $\Cat$) $H:(ev_{(1)}, ev_{(1)}) \circ p \Rightarrow r \circ (ev_{(1)}, ev_{(1)}) \circ q $ between the above two functors. 
 For each $F \in \StrSMHom{\L(2)}{G}$ let us denote $F((\ud{2}))$ by $g_{12}$. The isomorphism $p_{12}:(\ud{2}) \cong (\lbrace 1 \rbrace, \lbrace 2 \rbrace)$ in $\L(2)$ gives an isomorphism $F(p_{12}):g_{12} \cong g_1 \otimes g_2$, where $g_1 = F((\lbrace 1 \rbrace))$ and $g_2 = F((\lbrace 2 \rbrace))$. We observe that $r \circ (ev_{(1)}, ev_{(1)}) \circ q (F) = (g_1 \otimes g_2, g_1)$ and $ (ev_{(1)}, ev_{(1)}) \circ p(F) = (g_{12}, g_1)$.
 We define $H(F) := F(p_{12})$. Let $\sigma:F \Rightarrow G$ be a (monoidal) natural transformation and denoting $G((\ud{2}))$ by $g'_{12}$, $G(({1}))$ by $g'_{1}$ and $G(({2}))$ by $g'_{2}$ we get an isomorphism $G(p_{12}):g'_{12} \cong g'_1 \otimes g'_2$. The following diagram commutes:
 \begin{equation*}
 \xymatrix{
 g_{12} \ar[r]^{H(F)} \ar[d]_{\sigma((\ud{2}))} & g_1 \otimes g_2 \ar[d]^{{\sigma((\lbrace 1 \rbrace, \lbrace 2 \rbrace))}} \\
 g'_{12} \ar[r]_{H(G)} & g'_1 \otimes g'_2
  }
 \end{equation*}
 because $\sigma$ is a natural isomorphism. Hence we have constructed the desired natural isomorphism $H$. The construction of $H$ implies that the strict symmetric monoidal functor $r \circ (ev_{(1)}, ev_{(1)}) \circ q$ is a groupoidal equivalence if and only if $(ev_{(1)}, ev_{(1)}) \circ p$ is one. We know that the functors $q$ and $(ev_{(1)}, ev_{(1)})$ are both equivalences of categories. Let us assume that $G$ is a aforementioned local object then $(ev_{(1)}, ev_{(1)}) \circ p$ is a groupoidal equivalence and ,by the above argument, so is the composite functor $r \circ (ev_{(1)}, ev_{(1)}) \circ q$. By two out of three property of weak equivalences this implies that $r$ is a weak equivalence which implies that $G$ is a picard groupoid. Conversely, let us assume that $G$ is a picard groupoid in which case $r$ is a groupoidal equivalence which means that both $r \circ (ev_{(1)}, ev_{(1)}) \circ q$ and $(ev_{(1)}, ev_{(1)}) \circ p$ are groupoidal equivalences. Again by the two out of three property, $p$ is a groupoidal equivalence which implies that $G$ is local.
 
	\end{proof}

 \begin{thm}
	\label{Pic-mdl-str}
	There is a combinatorial model category structure on the category of (small) permutative categories $\PCat$ in which a functor $F:A \to B$ is
	\begin{enumerate}
		\item  a cofibration if it is a cofibration in the natural model category structure on $\PCat$.
		\item a weak equivalence if the following functor
		\[
		\StrSMHom{F}{P}:\StrSMHom{B}{P} \to \StrSMHom{A}{P}
		\]
		is an equivalence of categories for each Picard groupoid $P$.
		\item a fibration if it has the right lifting property with respect to the set of maps which are both cofibrations and weak equivalences.
	\end{enumerate}
A permutative category is a fibrant objects of this model category if and only if it ia a Picard groupoid. 
\end{thm}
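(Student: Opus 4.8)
The plan is to produce this model structure as the left Bousfield localization of the model category of permutative groupoids $\MdlPCatG$ with respect to the singleton set $\mathcal{S} := \lbrace (\L(m_2), \L(\delta^2_1)) \rbrace$, in exact parallel with the way Theorem~\ref{Gpd-mdl-str} was obtained from the natural model structure on $\Cat$. First I would check that Theorem~\ref{local-tool} applies: $\MdlPCatG$ is combinatorial by Theorem~\ref{model-str-Perm-Gpds}, and it is left proper because its cofibrations are those of the natural model structure on $\PCat$ (Proposition~\ref{char-cof}) while it is itself a left Bousfield localization of that structure, and left Bousfield localizations of left proper model categories are left proper. The localization theorem then furnishes a combinatorial model structure on $\PCat$ with the same cofibrations as $\MdlPCatG$; by Proposition~\ref{char-cof} these are precisely the cofibrations of the natural model structure on $\PCat$, which is item (1). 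Item (3) is purely formal: in any combinatorial model category the fibrations are exactly the maps with the right lifting property against a fixed generating set of acyclic cofibrations, equivalently against all maps that are simultaneously cofibrations and weak equivalences.

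For item (2) I would use the standard description of the localized weak equivalences as the $\mathcal{S}$-local equivalences: $F : A \to B$ is one precisely when $\Map^h(F, W)$ is a weak homotopy equivalence for every $\mathcal{S}$-local object $W$. Any $\mathcal{S}$-local object is in particular fibrant in $\MdlPCatG$, hence a permutative groupoid, so by Lemma~\ref{Pic-gpd-as-loc-ob} the $\mathcal{S}$-local objects are exactly the Picard groupoids. Since $\MdlPCatG$ is simplicial with function complex $\Map^h(-,-) = N(\StrSMHom{-}{-})$ (Proposition~\ref{simp-mdl-Pcat-G}), and since for a permutative groupoid $W$ the functor $\StrSMHom{-}{W}$ factors through the fundamental groupoid functor $\Pi_1$ (Remark~\ref{symm-mon-Pi1} together with the adjunction $\Pi_1 \dashv i$), this function complex is already homotopy-invariant in the first variable; moreover $\StrSMHom{A}{P}$ and $\StrSMHom{B}{P}$ are groupoids (a monoidal natural transformation with values in a groupoid is invertible), and a functor between groupoids is an equivalence of categories if and only if its nerve is a weak homotopy equivalence. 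Assembling these facts, $F$ is an $\mathcal{S}$-local equivalence if and only if $\StrSMHom{F}{P} : \StrSMHom{B}{P} \to \StrSMHom{A}{P}$ is an equivalence of categories for every Picard groupoid $P$, which is item (2). For the last sentence of the theorem, a fibrant object of the localized model structure is exactly an object which is fibrant in $\MdlPCatG$ (a permutative groupoid) and $\mathcal{S}$-local, so by Lemma~\ref{Pic-gpd-as-loc-ob} it is precisely a Picard groupoid.

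The step I expect to be most delicate is the homotopy-invariance bookkeeping inside item (2). Because not every object of $\PCat$ is cofibrant, one must justify that $N(\StrSMHom{-}{P})$ computes the derived mapping complex; the intended justification — that $\StrSMHom{-}{P}$ descends along $\Pi_1$ and hence inverts all weak equivalences of $\MdlPCatG$ — requires checking that precomposition with an equivalence of permutative groupoids induces an equivalence of the categories of strict symmetric monoidal functors, which is not quite automatic since such an equivalence need not possess a strict monoidal pseudo-inverse, and so deserves a short separate argument. Verifying left properness of $\MdlPCatG$ carefully is a second, more routine, point that should also be pinned down before invoking Theorem~\ref{local-tool}.
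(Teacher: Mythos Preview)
Your proposal is correct and follows essentially the same route as the paper: localize $\MdlPCatG$ at $\{(\L(m_2),\L(\delta^2_1))\}$ using Theorem~\ref{local-tool}, identify cofibrations via Proposition~\ref{char-cof}, identify local objects via Lemma~\ref{Pic-gpd-as-loc-ob}, and read off the weak equivalences from the simplicial function complex $N(\StrSMHom{-}{-})$. The only minor differences are that the paper does not verify left properness (its statement of Theorem~\ref{local-tool} requires only combinatoriality), and for the last step the paper invokes Lemma~\ref{N-htpy-ref} rather than your argument via groupoid targets; your extra caution about whether $N(\StrSMHom{-}{P})$ computes the derived mapping space is in fact more careful than the paper, which simply asserts this identification.
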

\begin{proof}
 We will prove this theorem by localizing the model category of permutative groupoids $\MdlPCatG$ with respect to the map 
 \[
 \L(1) \vee \L(1) \overset{(\L(m_2), \L(\delta^2_1))} \to \L(2).
 \]
 The existence of this left Bousfield localization follows from theorem \ref{local-tool}. A left Bousfield localization preserves cofibrations therefore the cofibrations in the new model category are the same as those in $\MdlPCatG$.
 By proposition \ref{char-cof}, the cofibrations in $\MdlPCatG$ are the same as those in the natural model category of permutative categories.
 Lemma \ref{Pic-gpd-as-loc-ob} above tells us that a permutative groupoid is a $\lbrace (\L(m_2), \L(\delta^2_1)) \rbrace$-local object if and only if it is a Picard groupoid. Since $\MdlPCatG$ is a simplicial model category and its function complex is given by:
 \[
 \map^h(-, -) = N(\StrSMHom{-}{-}),
 \]
 A strict symmetric monoidal functor $F:A \to B$ is a $\lbrace (\L(m_2), \L(\delta^2_1)) \rbrace$-local equivalence if the following simplicial map
 \[
 N(\StrSMHom{F}{P}):N(\StrSMHom{B}{P}) \to N(\StrSMHom{A}{P})
 \]
 is a homotopy equivalence of Kan complexes. Since the nerve functor is a homotopy reflection, see lemma \ref{N-htpy-ref}, therefore $N(\StrSMHom{F}{P})$ is a homotopy equivalence if and only if the functor
 \[
\StrSMHom{F}{P}:\StrSMHom{B}{P} \to \StrSMHom{A}{P}
 \]
 is an equivalence of categories (groupoids).

\end{proof}

\begin{nota}
	We will refer to the above model category as the model category of \emph{Picard groupoids}. We denote this model category by $\MdlPCatP$.
\end{nota}
 Adaptations of arguments used in the proof of propositions \ref{tens-Quillen-bifunc-PCat-G} and \ref{simp-mdl-Pcat-G}, to the model category $\MdlPCatP$  prove the following two analogous propositions:
\begin{prop}
	\label{Cat-G-mdl-Pcat-Pic}
	The bifunctors \eqref{tens-PCat-G}, \eqref{cotens-PCat-G} and \eqref{int-Hom-PCat-G} equip the model category of Picard groupoids $\MdlPCatP$ with a $\MdlCatG$-model category structure.
\end{prop}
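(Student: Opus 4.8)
The plan is to recognize $\MdlPCatP$ as a left Bousfield localization of the $\MdlCatG$-enriched model category $\MdlPCatG$ (Proposition \ref{tens-Quillen-bifunc-PCat-G}) that is compatible with the enrichment. The pentuple $(-\boxtimes-,\,[-,-],\,\StrSMHom{-}{-},\,\inv{\eta_r},\,\eta_l)$ is an adjunction of two variables by a purely structural argument — the very one recalled in the proof of Proposition \ref{tens-Quillen-bifunc-PCat-G} — so it is inherited without change, and the unit axiom is automatic because the monoidal unit of $\MdlCatG$ (the terminal category) is cofibrant. Thus, by the characterization of Quillen bifunctors (Lemma \ref{Q-bifunctor-char}, cf.\ \cite[lemma 4.2.2]{Hovey}), the only thing left to verify is the pushout-product axiom: for every cofibration $i:W\to X$ in $\MdlCatG$ and every cofibration $u:C\to D$ in $\MdlPCatP$, the strict symmetric monoidal functor
\[
i \Box u : X \boxtimes C \underset{W \boxtimes C}{\coprod} W \boxtimes D \to X \boxtimes D
\]
should be a cofibration in $\MdlPCatP$ which is acyclic whenever $i$ or $u$ is acyclic.

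The easy half comes for free. By Theorem \ref{Pic-mdl-str}$(1)$ and Proposition \ref{char-cof} the cofibrations of $\MdlPCatP$ are exactly those of $\MdlPCatG$, and the cofibrations of $\MdlCatG$ are those of the natural model structure on $\Cat$, so Proposition \ref{tens-Quillen-bifunc-PCat-G} already gives that $i\Box u$ is a cofibration of $\MdlPCatP$. Moreover, if $i$ is an acyclic cofibration of $\MdlCatG$ then $i\Box u$ is, by that same proposition, an acyclic cofibration of $\MdlPCatG$, and its underlying groupoidal equivalence is in particular a weak equivalence of $\MdlPCatP$, since a left Bousfield localization only enlarges the class of weak equivalences.

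The genuinely new case — and the step I expect to carry the weight — is when $u:C\to D$ is an acyclic cofibration of $\MdlPCatP$, i.e.\ an $\lbrace(\L(m_2),\L(\delta^2_1))\rbrace$-local equivalence that need not be a groupoidal equivalence, so that Proposition \ref{tens-Quillen-bifunc-PCat-G} no longer applies. The hinge is a compatibility lemma: \emph{for every small category $K$ and every Picard groupoid $P$, the cotensor $[K,P]$ of \eqref{cotens-PCat-G} is again a Picard groupoid.} I would prove this directly: $[K,P]$ carries the strict symmetric monoidal structure defined objectwise from that of $P$ and is a permutative groupoid because $P$ is; the functor $[K,-]:\Cat\to\Cat$ preserves finite products and equivalences of categories, so $[K,P\times P]\cong[K,P]\times[K,P]$ compatibly with the objectwise structures, and applying $[K,-]$ to the equivalence of categories $(- \otimes -, p_1):P\times P\to P\times P$ that witnesses that $P$ is Picard yields the corresponding equivalence $(- \otimes -, p_1):[K,P]\times[K,P]\to[K,P]\times[K,P]$; hence $[K,P]$ is Picard. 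Since every object of $\Cat$ is cofibrant in $\MdlCatG$, this lemma says precisely that cotensoring by cofibrant objects of the enriching category preserves local objects — the standard sufficient condition, employed throughout \cite{Sharma} for exactly this kind of localization, for a left Bousfield localization of an enriched model category to remain enriched.

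Granting the lemma, I would finish the remaining case concretely, mimicking the reduction in the proof of Proposition \ref{tens-Quillen-bifunc-PCat-G}. To see that the cofibration $i\Box u$ is an $\lbrace(\L(m_2),\L(\delta^2_1))\rbrace$-local equivalence it suffices to check that $\Map^h(i\Box u, P) = N(\StrSMHom{i\Box u}{P})$ is a weak homotopy equivalence for every Picard groupoid $P$. Unwinding the pushout-product adjunction together with the cotensor adjunction $\Map^h(A\boxtimes B, P)\simeq\Map^h(B,[A,P])$ identifies this with the canonical comparison map from $\Map^h(D,[X,P])$ to the homotopy pullback of $\Map^h(C,[X,P])\to\Map^h(C,[W,P])\leftarrow\Map^h(D,[W,P])$. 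By the lemma together with Lemma \ref{Pic-gpd-as-loc-ob}, both $[X,P]$ and $[W,P]$ are $\lbrace(\L(m_2),\L(\delta^2_1))\rbrace$-local; hence, $u$ being a local equivalence, both $\Map^h(D,[X,P])\to\Map^h(C,[X,P])$ and $\Map^h(D,[W,P])\to\Map^h(C,[W,P])$ are weak homotopy equivalences, and a routine diagram chase shows the comparison map is one as well. This establishes the pushout-product axiom, and hence the proposition; the companion simplicial statement then follows by the same adaptation of the proof of Proposition \ref{simp-mdl-Pcat-G}, applying the compatibility lemma to categories of the form $\tau_1(S)$.
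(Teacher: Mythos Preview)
Your proposal is correct. The paper gives no proof beyond the single sentence ``Adaptations of arguments used in the proof of propositions \ref{tens-Quillen-bifunc-PCat-G} and \ref{simp-mdl-Pcat-G}, to the model category $\MdlPCatP$ prove the following two analogous propositions,'' so your argument is a legitimate fleshing-out of what is left implicit.

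It is worth noting that a \emph{literal} adaptation of the proof of Proposition~\ref{tens-Quillen-bifunc-PCat-G}---which verifies condition~(2) of Lemma~\ref{Q-bifunctor-char} by reducing to the underlying functor---does not go through directly, since fibrations in $\MdlPCatP$ are only characterized by a lifting property and not by their underlying functors. Your route, via the pushout-product side and the compatibility lemma that $[K,P]$ is Picard whenever $P$ is, is precisely the pattern the paper itself uses in Theorem~\ref{cart-closed-gp-cat} (for $\MdlCatG$) and in Theorem~\ref{SM-closed-Func-Mdl} (for the Day convolution localizations): show that cotensoring by cofibrant objects of the enriching category preserves local objects, then lift against fibrations between fibrant objects. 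So while you do not follow the \emph{letter} of the cited proof, you follow the paper's own template for handling enriched localizations, and this is almost certainly the adaptation the author had in mind.
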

and
 \begin{prop}
 	\label{simp-G-mdl-PCat-Pic}
 	The bifunctors \eqref{simp-tens-PCat-G}, \eqref{simp-cotens-PCat-G} and \eqref{simp-Hom-PCat-G} equip the model category of Picard groupoids $\MdlPCatP$ with a simplicial model category structure.
 \end{prop}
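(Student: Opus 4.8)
The plan is to imitate the proof of Proposition~\ref{simp-mdl-Pcat-G} line for line, replacing $\MdlPCatG$ by its left Bousfield localization $\MdlPCatP$ and Proposition~\ref{tens-Quillen-bifunc-PCat-G} by Proposition~\ref{Cat-G-mdl-Pcat-Pic}. First I would note that the two-variable adjunction demanded by \cite[Def. 4.1.12]{Hovey} is the \emph{same} one already produced in the proof of Proposition~\ref{simp-mdl-Pcat-G}: the bifunctors \eqref{simp-tens-PCat-G}, \eqref{simp-cotens-PCat-G} and \eqref{simp-Hom-PCat-G} are defined purely in terms of $\tau_1$, the $\MdlCatG$-tensor $-\boxtimes-$ of \eqref{tens-PCat-G}, the internal hom $\StrSMHom{-}{-}$ and the nerve, none of which depends on a model structure, so the chain of bijections
\[
\PCat(C, [\tau_1(S), D]) \cong \PCat(C \boxtimes \tau_1(S), D) \cong \Cat(\tau_1(S), \StrSMHom{C}{D}) \cong \sSets(S, N(\StrSMHom{C}{D}))
\]
holds verbatim and exhibits the required adjunction of two variables. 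Nothing new is needed for this part.

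The only thing to check is therefore the pushout-product axiom for the localized weak equivalences. Given a cofibration $u:C \to D$ in $\MdlPCatP$ and a cofibration $i:S\to T$ in the Kan model structure on $\sSets$, I must show that
\[
u \Box i: D \boxtimes \tau_1(S) \underset{C \boxtimes \tau_1(S)}\coprod C \boxtimes \tau_1(T) \to D \boxtimes \tau_1(T)
\]
is a cofibration in $\MdlPCatP$ which is acyclic whenever $u$ or $i$ is. Since a left Bousfield localization changes neither the underlying category nor the cofibrations, $u$ is a cofibration in $\MdlPCatG$ as well, and by Proposition~\ref{char-cof} it coincides with a cofibration of the natural model structure on $\PCat$. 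Because $\tau_1$ is the left Quillen functor of the Quillen pair $(\tau_1,N)$ of Lemma~\ref{N-htpy-ref} (Kan model structure on the source, $\MdlCatG$ on the target), $\tau_1(i)$ is a cofibration of $\MdlCatG$, acyclic whenever $i$ is. Now $u\Box i$ is exactly the $\MdlCatG$-pushout product of $u$ against $\tau_1(i)$ through the tensoring $-\boxtimes-$, so by Proposition~\ref{Cat-G-mdl-Pcat-Pic}, which says that $\MdlPCatP$ is a $\MdlCatG$-model category, $u\Box i$ is a cofibration of $\MdlPCatP$; it is acyclic if $u$ is a local acyclic cofibration, and it is acyclic if $\tau_1(i)$ is an acyclic cofibration of $\MdlCatG$, hence in particular if $i$ is an acyclic cofibration of $\sSets$. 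This is the analogue of the closing line of the proof of Proposition~\ref{simp-mdl-Pcat-G}.

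I do not expect the present statement to present any real difficulty: once the two-variable adjunction is observed to be unchanged, everything is deferred to the $\MdlCatG$-model category structure of Proposition~\ref{Cat-G-mdl-Pcat-Pic}, exactly as Proposition~\ref{simp-mdl-Pcat-G} deferred everything to Proposition~\ref{tens-Quillen-bifunc-PCat-G}. If there is an obstacle, it is hidden one step upstream, inside Proposition~\ref{Cat-G-mdl-Pcat-Pic}: one must know that the cotensor $[Z,P]$ of \eqref{cotens-PCat-G} by an arbitrary small category $Z$ of a Picard groupoid $P$ is again a Picard groupoid (which should follow from the natural identification $\StrSMHom{-}{[Z,P]}\cong \StrSMHom{-\boxtimes Z}{P}$, since being a local object is tested by such mapping groupoids), and hence that the cotensor bifunctor carries path fibrations between Picard groupoids to local fibrations and sends pushout products of local acyclic cofibrations with cofibrations to local acyclic cofibrations. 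Since we are permitted to assume Proposition~\ref{Cat-G-mdl-Pcat-Pic}, the argument above is purely formal; the only care required is to route the pushout-product verification through that $\MdlCatG$-enrichment rather than re-deriving it by hand, as was done in the proof of Proposition~\ref{simp-mdl-Pcat-G}.
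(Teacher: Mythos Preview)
Your proposal is correct and is precisely the adaptation the paper has in mind: the paper does not write out a proof but states that ``adaptations of arguments used in the proof of propositions \ref{tens-Quillen-bifunc-PCat-G} and \ref{simp-mdl-Pcat-G}, to the model category $\MdlPCatP$'' suffice, and you have carried out exactly that, routing the pushout-product axiom through Proposition~\ref{Cat-G-mdl-Pcat-Pic} in place of Proposition~\ref{tens-Quillen-bifunc-PCat-G}.
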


 \section[The model category of coherently commutatve monoidal groupoids]{The model category of coherently commutatve monoidal groupoids}

A $\gCat$ is a functor from $\gop$ to $\Cat$.
The category of 
functors from $\gop$ to $\Cat$ and natural transformations between them $\CatHom{\gop}{\Cat}{}$ will be denoted by $\gCAT$. We begin by describing a model category structure on
$\gCAT$ which is often refered to as the \emph{projective model category structure}. We will also refer to this model category structure as \emph{strict model category structure of $\gGpds$}. 
\begin{df}
 A morphism $F:X \to Y$ of $\gCats$ is called
 \begin{enumerate}
 \item a \emph{strict groupoidal equivalence} of $\gCats$  if it is a degreewise weak equivalence in the groupoidal model category structure on $\Cat$ \emph{i.e.} $F(n^+):X(n^+) \to Y(n^+)$ is an equivalence of categories.
 
 \item a \emph{strict path fibration}  of $\gCats$ if it is degreewise a fibration in the groupoidal model category structure on $\Cat$ \emph{i.e.} $F(n^+):X(n^+) \to Y(n^+)$ is a path fibration.
 
 \item a \emph{Q-cofibration}  of $\gCats$ if it has the left lifting property with respect to
 all morphisms which are both strict groupoidal equivalence and strict path fibrations of $\gCats$.

  \end{enumerate}
 \end{df}
 In light of the combinatorial (natural) model category structure on $\Cat$, we observe that a map of $\gCats$ $F:X \to Y$ is a strict acyclic fibration of $\gCats$ if and only if it has the right lifting property with respect to all maps in the set
 \begin{equation}
 \label{gen-cof}
 \I = \lbrace \gn{n} \times \partial_0,
 \gn{n} \times \partial_1,  \gn{n} \times \partial_2 \mid \forall n \in Ob(\N) \rbrace.
 \end{equation}
 We further observe that $F$ is a strict fibration if and only it has the right lifting property with respect to all maps in the set
 \begin{equation}
 \label{gen-acyc-cof}
 \J = \lbrace \gn{n} \times i_0, \gn{n} \times i_1 \mid \forall n \in Ob(\N) \rbrace_{}.
 \end{equation}
 where $\partial_0$, $\partial_1$ and $\partial_2$ are the generating cofibrations of both the natural model category structure on $\Cat$ and the model category structure of groupoids on $\Cat$. The maps $i_0:0 \hookrightarrow I$ and $i_1:1 \hookrightarrow I$ are the obvious inclusions into the source and target of $I$.
 \begin{thm}
 \label{str-mdl-cat-gCat}
 Strict groupoidal equivalences, strict path fibrations and Q-cofibrations of $\gCats$ provide the category $\gCAT$ with a combinatorial model category structure.
 \end{thm}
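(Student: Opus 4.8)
The plan is to realize this model structure as the transferred (equivalently, projective) model structure on the diagram category $\gCAT = \CatHom{\gop}{\Cat}{}$, with each copy of $\Cat$ carrying the model category structure of groupoids $\MdlCatG$ of Theorem \ref{Gpd-mdl-str}. Concretely, I would work with the adjunction
\[
L : \prod_{n \in \Ob(\N)} \Cat \;\rightleftarrows\; \gCAT : U ,
\]
where $U$ sends a $\gCat$ $X$ to the family $\big(X(n^+)\big)_{n \in \Ob(\N)}$, and $L$ is its left adjoint, which on a family $(C_n)_n$ is the coproduct $\coprod_{n} \big(\gn{n} \times C_n\big)$; here $\gn{n} \times C$ is the $\gCat$ with $(\gn{n} \times C)(m^+) = \gop(n^+, m^+) \times C$, so that $\gn{n} \times (-)$ is left adjoint to evaluation at $n^+$. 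Give the source the product model structure; a set-indexed product of combinatorial model categories is combinatorial, and one may take as generating cofibrations (resp.\ generating acyclic cofibrations) the families supported in a single factor $n$ whose value there is one of $\partial_0, \partial_1, \partial_2$ (resp.\ one of $i_0, i_1$). Applying $L$ to these sets produces exactly the sets $\I$ and $\J$ of \eqref{gen-cof} and \eqref{gen-acyc-cof}. I would then invoke the transfer theorem \ref{mdl-str-transfer-tool} to obtain a model structure on $\gCAT$ whose weak equivalences and fibrations are the maps $f$ with $Uf$ a weak equivalence, resp.\ fibration, in $\prod_n \Cat$; unwinding, these are exactly the strict groupoidal equivalences and the strict path fibrations of $\gCats$, and the cofibrations are then forced to be the $Q$-cofibrations.

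Two hypotheses of \ref{mdl-str-transfer-tool} need checking. The smallness/presentability hypothesis is immediate: $\gCAT$ is a functor category from a small category into the locally presentable category $\Cat$, hence locally presentable, and $L$ preserves colimits, so the relevant cell complexes are small. The substantive step — the one I expect to be essentially all of the work — is showing that every relative cell complex built from $L$ of the chosen generating acyclic cofibrations of $\prod_n \Cat$ is a weak equivalence. For this it suffices to observe that for each acyclic cofibration $i$ of $\MdlCatG$, the map $\gn{n} \times i$ of $\gCats$ is a levelwise acyclic cofibration: evaluated at $m^+$ it is $\gop(n^+, m^+) \times i$, a (finite) coproduct of copies of $i$, and in any model category a coproduct of acyclic cofibrations is again an acyclic cofibration. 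Since colimits in $\gCAT$ are computed levelwise, levelwise acyclic cofibrations are closed under cobase change and transfinite composition; hence every such relative cell complex is a levelwise acyclic cofibration, in particular a levelwise equivalence of categories, hence a strict groupoidal equivalence. The same argument with $i$ replaced by a cofibration of $\MdlCatG$ shows that $\I$ generates the cofibrations, and combinatoriality of the resulting structure is inherited from the source together with the explicit generating sets $\I$ and $\J$.

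Finally, as a cross-check, the same model structure may be obtained by directly applying the standard existence result for projective model structures on diagrams valued in a combinatorial model category; its weak equivalences, fibrations, and cofibrations are levelwise weak equivalences, levelwise fibrations, and $Q$-cofibrations respectively, matching the statement, and the generating sets can again be taken to be $\I$ and $\J$.
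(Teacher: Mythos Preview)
Your overall strategy is exactly the paper's: the paper does not argue the theorem at all but simply records that it is the projective model structure on $[\gop,\Cat]$ induced by $\MdlCatG$, citing an appendix of \cite{JL}. Your final paragraph, invoking the standard existence theorem for projective model structures on diagrams in a combinatorial model category, is therefore a complete and faithful proof.

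There is, however, a genuine gap in your detailed transfer argument, and it is worth flagging because it is inherited from an error in the paper's displayed set $\J$. You take $\{i_0,i_1:0\hookrightarrow I\}$ as a set of generating acyclic cofibrations for $\MdlCatG$, but this is false: the unique functor $I\to\ast$ has the right lifting property with respect to both $i_0$ and $i_1$ (for $i_0$, lift by $id_I$ or the constant functor at $1$ depending on the chosen object; dually for $i_1$), yet $I\to\ast$ cannot be a fibration in $\MdlCatG$ since $I$ is not a groupoid and hence not fibrant. Thus RLP against $\{i_0,i_1\}$ does not characterize path fibrations, and the set $\J$ of \eqref{gen-acyc-cof} is not a generating set for the acyclic cofibrations of the strict groupoidal model structure. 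The transfer theorem \ref{mdl-str-transfer-tool}, applied with this $\J$, would not produce the model structure claimed.

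This does not damage your argument structurally: $\MdlCatG$ is combinatorial by Theorem~\ref{Gpd-mdl-str}, so it admits \emph{some} set $J_0$ of generating acyclic cofibrations, and your verification that $\gn{n}\times j$ is a levelwise acyclic cofibration for any acyclic cofibration $j$ of $\MdlCatG$, together with closure under cobase change and transfinite composition, goes through verbatim with $J_0$ in place of $\{i_0,i_1\}$. Alternatively, simply rely on your cross-check paragraph, which is both correct and closer to what the paper actually does.
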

 A proof of this proposition is given in an appendix of \cite{JL}.
 \begin{rem}
 	\label{fib-gpd-mdl-gcat}
 	An $\gCat$ $X$ is fibrant in the strict model category of groupoids if and only if $X(n^+)$ is a groupoid for each $n^+ \in \gop$.
 	\end{rem}
 \begin{rem}
 	\label{acyc-fib-gpd-mdl-gcat}
 	Strict acyclic path fibrations are degreewise acyclic fibrations in $\MdlCatG$ therefore they are the same as strict fibrations in the strict model category structure on $\gCAT$, \cite{Sharma}, namely they are degreewise acyclic fibrations in the natural model category structure on $\Cat$.
 	This implies that the cofibrations in the strict model category of groupoids are the same as those in the strict model category structure on $\gCAT$.
 \end{rem}
 
 To each pair of objects $(X, C) \in Ob(\gCAT) \times Ob(\Cat)$ we can assign a $\gCat$ 
 $\TensP{X}{C}{}$
 which is defined in degree $n$ as follows:
 \[
 (\TensP{X}{C}{})(n^+) :=  X(n^+) \times C,
 \]
 This assignment
 is functorial in both variables and therefore we have a bifunctor
 \[
 \TensP{-}{-}{}:\gCAT \times \Cat \to \gCAT.
 \]
 Now we will define a couple of function objects for the category $\gCAT$.
 The first function object enriches the category $\gCAT$ over
 $\Cat$ \emph{i.e.} there is a bifunctor
 \[
 \MapC{-}{-}{\gCAT}:\gCAT^{op} \times \gCAT \to \Cat
 \]
 which assigns to any pair of objects $(X, Y) \in Ob(\gCAT) \times Ob(\gCAT)$, a category
 $\MapC{X}{Y}{\gCAT}$ whose set of objects is the following
 \[
 Ob(\MapC{X}{Y}{\gCAT}) := Hom_{\gCAT}(X, Y)
 \]
 and the morphism set of this category are defined as follows:
 \[
 Mor(\MapC{X}{Y}{\gCAT}) := Hom_{\gCAT}(X \times I, Y)
 \]
 For any $\gCat$ $X$, the functor $\TensP{X}{-}{}:\Cat \to \gCAT$ is
 left adjoint to the functor $\MapC{X}{-}{\gCAT}:\gCAT \to \Cat$. The counit of this adjunction
 is the evaluation map $ev:\TensP{X}{\MapC{X}{Y}{\gCAT}}{} \to Y$
 and the unit is the obvious functor $C \to \MapC{X}{\TensP{X}{C}{}}{\gCAT}$.
 To any pair of objects $(C, X) \in Ob(\Cat) \times Ob(\gCAT)$ we can assign a $\gCat$ $\bHom{C}{X}{\gCAT}$
 which is defined in degree $n$ as follows:
 \[
 (\bHom{C}{X}{\gCAT})(n^+) := \CatHom{C}{X(n^+)} \ .
 \]
 This assignment
 is functorial in both variable and therefore we have a bifunctor
 \[
 \bHom{-}{-}{\gCAT}:\Cat^{op} \times \gCAT \to \gCAT.
 \]
 For any $\gCat$ $X$, the functor $\bHom{-}{X}{\gCAT}:\Cat \to \gCAT^{op}$ is
 left adjoint to the functor $\MapC{-}{X}{\gCAT}:\gCAT^{op} \to \Cat$. 
 The following proposition summarizes the above discussion.
\begin{prop}
\label{two-var-adj-cat-gcat}
There is an adjunction of two variables
\begin{multline}
\label{two-var-adj-gcat}
(\TensP{-}{-}{}, \bHom{-}{-}{\gCAT}, \MapC{-}{-}{\gCAT}) : \gCAT \times \Cat
\\  \to \gCAT.
\end{multline}

\end{prop}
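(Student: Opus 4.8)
The plan is to observe that the three bifunctors in the statement already carry, by the discussion preceding the proposition, the two one-variable adjunctions out of which an adjunction of two variables is built, and then to promote the associated hom-set bijections to be natural in all three arguments at once. Giving the adjunction \eqref{two-var-adj-gcat} (see \cite[Def. 4.1.12]{Hovey}) amounts to producing bijections
\[
\Hom_{\gCAT}(\TensP{X}{C}{}, Y) \;\cong\; \Hom_{\gCAT}(X, \bHom{C}{Y}{\gCAT}) \;\cong\; \Hom_{\Cat}(C, \MapC{X}{Y}{\gCAT}),
\]
natural in $X \in \gCAT$, $C \in \Cat$ and $Y \in \gCAT$ simultaneously. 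The bijection between the first and third terms is exactly the hom-set isomorphism of the adjunction $\TensP{X}{-}{} \dashv \MapC{X}{-}{\gCAT}$, and the bijection between the second and third terms is exactly that of the adjunction $\bHom{-}{Y}{\gCAT} \dashv \MapC{-}{Y}{\gCAT}$, read through the identification $\Hom_{\gCAT^{op}}(\bHom{C}{Y}{\gCAT}, X) = \Hom_{\gCAT}(X, \bHom{C}{Y}{\gCAT})$; both adjunctions are recorded above, the first with its unit and counit written out. Composing these two gives the bijection between the first two terms, and since all three pairwise bijections factor through the common term $\Hom_{\Cat}(C, \MapC{X}{Y}{\gCAT})$, the resulting triangle commutes automatically.

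First I would pin down the bijections by hand, since the explicit description is what makes the naturality checks transparent. A morphism $\TensP{X}{C}{} \to Y$ of $\gCats$ is a family of functors $f_{n^+}\colon X(n^+) \times C \to Y(n^+)$ natural in $n^+ \in \gop$; applying the cartesian closedness of $\Cat$ in each degree transports it to the natural family $\widehat{f}_{n^+}\colon X(n^+) \to [C, Y(n^+)] = (\bHom{C}{Y}{\gCAT})(n^+)$, i.e.\ to a morphism $X \to \bHom{C}{Y}{\gCAT}$ of $\gCats$, and this assignment is plainly a bijection. The same family also determines a functor $C \to \MapC{X}{Y}{\gCAT}$: an object $c$ of $C$ is sent to the natural transformation $X \Rightarrow Y$ obtained by inserting $c$ into the last slot, and a morphism $c \to c'$ of $C$ is sent to the natural transformation $X \times I \Rightarrow Y$ it induces; again this is a bijection, and by construction it is compatible with the previous one through the common middle term.

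The one point requiring genuine care is naturality, and this is where I expect the main (though still routine) effort to go. Naturality of each pairwise bijection in $C$, and in whichever of $X, Y$ is \emph{not} the held-fixed parameter of the corresponding one-variable adjunction, is already part of that adjunction; what is left is naturality in the held-fixed parameter --- naturality in $X$ for $\TensP{X}{-}{} \dashv \MapC{X}{-}{\gCAT}$ and naturality in $Y$ for $\bHom{-}{Y}{\gCAT} \dashv \MapC{-}{Y}{\gCAT}$. Here one must keep in mind that $\MapC{-}{-}{\gCAT}$ and $\bHom{-}{-}{\gCAT}$ are contravariant in their first arguments, so these are statements of naturality with respect to $\gCAT^{op}$ (resp.\ $\Cat^{op}$); but the verification itself is immediate from the degreewise formulas of the previous paragraph, which are manifestly functorial in $X$ and in $Y$. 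Equivalently, one checks that the unit $C \to \MapC{X}{\TensP{X}{C}{}}{\gCAT}$ and the counit $\ev\colon \TensP{X}{\MapC{X}{Y}{\gCAT}}{} \to Y$ --- together with their analogues for the second adjunction --- are natural in the parameter $X$ (resp.\ $Y$). Putting these observations together produces the adjunction of two variables \eqref{two-var-adj-gcat}.
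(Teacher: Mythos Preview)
Your proposal is correct and follows precisely the approach the paper intends: the paper gives no separate proof, stating only that the proposition ``summarizes the above discussion,'' where the two one-variable adjunctions $\TensP{X}{-}{} \dashv \MapC{X}{-}{\gCAT}$ and $\bHom{-}{Y}{\gCAT} \dashv \MapC{-}{Y}{\gCAT}$ are recorded. You have simply spelled out the assembly of these into a two-variable adjunction, together with the naturality checks the paper leaves implicit.
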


 \begin{thm}
  \label{enrich-GamCAT-CAT}
  The strict model category of $\gGpds$ $\gCAT$ is a $\MdlCatG$-enriched model category.
 \end{thm}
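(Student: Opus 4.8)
The plan is to show that the two-variable adjunction $(\TensP{-}{-}{},\bHom{-}{-}{\gCAT},\MapC{-}{-}{\gCAT})$ of Proposition~\ref{two-var-adj-cat-gcat} is a Quillen adjunction of two variables with respect to the strict model structure on $\gCAT$ of Theorem~\ref{str-mdl-cat-gCat} and the model category $\MdlCatG$ of groupoids on $\Cat$, and to dispatch the unit axiom; by definition these two facts together are precisely the assertion that $\gCAT$ is a $\MdlCatG$-enriched model category. I would begin by noting that $\MdlCatG$ is a (cartesian, symmetric) monoidal model category: the pushout--product axiom for $\times$ on $\Cat$ is Theorem~\ref{cart-closed-gp-cat}, and its monoidal unit, the terminal category, is cofibrant since every category is cofibrant in $\MdlCatG$. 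The unit axiom for the action is then immediate, because $\TensP{X}{\ast}{}=X$ naturally in $X$ and $\ast$ is already cofibrant, so the comparison map out of the tensor with a cofibrant replacement of the unit is the identity on $X$.

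The content is the pushout--product axiom: for a cofibration $f\colon X\to X'$ in $\gCAT$ and a cofibration $g\colon C\to C'$ in $\MdlCatG$ one needs the induced map
\[
f\,\Box\,g\colon\;\TensP{X'}{C}{}\;\coprod_{\TensP{X}{C}{}}\;\TensP{X}{C'}{}\;\longrightarrow\;\TensP{X'}{C'}{}
\]
to be a cofibration of $\gCAT$ which is acyclic whenever $f$ or $g$ is. Both model categories are combinatorial, so by Lemma~\ref{Q-bifunctor-char} it is enough to treat the cases in which $f$ runs over the generating cofibrations $\I$ of \eqref{gen-cof} (resp. over the generating acyclic cofibrations $\J$ of \eqref{gen-acyc-cof}) and $g$ runs over a set of generating cofibrations of $\MdlCatG$, taken to be $\{\partial_0,\partial_1,\partial_2\}$ (resp. over a set of generating acyclic cofibrations of $\MdlCatG$, of which only their acyclicity will be used). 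Each member of $\I$ has the form $\TensP{\gn{n}}{\partial_i}{}$, so the key step is the compatibility isomorphism, valid for arrows $j$ and $g$ in $\Cat$,
\[
\bigl(\TensP{\gn{n}}{j}{}\bigr)\,\Box\,g\;\cong\;\TensP{\gn{n}}{(j\,\Box_{\Cat}\,g)}{},
\]
where $j\Box_{\Cat}g$ denotes the cartesian pushout--product in $\Cat$; it follows from the evident associativity isomorphism $\TensP{(\TensP{Z}{A}{})}{B}{}\cong\TensP{Z}{(A\times B)}{}$, read off from the level formula $(\TensP{Z}{A}{})(m^{+})=Z(m^{+})\times A$, together with the fact that $\TensP{\gn{n}}{-}{}\colon\Cat\to\gCAT$ preserves coproducts and pushouts, being a left adjoint.

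To finish I would use that $\TensP{\gn{n}}{-}{}\colon\Cat\to\gCAT$ is a left Quillen functor from $\MdlCatG$ to the strict model structure on $\gCAT$: its right adjoint $\MapC{\gn{n}}{-}{\gCAT}$ is, by the Yoneda lemma, the evaluation functor $Y\mapsto Y(n^{+})$, and evaluation manifestly sends strict path fibrations to path fibrations and strict acyclic path fibrations to acyclic path fibrations, so $\TensP{\gn{n}}{-}{}$ preserves cofibrations and acyclic cofibrations. Then for $f=\TensP{\gn{n}}{\partial_i}{}\in\I$ and $g$ a cofibration of $\MdlCatG$, the map $\partial_i\Box_{\Cat}g$ is a cofibration of $\MdlCatG$ by Theorem~\ref{cart-closed-gp-cat}, acyclic if $g$ is, and applying $\TensP{\gn{n}}{-}{}$ together with the compatibility isomorphism exhibits $f\Box g$ as a cofibration of $\gCAT$, acyclic if $g$ is. For $f=\TensP{\gn{n}}{i_k}{}\in\J$ and $g=\partial_j$, the maps $i_0\colon 0\hookrightarrow I$ and $i_1\colon 1\hookrightarrow I$ are cofibrations of $\MdlCatG$ that are groupoidal equivalences --- by Proposition~\ref{char-weak-eq}, since $\Pi_1$ sends each of them to an equivalence of categories, $\Pi_1(I)$ being equivalent to the terminal category --- so $i_k\Box_{\Cat}\partial_j$ is an acyclic cofibration of $\MdlCatG$ by Theorem~\ref{cart-closed-gp-cat}, and $\TensP{\gn{n}}{-}{}$ carries it to an acyclic cofibration of $\gCAT$. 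These are exactly the three conditions of Lemma~\ref{Q-bifunctor-char}.

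The genuine work here is bookkeeping rather than anything conceptual: setting up the compatibility isomorphism $(\TensP{\gn{n}}{j}{})\Box g\cong\TensP{\gn{n}}{(j\Box_{\Cat}g)}{}$ so that it is natural enough to interact correctly with the cellular presentations underlying Lemma~\ref{Q-bifunctor-char}, and identifying $\MapC{\gn{n}}{-}{\gCAT}$ with evaluation at $n^{+}$ in order to see that $\TensP{\gn{n}}{-}{}$ is left Quillen. A convenient feature of this route is that it never requires an explicit description of a generating set of acyclic cofibrations of $\MdlCatG$: in the one place such a set appears --- the case $f\in\I$ with $g$ a generating acyclic cofibration --- only the acyclicity of $g$ enters, and the cartesian closedness of $\MdlCatG$ together with the left Quillen property of $\TensP{\gn{n}}{-}{}$ supplies the rest.
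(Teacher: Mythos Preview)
Your argument is correct, but it takes a different route from the paper's. You verify condition~(1) of Lemma~\ref{Q-bifunctor-char} (the pushout--product axiom for $\TensP{-}{-}{}$) by reducing to generating (acyclic) cofibrations of $\gCAT$, exhibiting the compatibility isomorphism $(\TensP{\gn{n}}{j}{})\,\Box\,g\cong\TensP{\gn{n}}{(j\,\Box_{\Cat}\,g)}{}$, and then invoking that $\TensP{\gn{n}}{-}{}$ is left Quillen. The paper instead verifies condition~(2): for a cofibration $g\colon C\to D$ in $\MdlCatG$ and a strict path fibration $p\colon Y\to Z$ of $\gCats$, it shows directly that the pullback--hom map $\bhom^{\Box}_{\gCAT}(g,p)$ is a strict path fibration, acyclic when either input is. Because $\bHom{-}{-}{\gCAT}$ is defined degreewise by the internal hom of $\Cat$, this reduces in degree $n^+$ to the cartesian closedness of $\MdlCatG$ (Theorem~\ref{cart-closed-gp-cat}) applied to $g$ and $p(n^+)$. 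The paper's approach exploits the levelwise nature of the strict model structure to avoid all of your cellular bookkeeping: no reduction to generators, no compatibility isomorphism, and no identification of $\MapC{\gn{n}}{-}{\gCAT}$ with evaluation are needed. Your route, while longer, has the virtue of making the left Quillen property of $\TensP{\gn{n}}{-}{}$ explicit, which is itself a useful fact. One small quibble: the reduction to generating (acyclic) cofibrations is not part of Lemma~\ref{Q-bifunctor-char} as stated (that lemma only records the equivalence of three formulations), but is of course a standard consequence of cofibrant generation.
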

 \begin{proof}
  We will show that the adjunction of two variables \eqref{two-var-adj-gcat}
  is a Quillen adjunction for the strict model category structure of $\gGpds$
 on $\gCAT$ and $\MdlCatG$.
  In order to do so, we will verify condition
 (2) of Lemma \ref{Q-bifunctor-char}. Let $g:C \to D$ be a cofibration
 in $\Cat$ and let $p:Y \to Z$ be a strict path fibration of $\gCats$,
 we have to show that the induced map
 \[
  \bhom^{\Box}_{\gCAT}(g, p):\bHom{X}{Y}{\gCAT} \to \bHom{D}{Z}{\gCAT}
  \underset{\bHom{C}{Z}{\gCAT}} \times \bHom{C}{Y}{\gCAT}
 \]
 is a path fibration which is acyclic if either of $g$ or $p$ is
 acyclic. It would be sufficient to check that the above morphism is degreewise
 a fibration in $\Cat$, i.e. for all $n^+ \in \gop$, the morphism
 \begin{equation*}
  \bhom^{\Box}_{\gCAT}(g, p)(n^+): [D, Y(n^+)]  \to
  [D, Z(n^+)] \underset{[C, Z(n^+)]} \times  [C, Y(n^+)] ,
 \end{equation*}
 is a path fibration in $\Cat$. This follows from the observations that the functor $p(n^+):Y(n^+) \to Z(n^+)$
 is a path fibration in $\Cat$ and the model category 
 $\MdlCatG$ is a cartesian closed model category whose internal hom is provided by the bifunctor $[-, -]$.
 \end{proof}
The strict model category of $\gGpds$ is also a simplicial model category:
\begin{prop}
	\label{enrich-over-sSets}
	The strict model category of $\gGpds$ is a simplicial model category.
	\end{prop}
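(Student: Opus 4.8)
The plan is to transport the $\MdlCatG$-enrichment of Theorem~\ref{enrich-GamCAT-CAT} to a simplicial structure along the adjunction $\tau_1\dashv N$ of Lemma~\ref{N-htpy-ref}, in precisely the way the cartesian closed structure of $\MdlCatG$ gave rise to its simplicial structure in Proposition~\ref{simp-mdl-cat}. First I would write down the three bifunctors. The simplicial function complex
\[
\MapC{-}{-}{\sSets} := N \circ \MapC{-}{-}{\gCAT} : \gCAT^{op} \times \gCAT \to \sSets
\]
is the nerve of the $\Cat$-valued function object of Proposition~\ref{two-var-adj-cat-gcat}; the tensor $\gCAT \times \sSets \to \gCAT$ sends $(X,S)$ to $\TensP{X}{\tau_1(S)}{}$; and the cotensor $\sSets^{op}\times\gCAT\to\gCAT$ sends $(S,X)$ to $\bHom{\tau_1(S)}{X}{\gCAT}$.

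Next I would check that these assemble into a two-variable adjunction, which is purely formal: combining the two-variable adjunction \eqref{two-var-adj-gcat}, the adjunction $\TensP{X}{-}{}\dashv\MapC{X}{-}{\gCAT}$, and $\tau_1\dashv N$ yields, naturally in $X$, $S$ and $Y$,
\begin{multline*}
\sSets\!\left(S, N(\MapC{X}{Y}{\gCAT})\right) \cong \Cat\!\left(\tau_1(S), \MapC{X}{Y}{\gCAT}\right) \\ \cong \gCAT\!\left(\TensP{X}{\tau_1(S)}{}, Y\right) \cong \gCAT\!\left(X, \bHom{\tau_1(S)}{Y}{\gCAT}\right).
\end{multline*}

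The substantive step is the pushout-product axiom, i.e.\ condition~$(3)$ of Lemma~\ref{Q-bifunctor-char}. Given a cofibration $i:S\to T$ in $\sSetsK$ and a strict path fibration $p:Y\to Z$ of $\gCats$, one must show that
\[
\bhom^{\Box}_{\gCAT}(\tau_1(i), p):\bHom{\tau_1(T)}{Y}{\gCAT} \to \bHom{\tau_1(T)}{Z}{\gCAT}\underset{\bHom{\tau_1(S)}{Z}{\gCAT}}{\times}\bHom{\tau_1(S)}{Y}{\gCAT}
\]
is a strict path fibration of $\gCats$, acyclic if $i$ or $p$ is. Here one invokes Lemma~\ref{N-htpy-ref}: $\tau_1$ is left Quillen from $\sSetsK$ to $\MdlCatG$, so $\tau_1(i)$ is a cofibration of categories, acyclic whenever $i$ is a weak homotopy equivalence. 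The displayed map is then exactly $\bhom^{\Box}_{\gCAT}(g,p)$ for the cofibration of categories $g=\tau_1(i)$, and Theorem~\ref{enrich-GamCAT-CAT} asserts that this is a strict path fibration of $\gCats$, acyclic if $g$ or $p$ is. Finally, as a sanity check one notes that when $X$ is $Q$-cofibrant and $Y$ is fibrant, $\MapC{X}{Y}{\gCAT}$ is fibrant in $\MdlCatG$, hence a groupoid, so $\MapC{X}{Y}{\sSets} = N(\MapC{X}{Y}{\gCAT})$ is a Kan complex, as in the proof of Proposition~\ref{Kan-fib-btw-gpds}.

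The only genuine work is the identification underlying the previous paragraph: recognising $\bhom^{\Box}_{\gCAT}(\tau_1(i),p)$ as the image under the $\MdlCatG$-enrichment of an ordinary pushout-product, which amounts to commuting $\tau_1$ past the tensor $\TensP{-}{-}{}$ (equivalently, past the cotensor $\bHom{-}{-}{\gCAT}$). Once that is in place no new computation remains, and the statement reduces to Theorem~\ref{enrich-GamCAT-CAT} together with the left-Quillenness of $\tau_1$.
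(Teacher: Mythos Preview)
Your proposal is correct and follows essentially the same route as the paper: define the simplicial tensor, cotensor, and function complex by composing the $\Cat$-enrichment of Proposition~\ref{two-var-adj-cat-gcat} with the adjunction $\tau_1\dashv N$, verify the two-variable adjunction by the same chain of bijections, and then reduce the compatibility axiom to Theorem~\ref{enrich-GamCAT-CAT} via the left-Quillenness of $\tau_1$ from Lemma~\ref{N-htpy-ref}. The only cosmetic difference is that the paper checks condition~$(1)$ of Lemma~\ref{Q-bifunctor-char} (the pushout-product of cofibrations), whereas you check the dual pullback-corner form with the cotensor; note that what you verify is actually condition~$(2)$ of that lemma, not condition~$(3)$ as you write, since your cofibration lives on the $\sSets$ side and you are using $\bHom{-}{-}{\gCAT}$ rather than $\MapC{-}{-}{\gCAT}$.
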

\begin{proof}
	 The right Hom bifunctor of the proposed simplicial enrichment
	\[
	\Map({-}{-}):\gCAT^{op} \times \gCAT \to \sSets
	\]
	is defined as follows:
	\[
	\gCAT^{op} \times \gCAT \overset{\MapC{-}{-}{\gCAT}} \to \Cat \overset{N} \to \sSets.
	\]
	The left hom bifunctor
	\[
	\sSets^{op} \times \gCAT \to \gCAT
	\]
	is defined as follows:
	\[
	\sSets^{op} \times \gCAT \overset{\tau_1 \times id} \to \Cat^{op} \times \gCAT \overset{\bHom{-}{-}{\gCAT}} \to \gCAT
	\]
	The tensor product bifunctor
	\[
	\gCAT \times \sSets \to \gCAT
	\]
	is defined as follows:
	\[
	\gCAT \times \sSets \overset{\id \times \tau_1} \to \gCAT \times \Cat \overset{-\otimes-} \to \gCAT.
	\]
	Let $X, Y$ be $\gCats$ and $S$ be a simplicial sets
	We have the following chain of bijections:
	\begin{multline*}
	\gCAT(X, \bHom{\tau_1(S)}{Y}{\gCAT}) \cong \gCAT(X \otimes \tau_1(S), Y) \cong \gCAT(\tau_1(S) \otimes X, Y) \cong \\ \Cat(\tau_1(S), \MapC{X}{Y}{\gCAT}) \cong \sSets(S, N(\MapC{X}{Y}{\gCAT})).
	\end{multline*}
	The above chain of bijections proves a two-variable adjunction. Now we need to verify the so called $(SM7)$ axiom. Let $u:X \to Y$ be a cofibration in $\gCAT$ and $i:S \to T$ be a cofibration in $\sSets$. We want to show that the pushout product:
	\[
	u \Box i:Y \otimes \tau_1(S) \underset{X \otimes \tau_1(S)} \coprod X \otimes \tau_1(T) \to Y \otimes \tau_1(T)
	\]
	is a cofibration which is acyclic when either $u$ or $i$ is acyclic.
	The functor $\tau_1$ is a left Quillen functor therefore it preserves cofibrations thus $\tau_1(i)$ is a cofibration in $\Cat$ which is acyclic if $i$ is acyclic. Now the result follows from theorem \ref{enrich-GamCAT-CAT}.
	which says that $\gCAT$ is a $\MdlCatG$-model category.
	
	\end{proof}

 Let $X$ and $Y$ be two $\gCats$, the \emph{Day convolution product} of $X$ and $Y$ denoted by $X \ast Y$ is defined as follows:
 \begin{equation}
 \label{Day-Con-prod}
 X \ast Y(n^+) := \int^{(k^+, l^+) \in \gop} \gop(k^+\wedge l^+, n^+) \times X(k^+) \times Y(l^+).
 \end{equation}
 Equivalently, one may define the Day convolution product of $X$ and $Y$ as the left Kan extension of their \emph{external tensor product} $X \overline \times Y$ along the smash product functor
 \[
 - \wedge - :\gop \times \gop \to \gop.
 \]
 we recall that the external tensor product $X \overline \times Y$ is a bifunctor
 \begin{equation*}
 X \overline \times Y:\gop \times \gop \to \Cat
 \end{equation*}
 which is defined on objects by 
 \[
 X \overline \times Y(m^+, n^+) = X(m^+) \times Y(n^+).
 \]
\begin{prop}
\label{GCAT-SM}
The category of all $\gCats$ $\gCAT$ is a symmetric monoidal category under the Day convolution product \eqref{Day-Con-prod}.
The unit of the symmetric monoidal structure is the representable $\gCat$ $\gn{1}$.
\end{prop}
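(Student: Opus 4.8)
The plan is to identify the stated structure as the standard Day convolution monoidal structure on a functor category and to verify that its hypotheses are met here. Two ingredients are needed. First, the source $\gop$ is a small symmetric monoidal category under the smash product $- \wedge -$ of finite based sets, with strict unit $1^+ = S^0$, since $1^+ \wedge n^+ \cong n^+$ canonically, while associativity and symmetry of $\wedge$ hold up to coherent natural isomorphism. Second, the target $\Cat$ is a complete and cocomplete symmetric monoidal category under the cartesian product, with unit the terminal category $\ast$, and it is cartesian closed, so that $- \times -$ preserves colimits separately in each variable. These are exactly the hypotheses under which Day convolution produces a symmetric monoidal structure on $\gCAT = [\gop, \Cat]$ with unit the representable functor at the monoidal unit $1^+$, namely $\gn{1}$.

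Concretely, I would argue as follows. The coend in \eqref{Day-Con-prod} exists because $\gop \times \gop$ is small and $\Cat$ is cocomplete, and it is functorial in both arguments (so $\ast$ extends to a bifunctor on $\gCAT$) by the universal property of coends. To build the associativity constraint one expands $(X \ast Y) \ast Z$ and $X \ast (Y \ast Z)$, uses that $\times$ in $\Cat$ commutes with coends in each variable to rewrite each side as an iterated coend over $\gop^{\times 3}$, and then applies the co-Yoneda lemma in the intermediate variable to collapse it to a single coend; explicitly both sides become naturally isomorphic to $\int^{k^+, l^+, m^+} \gop((k^+ \wedge l^+) \wedge m^+, n^+) \times X(k^+) \times Y(l^+) \times Z(m^+)$ up to the associator of $\wedge$, which supplies the needed isomorphism. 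The symmetry constraint $X \ast Y \cong Y \ast X$ comes from the symmetry of $\wedge$ together with the symmetry of $\times$ in $\Cat$. For the unit, I would compute $\gn{1} \ast X(n^+) = \int^{k^+, l^+} \gop(k^+ \wedge l^+, n^+) \times \gop(1^+, k^+) \times X(l^+)$; co-Yoneda in the variable $k^+$ collapses the first two factors to $\gop(1^+ \wedge l^+, n^+) \cong \gop(l^+, n^+)$, and a second application of co-Yoneda in $l^+$ identifies the result with $X(n^+)$, naturally in $n^+$ and in $X$, so $\gn{1}$ is a two-sided unit.

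Finally, the coherence axioms --- pentagon, triangle, and the two hexagons --- follow formally, because each structural isomorphism is assembled from the corresponding coherent isomorphism in $\gop$ and in $(\Cat, \times, \ast)$ together with co-Yoneda reductions, all of which are themselves coherent; chasing the relevant diagrams reduces them to the already-known coherence in $\gop$ and in $\Cat$. The only step that demands genuine care --- and the main obstacle, though it is bookkeeping rather than a real difficulty --- is tracking the permutations of the cartesian factors in $\Cat$ that the Fubini rearrangements of coends introduce, so that the symmetry isomorphism of $(\Cat, \times)$ is invoked consistently and the coherence diagrams commute on the nose; once a fixed convention for ordering the factors is chosen this is routine. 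Alternatively, the statement can simply be quoted from Day's convolution theorem, with the verifications of the first paragraph being the only thing that needs checking in our setting.
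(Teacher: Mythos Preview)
Your argument is correct and is exactly the standard verification that Day convolution makes a presheaf category symmetric monoidal. The paper, however, gives no proof of this proposition at all: it is stated as a known fact and the exposition moves immediately on to the internal hom. So there is nothing to compare against beyond noting that your write-up supplies what the paper leaves implicit, namely that $(\gop,\wedge,1^+)$ is symmetric monoidal, that $(\Cat,\times,\ast)$ is cocomplete and cartesian closed (so $\times$ preserves coends in each variable), and that the coherence data then transports along co-Yoneda and Fubini. Your closing remark is apt: in the spirit of the paper you could simply invoke Day's original theorem after checking those hypotheses, and dispense with the explicit coend manipulations.
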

Next we define an internal function object of the category $\gCat$
which we will denote by
\begin{equation}
\label{Int-Map-GCAT}
 \MGCat{-}{-}:\gCAT^{op} \times \gCAT \to \gCAT.
 \end{equation}
 Let $X$ and $Y$ be two $\gCats$, we define the $\gCat$ $\MGCat{X}{Y}$ as follows:
 \begin{equation*}
 \MGCat{X}{Y}(n^+) := \MapC{X \ast \gn{n}}{Y}{\gCAT}.
 \end{equation*}
 \begin{prop}
 \label{closed-SM-cat-GCat}
 The category $\gCAT$ is a closed symmetric monoidal category under the Day convolution product. The internal Hom is given by the bifunctor \eqref{Int-Map-GCAT} defined above.
 \end{prop}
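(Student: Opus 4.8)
The plan is to reduce everything to producing, for each fixed $\gCat$ $Y$, a right adjoint to the endofunctor $-\ast Y$ of $\gCAT$ and identifying it with $\MGCat{Y}{-}$. Since $\gCAT$ is already symmetric monoidal under the Day product with unit $\gn{1}$ by Proposition \ref{GCAT-SM}, this is exactly what ``closed symmetric monoidal with internal hom \eqref{Int-Map-GCAT}'' means. Concretely, I would construct a bijection $\gCAT(X\ast Y, Z)\cong \gCAT(X,\MGCat{Y}{Z})$ natural in $X$, $Y$ and $Z$, exhibiting the adjunction and the unit/counit along the way.

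The first step is to record two preliminary facts. (i) The coend formula \eqref{Day-Con-prod} exhibits $-\ast Y$ as a colimit-preserving functor: colimits of $\gCats$ are computed pointwise in $\Cat$, which is cocomplete, and coends commute with colimits and with copowering by a fixed category (the latter because $-\times C$ is a left adjoint in $\Cat$). (ii) A short coend computation --- Fubini for coends together with the co-Yoneda lemma applied to the functor $-\wedge l^+:\gop\to\gop$ --- yields a natural isomorphism $\TensP{\gn{m}}{C}{}\ast Y\cong\TensP{\gn{m}\ast Y}{C}{}$. Feeding the $\Cat$-enriched density presentation $X\cong\int^{m^+\in\gop}\TensP{\gn{m}}{X(m^+)}{}$ through (i) and (ii) gives $X\ast Y\cong\int^{m^+}\TensP{\gn{m}\ast Y}{X(m^+)}{}$.

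With this presentation in hand the adjunction falls out of a chain of natural isomorphisms, using that $\gCAT(-,Z)$ sends coends to ends, the adjunction $\TensP{W}{-}{}\dashv\MapC{W}{-}{\gCAT}$ recorded before Proposition \ref{two-var-adj-cat-gcat}, the symmetry of the Day product, the defining formula $\MGCat{Y}{Z}(m^+)=\MapC{Y\ast\gn{m}}{Z}{\gCAT}$, and the end description of natural transformations:
\begin{align*}
\gCAT(X\ast Y,Z)
&\cong \int_{m^+}\gCAT\!\left(\TensP{\gn{m}\ast Y}{X(m^+)}{},\,Z\right)
 \cong \int_{m^+}\Cat\!\left(X(m^+),\,\MapC{\gn{m}\ast Y}{Z}{\gCAT}\right)\\
&\cong \int_{m^+}\Cat\!\left(X(m^+),\,\MGCat{Y}{Z}(m^+)\right)
 \cong \gCAT\!\left(X,\,\MGCat{Y}{Z}\right).
\end{align*}
I would then verify that the composite is natural in all three variables and that the induced unit and counit are the evident coevaluation and evaluation maps, from which the closed symmetric monoidal structure follows (and, formally, its enriched upgrade $\MGCat{X\ast Y}{Z}\cong\MGCat{X}{\MGCat{Y}{Z}}$).

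The part that needs care rather than cleverness is the coend bookkeeping of the second paragraph: checking cocontinuity of Day convolution in each variable, establishing the reduction $\TensP{\gn{m}}{C}{}\ast Y\cong\TensP{\gn{m}\ast Y}{C}{}$ by an honest Fubini-plus-co-Yoneda argument, and confirming that every isomorphism in the display is genuinely natural in $X$, $Y$ and $Z$. None of this is deep --- it is the standard machinery behind Day's theorem specialised to $\gop$ and $\Cat$ --- but it is where the actual work sits.
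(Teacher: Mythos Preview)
Your argument is correct and is the standard coend-calculus proof of Day's theorem specialised to $\gCAT=[\gop,\Cat]$. The paper, however, does not supply a proof of this proposition at all: it is stated as a known fact about Day convolution (cf.\ the reference to \cite{Day2} a few lines earlier) and used without further justification. So there is nothing to compare against; you have simply written out the argument that the paper takes for granted.
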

 The above proposition implies that for each $n \in \Nat$
 the functor $- \ast \gn{n}:\gCAT \to \gCAT$ has a right adjoint $\MGCat{\gn{n}}{-}:\gCAT \to \gCAT$. It follows from
 \cite[Thm.]{} that the functor $- \ast \gn{n}$ has another right adjoint which we denote by $-(n^+ \wedge -):\gCAT \to \gCAT$. We will denote $-(n^+ \wedge -)(X)$ by $X(n^+ \wedge -)$, where $X$ is a $\gCat$. The $\gCat$ $X(n^+ \wedge -)$ is defined by the following composite:
 \begin{equation}
 \label{defn-X-n-wedge}
  \gop \overset{n^+ \wedge -} \to \gop \overset{X} \to \Cat.
 \end{equation}
 The following proposition sums up this observation:
 \begin{prop}
  \label{rt-adjs-DayCP}
  There is a natural isomorphism
  \[
  \phi: -(n^+ \wedge -) \cong \MGCat{\gn{n}}{-}.
  \]
  In particular,
  for each $\gCat$ $X$ there is an isomorphism of $\gCats$
  \[
  \phi(X):X(n^+ \wedge -) \cong \MGCat{\gn{n}}{X}.
  \]

 \end{prop}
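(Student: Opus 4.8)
The plan is to deduce the isomorphism from the uniqueness of adjoint functors: I will exhibit both $-(n^+ \wedge -)$ and $\MGCat{\gn{n}}{-}$ as right adjoints of the one functor $- \ast \gn{n} : \gCAT \to \gCAT$. That $\MGCat{\gn{n}}{-}$ is right adjoint to $- \ast \gn{n}$ is immediate from Proposition~\ref{closed-SM-cat-GCat}: in any closed symmetric monoidal category the internal hom out of a fixed object $A$ is right adjoint to $-\ast A$, here with $A = \gn{n}$. So the real work is to identify $-(n^+ \wedge -)$ as a right adjoint of $- \ast \gn{n}$ — which is exactly the assertion attributed to the cited theorem in the paragraph preceding the proposition — and then to invoke uniqueness of adjoints.

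By definition $-(n^+ \wedge -)$ is precomposition with the endofunctor $n^+ \wedge - : \gop \to \gop$, and precomposition along any functor between small categories (into the cocomplete category $\Cat$) admits a left adjoint given by left Kan extension; thus $-(n^+ \wedge -)$ has left adjoint $\Lan{n^+ \wedge -}{-}$. It therefore suffices to produce an isomorphism $\Lan{n^+ \wedge -}{X} \cong X \ast \gn{n}$ of $\gCats$, natural in the $\gCat$ $X$. Writing $\gn{n} = \gop(n^+, -)$ for the representable $\gCat$, I would compute the right-hand side pointwise from the Day convolution coend
\[
(X \ast \gn{n})(m^+) = \int^{(k^+, l^+) \in \gop} \gop(k^+ \wedge l^+, m^+) \times X(k^+) \times \gop(n^+, l^+),
\]
carry out the coend in $l^+$ first, and apply the co-Yoneda (density) lemma to the contravariant functor $l^+ \mapsto \gop(k^+ \wedge l^+, m^+)$ against the representable $\gop(n^+, -)$ to collapse $\int^{l^+} \gop(n^+, l^+) \times \gop(k^+ \wedge l^+, m^+)$ to $\gop(k^+ \wedge n^+, m^+)$. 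This yields
\[
(X \ast \gn{n})(m^+) \cong \int^{k^+ \in \gop} \gop(k^+ \wedge n^+, m^+) \times X(k^+),
\]
which is precisely the pointwise coend formula for $\Lan{n^+ \wedge -}{X}$, once one uses the symmetry isomorphism $k^+ \wedge n^+ \cong n^+ \wedge k^+$ of the symmetric monoidal structure on $\gop$. Uniqueness of left (hence of right) adjoints then gives the natural isomorphism $\phi : -(n^+ \wedge -) \cong \MGCat{\gn{n}}{-}$, and evaluating at $X$ gives $\phi(X)$.

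Alternatively one can produce the pointwise isomorphism directly: unwinding the definition, $\MGCat{\gn{n}}{X}(m^+) = \MapC{\gn{n} \ast \gn{m}}{X}{\gCAT}$, and since the Day convolution of two representables is the representable at their smash, $\gn{n} \ast \gn{m} \cong \gop(n^+ \wedge m^+, -)$; the $\Cat$-enriched Yoneda lemma for $\gCAT$ then gives $\MapC{\gop(n^+ \wedge m^+, -)}{X}{\gCAT} \cong X(n^+ \wedge m^+) = \big(X(n^+ \wedge -)\big)(m^+)$, naturally in $m^+$. Either way, the one point that requires care — and the main obstacle — is the bookkeeping: keeping the variances straight in the Fubini rearrangement and in the co-Yoneda reduction of the coend, and using the symmetry isomorphism of $\wedge$ on the skeletal category $\gop$ coherently, so that the resulting isomorphism is genuinely natural in $X$ (and in $n$, for the strengthened statement about $\phi$ as a natural transformation of functors). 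Everything else is formal once the two adjunctions are on the table.
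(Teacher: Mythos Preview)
Your proposal is correct and follows exactly the approach the paper indicates: the paragraph immediately preceding the proposition already records that both $\MGCat{\gn{n}}{-}$ (from Proposition~\ref{closed-SM-cat-GCat}) and $-(n^+\wedge -)$ (via the cited theorem) are right adjoints to $-\ast\gn{n}$, and the proposition is then stated without a formal proof, the uniqueness of adjoints being understood. You have in fact supplied more than the paper does, filling in the Kan-extension and coend computation that justifies the second adjunction and offering the alternative direct Yoneda argument as well.
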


 The next theorem shows that the strict model category $\gCAT$ is compatible with the Day convolution product.
 \begin{thm}
 \label{SM-closed-mdl-str-GCat}
 The strict  groupoidal model category $\gCAT$ is a symmetric monoidal closed model category under the Day convolution product.
 \end{thm}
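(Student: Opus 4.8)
The plan is to verify the pushout-product axiom and the unit axiom, since Proposition~\ref{closed-SM-cat-GCat} already provides the closed symmetric monoidal structure on $\gCAT$, with unit the representable $\gCat$ $\gn{1}$ (Proposition~\ref{GCAT-SM}). So everything reduces to checking compatibility of the Day convolution product with the strict groupoidal model structure.

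First I would handle the unit axiom. By the Yoneda lemma the functor $\TensP{\gn{n}}{-}{}\colon\Cat\to\gCAT$ of Proposition~\ref{two-var-adj-cat-gcat} is the left adjoint $F_{n^+}$ of evaluation at $n^+$, so $\MapC{\gn{n}}{-}{\gCAT}\cong \ev_{n^+}$; since strict path fibrations and strict acyclic fibrations of $\gCats$ are defined degreewise, $\ev_{n^+}$ preserves them, and hence $\TensP{\gn{n}}{-}{}$ is a left Quillen functor to the strict groupoidal model structure on $\gCAT$ (even from the natural model structure on $\Cat$). Applying $\TensP{\gn{1}}{-}{}$ to the cofibrant terminal category $\ast$ gives $\gn{1}$, so $\gn{1}$ is cofibrant and the unit axiom is automatic.

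Next, since $\gCAT$ is combinatorial with generating cofibrations $\I$ of \eqref{gen-cof} and generating acyclic cofibrations $\J$ of \eqref{gen-acyc-cof}, I would invoke the standard reduction (\cite[Cor.~4.2.5]{Hovey}, cf.\ Lemma~\ref{Q-bifunctor-char}): it suffices to show $i\Box j$ is a $Q$-cofibration for $i,j\in\I$ and a strict groupoidal acyclic cofibration for $i\in\I$, $j\in\J$, the other mixed case following by symmetry of $\ast$. Every element of $\I$ has the form $\TensP{\gn{n}}{f}{}$ with $f\in\{\partial_0,\partial_1,\partial_2\}$ a generating cofibration of $\Cat$, and every element of $\J$ the form $\TensP{\gn{m}}{g}{}$ with $g\in\{i_0,i_1\}$ an acyclic cofibration of $\MdlCatG$. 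The key step is a coend computation: using that $-\times A$ preserves colimits in $\Cat$ and that the Day convolution of representables is representable, $\gn{n}\ast\gn{m}\cong\gn{nm}$, one obtains a natural isomorphism $(\TensP{\gn{n}}{A}{})\ast(\TensP{\gn{m}}{B}{})\cong\TensP{\gn{nm}}{(A\times B)}{}$; since each $\TensP{\gn{k}}{-}{}$ preserves pushouts this is compatible with pushout-products, giving $\TensP{\gn{n}}{f}{}\Box\TensP{\gn{m}}{g}{}\cong\TensP{\gn{nm}}{(f\Box g)}{}$, where $f\Box g$ is the pushout-product formed with respect to the cartesian product of $\Cat$.

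To finish, I would combine two earlier results: Theorem~\ref{cart-closed-gp-cat} says $\MdlCatG$ is cartesian closed (with the same cofibrations as the natural structure), so $f\Box g$ is a cofibration of $\Cat$ that is a groupoidal equivalence whenever $f$ or $g$ is --- in particular $\partial_i\Box \partial_j$ is a cofibration and $\partial_i\Box i_k$ an acyclic cofibration in $\MdlCatG$; and $\TensP{\gn{nm}}{-}{}$ is left Quillen, hence carries (acyclic) cofibrations of $\MdlCatG$ to (acyclic) $Q$-cofibrations of $\gCats$. Feeding this back through the displayed isomorphism gives the pushout-product axiom. The hardest point is the acyclic half of that axiom, which genuinely requires the cartesian closedness of the \emph{groupoidal} model structure $\MdlCatG$ (Theorem~\ref{cart-closed-gp-cat}), not merely of the natural one, so that $\partial_i\Box i_k$ is acyclic; granting that input, the rest is the formal coend bookkeeping above.
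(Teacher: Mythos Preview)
Your proof is correct but follows a different route from the paper's. The paper argues on the adjoint side: it first observes that each functor $-\ast\gn{n}$ is left Quillen (because its right adjoint is $-(n^+\wedge-)$, which manifestly preserves degreewise (acyclic) fibrations), and then invokes condition~(3) of Lemma~\ref{Q-bifunctor-char} to reduce the pushout-product axiom to showing that the internal-hom map $\MGBoxCat{f}{p}$ is a (acyclic) strict path fibration. Since $\MGCat{U}{Y}(n^+)=\MapC{U\ast\gn{n}}{Y}{\gCAT}$, this amounts in each degree to the $\MdlCatG$-enrichment statement already proved in Theorem~\ref{enrich-GamCAT-CAT}, applied to the cofibration $f\ast\gn{n}$ and the fibration $p$.

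You instead work directly with the generating (acyclic) cofibrations, computing the Day convolution of the tensors $\TensP{\gn{n}}{A}{}$ explicitly via $\gn{n}\ast\gn{m}\cong\gn{nm}$ to obtain the key isomorphism $\TensP{\gn{n}}{f}{}\Box\TensP{\gn{m}}{g}{}\cong\TensP{\gn{nm}}{(f\Box g)}{}$, and then feed in the cartesian closedness of $\MdlCatG$ (Theorem~\ref{cart-closed-gp-cat}). Both arguments ultimately rest on the same input---the compatibility of $\MdlCatG$ with the cartesian product---but the paper's approach avoids the coend computation and the reduction to generators by exploiting the already-established $\MdlCatG$-enrichment of the strict groupoidal model structure, whereas your approach is more hands-on and makes the role of $\gn{n}\ast\gn{m}\cong\gn{nm}$ explicit. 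Your treatment of the unit axiom is a welcome addition; the paper does not address it.
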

 \begin{proof}
 Using the adjointness which follows from proposition \ref{closed-SM-cat-GCat} one can show that if a map $f:U \to V$ is a (acyclic) cofibration in the strict model category $\gCAT$ then the induced map
 $f \ast \gn{n}:U \ast \gn{n} \to V \ast \gn{n}$ is also a (acyclic) cofibration in the strict model category for all $n \in \Nat$. By $(3)$ of Lemma \ref{Q-bifunctor-char} it is sufficient to show that whenever
 $f$ is a cofibration and $p:Y \to Z$ is a fibration then the map
 \[
    \MGBoxCat{f}{p}:\MGCat{V}{Y} \to \MGCat{V}{Z} \underset{\MGCat{U}{Z}}\times \MGCat{U}{Y}.
 \]
   is a fibration in $\gCAT$ which is acyclic if either $f$ or $p$ is a
   weak equivalence. The above map is a (acyclic) fibration if and only if the map
\begin{multline*}
 \MGBoxCat{f \ast \gn{n}}{p}(n^+): \MapC{V \ast \gn{n}}{Y}{\gCAT} \to \\
  \MapC{V \ast \gn{n}}{Z}{\gCAT}
  \underset{\MapC{U \ast \gn{n}}{Z}{\gCAT}} \times \MapC{U \ast \gn{n}}{Y}{\gCAT}
\end{multline*}

    is a (acyclic) fibration in $\Cat$ for all $n \in \Nat$. Since $f \ast \gn{n}$ is a cofibration as abserved above, the result follows from theorem \ref{enrich-GamCAT-CAT}.
 \end{proof}
\subsection[The model category of coherently commutative monoidal groupoids]{The model category of coherently commutative monoidal groupoids}
\label{EInf-Cat}
  The objective of this subsection is to construct a new model
  category structure on the category $\gCAT$. This new model
  category is obtained by localizing the strict model category
 defined above and we call it the \emph{ model category of coherently commutative monoidal groupoids}. The aim of this new model structure is to endow its homotopy category with a semi-additive structure. In other words we want this new
 model category structure to have finite \emph{homotopy biproducts}.  We go on further to show that this new model category is symmetric monoidal with respect to
 the \emph{Day convolution product}, see \cite{Day2}.   
 
 We want to construct a left Bousfield localization of
 the strict model category of $\gCats$. For each pair $k^+, l^+ \in \gop$,
 we have the obvious \emph{projection maps} in $\gSC$
 \[
  \delta^{k+l}_k:(k+l)^+ \to k^+ \ \ \ \ and \ \ \ \ \delta^{k+l}_l:(k+l)^+ \to l^+.
 \]
 The maps
 \[
 \gop(\delta^{k+l}_k,-):\Gamma^{k} \to \Gamma^{k+l} \ \ \ \ and \ \ \ \ 
 \gop(\delta^{k+l}_l,-):\Gamma^{l} \to \Gamma^{k+l} 
 \]
 induce a map of $\gSs$ on the coproduct which we denote as follows:
 \[
  h_k^l:\Gamma^l \vee \Gamma^l \to \Gamma^{l+k}.
 \]
 
 We now define the class of
 maps $\E_\infty\S$ in $\gCAT$ with respect to which we
 will localize.
 \begin{equation}
 \label{loc-maps-ccmc}
  \E_\infty\S := \lbrace h_k^l:\Gamma^l \vee \Gamma^l \to \Gamma^{l+k}:
  l, k \in \mathbb{Z}^+ \rbrace
 \end{equation}
%
 
 The following proposition gives a characterization of (fibrant)
 $\E_\infty\S$-\emph{local objects}, see \cite[Defn. 3.1.4]{Hirchhorn}
 \begin{prop}
 A $\gCat$ $X$ is an $\E_\infty\S$-local object if and
 only if $X(n^+)$ is a groupoid for all $n^+ \in \gop$ and the following functor
 \[
 (X(\delta^{k+l}_k), X(\delta^{k+l}_l)):X((k+l)^+) 
 \to X(k^+) \times X(l^+) 
 \]
 is an equivalence of categories for all $k^+, l^+ \in \gop$.
 \end{prop}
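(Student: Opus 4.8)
The plan is to unwind the definition of an $\E_\infty\S$-local object and translate it, via enriched co-Yoneda together with the homotopy reflection property of the nerve, into the stated Segal-type condition on $X$. Throughout I would use that the strict model category of $\gGpds$ is simplicial (Proposition \ref{enrich-over-sSets}), so that for a cofibrant source and a fibrant target the homotopy function complex is $\Map^h(-,-) = N(\MapC{-}{-}{\gCAT})$, the nerve of the $\Cat$-valued function complex of Theorem \ref{enrich-GamCAT-CAT}.

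First I would dispose of the fibrancy clause. By \cite[Defn. 3.1.4]{Hirchhorn} an $\E_\infty\S$-local object is in particular fibrant in the strict model category, which by Remark \ref{fib-gpd-mdl-gcat} is exactly the assertion that $X(n^+)$ is a groupoid for every $n^+ \in \gop$; conversely, if $X$ is degreewise a groupoid then it is already strictly fibrant. Hence in both directions we may assume $X$ is degreewise a groupoid, and it remains to prove that such an $X$ is $\E_\infty\S$-local if and only if each functor $(X(\delta^{k+l}_k), X(\delta^{k+l}_l)) : X((k+l)^+) \to X(k^+) \times X(l^+)$ is an equivalence of categories.

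Next I would identify the two function complexes occurring in the locality condition. Each representable $\Gamma^n$ is cofibrant in the strict model category (it is the tensor of a representable with the terminal category, which is cofibrant in the groupoidal model structure), hence so is the coproduct which is the source of $h_k^l$. Since $X$ is fibrant, the enriched co-Yoneda isomorphism $\MapC{\Gamma^n}{X}{\gCAT} \cong X(n^+)$ gives $\Map^h(\Gamma^n, X) \cong N(X(n^+))$, a Kan complex, and since $\Map^h(-, X)$ carries coproducts to products while $N$ preserves products, $\Map^h(\Gamma^k \vee \Gamma^l, X) \cong N(X(k^+) \times X(l^+))$. Tracking the variance — a morphism of representables $\Gamma^a \to \Gamma^b$ corresponds to a morphism $b^+ \to a^+$ of $\gop$ — one sees that $h_k^l$, which is induced on the coproduct by the two representable maps coming from the projections $\delta^{k+l}_k$ and $\delta^{k+l}_l$, induces under $\Map^h(-, X)$ exactly $N$ applied to $(X(\delta^{k+l}_k), X(\delta^{k+l}_l))$.

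Finally I would appeal to Lemma \ref{N-htpy-ref}: the nerve is a homotopy reflection onto the groupoidal model category, so it preserves and reflects weak equivalences between groupoids, i.e. a functor between groupoids is an equivalence of categories precisely when its nerve is a weak homotopy equivalence of simplicial sets. Applying this to $(X(\delta^{k+l}_k), X(\delta^{k+l}_l))$, whose source and target are groupoids by the first step, the map $\Map^h(h_k^l, X)$ is a weak equivalence for all $k, l$ if and only if every such functor is an equivalence of categories; by the definition of an $\E_\infty\S$-local object this is exactly the desired characterization. I expect the only mildly delicate point to be the bookkeeping of the third paragraph — cofibrancy of the representables, the enriched co-Yoneda identification in the $\Cat$-enrichment, and matching the induced map $\Map^h(h_k^l, X)$ with the Segal functor while keeping the variance straight; no genuinely hard step is involved.
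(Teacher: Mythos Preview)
Your proposal is correct and is essentially what the paper intends: the paper's own proof is just the sentence ``The proof is an easy consequence of the above definition,'' and your argument is precisely the unwinding of that definition via the simplicial enrichment, the Yoneda identification $\MapC{\Gamma^n}{X}{\gCAT}\cong X(n^+)$, and the fact that the nerve reflects equivalences between groupoids. One terminological quibble: the isomorphism $\MapC{\Gamma^n}{X}{\gCAT}\cong X(n^+)$ is the (enriched) Yoneda lemma, not co-Yoneda; otherwise nothing needs to change.
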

 The proof is an easy consequence of the above definition.
 \begin{df}
 A $\EinC$ \ is a strictly fibrant $\EinSLO$.
 \end{df}
 \begin{df}
 An \emph{equivalence of coheretly commutative monoidal groupoids} is an $\E_\infty \S$-local equivalence.
 \end{df}
 The main result of this section is about constructing a
 new model category structure on the category $\gCAT$,
 by localizing the strict model category of $\gGpds$ with respect to
 morphisms in the set $\E_\infty\S$. 
\begin{thm}
 \label{loc-semi-add}
 There is a closed, left proper, combinatorial model category structure on
 the category of $\gCats$, $\gCAT$, in which
 \begin{enumerate}
 \item Cofibrations are 
 Q-cofibrations of $\gCats$.
 \item Weak equivalences are stable equivalences of $\gCats$.
 \end{enumerate}
 \begin{sloppypar}
 An object is fibrant in this model category if and only if it is a
  coherently commutative monoidal groupoid.
 \end{sloppypar}
 \end{thm}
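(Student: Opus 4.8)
The plan is to obtain the asserted model structure as the left Bousfield localization of the strict model category of $\gGpds$ of Theorem~\ref{str-mdl-cat-gCat} at the set $\E_\infty\S$ of \eqref{loc-maps-ccmc}. First I would check that the strict model category satisfies the hypotheses of the localization machine Theorem~\ref{local-tool}: it is combinatorial by Theorem~\ref{str-mdl-cat-gCat}, and it is left proper because its weak equivalences and pushouts are computed degreewise in $\MdlCatG$ — which is left proper, every object being cofibrant — while its cofibrations are in particular degreewise cofibrations. Theorem~\ref{local-tool} then produces a closed, left proper, combinatorial model category structure on $\gCAT$ whose cofibrations are exactly the Q-cofibrations of $\gCats$ (a left Bousfield localization does not change the cofibrations) and whose weak equivalences are the $\E_\infty\S$-local equivalences, i.e.\ the stable equivalences of $\gCats$. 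This gives clauses (1) and (2).

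Next I would identify the fibrant objects. By the general theory of left Bousfield localization, an object $X$ of $\gCAT$ is fibrant in the localized model category if and only if it is strictly fibrant and $\E_\infty\S$-local. By Remark~\ref{fib-gpd-mdl-gcat}, $X$ is strictly fibrant precisely when every $X(n^+)$ is a groupoid. For such $X$, the simplicial function complex of the simplicial model category of $\gGpds$ is $N(\MapC{-}{-}{\gCAT})$ (Propositions~\ref{two-var-adj-cat-gcat} and~\ref{enrich-over-sSets}), and since the source and target of each $h^l_k$ are coproducts of representables, hence Q-cofibrant, the derived mapping space $\Map^h(h^l_k, X)$ is computed as $N(\MapC{h^l_k}{X}{\gCAT})$; because the nerve is a homotopy reflection (Lemma~\ref{N-htpy-ref}), this is a weak homotopy equivalence exactly when $\MapC{h^l_k}{X}{\gCAT}$ is an equivalence of categories. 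Using the identification $\MapC{\gn{n}}{X}{\gCAT}\cong X(n^+)$ coming from representability of $\gn{n}$, together with the fact that $\MapC{-}{X}{\gCAT}$ sends coproducts to products, this condition unwinds to the requirement that every functor $(X(\delta^{k+l}_k),X(\delta^{k+l}_l)):X((k+l)^+)\to X(k^+)\times X(l^+)$ be an equivalence of categories. Combined with the groupoid condition this says exactly that $X$ is a coherently commutative monoidal groupoid, which is the claimed description of the fibrant objects.

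I do not expect a genuine obstacle here: once the hypotheses of Theorem~\ref{local-tool} are verified, the existence statement and clauses (1), (2) are formal, and the only step requiring care is the identification of $\E_\infty\S$-local objects just sketched — the $\gCAT$-analogue of Lemma~\ref{Pic-gpd-as-loc-ob} — where one must pass correctly between weak homotopy equivalences of nerves and equivalences of categories and unwind the wedge $\gn{k}\vee\gn{l}$ inside the mapping bifunctor.

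Finally, to record that this localized model structure is also symmetric monoidal closed under the Day convolution product, as in Theorem~\ref{SM-closed-mdl-str-GCat}, I would invoke the monoidal localization criterion: it suffices to show that $h^l_k\ast\gn{n}$ is an $\E_\infty\S$-local equivalence for every $n\in\Nat$. Since the Yoneda embedding $\gop\to\gCAT$ is strong monoidal from $(\gop,\wedge)$ to $(\gCAT,\ast)$ and $n^+\wedge m^+\cong(nm)^+$, and $-\ast\gn{n}$ preserves coproducts, the map $h^l_k\ast\gn{n}$ is naturally identified with $h^{ln}_{kn}\in\E_\infty\S$ and is therefore a local equivalence. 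Together with Theorem~\ref{SM-closed-mdl-str-GCat} this yields the symmetric monoidal closed model structure.
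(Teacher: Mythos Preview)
Your approach is the same as the paper's: invoke Theorem~\ref{local-tool} to localize the strict model category of $\gGpds$ at $\E_\infty\S$; the paper's own proof is essentially two sentences to this effect, deferring the characterization of fibrant objects to the proposition immediately preceding the theorem, whereas you spell out both the left-properness verification and the unwinding of $\E_\infty\S$-locality in detail. That added detail is fine and correct.

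One remark on scope: the word ``closed'' in the statement is Quillen's older terminology for a model category, not an assertion of symmetric monoidal closure, so your final paragraph is really a proof sketch for the separate Theorem~\ref{SM-closed-CCMG}. There the paper proceeds differently: rather than showing each $h^l_k\ast\gn{n}$ is again an element of $\E_\infty\S$, it checks via Lemma~\ref{func-cmp-fib} that $\MGCat{W}{A}$ is $\E_\infty\S$-local whenever $W$ is Q-cofibrant and $A$ is, and then feeds this into the general criterion Theorem~\ref{SM-closed-Func-Mdl}. Your argument is more direct and exploits the explicit form of the localizing maps; the paper's argument is more structural and reuses the same template for the Picard localization in Theorem~\ref{SM-closed-CCPG}, where no such coincidence with the localizing set is available.
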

 \begin{proof}
 The strict model category of $\gCats$ is a combinatorial
 model category therefore the existence of the model structure
 follows from theorem \ref{local-tool} stated above. The statement
 characterizing fibrant objects also follows from theorem \ref{local-tool}.
 \end{proof}
 \begin{nota}
 The model category constructed in theorem \ref{loc-semi-add} will
 be called the model category of \emph{coherently commutative monoidal groupoids}.
 \end{nota}
 \begin{sloppypar}
 The rest of this section is devoted to proving that the model
 category of $\EinCs$  is a symmetric monoidal closed model category.
 In order to do so we will need some general results which we
 state and prove now.
 \end{sloppypar}
 
%
 
 The following lemma will be useful in the proof of the main result of this section:
 \begin{lem}
 	\label{func-cmp-fib}
 	For each $Q$-cofibrant $\gCat$ $W$, the mapping object $\MGCat{W}{A}$ is a coherently commutative monoidal groupoid if $A$ is one.
 	\end{lem}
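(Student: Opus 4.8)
The plan is to reduce the lemma to a single compatibility between the Day convolution product and the localized model category. Write $X:=\MGCat{W}{A}$. By Theorem~\ref{loc-semi-add}, $A$ being a coherently commutative monoidal groupoid is the same as $A$ being fibrant in the localized model category, so it will be enough to show that $\MGCat{W}{-}$ is right Quillen for that model category — equivalently, that its left adjoint $-\ast W$ is left Quillen there. Now $-\ast W$ is already left Quillen for the strict model category of $\gGpds$, by Theorem~\ref{SM-closed-mdl-str-GCat} together with the $Q$-cofibrancy of $W$; the strict model category is combinatorial; and every map in $\E_\infty\S$ has $Q$-cofibrant source and target, being a coproduct of representables $\gn{n}$, each of which is $Q$-cofibrant. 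So by the standard criterion governing when a left Quillen functor descends along a left Bousfield localization \cite{Hirchhorn}, the problem reduces to showing that $f\ast W$ is an $\E_\infty\S$-local equivalence for every $f\in\E_\infty\S$ and every $Q$-cofibrant $W$.

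First I would treat the representable case $W=\gn{n}$. Here the Day convolution of representables satisfies $\gn{a}\ast\gn{b}\cong\gn{ab}$, compatibly with wedges, so $-\ast\gn{n}$ carries a structure map $f\colon\gn{k}\vee\gn{l}\to\gn{k+l}$ of $\E_\infty\S$ to a map $\gn{kn}\vee\gn{ln}\to\gn{(k+l)n}$ which, after composition with a suitable reindexing automorphism of $((k+l)n)^+$, is again a structure map of $\E_\infty\S$; hence $f\ast\gn{n}$ is a weak equivalence of the localized model category essentially by construction.

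To pass to an arbitrary $Q$-cofibrant $W$ I would argue by cell induction, using two auxiliary facts. First, for each small category $C$ the tensoring $\TensP{-}{C}{}$ commutes with Day convolution, $(\TensP{Y}{C}{})\ast Z\cong\TensP{(Y\ast Z)}{C}{}$. Second, $\TensP{-}{C}{}$ likewise descends to a left Quillen endofunctor of the localized model category: it is left Quillen for the strict structure (being the tensoring of the $\MdlCatG$-enrichment of Theorem~\ref{enrich-GamCAT-CAT}), and its right adjoint $\bHom{C}{-}{\gCAT}$, $Y\mapsto(n^+\mapsto[C,Y(n^+)])$, sends coherently commutative monoidal groupoids to coherently commutative monoidal groupoids because $[C,-]$ preserves both groupoids and equivalences of categories, so the same descent criterion applies once more. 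Together these give $f\ast W$ an $\E_\infty\S$-local equivalence whenever $W$ is a domain or codomain of a generating $Q$-cofibration $\gn{n}\times\partial_i$, since such a $W$ has the form $\TensP{\gn{n}}{C}{}$ and $f\ast\TensP{\gn{n}}{C}{}\cong\TensP{(f\ast\gn{n})}{C}{}$ is $\TensP{-}{C}{}$ applied to the local equivalence $f\ast\gn{n}$ of cofibrant objects. The general cofibrant case then follows from the closure of $\E_\infty\S$-local equivalences between $Q$-cofibrant objects under coproducts, pushouts along generating cofibrations, transfinite composition, and retracts, the pushout and transfinite steps using left properness of the localized model category (Theorem~\ref{loc-semi-add}).

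The main obstacle is exactly this last reduction. One cannot simply assert that $-\ast W$ preserves $\E_\infty\S$-local equivalences, because the compatibility of the Day convolution with the localized model structure is the sort of statement this lemma is being used to establish — so that argument would be circular. The circularity is broken by verifying the representable case $W=\gn{n}$ by hand, as above, and then propagating it along cells; the remaining bookkeeping — recognizing the domains and codomains of the generators $\gn{n}\times\partial_i$ as tensorings, and tracking the reindexing isomorphisms in the base case — is routine.
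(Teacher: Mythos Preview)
Your argument is correct but takes a much more elaborate route than the paper. The paper's proof splits the conclusion into two independent checks. First, since a coherently commutative monoidal groupoid is in particular a coherently commutative monoidal \emph{category}, the paper simply invokes the symmetric monoidal closed structure on the model category of coherently commutative monoidal categories already established in \cite{Sharma} to conclude that $\MGCat{W}{A}$ satisfies the Segal condition. Second, the levelwise-groupoid condition is handled separately: $A$ is fibrant in the strict model category of $\gGpds$, so Theorem~\ref{enrich-GamCAT-CAT} (the $\MdlCatG$-enrichment of that strict structure) and Theorem~\ref{SM-closed-mdl-str-GCat} give that $\MGCat{W}{A}(n^+)$ is a groupoid for each $n^+$. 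That is the whole proof.

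Your approach instead shows directly that $-\ast W$ descends to a left Quillen functor on the $\E_\infty\S$-localization, by verifying by hand that $f\ast W$ is a local equivalence for each $f\in\E_\infty\S$: first for representables $W=\gn{n}$ via $\gn{a}\ast\gn{b}\cong\gn{ab}$, then propagating through cells using the compatibility of the tensoring $\TensP{-}{C}{}$ with Day convolution and with the localization. This is essentially redoing, in the groupoidal setting, the work that the paper imports wholesale from \cite{Sharma}. What your route buys is self-containment: you do not need the prior symmetric-monoidal-closed theorem for coherently commutative monoidal categories. What the paper's route buys is brevity: once that earlier theorem is in hand, the only new content here is the one-line groupoid check. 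Both are valid; the paper's is the intended short argument given the available citation.
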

 \begin{proof}
  Since $A$ is a coherently commutative monoidal category \emph{i.e.} a fibrant object in the model category of coherently commutative monoidal categories, the symmetric monoidal closed structure on the aforementioned model category, \cite[]{Sharma}, implies that $\MGCat{W}{A}$ is a coherently commutative monoidal category. Since $W$ is Q-cofibrant by assumption and $A$ is also a fibrant object in the strict model category of $\gGpds$, theorem \ref{enrich-GamCAT-CAT} implies that $\MGCat{W}{A}(n^+)$ is a groupoid  for each $n^+ \in \gop$. Thus we have shown that $\MGCat{W}{A}$ is a coherently commutative monoidal groupoid if $W$ is Q-cofibrant.
 	\end{proof}

Finally we get to the main result of this subsection. All the lemmas proved above will be useful in proving the following theorem:
\begin{thm}
\label{SM-closed-CCMG}
The model category of coherently commutative monoidal groupoids is a symmetric monoidal closed model category under the Day convolution product.
\end{thm}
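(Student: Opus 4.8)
The plan is to obtain the two axioms of a symmetric monoidal closed model category --- the pushout-product axiom and the unit axiom --- for the model category of coherently commutative monoidal groupoids from the corresponding facts for the \emph{strict} model category of $\gGpds$, which is already a symmetric monoidal closed model category under the Day convolution product by Theorem \ref{SM-closed-mdl-str-GCat}, feeding in Lemma \ref{func-cmp-fib} at the single point where the localization intervenes. First observe that a left Bousfield localization leaves the cofibrations unchanged, so ``$Q$-cofibrant'' means the same thing in the model category of Theorem \ref{loc-semi-add} as in the strict one; in particular the monoidal unit $\gn{1}$, being representable, is $Q$-cofibrant, and since $\gn{1}\ast X\cong X$ naturally the unit axiom is immediate. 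Next, the pushout-product $f\Box g$ of two $Q$-cofibrations is again a $Q$-cofibration by Theorem \ref{SM-closed-mdl-str-GCat}, so the whole issue is the acyclicity clause: $f\Box g$ must be a stable equivalence whenever $f$ or $g$ is.

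The crucial step, and the reason Lemma \ref{func-cmp-fib} is needed, is the assertion that for every $Q$-cofibrant $\gCat$ $W$ the endofunctor $-\ast W$ of $\gCAT$ sends each morphism of $\E_\infty\S$ to a stable equivalence. I would prove this by testing against an arbitrary coherently commutative monoidal groupoid $A$: for $s\in\E_\infty\S$ the closed structure of Proposition \ref{closed-SM-cat-GCat} identifies the induced functor $\MapC{s\ast W}{A}{\gCAT}$ with $\MapC{s}{\MGCat{W}{A}}{\gCAT}$, and by Lemma \ref{func-cmp-fib} the object $\MGCat{W}{A}$ is again a coherently commutative monoidal groupoid, hence $\E_\infty\S$-local; since $s$ is an $\E_\infty\S$-local equivalence by construction, $\MapC{s}{\MGCat{W}{A}}{\gCAT}$ is an equivalence of categories, so the same is true for $\MapC{s\ast W}{A}{\gCAT}$, and as $A$ was arbitrary $s\ast W$ is a stable equivalence. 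Because $-\ast W$ is already a left Quillen endofunctor of the strict model category (Theorem \ref{SM-closed-mdl-str-GCat}), the universal property of the left Bousfield localization \cite{Hirchhorn} promotes it to a left Quillen endofunctor of the model category of coherently commutative monoidal groupoids; in particular, by Ken Brown's lemma, $-\ast W$ then preserves stable equivalences between $Q$-cofibrant objects and carries stable-acyclic $Q$-cofibrations to stable-acyclic $Q$-cofibrations.

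To finish I would invoke the standard reduction of the pushout-product axiom to generators (see \cite[Sec.~4.2]{Hovey} and Lemma \ref{Q-bifunctor-char}): it suffices to treat $f$ a generating $Q$-cofibration --- whose source and target are of the form $\gn{n}\times C$ and hence $Q$-cofibrant --- and $g$ a generating stable-acyclic cofibration, the latter being taken, as in the construction of the localization, to be a $Q$-cofibration between $Q$-cofibrant $\gCats$. If $g$ lies in the set $\J$ of generating strict acyclic cofibrations then $f\Box g$ is a strict acyclic cofibration by Theorem \ref{SM-closed-mdl-str-GCat}, hence a stable equivalence. Otherwise, writing $f:U\to V$, $g:X\to Y$ and $f\Box g:V\ast X\cup_{U\ast X}U\ast Y\to V\ast Y$, the coprojection $V\ast X\to V\ast X\cup_{U\ast X}U\ast Y$ is a cobase change of $U\ast g$, which is a stable-acyclic $Q$-cofibration by the previous paragraph, and is therefore itself a stable-acyclic $Q$-cofibration; since its composite with $f\Box g$ equals $V\ast g$, which is a stable equivalence (again by the previous paragraph, $V$ being $Q$-cofibrant and $g$ a stable equivalence between $Q$-cofibrant objects), the two-out-of-three property forces $f\Box g$ to be a stable equivalence, and the case ``$f$ acyclic, $g$ a generating $Q$-cofibration'' is symmetric. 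This establishes the pushout-product axiom, which together with the unit axiom and the closedness recorded in Proposition \ref{closed-SM-cat-GCat} proves the theorem. I expect the main obstacle to be precisely the assertion of the middle paragraph --- the interaction of the internal hom $\MGCat{-}{-}$ with stable equivalences, i.e.\ passing from Lemma \ref{func-cmp-fib} to ``$-\ast W$ remains left Quillen after localization'' --- with the reduction to generators a routine matter once that is in hand.
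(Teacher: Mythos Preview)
Your proposal is correct and uses the same key ingredient as the paper, namely Lemma \ref{func-cmp-fib}: once you know $\MGCat{W}{A}$ is again a coherently commutative monoidal groupoid for $W$ $Q$-cofibrant, the pushout-product axiom for the localization follows from that of the strict model category by a routine adjunction argument. The paper simply packages this routine argument as the general appendix result Theorem \ref{SM-closed-Func-Mdl} and invokes it in one line, whereas you unfold it in place.

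There is one organizational difference worth noting. The paper's Theorem \ref{SM-closed-Func-Mdl} verifies acyclicity of $f\Box g$ by testing the \emph{lifting property} against fibrations $p$ between $\S$-local objects, passing via adjunction to a lift of $g$ against the pullback-hom $(f^\ast,p^\ast)$; this avoids any appeal to an explicit set of generating acyclic cofibrations for the localized model structure. Your route instead promotes $-\ast W$ to a left Quillen endofunctor of the localization and then runs a two-out-of-three argument on the pushout square, which requires you to assume that the generating stable-acyclic cofibrations have $Q$-cofibrant source and target. That assumption is harmless here (it can be arranged in any combinatorial localization), but the paper's lifting formulation sidesteps it entirely and is slightly cleaner for that reason.
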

\begin{proof}
	The generating $Q$-cofibrations are maps between $Q$-cofibrant objects. For a $Q$-cofibrant object $W$ and a coherently commutative monoidal groupoid $A$, the mapping object $\MGCat{W}{A}$ is a coherently commutative monoidal groupoid by lemma \ref{func-cmp-fib}. The strict model category of $\gGpds$ is symmetric monoidal closed under the Day convolution product by Theorem \ref{SM-closed-mdl-str-GCat}. Now Theorem \ref{SM-closed-Func-Mdl} proves the theorem.

\end{proof}

 \section[The model category of coherently commutative Picard groupoids]{Coherently commutative Picard groupoids}
\label{mdl-cat-cc-pic}
 In this subsection we will introduce a notion of a \emph{coherently commutative} Picard groupoid. We will go on to construct another model category structure on $\gCAT$ whose fibrant objects are the aforementioned objects. A prominent result of this section is that this new model category is symmetric monoidal closed under the Day convolution product thereby giving us a tensor product of Picard groupoids.
 The main result of this paper is that there is a Quillen equivalence between the model category of Picard groupoids, which was constructed above, and the proposed model category of coherently commutative Picard groupoids. The Quillen pair which induces this equivalence is the pair $(\PNat, \Kbar)$ defined in \cite[section 6]{Sharma}.
 
  The mode of construction of this new model category will be localization.
 The following two pairs of maps of based sets:
 \[
 m_2:2^+ \to 1^+ \ \ \ \ \textit{and} \ \ \ \ \delta^2_1:2^+ \to 1^+
 \]
 and
 \[
 m_2:2^+ \to 1^+ \ \ \ \ \textit{and} \ \ \ \ \delta^2_2:2^+ \to 1^+
 \]
 induce two maps of $\gCats$
 \[
 \gn{(m_2, \delta^2_1)}{}:\gn{1}{} \vee \gn{1}{} \to \gn{2}{}
 \]
 and
 \[
 \gn{(m_2, \delta^2_2)}{}:\gn{1}{} \vee \gn{1}{} \to \gn{2}{}
 \]
 \begin{nota}
 	We denote the set $\lbrace \gn{(m_2, \delta^2_1)}{}, \gn{(m_2, \delta^2_2)}{} \rbrace$ by $\P_\infty$.
 \end{nota}
\begin{df}
	A \emph{coherently commutative Picard groupoid} is a coherently commutative monoidal groupoid which is also a $\P_\infty$-local object.
	\end{df}
Unravelling the above definition gives us the following characterization of a coherently commutative Picard groupoid:
\begin{prop}
	\label{char-cc-pic-gpd}
	A $\gCat$ $X$ is a coherently commutative Picard groupoid if and only if it satisfies the following three conditions:
	\begin{enumerate}
		\item For each $k^+ \in Ob(\gop)$, $X(k^+)$ is a groupoid.
		\item For each $k^+, l^+ \in Ob(\gop)$
		\[
		(X(\delta^{k+l}_k), X(\delta^{k+l}_l)):X((k+l)^+) \to X(k^+) \times X(l^+)
		\]
		is a groupoidal equivalence.
		\item The following two maps induced by the maps in $\P_\infty$:
		\[
		X((m_2, \delta^2_1)):X(2^+) \to X(1^+) \times X(1^+) \ \  \textit{and} \ \  X((m_2, \delta^2_2)):X(2^+) \to X(1^+) \times X(1^+)
		\]
		are groupoidal equivalences. 
		\end{enumerate}
	\end{prop}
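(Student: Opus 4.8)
The plan is to unravel the two successive localizations that enter the definition. First I would record that, since a coherently commutative monoidal groupoid is by definition a strictly fibrant $\E_\infty\S$-local object, the characterization of $\E_\infty\S$-local objects given above shows immediately that a $\gCat$ $X$ is a coherently commutative monoidal groupoid exactly when conditions $(1)$ and $(2)$ hold: $(1)$ records strict fibrancy (each $X(k^+)$ is a groupoid) and $(2)$ records $\E_\infty\S$-locality (the Segal maps $(X(\delta^{k+l}_k),X(\delta^{k+l}_l)):X((k+l)^+)\to X(k^+)\times X(l^+)$ are equivalences of categories). So the proposition reduces to the claim that a coherently commutative monoidal groupoid $X$ is a $\P_\infty$-local object if and only if condition $(3)$ holds.

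Next I would compute the homotopy function complexes occurring in the notion of $\P_\infty$-locality: by definition $X$ is $\P_\infty$-local precisely when restriction along each of the two maps $\gn{(m_2,\delta^2_i)}:\gn{1}\vee\gn{1}\to\gn{2}$ in $\P_\infty$ induces a weak homotopy equivalence of homotopy function complexes into $X$. Since the representables $\gn{1},\gn{2}$, hence the coproduct $\gn{1}\vee\gn{1}$, are $Q$-cofibrant, $X$ is fibrant, and a left Bousfield localization changes neither the cofibrations nor (for a cofibrant source and a fibrant target) the homotopy function complexes, these complexes are modelled by the simplicial mapping space $N(\MapC{-}{X}{\gCAT})$ of the strict model category, which is simplicial by Proposition \ref{enrich-over-sSets}. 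The enriched ($2$-categorical) Yoneda lemma then identifies $\MapC{\gn{n}}{X}{\gCAT}$ with the category $X(n^+)$ --- on objects via $\Hom_{\gCAT}(\gn{n},X)\cong X(n^+)$ and on morphisms by unwinding $\Hom_{\gCAT}(\gn{n}\times I,X)$ into the morphisms of $X(n^+)$ --- and, since $\MapC{-}{X}{\gCAT}$ sends the coproduct $\gn{1}\vee\gn{1}$ to a product, identifies $\MapC{\gn{1}\vee\gn{1}}{X}{\gCAT}$ with $X(1^+)\times X(1^+)$. Under these identifications the map induced by $\gn{(m_2,\delta^2_i)}$ becomes $N$ applied to the functor $X((m_2,\delta^2_i))=(X(m_2),X(\delta^2_i)):X(2^+)\to X(1^+)\times X(1^+)$.

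Finally I would assemble the two halves. From the previous paragraph, $X$ is $\P_\infty$-local if and only if $N((X(m_2),X(\delta^2_i)))$ is a weak homotopy equivalence for $i=1,2$; because condition $(1)$ makes all the categories in sight groupoids, the homotopy reflection property of the nerve (Lemma \ref{N-htpy-ref}) converts this into the statement that each $(X(m_2),X(\delta^2_i))$ is a groupoidal equivalence, i.e.\ into condition $(3)$. Together with the reduction of the first paragraph, a coherently commutative monoidal groupoid satisfies $\P_\infty$-locality exactly when $(3)$ holds, and hence $X$ is a coherently commutative Picard groupoid if and only if $(1)$, $(2)$ and $(3)$ hold.

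I do not expect a genuine obstacle here: the argument is purely a matter of unwinding definitions against two facts already in hand, namely the characterization of $\E_\infty\S$-local objects and the homotopy reflection of the nerve. The step that needs the most care is the Yoneda identification of $\MapC{\gn{n}}{X}{\gCAT}$ with $X(n^+)$ --- one must check it compatibly on objects and on morphisms (the morphism set being $\Hom_{\gCAT}(\gn{n}\times I,X)$, which unwinds by co-Yoneda to the morphism set of $X(n^+)$) and verify that under it the map induced by the $\P_\infty$-morphism $\gn{(m_2,\delta^2_i)}$ is exactly the pairing $(X(m_2),X(\delta^2_i))$.
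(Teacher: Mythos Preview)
Your proposal is correct and is exactly the ``unravelling'' the paper has in mind: the paper offers no formal proof here, stating only that the proposition is obtained by unravelling the definition, and your argument makes that unravelling explicit (conditions $(1)$--$(2)$ from the characterization of $\E_\infty\S$-local objects, condition $(3)$ from computing the $\P_\infty$-locality maps via the Yoneda identification $\MapC{\gn{n}}{X}{\gCAT}\cong X(n^+)$ in the simplicial strict model structure).
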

 \begin{df}
 	A stable equivalence of $\gCats$ is a $\P_\infty$-local equivalence.
 	\end{df}
 A left-Bousfield localization with respect to maps in the set $\P_\infty$, see appendix \ref{loc-Mdl-Cats}, gives us the following model category.
\begin{thm}
	\label{Mdl-CC-Pic}
	There is a closed, left proper, combinatorial model category structure on
	the category of $\gCats$, $\gCAT$, in which
	\begin{enumerate}
		\item The class of cofibrations is the same as the class of
		Q-cofibrations of $\gCats$.
		\item The weak equivalences are stable equivalences of $\gCats$.
	\end{enumerate}
	\begin{sloppypar}
		An object is fibrant in this model category if and only if it is a
		coherently commutative Picard groupoid.
	\end{sloppypar}
\end{thm}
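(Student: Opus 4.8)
The plan is to obtain this model structure as a left Bousfield localization of the model category of coherently commutative monoidal groupoids constructed in Theorem \ref{loc-semi-add}. That model category is combinatorial and left proper, so by the localization tool (Theorem \ref{local-tool}) the left Bousfield localization of it with respect to the set $\P_\infty = \lbrace \gn{(m_2, \delta^2_1)}{}, \gn{(m_2, \delta^2_2)}{} \rbrace$ exists and is again a closed, left proper, combinatorial model category. This is the model category asserted in the theorem.

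First I would read off items (1) and (2). A left Bousfield localization does not alter the cofibrations, and the cofibrations of the model category of coherently commutative monoidal groupoids are by Theorem \ref{loc-semi-add}(1) exactly the $Q$-cofibrations of $\gCats$; this gives (1). By construction the weak equivalences of the localized model category are the $\P_\infty$-local equivalences, computed with respect to the model category of coherently commutative monoidal groupoids, and these are precisely the maps called stable equivalences of $\gCats$ in the definition immediately preceding the theorem; this gives (2).

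Next I would identify the fibrant objects. By Theorem \ref{local-tool} an object $X$ is fibrant in the localized model category if and only if it is fibrant in the model category of coherently commutative monoidal groupoids \emph{and} is $\P_\infty$-local. The first condition says, by Theorem \ref{loc-semi-add}, that $X$ is a coherently commutative monoidal groupoid; adjoining $\P_\infty$-locality is exactly the definition of a coherently commutative Picard groupoid. Unravelling $\P_\infty$-locality through the function complex of the model category of coherently commutative monoidal groupoids and Proposition \ref{char-cc-pic-gpd} then recovers the concrete three-condition description; this relies on the fact that on fibrant targets the function complex is still computed by $N(\MapC{-}{-}{\gCAT})$ and that the nerve is a homotopy reflection (Lemma \ref{N-htpy-ref}), so $\P_\infty$-locality is equivalent to condition (3) of Proposition \ref{char-cc-pic-gpd} rather than something a priori weaker.

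There is essentially no serious obstacle here: the statement is a direct application of the localization machinery already assembled, and the existence, combinatoriality and left properness are formal. The only point that needs a little care is the last one just mentioned, namely checking that $\P_\infty$-local objects of the intermediate model category are exactly the coherently commutative Picard groupoids in the sense of Proposition \ref{char-cc-pic-gpd}; but this is handled precisely as in the proof of Lemma \ref{Pic-gpd-as-loc-ob}, translating the homotopy-mapping-space condition into an equivalence of categories via the homotopy reflection $(\tau_1, N)$.
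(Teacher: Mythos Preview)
Your proposal is correct and follows essentially the same approach as the paper: the paper simply declares that the model structure is obtained by left Bousfield localization with respect to $\P_\infty$ and invokes the localization machinery of Appendix~\ref{loc-Mdl-Cats}, leaving the identification of fibrant objects to the preceding definition. Your write-up supplies a bit more detail (in particular the check that $\P_\infty$-locality translates to condition~(3) of Proposition~\ref{char-cc-pic-gpd}), but the strategy is identical.
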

 The following lemma will be useful in the proof of the main result of this section:
 \begin{lem}
 	\label{map-obj-pic-gpd}
 	For each $Q$-cofibrant $\gCat$ $W$, the mapping object $\MGCat{W}{A}$ is a coherently commutative Picard groupoid if $A$ is one.
 \end{lem}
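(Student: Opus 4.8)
The plan is to reduce the assertion, via Lemma~\ref{func-cmp-fib} and Proposition~\ref{char-cc-pic-gpd}, to a single condition on $\MGCat{W}{A}$, verify that condition for the generating $Q$-cofibrant $\gCats$, and then propagate it to all $Q$-cofibrant $W$ by a cell induction. Since a coherently commutative Picard groupoid is in particular a coherently commutative monoidal groupoid, Lemma~\ref{func-cmp-fib} already shows that $\MGCat{W}{A}$ is a coherently commutative monoidal groupoid; by Proposition~\ref{char-cc-pic-gpd} it therefore remains to check condition $(3)$, namely that for $i=1,2$ the functor $\MGCat{W}{A}((m_2,\delta^2_i)):\MGCat{W}{A}(2^+)\to\MGCat{W}{A}(1^+)\times\MGCat{W}{A}(1^+)$ is a groupoidal equivalence.

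First I would treat $W=\TensP{\gn{n}}{C}{}$ for a small category $C$; the domains and codomains of the generating $Q$-cofibrations $\gn{n}\times\partial_j$ are of this shape. Using the isomorphism $(\TensP{X}{C}{})\ast Y\cong\TensP{X\ast Y}{C}{}$, the isomorphism $\gn{n}\ast\gn{k}\cong\gn{nk}$, Proposition~\ref{GCAT-SM} (so $\gn{1}$ is the $\ast$-unit), Proposition~\ref{rt-adjs-DayCP}, and the $\Cat$-enriched tensor--hom adjunction, one computes $\MGCat{\TensP{\gn{n}}{C}{}}{A}(k^+)\cong[C,A((nk)^+)]$, and under this identification $\MGCat{\TensP{\gn{n}}{C}{}}{A}((m_2,\delta^2_i))$ is $[C,-]$ applied to the functor induced by the maps $m_2\wedge\id_{n^+},\ \delta^2_i\wedge\id_{n^+}:(2n)^+\to n^+$, that is, to $(A(m_2\wedge\id_{n^+}),A(\delta^2_i\wedge\id_{n^+})):A((2n)^+)\to A(n^+)\times A(n^+)$. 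Since $[C,-]$ preserves equivalences of groupoids, it suffices to show this last functor is an equivalence. To that end write $(2n)^+=n^+\vee n^+$: the two inert projections are $\delta^2_1\wedge\id_{n^+}$ and $\delta^2_2\wedge\id_{n^+}$, so $\E_\infty\S$-locality of $A$ makes $\sigma=(A(\delta^2_1\wedge\id_{n^+}),A(\delta^2_2\wedge\id_{n^+}))$ an equivalence, while $m_2\wedge\id_{n^+}$ is the fold $n^+\vee n^+\to n^+$, so $A(m_2\wedge\id_{n^+})$ is the composite of $\sigma$ with the symmetric monoidal addition $-\otimes-$ on $A(n^+)$. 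Hence $(A(m_2\wedge\id_{n^+}),A(\delta^2_1\wedge\id_{n^+}))$ equals $(-\otimes-,p_1)\circ\sigma$, which is an equivalence exactly because $A(n^+)$ is a Picard groupoid: $A(n^+)$ is equivalent, as a symmetric monoidal groupoid, to $A(1^+)^{n}$, and $A(1^+)$ is a Picard groupoid since $A$ is $\P_\infty$-local (condition $(3)$ for $A$ itself, combined with the Segal equivalence $A(2^+)\simeq A(1^+)\times A(1^+)$).

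For an arbitrary $Q$-cofibrant $\gCat$ $W$ I would induct along a cell presentation. By Theorem~\ref{SM-closed-mdl-str-GCat} the functor $\MGCat{-}{A}$ carries $Q$-cofibrations of $\gCats$ to strict path fibrations, so $\MGCat{W}{A}$ is a retract of the inverse limit of a tower of strict path fibrations whose successive layers are pullbacks, along strict path fibrations, of maps between objects of the form $\MGCat{\TensP{\gn{n}}{C}{}}{A}$ treated in the previous step. Condition $(3)$ asserts that certain functors are equivalences of groupoids, and such an assertion is preserved by retracts, by pullbacks along strict path fibrations (which are homotopy pullbacks), and by inverse limits of towers of strict path fibrations (which are homotopy limits), since homotopy limits of equivalences are equivalences and these homotopy limits are computed objectwise in $\gop$. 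Therefore $\MGCat{W}{A}$ satisfies condition $(3)$, and with Lemma~\ref{func-cmp-fib} and Proposition~\ref{char-cc-pic-gpd} this proves that $\MGCat{W}{A}$ is a coherently commutative Picard groupoid.

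The main obstacle is the base case: arranging the Segal identifications so that $(A(m_2\wedge\id_{n^+}),A(\delta^2_1\wedge\id_{n^+}))$ is recognized, up to the equivalence $\sigma$, as the invertibility-of-addition functor $(-\otimes-,p_1)$, and matching the symmetric monoidal structures on $A(n^+)$ and $A(1^+)^{n}$ so that the Picard-groupoid property transfers; everything afterwards is formal. An alternative route is to invoke the general machinery for left Bousfield localizations of symmetric monoidal model categories (as used for Theorem~\ref{SM-closed-mdl-str-GCat} and its predecessors), which reduces the whole statement to precisely this base-case computation via the adjunction between $-\ast W$ and $\MGCat{W}{-}$ together with the fact that the maps of $\P_\infty$ lie between $Q$-cofibrant objects.
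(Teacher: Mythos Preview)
Your argument is correct, but the paper proceeds by a shorter and more direct route that avoids the cell induction entirely. After the common reduction to condition $(3)$ via Lemma~\ref{func-cmp-fib} and Proposition~\ref{char-cc-pic-gpd}, the paper exploits the symmetry of the Day convolution and the adjunction $-\ast\gn{k}\dashv\MGCat{\gn{k}}{-}$ to rewrite
\[
\MGCat{W}{A}((m_2,\delta^2_i))=\MapC{W\ast\gn{(m_2,\delta^2_i)}}{A}{\gCAT}\cong\MapC{W}{\MGCat{\gn{(m_2,\delta^2_i)}}{A}}{\gCAT},
\]
thereby moving the variable $\gn{k}$ from the domain to the codomain. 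Since $W$ is $Q$-cofibrant and the strict model structure is $\MdlCatG$-enriched, it then suffices to show that $\MGCat{\gn{(m_2,\delta^2_i)}}{A}$ is a strict equivalence of $\gGpds$. Both its source and target are coherently commutative monoidal groupoids (again by Lemma~\ref{func-cmp-fib}), so this map is a strict equivalence iff it is an equivalence in degree $1^+$; there it is literally $A((m_2,\delta^2_i)):A(2^+)\to A(1^+)\times A(1^+)$, which is an equivalence by hypothesis on $A$. Your approach instead verifies condition $(3)$ directly at each degree for $W=\TensP{\gn{n}}{C}{}$ (which forces you to argue that $A(n^+)$ is itself a Picard groupoid) and then runs a tower/cell induction; this works, but the paper's adjunction trick together with the ``check at degree $1^+$'' observation sidesteps both the base-case Picard computation at $A(n^+)$ and the induction, handling all $Q$-cofibrant $W$ in one stroke.
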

 \begin{proof}
 	If $A$ is a coherently commutative Picard groupoid then it is also a fibrant object in the model category of coherently commutative monoidal groupoids, the symmetric monoidal closed structure on the aforementioned model category, \ref{SM-closed-CCMG}, implies that $\MGCat{W}{A}$ is a coherently commutative monoidal groupoid because $W$ is $Q$-cofibrant by assumption. Thus we have verified $(1)$ and $(2)$ in proposition \ref{char-cc-pic-gpd}. In order to verify $(3)$ in the same proposition we need to show that the following two functors are groupoidal equivalences:
 	\begin{multline*}
 	\MapC{W \ast \gn{(m_2, \delta^2_1)}}{A}{\gCAT}:\MapC{W \ast \gn{2}}{A}{\gCAT} \to \\
 	 \MapC{W\ast \gn{1}}{A}{\gCAT} \times \MapC{W\ast \gn{1}}{A}{\gCAT}
 	\end{multline*}
 	and
 	\begin{multline*}
 	\MapC{W \ast \gn{(m_2, \delta^2_2)}}{A}{\gCAT}:\MapC{W \ast \gn{2}}{A}{\gCAT} \to \\
 	\MapC{W\ast \gn{1}}{A}{\gCAT} \times \MapC{W\ast \gn{1}}{A}{\gCAT}
 	\end{multline*}
 	By adjointness, the morphism of $\gCats$ $\MapC{W \ast \gn{(m_2, \delta^2_1)}}{A}{\gCAT}$ is a groupoidal equivalence if and only if its adjunct map
 \begin{multline*}
 \MapC{W} {\MGCat{\gn{(m_2, \delta^2_1)}}{A}}{\gCAT}:\MapC{W} {\MGCat{\gn{2}}{A}}{\gCAT} \to \\
 \MapC{W} {\MGCat{\gn{1}}{A}}{\gCAT} \times \MapC{W} {\MGCat{\gn{1}}{A}}{\gCAT}
 \end{multline*}
 is one. Since $W$ is $Q$-cofibrant, it is sufficient to show that the morphism
 \[
 \MGCat{\gn{(m_2, \delta^2_1)}}{A}: \MGCat{\gn{2}}{A} \to \MGCat{\gn{1}}{A} \times \MGCat{\gn{1}}{A}
 \]
 is a strict equivalence of $\gGpds$. Since the $\gCats$ $\MGCat{\gn{2}}{A}$ and $\MGCat{\gn{1}}{A}$ are both coherently commutative monoidal groupoids therefore the morphism $\MGCat{\gn{(m_2, \delta^2_1)}}{A}$ will be a strict equivalence of $\gGpds$ if and only if $(\MGCat{\gn{(m_2, \delta^2_1)}}{A})(1^+)$ is a groupoidal equivalence. The following commutative diagram :
 \begin{equation*}
 \xymatrix{
  \MGCat{\gn{2}}{A} \ar[r]^{U \ \ \ \ \ \ \ \ \ \ \ \ } \ar[d]_\cong & \MGCat{\gn{1}}{A} \times \MGCat{\gn{1}}{A} \ar[d]^\cong \\
  A(2^+) \ar[r]_{A((m_2, \delta^2_1))} &A(1^+) \times A(1^+)
  }
 \end{equation*}
 where $U = (\MGCat{\gn{(m_2, \delta^2_1)}}{A})(1^+)$, implies that this map is a groupoidal equivalence because $A$ is a coherently commutative Picard groupoid by assumption.
 \end{proof}

\begin{thm}
	\label{SM-closed-CCPG}
	The model category of coherently commutative Picard groupoids is a symmetric monoidal closed model category under the Day convolution product.
\end{thm}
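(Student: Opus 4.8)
The plan is to mimic the proof of Theorem \ref{SM-closed-CCMG} verbatim, substituting ``coherently commutative Picard groupoid'' for ``coherently commutative monoidal groupoid'' throughout and invoking Lemma \ref{map-obj-pic-gpd} in place of Lemma \ref{func-cmp-fib}. Recall that the model category of coherently commutative Picard groupoids is obtained from the model category of coherently commutative monoidal groupoids by a left Bousfield localization at $\P_\infty$, and that under this localization the cofibrations are unchanged (they remain the $Q$-cofibrations of $\gCats$) while only weak equivalences are added. The Day convolution product already makes $\gCAT$ a closed symmetric monoidal category with internal hom $\MGCat{-}{-}$ and unit the representable $\gn{1}$ by Proposition \ref{closed-SM-cat-GCat}, and by Theorem \ref{SM-closed-mdl-str-GCat} this structure is compatible with the strict model category. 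Hence the pushout product of two $Q$-cofibrations is again a $Q$-cofibration, and it is a strict acyclic cofibration whenever one factor is; since strict acyclic cofibrations are in particular stable acyclic cofibrations, the only case that requires new work is the one in which one factor is a stable acyclic cofibration.

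The natural vehicle for that case is the general criterion Theorem \ref{SM-closed-Func-Mdl}. Its hypothesis asks that for each generating $Q$-cofibration $f:U\to V$ --- which by construction is a map between $Q$-cofibrant objects --- and each fibrant object $A$ of the localized model category, the induced map $\MGBoxCat{f}{p}$ for a fibration $p$ between fibrant objects again takes values among the fibrant objects; here the fibrant objects are precisely the coherently commutative Picard groupoids. The crucial input is exactly Lemma \ref{map-obj-pic-gpd}: for $W$ a $Q$-cofibrant $\gCat$ and $A$ a coherently commutative Picard groupoid, $\MGCat{W}{A}$ is again one. Feeding this, together with Theorem \ref{SM-closed-mdl-str-GCat}, into Theorem \ref{SM-closed-Func-Mdl} delivers the pushout product axiom for stable acyclic cofibrations, and the unit axiom follows since $\gn{1}$ is $Q$-cofibrant. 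Combining the two cases gives that the Day convolution product is a Quillen bifunctor, i.e. the verification of condition $(3)$ of Lemma \ref{Q-bifunctor-char}, which is what the theorem asserts.

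I expect the genuinely substantive work to lie not in the present theorem --- once Lemma \ref{map-obj-pic-gpd} is available the argument is essentially formal --- but in confirming that the $\P_\infty$-localization satisfies the hypotheses of Theorem \ref{SM-closed-Func-Mdl}: namely that each map $f\ast\gn{n}$ with $f\in\P_\infty$ and $n\in\Nat$ is a stable acyclic cofibration, so that $-\ast\gn{n}$ preserves stable acyclic cofibrations, and more generally that the localizing set $\P_\infty$ is, up to stable equivalence, stable under the functors $-\ast\gn{n}$. This is the exact analogue of the corresponding step in the proof of Theorem \ref{SM-closed-CCMG}, and it should be extracted from the explicit description of the maps in $\P_\infty$ together with the adjunction $(-\ast\gn{n},\ \MGCat{\gn{n}}{-})$ of Proposition \ref{rt-adjs-DayCP}, which reduces the question to a degreewise statement about the reindexing functors $X(n^+\wedge-)$.
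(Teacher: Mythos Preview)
Your approach is essentially the paper's: invoke Lemma \ref{map-obj-pic-gpd} and feed it, together with Theorem \ref{SM-closed-mdl-str-GCat}, into Theorem \ref{SM-closed-Func-Mdl}. The paper applies Theorem \ref{SM-closed-Func-Mdl} to the strict model category of $\gGpds$ with the full localizing set $\S = \E_\infty\S \cup \P_\infty$ in one step, rather than treating the Picard localization as a second localization of the coherently commutative monoidal groupoid model structure; since the $\S$-local objects are exactly the coherently commutative Picard groupoids and the domains and codomains of the maps in $\S$ are representables or finite wedges thereof (hence $Q$-cofibrant), the hypotheses are immediate.

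Your final paragraph, however, misreads what Theorem \ref{SM-closed-Func-Mdl} demands. Its only substantive hypothesis is that $\MGCat{W}{A}$ be $\S$-local whenever $W$ is projective cofibrant and $A$ is $\S$-local; this is precisely Lemma \ref{map-obj-pic-gpd}, and nothing further is needed. In particular, you do \emph{not} need to verify separately that $f \ast \gn{n}$ is a stable acyclic cofibration for $f \in \P_\infty$: that conclusion is an output of Theorem \ref{SM-closed-Func-Mdl}, not an input. (Indeed, the proof of Theorem \ref{SM-closed-Func-Mdl} uses the internal-hom hypothesis and adjunction to show, for any $\S$-acyclic cofibration $j$ and any cofibration $i$, that $i \Box j$ lifts against fibrations between $\S$-local objects; the statement about $f \ast \gn{n}$ is the special case $j = f$, $i = \emptyset \to \gn{n}$.) So the ``genuinely substantive work'' you anticipate is already absorbed into Lemma \ref{map-obj-pic-gpd}, and your concern is unfounded.
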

\begin{proof}
	The generating $Q$-cofibrations are maps between $Q$-cofibrant objects. For a $Q$-cofibrant object $W$ and a coherently commutative Picard groupoid $A$, the mapping object $\MGCat{W}{A}$ is a coherently commutative Picard groupoid by lemma \ref{map-obj-pic-gpd}. The strict model category of $\gGpds$ is symmetric monoidal closed under the Day convolution product by theorem \ref{SM-closed-mdl-str-GCat}. Now applying theorem \ref{SM-closed-Func-Mdl} to the strict model category of $\gGpds$ with the set of morphisms $\S = \E_\infty\S \cup \P_\infty$, see \eqref{loc-maps-ccmc}, proves the theorem.
\end{proof}

 \section[The Quillen equivalences]{The Quillen equivalences}
\label{Quill-Eqs}
This section is devoted to proving the main result of this paper namely the classical Segal's nerve functor, see \cite{segal}, \cite{mandell},\cite{mandell2} \cite{Sharma}  induces a Quillen equivalence between the model category of coherently commutative Picard groupoids and the model category of Picard groupoids $\MdlPCatP$. We begin by briefly recalling that the thickened Segal's nerve functor $\Kbar$ constructed in \cite{Sharma}:

\begin{df}
	For each $n \in Ob(\N)$ we will have a permutative groupoid $\PNat(n)$. The objects of this
	groupoid are finite collections of morphisms in $\gop$ having domain $n^+$, in other words the object monoid of the category $\PNat(n)$ is the free monoid generated by the following set
	\[
	Ob(\PNat(n)) := \underset{k \in Ob(\N)} \sqcup \ \gn{n}(k^+).
	\]
	\begin{sloppypar}
		We will denote an object of this groupoid by $(f_1, f_2, \dots, f_r)$. A morphism $(f_1, f_2, \dots, f_r) \to (g_1, g_2, \dots, g_k)$ is an
		isomorphism of finite sets
		\[
		F:\Supp{f_1} \sqcup \Supp{f_2} \sqcup \dots \sqcup \Supp{f_r} \overset{\cong} \to \Supp{g_1} \sqcup \Supp{g_2} \sqcup \dots \sqcup \Supp{g_k}
		\]
		such that the following diagram commutes
	\end{sloppypar}
	\begin{equation*}
	\xymatrix{
		\Supp{f_1} \sqcup \dots \sqcup \Supp{f_r} \ar[rr]^{F} \ar[rd] && \Supp{g_1} \sqcup \dots \sqcup \Supp{g_k} \ar[ld] \\
		&n
	}
	\end{equation*}
	where the diagonal maps are the unique inclusions of the coproducts into $n$.
\end{df}
\begin{rem}
	\label{cont-fun-Lbar}
	The construction above defines a contravariant functor $\PNat(-):\gop \to \PCat$. A map $f:n^+ \to m^+$ in $\gop$ defines a strict symmetric monoidal functor
	$\PNat(f):\PNat(m) \to \PNat(n)$. An object $(f_1, f_2, \dots, f_r) \in \PNat(m)$ is mapped by this functor to $(f_1 \circ f, f_2 \circ f, \dots, f_r \circ f) \in \PNat(n)$.
\end{rem}
 The thickened Segal's nerve functor is now defined in degree $n$ as follows:
 \[
 \Kbar(C)(n^+) := \StrSMHom{\PNat(n)}{C},
 \]
 where $C$ is a permutative category. The functor $\Kbar$ has a left adjoint, denoted $\PNat$, see \cite{Sharma}. The permutative groupoid $\PNat(n)$ constructed above has a full sub-groupoid denoted $\PStr(n)$ which we now recall:
 \begin{df}
 	\label{P-Str-n}
 	For each $n \in \Nat$ we will now define a permutative groupoid $\PStr(n)$. The objects of this
 	groupoid are finite sequences of subsets of $\ud{n}$. We will denote an object of this groupoid by $(S_1, S_2, \dots, S_r)$, where $S_1, \dots S_r$ are subsets of $\ud{n}$. A morphism $(S_1, S_2, \dots, S_r) \to (T_1, T_2, \dots, T_k)$ is an
 	isomorphism of finite sets $F:S_1 \sqcup S_2 \sqcup \dots \sqcup S_r \overset{\cong} \to T_1 \sqcup T_2 \sqcup \dots \sqcup T_k$ such that the following
 	diagram commutes
 	\begin{equation*}
 	\xymatrix{
 		S_1 \sqcup S_2 \sqcup \dots \sqcup S_r \ar[rr]^{F} \ar[rd] && T_1 \sqcup T_2 \sqcup \dots \sqcup T_k \ar[ld] \\
 		&\ud{n}
 	}
 	\end{equation*}
 	where the diagonal maps are the unique inclusions of the coproducts into $\ud{n}$.
 \end{df}
 The (classical) Segal's nerve functor is now defined in degree $n$ as follows:
 \[
 \KSeg(C)(n^+) := \StrSMHom{\PStr(n)}{C},
 \]
 where $C$ is a permutative category. The functor $\KSeg$ has a left adjoint, denoted $\PStr$, see \cite{Sharma}. 
\begin{prop}
	\label{Th-SegN-Pic-to-Pic}
	The classical Segal's nerve functor maps Picard groupoids to coherently commutative Picard groupoids.
	\end{prop}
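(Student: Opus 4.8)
The plan is to verify, for the $\gCat$ $\KSeg(C)$ attached to a Picard groupoid $C$, the three conditions of Proposition~\ref{char-cc-pic-gpd}. Since $\PStr(n) = \L(n)$, we have $\KSeg(C)(n^+) = \StrSMHom{\L(n)}{C}$, with the $\gop$-functoriality coming from the contravariant functor $\L(-):\gop \to \PCat$ recalled above. Condition~$(1)$ is immediate: a monoidal natural transformation into a groupoid has componentwise invertible components, and these inverses again assemble into a monoidal natural transformation, so $\StrSMHom{\L(n)}{C}$ is a groupoid whenever $C$ is. For condition~$(2)$, the Segal condition, I would invoke \cite{Sharma}, where it is shown that for every permutative category $C$ the classical Segal's nerve $\KSeg(C)$ is a coherently commutative monoidal category; in particular each functor $(\KSeg(C)(\delta^{k+l}_k), \KSeg(C)(\delta^{k+l}_l)):\KSeg(C)((k+l)^+) \to \KSeg(C)(k^+) \times \KSeg(C)(l^+)$ is an equivalence of categories (the case $k = l = 1$ being already recorded in~\eqref{Two-partition-equiv}). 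An equivalence of categories is a fortiori a groupoidal equivalence, since the model structure of groupoids on $\Cat$ is a left Bousfield localization of the natural one, so $(2)$ holds.

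The Picard hypothesis on $C$ is used only in condition~$(3)$. Here I would observe that, under the canonical isomorphism $\StrSMHom{\L(1) \vee \L(1)}{C} \cong \StrSMHom{\L(1)}{C} \times \StrSMHom{\L(1)}{C}$, the functor $\KSeg(C)((m_2, \delta^2_1)):\KSeg(C)(2^+) \to \KSeg(C)(1^+) \times \KSeg(C)(1^+)$ is exactly the functor $\StrSMHom{(\L(m_2), \L(\delta^2_1))}{C}$ analysed in the proof of Lemma~\ref{Pic-gpd-as-loc-ob}; that proof, via the explicit natural isomorphism $H$ constructed there and the evaluation equivalences $ev_{(1)}$, shows this functor to be a groupoidal equivalence precisely when $C$ is a Picard groupoid, which it is by assumption. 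For the remaining functor $\KSeg(C)((m_2, \delta^2_2))$ I would either rerun that argument with $\delta^2_2$ in place of $\delta^2_1$ (the natural isomorphism $H$ adapts verbatim, using the projection $p_1$ rather than $p_2$), or note that $m_2 = m_2 \circ t$ and $\delta^2_2 = \delta^2_1 \circ t$ for the transposition $t:2^+ \to 2^+$, whence $\KSeg(C)((m_2, \delta^2_2)) = \KSeg(C)((m_2, \delta^2_1)) \circ \KSeg(C)(t)$ with $\KSeg(C)(t)$ an isomorphism, and then conclude by two-out-of-three. This establishes condition~$(3)$, so $\KSeg(C)$ is a coherently commutative Picard groupoid.

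The only genuine inputs are the Segal condition for $\KSeg(C)$ from \cite{Sharma} and Lemma~\ref{Pic-gpd-as-loc-ob}; the rest is bookkeeping. I expect the delicate point to be the precise identification of the degree one and degree two structure maps of $\KSeg(C)$ with the functors studied in the proof of Lemma~\ref{Pic-gpd-as-loc-ob}, keeping the contravariance of $\L(-)$ straight, together with the $\delta^2_2$ case, handled as above.
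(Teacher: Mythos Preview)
Your proposal is correct and follows essentially the same line as the paper's proof: both verify condition~(3) of Proposition~\ref{char-cc-pic-gpd} by showing that $\StrSMHom{(\L(m_2), \L(\delta^2_1))}{C}$ is an equivalence of categories when $C$ is a Picard groupoid, with conditions~(1) and~(2) coming for free from the groupoid hypothesis and the results of \cite{Sharma}. The only packaging difference is that the paper phrases the key step more abstractly---observing that $(\L(m_2), \L(\delta^2_1))$ is a weak equivalence between cofibrant objects in $\MdlPCatP$ and invoking the $\MdlCatG$-enrichment of Proposition~\ref{Cat-G-mdl-Pcat-Pic}---whereas you appeal directly to the explicit analysis inside the proof of Lemma~\ref{Pic-gpd-as-loc-ob}; your version is more self-contained and also treats the $\delta^2_2$ case explicitly, which the paper omits.
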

\begin{proof}
	The following strict symmetric monoidal functor is a weak-equivalence between cofibrant objects  in the model category $\MdlPCatP$
	\[
	(\L(m_2), \L(\delta^2_1)):\PStr(1) \vee \PStr(1) \to \PStr(2).
	\]
	This implies that the following functor is an equivalence of categories:
	\[
	\StrSMHom{(\L(m_2), \L(\delta^2_1))}{C}:\StrSMHom{\PStr(2)}{C}  \to \StrSMHom{\PStr(1)}{C} \times \StrSMHom{\PStr(1)}{C} 
	\]
	This implies that $\KSeg(C)$ is a coherently commutative Picard groupoid. 
	Now the statement about $\Kbar(C)$ follows from the existence of a natural weak-equivalence (homotopy) $\KSeg \Rightarrow \Kbar$ from \cite[Cor. 6.15]{Sharma}.
	\end{proof}

 First we show that the aforementioned adjoint pairs are Quillen pairs between relevant model categories of Picard groupoids constructed in this paper:  
\begin{lem}
	\label{Thick-Qu-pair-Pic}
	The adjoint functors $(\PNat, \Kbar)$ form a Quillen pair between the model category of coherently commutative Picard groupoids and the model category of Picard groupoids.
	\end{lem}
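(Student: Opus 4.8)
The plan is to obtain $(\PNat,\Kbar)$ as a Quillen pair at the ``strict'' level and then transport it through the two left Bousfield localizations that define the model categories in the statement. First I would verify that $(\PNat,\Kbar)$ is a Quillen pair between the strict model category of $\gGpds$ on $\gCAT$ (Theorem \ref{str-mdl-cat-gCat}) and the model category of permutative groupoids $\MdlPCatG$; for this it suffices that the right adjoint $\Kbar$ preserve fibrations and acyclic fibrations. Preservation of acyclic fibrations follows from \cite[section 6]{Sharma}: the acyclic fibrations of $\MdlPCatG$ agree with those of the natural model structure on $\PCat$ (the cofibrations are the same, Proposition \ref{char-cof}), the acyclic fibrations of the strict model category of $\gGpds$ are exactly the degreewise acyclic fibrations of the natural model structure on $\Cat$ (Remark \ref{acyc-fib-gpd-mdl-gcat}), and with respect to these natural structures $(\PNat,\Kbar)$ is already a Quillen pair. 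For fibrations, write $\Kbar(C)(n^+)=\StrSMHom{\PNat(n)}{C}$: the permutative groupoid $\PNat(n)$ is cofibrant in $\MdlPCatG$ (its cofibrations are the natural ones, and $\PNat(n)$ is cofibrant there by \cite{Sharma}), so by the $\MdlCatG$-enrichment of $\MdlPCatG$ (Proposition \ref{tens-Quillen-bifunc-PCat-G}) the functor $\StrSMHom{\PNat(n)}{-}$ sends every path fibration of permutative groupoids to a path fibration of categories; hence $\Kbar$ takes a fibration of $\MdlPCatG$ to a degreewise path fibration, i.e.\ a strict path fibration of $\gCats$.

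Next I would transport this through both localizations. On the $\PCat$-side, the model category of Picard groupoids $\MdlPCatP$ is the left Bousfield localization of $\MdlPCatG$ at $(\L(m_2),\L(\delta^2_1))$ (Theorem \ref{Pic-mdl-str}), so the identity is a left Quillen functor $\MdlPCatG\to\MdlPCatP$ and, composing Quillen pairs, $(\PNat,\Kbar)$ is a Quillen pair between the strict model category of $\gGpds$ and $\MdlPCatP$. On the $\gCAT$-side, the model category of coherently commutative Picard groupoids is the left Bousfield localization of the strict model category of $\gGpds$ at $\S=\E_\infty\S\cup\P_\infty$ (Theorems \ref{loc-semi-add} and \ref{Mdl-CC-Pic}). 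By the universal property of left Bousfield localization (\cite[Thm.\ 3.3.20]{Hirchhorn}), the Quillen pair just obtained descends to a Quillen pair between the model category of coherently commutative Picard groupoids and $\MdlPCatP$ exactly when $\Kbar$ carries each fibrant object of $\MdlPCatP$ to an $\S$-local object of $\gCAT$. But the fibrant objects of $\MdlPCatP$ are precisely the Picard groupoids (Theorem \ref{Pic-mdl-str}), and Proposition \ref{Th-SegN-Pic-to-Pic} says that $\Kbar$ sends every Picard groupoid to a coherently commutative Picard groupoid, which is a fibrant, hence $\S$-local, object of $\gCAT$. This proves the lemma.

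The main obstacle is the first step. Since Proposition \ref{Th-SegN-Pic-to-Pic} is already established, the localization bookkeeping of the second paragraph is purely formal; the point requiring care is that $(\PNat,\Kbar)$ is quoted from \cite{Sharma} for the \emph{natural} model structures rather than the groupoidal ones, so one cannot simply invoke a ready-made Quillen pair and must re-derive it, as above, from the $\MdlCatG$-enrichment of $\MdlPCatG$ together with the cofibrancy of the permutative groupoids $\PNat(n)$. If a Quillen pair between the model category of coherently commutative monoidal groupoids and $\MdlPCatG$ is already available, one may instead begin the argument there and localize only at $\P_\infty$; this is organisationally cleaner but rests on the same input.
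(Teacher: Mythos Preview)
Your argument is correct. The paper reaches the same conclusion by a slightly more direct route: rather than first establishing the Quillen pair at the strict groupoidal level and then invoking the universal property of left Bousfield localization, it applies the criterion \cite[Prop.~E.2.14]{AJ1} (left adjoint preserves cofibrations and right adjoint preserves fibrations between fibrant objects) directly to the two localized model categories. For cofibrations this is immediate, since the cofibrations of the model category of coherently commutative Picard groupoids coincide with those of the model category of coherently commutative monoidal categories, and likewise on the $\PCat$ side, so the preservation of cofibrations by $\PNat$ follows straight from \cite{Sharma}. For fibrations between fibrant objects, the paper observes that a fibration between Picard groupoids in $\MdlPCatP$ is an isofibration of the underlying categories (being a fibration in $\MdlPCatG$ between groupoids), and that $\Kbar$ sends such a map to a fibration between coherently commutative Picard groupoids.

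Your approach is a bit more systematic: you make the passage through $\MdlPCatG$ and the strict groupoidal model structure on $\gCAT$ explicit, and then descend through the localization using \cite[Thm.~3.3.20]{Hirchhorn} together with Proposition~\ref{Th-SegN-Pic-to-Pic}. The trade-off is that you spend effort re-deriving the strict-level Quillen pair from the $\MdlCatG$-enrichment (which is exactly what the paper later records as Lemma~\ref{Thick-Qu-pair-G}), whereas the paper short-circuits this by noting that the relevant classes of maps coincide with those already handled in \cite{Sharma}. Both arguments rest on the same ingredients; yours just makes the localization bookkeeping more visible.
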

\begin{proof}
	We begin by showing that the adjoint pair $(\PNat, \Kbar)$ is a Quillen pair between the aforementioned model categories. The cofibrations in the model category of coherently commutative monoidal categories are the same as those in the model category of coherently commutative Picard groupoids. The cofibrations in the natural model category of permutative categories are the same as those in the model category of Picard groupoids. It was shown in \cite{Sharma} that $(\PNat, \Kbar)$ is a Quillen pair between the model category of coherently commutative monoidal categories and the natural model category of permutative categories. Therefore by the above observation the left adjoint $\PNat$ maps Q-cofibrations to cofibrations in the model category of Picard groupoids. A fibration between Picard groupoids in the model category of Picard groupoids is the same as a fibration in the natural model category of permutative categories, namely an isofibration of the underlying categories. The right adjoint $\Kbar$ maps these fibrations to fibrations in the model category of coherently commutative Picard groupoids. Thus the left adjoint preserve cofibrations and the right adjoint preserves fibrations between fibrant objects therefore $(\PNat, \Kbar)$ is a Quillen pair between the aforementions model categories.
	\end{proof}

The stricter version of the functor $\Kbar$ namely $\KSeg$ is also a right Quillen functor between the model categories considered in the above lemma:
\begin{lem}
	\label{str-Qu-pair-Pic}
	The adjunction $\left(\PStr, \KSeg \right)$ is a Quillen adjunction between the the model category of coherently commutative Picard groupoids  and the model category of Picard groupoids $\MdlPCatP$.
\end{lem}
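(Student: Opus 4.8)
The plan is to imitate the proof of Lemma~\ref{Thick-Qu-pair-Pic} essentially verbatim, with the classical Segal nerve $\KSeg$ in place of the thickened nerve $\Kbar$ and its left adjoint $\PStr$ in place of $\PNat$. The starting point is that, as shown in \cite{Sharma}, the adjunction $(\PStr, \KSeg)$ is a Quillen pair between the model category of coherently commutative monoidal categories and the natural model category of permutative categories; from this I want to promote it to a Quillen pair between the two localized model categories of the statement. The device that makes this work is the criterion \cite[Prop.~E.2.14]{AJ1} used throughout the paper: a left adjoint that preserves cofibrations together with a right adjoint that preserves fibrations between fibrant objects constitutes a Quillen pair. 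So the lemma reduces to two verifications.

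First, the left adjoint preserves cofibrations. By Remark~\ref{acyc-fib-gpd-mdl-gcat} the cofibrations of the model category of coherently commutative Picard groupoids are precisely the $Q$-cofibrations of $\gCats$, which are exactly the cofibrations of the model category of coherently commutative monoidal categories; and by Proposition~\ref{char-cof} together with Theorem~\ref{Pic-mdl-str}$(1)$ the cofibrations of $\MdlPCatP$ are exactly the cofibrations of the natural model category of permutative categories. Since $\PStr$ takes the latter into the former by the cited result of \cite{Sharma}, it takes $Q$-cofibrations of $\gCats$ to cofibrations of $\MdlPCatP$.

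Second, the right adjoint preserves fibrations between fibrant objects. A fibration $G \to H$ in $\MdlPCatP$ between Picard groupoids is, as a fibration between fibrant objects of the left Bousfield localization $\MdlPCatP$ of $\MdlPCatG$, a fibration of $\MdlPCatG$, hence a path fibration between permutative groupoids; by the argument of Proposition~\ref{Kan-fib-btw-gpds} a path fibration between groupoids is an isofibration, i.e. a fibration in the natural model category of permutative categories. As $(\PStr, \KSeg)$ is right Quillen for that structure, $\KSeg(G) \to \KSeg(H)$ is a fibration in the model category of coherently commutative monoidal categories. By Proposition~\ref{Th-SegN-Pic-to-Pic} both $\KSeg(G)$ and $\KSeg(H)$ are coherently commutative Picard groupoids, hence fibrant in the target model category; and a fibration of an un-localized model category between two objects that are fibrant in a left Bousfield localization is automatically a fibration in that localization. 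Thus $\KSeg$ sends fibrations between fibrant objects of $\MdlPCatP$ to fibrations between fibrant objects of the model category of coherently commutative Picard groupoids, and the criterion above completes the proof.

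The one genuinely delicate point — and the step I would expect to need the most care — is the bookkeeping among the various model structures carried by $\gCAT$ (the strict model category of $\gGpds$, the model category of coherently commutative monoidal categories of \cite{Sharma}, and the localizations yielding coherently commutative monoidal and Picard groupoids): one must be sure they all share a single class of cofibrations and that the $\KSeg$-image of a Picard groupoid lands among the fibrant objects of the last of these. Both facts are already on record, the first via Remark~\ref{acyc-fib-gpd-mdl-gcat} and Proposition~\ref{char-cof}, the second via Proposition~\ref{Th-SegN-Pic-to-Pic}. As an alternative, one could instead derive the lemma from Lemma~\ref{Thick-Qu-pair-Pic} together with the natural weak equivalence $\KSeg \Rightarrow \Kbar$ of \cite[Cor.~6.15]{Sharma}, which transfers the relevant preservation properties of $\Kbar$ directly to $\KSeg$.
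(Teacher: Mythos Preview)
Your proof is correct and follows the same overall strategy as the paper: apply \cite[Prop.~E.2.14]{AJ1} by checking that $\PStr$ preserves cofibrations and $\KSeg$ preserves fibrations between fibrant objects, with the cofibration argument handled exactly as you describe. The only difference is in how the fibration step is verified: you reduce to the known Quillen pair $(\PStr,\KSeg)$ from \cite{Sharma} (for the natural model structure on $\PCat$ and the model category of coherently commutative monoidal categories) and then invoke Proposition~\ref{Th-SegN-Pic-to-Pic} together with the standard fact about fibrations between local objects in a Bousfield localization, whereas the paper argues directly that $\KSeg(F)(n^+)=\StrSMHom{\PStr(n)}{F}$ is a fibration in $\MdlCatG$ using the $\MdlCatG$-enrichment of $\MdlPCatP$ established in Proposition~\ref{Cat-G-mdl-Pcat-Pic}. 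Both routes land on the same conclusion; yours makes the localization bookkeeping more explicit, while the paper's is slightly more self-contained in that it does not need to pass through the isofibration characterization or the coherently commutative monoidal categories model structure of \cite{Sharma}.
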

\begin{proof}
	We will prove the lemma by showing that the left adjoint $\PStr$ preserves cofibrations and  the right adjoint functor $\KSeg$ preserves fibrations between fibrant objects, see \cite[Prop. E.2.14]{AJ1}. The left adjoint preserves cofibrations because the cofibrations in the model category of coherently commutative Picard groupoids are the same as those in the model category of coherently commutative monoidal categories namely $Q$-cofibrations. Further, the cofibrations in the model category $\MdlPCatP$ are the same as those in the natural model category of permutative categories. We recall from \cite{Sharma} that $\PStr$ is a left Quillen functor with respect to the model category of coherently commutative monoidal categories and the natural model category structure on $\PCat$. This implies that $\PStr$ maps cofibrations in the model category of coherently commutative Picard groupoids to cofibrations in $\MdlPCatP$.
	
	 Let $F:C \to D$ be a fibration between fibrant objects in $\MdlPCatP$. In order to show that $\KSeg(F)$ is a fibration in the model category of coherently commutative Picard groupoids, it would be sufficient to show that $\KSeg(F)(n^+)$ is a fibration in $\MdlCatG$, for all $n^+ \in Ob(\gop)$. For each $n \in \Nat$ the groupoid $\PStr(n)$ is a cofibrant object in $\MdlPCatP$. The model category $\MdlPCatP$ is a $\MdlCatG$-model category whose categorical Hom is given by the functor $\StrSMHom{-}{-}$. This implies that the functor
	\[
	\StrSMHom{\PStr(n)}{F}:\StrSMHom{\PStr(n)}{C} \to \StrSMHom{\PStr(n)}{D}
	\]
	is a fibration in $\MdlCatG$ and it is an acyclic fibration in $\MdlCatG$ whenever $F$ is an acyclic fibration.
\end{proof}
 
 Adaptations of arguments in the proofs of the above lemmas to the model category of coherently commutative monoidal groupoids and the model category of permutative groupoids gives us the following two analogous lemmas:
 \begin{lem}
 	\label{Thick-Qu-pair-G}
 	The adjoint functors $(\PNat, \Kbar)$ form a Quillen pair between the model category of coherently commutative monoidal groupoids and the model category of permutative groupoids.
 \end{lem}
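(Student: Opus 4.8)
The plan is to adapt the proof of Lemma \ref{Thick-Qu-pair-Pic} almost verbatim, invoking the criterion \cite[Prop. E.2.14]{AJ1}: it suffices to show that the left adjoint $\PNat$ preserves cofibrations and that the right adjoint $\Kbar$ preserves fibrations between fibrant objects. The only model categories that enter are the model category of coherently commutative monoidal groupoids (Theorem \ref{loc-semi-add}) and the model category of permutative groupoids $\MdlPCatG$ (Theorem \ref{model-str-Perm-Gpds}), together with the already-established Quillen pair $(\PNat, \Kbar)$ between the model category of coherently commutative monoidal categories and the natural model category of permutative categories from \cite{Sharma}.

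For cofibrations: by Theorem \ref{loc-semi-add}$(1)$ the cofibrations of the model category of coherently commutative monoidal groupoids are the $Q$-cofibrations of $\gCats$, which by Remark \ref{acyc-fib-gpd-mdl-gcat} agree with the cofibrations of the strict model structure on $\gCAT$ of \cite{Sharma}, and hence with the cofibrations of the model category of coherently commutative monoidal categories (a left Bousfield localization of that strict model structure). By Proposition \ref{char-cof} the cofibrations of $\MdlPCatG$ are exactly those of the natural model category of permutative categories. Since $(\PNat, \Kbar)$ is a Quillen pair between the latter two model categories, $\PNat$ sends $Q$-cofibrations to cofibrations in $\MdlPCatG$. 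Next, $\Kbar$ lands in the right place on fibrant objects: every permutative category is fibrant in the natural model category, so the right Quillen functor $\Kbar$ carries a permutative groupoid $G$ to a coherently commutative monoidal category; and since $G$ is a groupoid each $\Kbar(G)(n^+) = \StrSMHom{\PNat(n)}{G}$ is a groupoid, so $\Kbar(G)$ is a coherently commutative monoidal groupoid.

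For fibrations between fibrant objects: let $F:C \to D$ be a fibration between permutative groupoids in $\MdlPCatG$. By Theorem \ref{model-str-Perm-Gpds} its underlying functor is a path fibration between groupoids, and, exactly as in the proof of Proposition \ref{Kan-fib-btw-gpds}, a path fibration between groupoids is an isofibration; hence $F$ is a fibration in the natural model category of permutative categories. Therefore $\Kbar(F)$ is a strict fibration of $\gCats$ by the Quillen pair of \cite{Sharma}, and since $\Kbar(C)$ and $\Kbar(D)$ are coherently commutative monoidal groupoids (fibrant), while a strict fibration between fibrant objects of a left Bousfield localization is again a fibration, $\Kbar(F)$ is a fibration between fibrant objects in the model category of coherently commutative monoidal groupoids. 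Combining the three points via \cite[Prop. E.2.14]{AJ1} finishes the argument. The only step requiring genuine thought is the identification in the last paragraph that a fibration of permutative groupoids in $\MdlPCatG$ has underlying functor an isofibration, so that the known Quillen pair over the \emph{natural} model structures applies; the rest is bookkeeping about which localized model structures share their classes of cofibrations.
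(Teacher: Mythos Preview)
Your proposal is correct and follows essentially the same approach the paper intends: the paper merely states that the proof is an adaptation of the argument for Lemma~\ref{Thick-Qu-pair-Pic}, and you have carried out precisely that adaptation, supplying the details (identifying cofibrations via Proposition~\ref{char-cof} and Remark~\ref{acyc-fib-gpd-mdl-gcat}, reducing fibrations between fibrant objects to isofibrations via the argument of Proposition~\ref{Kan-fib-btw-gpds}, and then invoking the Quillen pair of \cite{Sharma}). The only step left somewhat implicit is that the model category of coherently commutative monoidal groupoids is a left Bousfield localization of the strict model structure of \cite{Sharma} (so that a strict fibration between its fibrant objects is indeed a fibration there), but this is immediate since both share the same $Q$-cofibrations and every degreewise equivalence of categories is a degreewise groupoidal equivalence.
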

\begin{lem}
	\label{str-Qu-pair-G}
	The adjunction $\left(\PStr, \KSeg \right)$ is a Quillen adjunction between the the model category of coherently commutative monoidal groupoids  and the model category of permutative groupoids $\MdlPCatG$.
\end{lem}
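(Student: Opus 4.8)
The plan is to follow the proof of Lemma~\ref{str-Qu-pair-Pic} almost verbatim, replacing ``Picard groupoid'' by ``monoidal groupoid'' throughout. By \cite[Prop. E.2.14]{AJ1} it suffices to check that the left adjoint $\PStr$ preserves cofibrations and that the right adjoint $\KSeg$ preserves fibrations between fibrant objects.

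For the cofibration half: the cofibrations of the model category of coherently commutative monoidal groupoids are the $Q$-cofibrations of $\gCats$ (Theorem~\ref{loc-semi-add}), and these coincide with the cofibrations of the model category of coherently commutative monoidal categories of \cite{Sharma}, since both model structures are localizations of the strict model structure on $\gCAT$. On the target side, Proposition~\ref{char-cof} identifies the cofibrations of $\MdlPCatG$ with those of the natural model category of permutative categories. Since \cite{Sharma} establishes that $\PStr$ is a left Quillen functor from the model category of coherently commutative monoidal categories to the natural model category of permutative categories, it carries $Q$-cofibrations to cofibrations of $\MdlPCatG$.

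For the fibration half: let $F:C \to D$ be a fibration between fibrant objects of $\MdlPCatG$, \emph{i.e.}\ a strict symmetric monoidal functor between permutative groupoids whose underlying functor is a path fibration. It is enough to show that $\KSeg(F)(n^+) = \StrSMHom{\PStr(n)}{F}$ is a path fibration of categories for every $n^+ \in \gop$, and an acyclic one whenever $F$ is acyclic. Here we use two facts: first, each $\PStr(n)$ is cofibrant in $\MdlPCatG$ (its cofibrancy is the same as in the natural model category of permutative categories, where $\PStr(n)$ is a retract of a free permutative category, exactly as in the proof of Lemma~\ref{str-Qu-pair-Pic}); second, by Proposition~\ref{tens-Quillen-bifunc-PCat-G} the bifunctor $-\boxtimes-$ is a Quillen bifunctor for $\MdlCatG$ and $\MdlPCatG$, so that $\PCat$ carries a $\MdlCatG$-enriched model structure whose internal hom is $\StrSMHom{-}{-}$. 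Applying the pushout-product axiom of this enrichment to the cofibration from the initial permutative category into $\PStr(n)$ and to the fibration $F$ yields precisely that $\StrSMHom{\PStr(n)}{F}$ is a fibration in $\MdlCatG$, acyclic when $F$ is.

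The only real work, as in Lemma~\ref{str-Qu-pair-Pic}, is confirming the cofibrancy of $\PStr(n)$ in $\MdlPCatG$ and checking that the $\MdlCatG$-enriched model structure from Proposition~\ref{tens-Quillen-bifunc-PCat-G} indeed has $\StrSMHom{-}{-}$ as its internal hom, so that the $(SM7)$-type argument transfers directly; once these are in place the statement is immediate.
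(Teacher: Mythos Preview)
Your proposal is correct and follows exactly the approach the paper intends: the paper does not give a separate proof of this lemma but simply remarks that it follows by adapting the argument of Lemma~\ref{str-Qu-pair-Pic} to the groupoid setting, which is precisely what you have written out. The only point worth making explicit is that showing $\KSeg(F)(n^+)$ is a path fibration suffices because $\KSeg$ carries permutative groupoids to coherently commutative monoidal groupoids (so $\KSeg(F)$ lands between fibrant objects of the localization, where fibrations agree with strict path fibrations); this is implicit in both your argument and the paper's.
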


The next lemma will be instrumental in writing the proof of the main result of this section:
\begin{lem}
	\label{PNat-pr-pic}
	The left adjoint functor $\PNat$ map coherently commutative Picard groupoids to Picard groupoids.
\end{lem}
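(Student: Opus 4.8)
The plan is to verify that $\PNat(X)$ meets the hypotheses of Lemma~\ref{Pic-gpd-as-loc-ob}: that it is a permutative groupoid which is a $\lbrace (\L(m_2), \L(\delta^2_1)) \rbrace$-local object of $\MdlPCatG$. By that lemma this forces $\PNat(X)$ to be a Picard groupoid. For the first part, I would use that a coherently commutative Picard groupoid is in particular a coherently commutative monoidal groupoid, so every $X(n^+)$ is a groupoid and each Segal projection $X((k+l)^+) \to X(k^+) \times X(l^+)$ is an equivalence of categories. Using the description of $\PNat(X)$ as a colimit of the permutative groupoids $\PNat(n)$ coming from the adjunction $\PNat \dashv \Kbar$ (using, along the way, that $\Pi_1$ preserves products, Proposition~\ref{Pi1-pres-prod}), I would show that the Segal condition forces the canonical comparison $X(1^+) \to \PNat(X)$ to be an equivalence of symmetric monoidal categories; since $X(1^+)$ is a groupoid this makes $\PNat(X)$ a permutative groupoid. (Alternatively one cites the corresponding statement for coherently commutative monoidal categories from \cite{Sharma} and notes that it specialises to groupoids when the input is levelwise a groupoid.)

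For the local-object condition, write $C := \PNat(X)$, now a permutative groupoid. Since $\MdlPCatG$ is a simplicial model category with function complex $N(\StrSMHom{-}{-})$ and $N$ is a homotopy reflection (Lemma~\ref{N-htpy-ref}), $C$ is $\lbrace (\L(m_2), \L(\delta^2_1)) \rbrace$-local precisely when $\StrSMHom{(\L(m_2), \L(\delta^2_1))}{C} \colon \StrSMHom{\L(2)}{C} \to \StrSMHom{\L(1)}{C} \times \StrSMHom{\L(1)}{C}$ is an equivalence of categories. By definition of the classical Segal nerve this functor is $\KSeg(C)((m_2, \delta^2_1))$, and the natural weak equivalence $\KSeg(C) \xrightarrow{\sim} \Kbar(C)$ of \cite[Cor.~6.15]{Sharma}, which is a degreewise equivalence of categories, reduces the claim to the same statement for $\Kbar(C) = \Kbar\PNat(X)$. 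Finally the unit $\eta_X \colon X \to \Kbar\PNat(X)$, whose component at $1^+$ coincides — after the identification $\Kbar\PNat(X)(1^+) = \StrSMHom{\PNat(1)}{C} \simeq \StrSMHom{\PStr(1)}{C} \xrightarrow{ev_{(1)}} C$ furnished by $\PStr(1) \hookrightarrow \PNat(1)$ and Proposition~\ref{free-pCat-one-gen} — with the equivalence $X(1^+) \to C$ of the previous paragraph, transports condition $(3)$ of Proposition~\ref{char-cc-pic-gpd} (valid for $X$, being a coherently commutative Picard groupoid) first to $\Kbar\PNat(X)$ and then to $C$. Hence $C = \PNat(X)$ is a $\lbrace (\L(m_2), \L(\delta^2_1)) \rbrace$-local permutative groupoid, and so a Picard groupoid.

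The hard part is the first paragraph: showing that the Segal condition on $X$ already forces the comparison $X(1^+) \to \PNat(X)$ to be an equivalence of symmetric monoidal categories — in particular, that the permutative category $\PNat(X)$ is a groupoid. This is where the special structure of $X$ is used essentially; everything after that is a formal manipulation of the characterisations of local objects together with the homotopy reflection $N$ and the comparison $\KSeg \Rightarrow \Kbar$.
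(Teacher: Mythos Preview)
Your approach is correct and arrives at the same conclusion, but the paper gets there with considerably less work. The key observation you are missing is that \cite[Cor.~6.12]{Sharma} already tells you that the unit $\eta_X : X \to \Kbar(\PNat(X))$ is a \emph{strict} equivalence of $\gCats$ (i.e.\ a degreewise equivalence of categories) whenever $X$ is a coherently commutative monoidal category, hence in particular for any coherently commutative Picard groupoid. This single citation dispatches what you call the ``hard part'': since $\eta_X(1^+)$ is an equivalence and $\Kbar(\PNat(X))(1^+) = \StrSMHom{\PNat(1)}{\PNat(X)} \simeq \PNat(X)$, the permutative category $\PNat(X)$ is equivalent to the groupoid $X(1^+)$ and is therefore itself a groupoid. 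There is no need to unwind the colimit description of $\PNat$ or invoke Proposition~\ref{Pi1-pres-prod}.

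Once you have the strict equivalence $\eta_X$, the paper simply writes down the evident commutative square
\[
\xymatrix{
\Kbar(\PNat(X))(2^+) \ar[rr]^{\Kbar(\PNat(X))((m_2,\delta^2_1))} && \Kbar(\PNat(X))(1^+) \times \Kbar(\PNat(X))(1^+) \\
X(2^+) \ar[u]^{\eta_X(2^+)} \ar[rr]_{X((m_2,\delta^2_1))} && X(1^+) \times X(1^+) \ar[u]_{\eta_X(1^+) \times \eta_X(1^+)}
}
\]
and applies $2$-out-of-$3$: the bottom map is an equivalence by condition~(3) of Proposition~\ref{char-cc-pic-gpd}, the verticals by \cite[Cor.~6.12]{Sharma}, hence so is the top map, which is precisely $\StrSMHom{(\PNat(m_2),\PNat(\delta^2_1))}{\PNat(X)}$. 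Your detour through $\KSeg$ and the comparison $\KSeg \Rightarrow \Kbar$ is unnecessary here, since $\Kbar(\PNat(X))((m_2,\delta^2_1))$ is already the map you want. In short: your parenthetical ``alternatively one cites the corresponding statement from \cite{Sharma}'' is not an alternative but the whole proof.
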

\begin{proof}
	Let $X$ be a coherently commutative Picard groupoid. It follows from \cite[Cor. 6.12]{Sharma} that the unit map $\eta_X:X \to \Kbar(\PNat(X))$ is a strict equivalence of $\gCats$. The unit natural transformation gives us the following commutative diagram in $\Cat$:
	\begin{equation*}
	\xymatrix{
		\StrSMHom{\PNat(2)}{\PNat(X)} \ar[r]^{H \ \  \ \ \ \ \ \ \ \ \ }  & \StrSMHom{\PNat(1)}{\PNat(X)} \times \StrSMHom{\PNat(1)}{\PNat(X)}  \\
		X(2^+) \ar[r]_{X((m_2, \partition{2}{1})) \ \ \ \ } \ar[u]^{\eta_X(2^+)} & X(1^+) \times X(1^+) \ar[u]_{\eta_X(1^+) \times \eta_X(1^+)}
	}
	\end{equation*}
	where the top map $H = \StrSMHom{(\PStr(m_2), \PStr(\partition{2}{1}))}{\PNat(X)} = \Kbar(\PNat(X))((m_2, \partition{2}{1}))$. Since the bottom horizontal map and the two vertical maps are equivalences of categories therefore the $2$ out of $3$ property implies that the top horizontal map $H$ is also an equivalence of categories. This implies that $\PNat(X)$ is a Picard groupoid.
\end{proof}

The following lemma will be used in the proof of the main result of this section. This lemma exhibits a property of the Quillen pair in context which is not common to all Quillen equivalences.

\begin{lem}
	\label{char-CC-Mon-eq}
	A morphism of $\gCats$ $F:X \to Y$ is a stable equivalence of $\gCats$ if and only if the strict symmetric monoidal functor
	$\PNat(F):\PNat(X) \to \PNat(Y)$ is a weak equivalence in $\MdlPCatP$.
\end{lem}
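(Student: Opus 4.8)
The plan is to reduce to the case where $X$ and $Y$ are coherently commutative Picard groupoids, and then to compare $F$ with $\PNat F$ by playing the unit of the adjunction $(\PNat,\Kbar)$ against the identity. The model category of coherently commutative Picard groupoids is combinatorial (Theorem~\ref{Mdl-CC-Pic}), so it has a functorial fibrant replacement; applied to $F$ it gives a commutative square whose horizontal arrows $r_X\colon X\to\widetilde X$ and $r_Y\colon Y\to\widetilde Y$ are trivial cofibrations with coherently commutative Picard groupoid targets and whose vertical arrows are $F$ and $\widetilde F$. By two-out-of-three, $F$ is a stable equivalence if and only if $\widetilde F$ is. Since $\PNat$ is a left Quillen functor (Lemma~\ref{Thick-Qu-pair-Pic}), $\PNat r_X$ and $\PNat r_Y$ are trivial cofibrations, hence weak equivalences, of $\MdlPCatP$, so $\PNat F$ is a weak equivalence in $\MdlPCatP$ if and only if $\PNat\widetilde F$ is. This reduces the lemma to a morphism $F$ between coherently commutative Picard groupoids, which I assume from now on.

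In this case Lemma~\ref{PNat-pr-pic} gives that $\PNat X$ and $\PNat Y$ are Picard groupoids, hence fibrant in $\MdlPCatP$, and Proposition~\ref{Th-SegN-Pic-to-Pic} gives that $\Kbar\PNat X$ and $\Kbar\PNat Y$ are coherently commutative Picard groupoids, hence fibrant. Exactly as in the proof of Lemma~\ref{PNat-pr-pic}, \cite[Cor.~6.12]{Sharma} shows that the units $\eta_X\colon X\to\Kbar\PNat X$ and $\eta_Y\colon Y\to\Kbar\PNat Y$ are strict equivalences of $\gCats$, hence stable equivalences; so naturality of $\eta$ and two-out-of-three in the model category of coherently commutative Picard groupoids show that $F$ is a stable equivalence if and only if $\Kbar\PNat F$ is a stable equivalence of $\gCats$. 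Moreover $\Kbar$ is a right Quillen functor and $\PNat X,\PNat Y$ are fibrant, so if $\PNat F$ is a weak equivalence in $\MdlPCatP$ then $\Kbar\PNat F$ is a stable equivalence, and therefore $F$ is one. This gives one implication of the lemma; the content of the other is to show that a stable equivalence $\Kbar\PNat F$ forces $\PNat F$ to be a weak equivalence in $\MdlPCatP$.

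For that, note that $\Kbar\PNat F$ is a stable equivalence between the fibrant objects $\Kbar\PNat X$ and $\Kbar\PNat Y$ of the model category of coherently commutative Picard groupoids. As that model category is a left Bousfield localization of the strict model category of $\gGpds$, a weak equivalence between fibrant objects there is a strict equivalence of $\gCats$; hence $(\Kbar\PNat F)(n^+)$ is an equivalence of categories for every $n^+\in\gop$. Taking $n^+=1^+$, using the natural weak equivalence $\KSeg\Rightarrow\Kbar$ of \cite[Cor.~6.15]{Sharma} (which on the coherently commutative Picard groupoids built from $\PNat X$ and $\PNat Y$ is in particular a strict equivalence), and using the natural equivalence $ev_{(1)}\colon\StrSMHom{\L(1)}{-}\to(-)$ of Proposition~\ref{free-pCat-one-gen} on the permutative groupoids $\PNat X$ and $\PNat Y$, one gets that $\PNat F\colon\PNat X\to\PNat Y$ is an equivalence of the underlying categories. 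An equivalence of underlying categories between Picard groupoids is a weak equivalence in $\MdlPCatG$, and hence in its localization $\MdlPCatP$; so $\PNat F$ is a weak equivalence in $\MdlPCatP$, which completes the reduced statement and the lemma.

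The hard part is the last paragraph: the rest is a formal manipulation of the Quillen pairs $(\PNat,\Kbar)$ and of two-out-of-three once the two external inputs---that the unit is a strict equivalence on coherently commutative Picard groupoids, and that $\PNat$ sends such groupoids to Picard groupoids---are available, whereas recovering the weak equivalence $\PNat F$ from the stable equivalence $\Kbar\PNat F$ uses both that a local equivalence between local objects is a strict (degreewise) equivalence and that $\Kbar$ determines its input up to natural equivalence from its value on $1^+$. This is exactly the feature highlighted by the remark preceding the lemma: unlike a general Quillen equivalence, here no (co)fibrant replacement of $F$ itself is needed, because $\PNat$ preserves and reflects all stable equivalences.
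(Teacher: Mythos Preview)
Your proof is correct, and the overall architecture—reduce to a map between coherently commutative Picard groupoids, then compare $F$ with $\PNat F$ via the unit $\eta$—is the same as the paper's. There are, however, two genuine differences in how the details are carried out.

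First, for the reduction step the paper treats the two directions separately: the forward implication uses a \emph{cofibrant} replacement $Q$ together with \cite[Lemma~6.6]{Sharma} to see that $\PNat$ sends the strict equivalences $Q(X)\to X$ and $Q(Y)\to Y$ to weak equivalences, while the reverse implication uses a fibrant replacement. Your single fibrant-replacement square handles both directions at once, because $\PNat$ sends acyclic cofibrations to acyclic cofibrations by virtue of being left Quillen; this avoids invoking \cite[Lemma~6.6]{Sharma} and is a little cleaner.

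Second, for the reverse implication in the reduced case the paper simply observes that a weak equivalence in $\MdlPCatP$ between Picard groupoids is already a weak equivalence in the natural model structure on $\PCat$ and then invokes \cite[Thm.~6.13]{Sharma} (the Quillen equivalence in the coherently commutative monoidal setting) to conclude that $F$ is a weak equivalence. You instead unwind this by hand: pass to $\Kbar\PNat F$, note that a stable equivalence between fibrant objects is a strict equivalence, evaluate at $1^+$, and use the natural equivalence $\KSeg\Rightarrow\Kbar$ together with Proposition~\ref{free-pCat-one-gen} to recover $\PNat F$ as an equivalence of underlying categories. Your route is longer but more self-contained, since it does not appeal to the main theorem of \cite{Sharma} as a black box; the paper's route is shorter but imports that result wholesale.
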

\begin{proof}
	Let us first assume that the morphism of $\gCats$ $F$ is a
	stable equivalence of $\gCats$.
	Any choice of a cofibrant replacement functor $Q$ for $\gCAT$ provides a commutative
	diagram
	\begin{equation*}
	\xymatrix{
		Q(X) \ar[r]^{Q(F)} \ar[d] &Q(Y) \ar[d] \\
		X \ar[r]_F & Y
	}
	\end{equation*}
	The vertical maps in this diagram are acyclic fibrations in the model
	category of coherently commutative Picard groupoids which are
	strict equivalences of $\gCats$.
	Applying the functor $\PNat$ to this commutative diagram we get the following
	commutative diagram in $\MdlPCatG$
	\begin{equation*}
	\xymatrix{
		\PNat(Q(X)) \ar[r]^{\PNat(Q(F))} \ar[d] & \PNat(Q(Y)) \ar[d] \\
		\PNat(X) \ar[r]_{\PNat(F)} & \PNat(Y)
	}
	\end{equation*}
	The functor $\PNat$ is a left Quillen functor therefore it preserves
	weak equivalences between cofibrant objects. This implies that the top
	horizontal arrow in the above diagram is a weak equivalence in $\MdlPCatP$.
    \cite[Lemma 6.6]{Sharma} implies that the vertical maps in the
	above diagram are weak equivalences in $\MdlPCatP$. Now the two out of three
	property of weak equivalences in model categories implies that
	$\PNat(F)$ is a weak equivalence in $\MdlPCatP$.
	
	Conversely, let us assume that $\PNat(F):\PNat(X) \to \PNat(Y)$ is a weak equivalence in $\MdlPCatP$. We begin by looking at the special case of $X$ and $Y$ being coherently commutative Picard groupoids. By lemma \ref{PNat-pr-pic}, both $\PNat(X)$ and $\PNat(Y)$ are Picard groupoids therefore $\PNat(F)$ is a weak-equivalence in the natural model category $\PCat$.
		Now \cite[Thm. 6.13]{Sharma} implies that $F$ is a weak-equivalence in the model category of coherently commutative monoidal categories which is also a stable equivalence of $\gCats$.
   Now we tackle the general case. Let $F:X \to Y$ be a morphism of $\gCats$ such that $\PNat(F)$ is
	a weak-equivalence in $\MdlPCatP$. By a choice of a fibrant replacement functor $R$
	we get the following commutative diagram whose vertical arrows are acyclic
	cofibrations in the model category of coherently commutative monoidal categories and $R(X)$ and $R(Y)$ are coherently commutative monoidal categories:
	\begin{equation*}
	\xymatrix{
		R(X) \ar[r]^{R(F)} & R(Y) \\
		X \ar[u]^{\zeta(X)} \ar[r]_F & Y \ar[u]_{\zeta(Y)}
	}
	\end{equation*}
	Applying the functor $\PNat$ to the above diagram we get the following commutative diagram in $\MdlPCatP$:
	\begin{equation*}
	\xymatrix{
		\PNat(R(X)) \ar[r]^{\PNat(R(F))} & \PNat(R(Y)) \\
		\PNat(X) \ar[u]^{\PNat(\zeta(X))} \ar[r]_{\PNat(F)} & \PNat(Y) \ar[u]_{\PNat(\zeta(Y))}
	}
	\end{equation*}
	Since $\PNat$ is a left Quillen functor therefore it preserves
	acyclic cofibrations. This implies that the two vertical morphisms in the above
	diagram are weak-equivalences in $\MdlPCatP$. By assumption $\PNat(F)$
	is a weak-equivalences in $\MdlPCatP$ therefore the two out of three property implies that $\PNat(R(F))$ is a weak-equivalences in $\MdlPCatP$. The discussion earlier in this proof regarding strict equivalence between coherently commutative Picard groupoids implies that $R(F)$ is a strict equivalence of $\gCats$.
\end{proof}

\begin{coro}
	\label{hmtpy-func-K}
	A strict symmetric monoidal functor is a weak equivalences in $\MdlPCatP$ if and only if its image under the right Quillen functor $\Kbar$ is a stable equivalences of $\gCats$.
	\end{coro}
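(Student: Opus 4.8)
The plan is to derive this from Lemma~\ref{char-CC-Mon-eq} together with a study of the counit of the adjunction $(\PNat,\Kbar)$. Let $f\colon C\to D$ be a strict symmetric monoidal functor between permutative categories. Naturality of the counit gives a commutative square in $\PCat$ whose top edge is $\PNat\Kbar(f)$, whose bottom edge is $f$, and whose vertical edges are the counit components $\epsilon_C\colon\PNat\Kbar(C)\to C$ and $\epsilon_D\colon\PNat\Kbar(D)\to D$. If both $\epsilon_C$ and $\epsilon_D$ are weak equivalences in $\MdlPCatP$, then the two-out-of-three property shows that $f$ is a weak equivalence in $\MdlPCatP$ if and only if $\PNat\Kbar(f)$ is one; and Lemma~\ref{char-CC-Mon-eq}, applied to the morphism $\Kbar(f)$ of $\gCats$, says precisely that $\PNat\Kbar(f)$ is a weak equivalence in $\MdlPCatP$ exactly when $\Kbar(f)$ is a stable equivalence of $\gCats$. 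Chaining these two equivalences proves the corollary.

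So the one point that must be checked first is that the counit $\epsilon_E\colon\PNat\Kbar(E)\to E$ is a weak equivalence in $\MdlPCatP$ for \emph{every} permutative category $E$. Here I would use that $(\PNat,\Kbar)$ is a Quillen equivalence between the model category of coherently commutative monoidal categories and the natural model category of permutative categories \cite{Sharma}. Every object $E$ of $\PCat$ is fibrant in the natural model category, so for a cofibrant replacement $q\colon W\to\Kbar(E)$ of $\Kbar(E)$ the derived counit $\epsilon_E\circ\PNat(q)\colon\PNat(W)\to E$ is a weak equivalence in the natural model category of permutative categories, hence also in $\MdlPCatP$, since every weak equivalence of the natural model category of permutative categories is one in $\MdlPCatP$ (the two share their cofibrations and $\MdlPCatP$ has more weak equivalences). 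Since $q$ is a strict equivalence of $\gCats$, \cite[Lemma~6.6]{Sharma} gives that $\PNat(q)$ is a weak equivalence in $\MdlPCatP$; two-out-of-three then forces $\epsilon_E$ to be a weak equivalence in $\MdlPCatP$.

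Assembling: applying the previous paragraph to $E=C$ and $E=D$ makes the vertical maps of the counit square weak equivalences in $\MdlPCatP$, so $f$ and $\PNat\Kbar(f)$ have the same weak-equivalence status there, and Lemma~\ref{char-CC-Mon-eq} converts this into the statement about $\Kbar(f)$. The main obstacle is the counit claim: being a Quillen equivalence directly yields only that the \emph{derived} counit is a weak equivalence, so one has to compare it with the honest counit through a cofibrant replacement, which is exactly where \cite[Lemma~6.6]{Sharma} (that $\PNat$ carries strict equivalences of $\gCats$ to weak equivalences) is needed; everything else is formal two-out-of-three bookkeeping.
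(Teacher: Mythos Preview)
Your proposal is correct and follows essentially the same approach as the paper: both use the counit naturality square together with Lemma~\ref{char-CC-Mon-eq} and the two-out-of-three property. The only difference is in how the counit $\epsilon_E$ is shown to be a weak equivalence: the paper directly invokes \cite[Theorem~6.14]{Sharma}, whereas you rederive this fact from the Quillen equivalence of \cite{Sharma} combined with \cite[Lemma~6.6]{Sharma}; your derivation is valid and amounts to reproving the needed piece of \cite[Theorem~6.14]{Sharma}.
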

\begin{proof}
	Let $F:C \to D$ be a weak equivalence in $\MdlPCatP$.  We have the following commutative diagram in $\PCat$:
	\begin{equation*}
	\xymatrix{
	\PNat(\Kbar(C)) \ar[r]^{\PNat(\Kbar(F)) \ } \ar[d]_\epsilon  & \PNat(\Kbar(D)) \ar[d]^\epsilon \\
	C \ar[r]_F & D
   }
	\end{equation*}
    \cite[Theorem 6.14]{Sharma} implies that the vertical (counit) maps are weak equivalences in the natural model category $\PCat$ and therefore they are weak-equivalences in $\MdlPCatP$. By assumption $F$ is a weak equivalence therefore by the $2$ out of $3$ property the top horizontal map $\PNat(\Kbar(F))$ is a weak equivalence in $\MdlPCatP$. Now the above lemma implies that $\Kbar(F)$ is a stable equivalence of $\gCats$.
    
    Conversely let us assume that $\Kbar(F)$ is a stable equivalence of $\gCats$. Then the avove lemma implies that $\PNat(\Kbar(F))$ \emph{i.e.} the top horizontal arrow in the above commutative diagram, is a weak equivalence in $\MdlPCatP$. Now the 2 out of 3 property tells us that $F$ is a weak equivalence in $\MdlPCatP$.
	\end{proof}

%
Our next goal is to show that all of the aforementioned Quillen adjunctions are Quillen equivalences.

\begin{thm}
	\label{Thick-Qu-Eq}
	The adjunction $(\PNat, \Kbar)$ is a Quillen equivalence between the model category of coherently commutative Picard groupoids and the model category of Picard groupoids $\MdlPCatP$.
\end{thm}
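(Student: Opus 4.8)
The plan is to verify the standard adjunction criterion for a Quillen equivalence. Since Lemma \ref{Thick-Qu-pair-Pic} already establishes that $(\PNat, \Kbar)$ is a Quillen pair, it remains to show that for every $\gCat$ $X$ and every permutative category $C$, a strict symmetric monoidal functor $f\colon \PNat(X)\to C$ is a weak equivalence in $\MdlPCatP$ if and only if its adjunct $f^{\flat}\colon X\to \Kbar(C)$ is a stable equivalence of $\gCats$; in particular this holds when $X$ is $Q$-cofibrant and $C$ is a Picard groupoid, which is all the criterion demands. I will actually prove the ``if and only if'' for arbitrary $X$ and $C$, which is no harder.

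The crucial preliminary step is to prove that the unit $\eta_X\colon X\to\Kbar(\PNat(X))$ of the adjunction is a stable equivalence of $\gCats$ for \emph{every} $\gCat$ $X$. By Lemma \ref{char-CC-Mon-eq} this is equivalent to showing that $\PNat(\eta_X)$ is a weak equivalence in $\MdlPCatP$. Here I would invoke the triangle identity $\epsilon_{\PNat(X)}\circ\PNat(\eta_X)=\id_{\PNat(X)}$ together with \cite[Thm.~6.14]{Sharma}, which says the counit $\epsilon_{\PNat(X)}\colon\PNat(\Kbar(\PNat(X)))\to\PNat(X)$ is a weak equivalence in the natural model category of permutative categories, hence \emph{a fortiori} a weak equivalence in its left Bousfield localization $\MdlPCatP$; the two out of three property then gives that $\PNat(\eta_X)$ is a weak equivalence in $\MdlPCatP$, and so $\eta_X$ is a stable equivalence of $\gCats$.

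With this in hand the theorem follows quickly. Writing $f^{\flat}=\Kbar(f)\circ\eta_X$, Corollary \ref{hmtpy-func-K} says that $f$ is a weak equivalence in $\MdlPCatP$ if and only if $\Kbar(f)$ is a stable equivalence of $\gCats$; and since $\eta_X$ is always a stable equivalence, the two out of three property for stable equivalences shows $\Kbar(f)$ is a stable equivalence if and only if $f^{\flat}$ is. Chaining these two equivalences yields the required criterion, and therefore $(\PNat,\Kbar)$ is a Quillen equivalence between the model category of coherently commutative Picard groupoids and $\MdlPCatP$.

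The substantive content has really been outsourced to the earlier results: Corollary \ref{hmtpy-func-K} (which rests on \cite[Thm.~6.14]{Sharma}) and Lemma \ref{char-CC-Mon-eq}. The only point requiring genuine care is bookkeeping about which class of weak equivalences is in play --- weak equivalences of $\MdlPCatP$ are detected by mapping into Picard groupoids rather than arbitrary permutative groupoids --- so one must be sure that the counit, known to be a weak equivalence in the natural model structure, remains one after localization; this is automatic because a left Bousfield localization only enlarges the class of weak equivalences. I expect no serious obstacle beyond assembling these pieces correctly.
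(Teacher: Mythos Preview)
Your proposal is correct and follows essentially the same route as the paper: both verify the adjunction criterion for a Quillen equivalence by factoring the adjunct as $\Kbar(f)\circ\eta_X$, invoking Corollary~\ref{hmtpy-func-K} for the $\Kbar(f)$ factor, and using that the unit $\eta_X$ is a stable equivalence together with two out of three. The only minor difference is that the paper obtains the stable-equivalence property of $\eta_X$ directly by citing \cite[Cor.~6.12]{Sharma} (which gives that $\eta_X$ is already a coherently commutative monoidal equivalence, hence a stable equivalence after localizing), whereas you derive it from the triangle identity $\epsilon_{\PNat(X)}\circ\PNat(\eta_X)=\id$, the counit weak equivalence \cite[Thm.~6.14]{Sharma}, and Lemma~\ref{char-CC-Mon-eq}; both are valid and close in spirit.
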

\begin{proof}

	Let $X$ be a cofibrant object in the model category of coherently commutative Picard groupoids and let $C$ be a Picard groupoid.
	We will show that a map $F:\PNat(X) \to C$ is a stable equivalence of $\gCats$ if and only if its adjunct map $\phi(F):X \to \Kbar{C}$
	is a weak equivalence in $\MdlPCatP$. Let us first assume that $F$ is an equivalence in $\MdlPCatP$. The adjunct map $\phi(F)$ is defined by the following commutative diagram:
	\begin{equation*}
	\xymatrix{
		\Kbar(\PNat(X)) \ar[r]^{\Kbar(F)} & \Kbar(C) \\
		X \ar[u]^\eta \ar[ru]_{\phi(F)}
	}
	\end{equation*}
	The right adjoint functor $\Kbar$ preserves weak equivalences therefore the top horizontal arrow is a stable equivalence of $\gCats$.
	The unit map $\eta$ is a coherently commutative monoidal equivalence by  \cite[Cor. 6.12]{Sharma} and therefore it is a stable equivalence of $\gCats$. Now the 2 out of 3 property of model categories implies that $\phi(F)$ is also a coherently commutative monoidal equivalence. 
	
	Conversely, let us assume that $\phi(F)$ is a stable equivalence of $\gCats$. The 2 out of 3 property of model categories implies that top horizontal arrow in the above commutative diagram, namely $\Kbar(F)$ is a stable equivalence of $\gCats$. Now the above corollary tells us that $F$ is a weak equivalence in $\MdlPCatP$.
	
\end{proof}

The same natural equivalence $\eta:\KSeg \Rightarrow \Kbar$ as in the proof of \cite[Cor. 6.15]{Sharma} implies the proof the following theorem:
\begin{coro}
	\label{main-result}
	The adjunction $(\PStr, \KSeg)$ is a Quillen equivalence between the model category of coherently commutative Picard groupoids and the model category of Picard groupoids $\MdlPCatP$.
\end{coro}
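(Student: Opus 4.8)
The plan is to deduce this from Theorem~\ref{Thick-Qu-Eq} --- that $(\PNat,\Kbar)$ is a Quillen equivalence --- by transporting that statement along the natural weak equivalence $\eta:\KSeg \Rightarrow \Kbar$ used in the proof of Proposition~\ref{Th-SegN-Pic-to-Pic} (the one coming from \cite[Cor.~6.15]{Sharma}), whose component at any permutative category is a stable equivalence of $\gCats$. Since Lemma~\ref{str-Qu-pair-Pic} already tells us that $(\PStr,\KSeg)$ is a Quillen adjunction, it remains only to check that it is a Quillen equivalence, and I would do this by verifying the two standard criteria: that the right adjoint $\KSeg$ reflects weak equivalences between fibrant objects, and that for every cofibrant $\gCat$ $X$ the derived unit $X \to \KSeg(R\,\PStr(X))$ is a stable equivalence of $\gCats$, where $R$ denotes a fibrant replacement in $\MdlPCatP$.

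For the first criterion, let $F:C \to D$ be a strict symmetric monoidal functor between Picard groupoids (the fibrant objects of $\MdlPCatP$) with $\KSeg(F)$ a stable equivalence of $\gCats$. The naturality square for $\eta$ at $F$, together with the fact that $\eta_C$ and $\eta_D$ are stable equivalences of $\gCats$ and the two-out-of-three property, forces $\Kbar(F)$ to be a stable equivalence of $\gCats$; Corollary~\ref{hmtpy-func-K} then yields that $F$ is a weak equivalence in $\MdlPCatP$. (By Proposition~\ref{Th-SegN-Pic-to-Pic} all four $\gCats$ occurring here are in fact coherently commutative Picard groupoids, so this comparison takes place between fibrant objects of the target model category.)

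For the second criterion I would compare with the Quillen equivalence $(\PNat,\Kbar)$. Taking mates, $\eta$ induces a natural transformation $\PNat \Rightarrow \PStr$ whose component at each cofibrant $\gCat$ is a weak equivalence in $\MdlPCatP$ --- the mate of a natural transformation which is a weak equivalence on all fibrant objects is a weak equivalence on all cofibrant objects. Hence the derived left adjoints $\mathbb{L}\PStr$ and $\mathbb{L}\PNat$ are naturally isomorphic, as are $\mathbb{R}\KSeg$ and $\mathbb{R}\Kbar$, so the derived adjunction of $(\PStr,\KSeg)$ on homotopy categories is an adjoint equivalence because the derived adjunction of $(\PNat,\Kbar)$ is, by Theorem~\ref{Thick-Qu-Eq}. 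Concretely, the derived unit of $(\PStr,\KSeg)$ at a cofibrant $X$ is identified, in the homotopy category of coherently commutative Picard groupoids, with the derived unit of $(\PNat,\Kbar)$ at $X$, which is a stable equivalence by \cite[Cor.~6.12]{Sharma} together with Corollary~\ref{hmtpy-func-K}. Combined with the first criterion this establishes that $(\PStr,\KSeg)$ is a Quillen equivalence.

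I expect the only non-formal point to be the behaviour of $\eta$ on Picard groupoids --- that $\KSeg(C)$ and $\Kbar(C)$ are coherently commutative Picard groupoids and that $\eta_C$ is a stable equivalence of $\gCats$ --- which is precisely the content of Proposition~\ref{Th-SegN-Pic-to-Pic} and \cite[Cor.~6.15]{Sharma}; everything else is the formal principle that a Quillen equivalence transports along a natural weak equivalence between the two right adjoints. If one prefers to avoid the mate calculus, an equivalent route is to repeat the proof of Theorem~\ref{Thick-Qu-Eq} verbatim with $(\PNat,\Kbar)$ replaced by $(\PStr,\KSeg)$: for $X$ cofibrant and $C$ a Picard groupoid, show that $F:\PStr(X)\to C$ is a weak equivalence in $\MdlPCatP$ if and only if its adjunct $X\to\KSeg(C)$ is a stable equivalence of $\gCats$, using that $\KSeg$ carries $\MdlPCatP$-weak equivalences to stable equivalences (via $\eta$ and Corollary~\ref{hmtpy-func-K}) and that the unit of $(\PStr,\KSeg)$ at $X$ is a stable equivalence.
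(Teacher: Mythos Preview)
Your proposal is correct and follows essentially the same route as the paper: the paper's proof is the single sentence ``The same natural equivalence $\eta:\KSeg \Rightarrow \Kbar$ as in the proof of \cite[Cor.~6.15]{Sharma} implies the proof,'' and what you have written is a careful unpacking of exactly that transport along $\eta$. Your two criteria (reflection of weak equivalences by $\KSeg$ on fibrant objects via the naturality square for $\eta$ and Corollary~\ref{hmtpy-func-K}, and the comparison of derived units via mates) are the standard way to make the paper's one-line argument precise, and your alternative of rerunning the proof of Theorem~\ref{Thick-Qu-Eq} with $(\PStr,\KSeg)$ is equally valid.
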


All of the arguments used in the proof of the above corollary can be adapted to the model category pair of the model category of coherently commutative monoidal groupoids and the model category of permutative groupoids to give us the following result:

\begin{thm}
	\label{main-result-G}
	The Quillen pair $(\PStr, \KSeg)$ is a Quillen equivalence between the model category of coherently commutative permutative groupoids and the model category of permutative groupoids $\MdlPCatG$.
\end{thm}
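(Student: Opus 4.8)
The plan is to transport, step for step, the proof of the Picard statement into the present monoidal setting. Concretely, I would first establish the \emph{thickened} comparison --- that $(\PNat, \Kbar)$ is a Quillen equivalence between the model category of coherently commutative monoidal groupoids and $\MdlPCatG$, the monoidal analogue of Theorem \ref{Thick-Qu-Eq} --- and then deduce the statement for the classical Segal nerve from the natural weak equivalence $\KSeg \Rightarrow \Kbar$ of \cite[Cor. 6.15]{Sharma}, exactly as Corollary \ref{main-result} is deduced from Theorem \ref{Thick-Qu-Eq}. That both $(\PNat, \Kbar)$ and $(\PStr, \KSeg)$ are Quillen pairs between these model categories is already recorded in Lemmas \ref{Thick-Qu-pair-G} and \ref{str-Qu-pair-G}, so only the Quillen equivalence property of the thickened pair needs proof.

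For the thickened comparison I would first set up the monoidal-groupoidal analogues of Lemma \ref{PNat-pr-pic} and Corollary \ref{hmtpy-func-K}. For the former, one needs that $\PNat$ carries a coherently commutative monoidal groupoid $X$ to a permutative \emph{groupoid}; the Picard condition plays no role here, and it suffices to check that the underlying category of $\PNat(X)$ is a groupoid. This follows because the unit $\eta_X : X \to \Kbar(\PNat(X))$ is a strict equivalence of $\gCats$ by \cite[Cor. 6.12]{Sharma}, so $\PNat(X)$ --- identified up to equivalence with $\Kbar(\PNat(X))(1^+)$ via the level-one evaluation equivalence (Proposition \ref{free-pCat-one-gen} together with $\KSeg \Rightarrow \Kbar$) --- is equivalent to the groupoid $X(1^+)$ and hence is itself a groupoid. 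For the analogue of Corollary \ref{hmtpy-func-K} --- a strict symmetric monoidal functor is a weak equivalence in $\MdlPCatG$ iff its image under $\Kbar$ is an equivalence in the model category of coherently commutative monoidal groupoids --- one repeats the counit argument verbatim, using \cite[Thm. 6.14]{Sharma} for the counit $\PNat\Kbar(C) \to C$ and observing that $\Kbar(G)$ is a coherently commutative monoidal groupoid whenever $G$ is a permutative groupoid, since each $\Kbar(G)(n^+) = \StrSMHom{\PNat(n)}{G}$ is a full subcategory of a functor category into the groupoid $G$ and is therefore a groupoid.

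The substantive ingredient, and the step I expect to be the main obstacle, is the analogue of Lemma \ref{char-CC-Mon-eq}: a morphism $F$ of $\gCats$ is an equivalence in the model category of coherently commutative monoidal groupoids if and only if $\PNat(F)$ is a weak equivalence in $\MdlPCatG$. The forward direction follows as in Lemma \ref{char-CC-Mon-eq} by cofibrant replacement, applying the left Quillen functor $\PNat$, invoking \cite[Lemma 6.6]{Sharma}, and using two-out-of-three. For the converse one first handles the case where $X$ and $Y$ are coherently commutative monoidal groupoids: then $\PNat(X)$ and $\PNat(Y)$ are permutative groupoids by the first auxiliary fact, a weak equivalence between permutative groupoids in $\MdlPCatG$ is an equivalence of underlying groupoids and hence a weak equivalence in the natural model category of permutative categories, so \cite[Thm. 6.13]{Sharma} gives that $F$ is a coherently commutative monoidal equivalence; the general case then follows by fibrant replacement in the model category of coherently commutative monoidal categories, exactly as in Lemma \ref{char-CC-Mon-eq}. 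With these three facts in place, the triangle argument of Theorem \ref{Thick-Qu-Eq} --- applied to a cofibrant $X$, a permutative groupoid $C$, the unit $X \to \Kbar\PNat(X)$ and its strict-equivalence status --- shows that $F : \PNat(X) \to C$ is an equivalence in $\MdlPCatG$ if and only if its adjunct $X \to \Kbar(C)$ is a stable equivalence of $\gCats$, completing the thickened comparison; the passage to $(\PStr, \KSeg)$ via $\KSeg \Rightarrow \Kbar$ is then formal.
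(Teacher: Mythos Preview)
Your proposal is correct and follows exactly the approach the paper indicates --- the paper's own ``proof'' consists solely of the remark that the Picard-case arguments ``can be adapted,'' and you have carried out that adaptation in detail. One minor point: invoking Proposition~\ref{free-pCat-one-gen} to identify $\PNat(X)$ with $\Kbar(\PNat(X))(1^+)$ is formally circular, since that proposition is stated only for permutative \emph{groupoids} and this is precisely what you are trying to establish; however, the proof of Proposition~\ref{free-pCat-one-gen} uses nothing about the target beyond fibrancy in the natural model structure on $\PCat$ (where every object is fibrant), so its conclusion holds for an arbitrary permutative category and your argument goes through.
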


 \section{Stable homotopy one-types}
In this section we will compare our model category of coherently commutative Picard groupoids with a homotopy theory of \emph{stable homotopy one-types}. The main result of this subsection is the existence of an equivalence of categories between the homotopy category of our model category of coherently commutative Picard groupoiuds and a homotopy category of stable homotopy one-types. In this section we will be dealing with the model category of pointed spaces $\pSSetsK$ and we recall that a map in this model category is a weak equivalence if and only if its underlying (unpointed) simplicial map is a weak homotopy equivalence.
\begin{df}
	\label{Sta-homo-OT}
	A stable homotopy one type is a functor $X:\gop \to \pSSets$ such that the following conditions are satisfied:
	\begin{enumerate}
		\item For each $n^+ \in \gop$, the (pointed) simplicial set $X(n^+)$ is a Kan complex.
		\item  The pointed simplicial set $X(1^+)$ has at most two non-trivial homotopy groups only in degrees zero or one.
		\item For each pair of objects $k^+, l^+ \in \gop$, the following simplicial map is a weak homotopy equivalence:
		\[
		(X(\delta^{k+l}_k), X(\delta^{k+l}_l)):X((k+l)^+) \to X(k^+) \times X(l^+)
		\]
		\item The following two maps induced by the maps in $\P_\infty$:
		\[
		X((m_2, \delta^2_1)):X(2^+) \to X(1^+) \times X(1^+) \ \  \textit{and} \ \  X((m_2, \delta^2_2)):X(2^+) \to X(1^+) \times X(1^+)
		\]
		are weak homotopy equivalences of pointed simplicial sets. 
	\end{enumerate}
\end{df}
\begin{rem}
	Each stable homotopy one type is a fibrant object in the stable $Q$-model category constructed in \cite{Schwede}.
	\end{rem}
\begin{rem}
	Each stable homotopy one-type determines a \emph{connective spectrum} with at most two non-trivial homotopy groups in degree zero or one, see \cite{BF78}.
\end{rem}
\begin{rem}
	The adjoint pair of functors $(\tau, N)$ induce an adjunction 
	\begin{equation*}
	[\gop, \tau]:\gCAT \rightleftharpoons \gSC:[\gop, N]
	\end{equation*}
	This adjunction is a Quillen pair with respect to the strict (or projective) model category structires on the two functor categories, see \cite[Remark A.2.8.6]{JL}. Since the counit of $(\tau, N)$ is the identity, therefore the counit of the induces adjunction is also identity.
	\end{rem}
We recall from \cite{sharma4} the adjoint pair $((-)^{\textit{nor}}, U)$ which determines a Quillen equivalence between the $JQ$-model category of $\gSs$ and the $JQ$-model category of normalized $\gSs$. It is easy to see that each coherently commutative monoidal Picard groupoid $X$ determines a $\gS$ upon composition with the nerve functor, we denote this $\gS$ by $N(X)$. Applying the left adjoint gives us a normalized $\gS$ $(N(X))^{\textit{nor}}$. This leads us to the following proposition:
\begin{prop}
	\label{preserve-pic}
	For each coherently commutative Picard groupoid $X$, the normalized $\gS$ $(N(X))^{\textit{nor}}$ is a stable homotopy one-type.
\end{prop}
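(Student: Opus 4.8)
The plan is to verify the four conditions of Definition~\ref{Sta-homo-OT} for $(N(X))^{\textit{nor}}$ by first checking the corresponding statements for the (unnormalized) $\gS$ $N(X)$ and then transporting them through the comparison map supplied by the normalization functor of \cite{sharma4}.

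First I would record what $N(X)$ looks like. Three standard facts about the nerve are needed: the nerve of a groupoid is a Kan complex (as already used in Proposition~\ref{Kan-fib-btw-gpds}) whose homotopy groups vanish in degrees $\ge 2$; the nerve preserves finite products, $N(C \times D) \cong N(C) \times N(D)$; and the nerve sends an equivalence of categories to a weak homotopy equivalence. Since a groupoidal equivalence between groupoids is an equivalence of categories (cf.\ the remarks following Theorem~\ref{Gpd-mdl-str}), Proposition~\ref{char-cc-pic-gpd} then gives directly that $N(X)$ is levelwise a Kan complex, that $N(X)(1^+) = N(X(1^+))$ has non-trivial homotopy groups only in degrees $0$ and $1$, and that the maps of Definition~\ref{Sta-homo-OT}(3) and (4) are weak homotopy equivalences: here one applies $N$ to the corresponding equivalences of categories from Proposition~\ref{char-cc-pic-gpd} and uses that $N$ preserves products to identify the targets $N(X(k^+) \times X(l^+))$ with $N(X(k^+)) \times N(X(l^+))$.

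Next I would show that $N(X)(0^+)$ is contractible. Applying Proposition~\ref{char-cc-pic-gpd}(2) with $k = l = 0$, the diagonal functor $X(0^+) \to X(0^+) \times X(0^+)$ is a groupoidal equivalence, hence an equivalence of categories; a short check shows that the only groupoids whose diagonal into the square is an equivalence of categories are the empty groupoid and the groupoids equivalent to the terminal category, so $X(0^+)$ is one of these and $N(X)(0^+)$ is empty or contractible. (In the empty case $X$, and with it $(N(X))^{\textit{nor}}$, collapses to the point and the statement is trivial, so I may assume $N(X)(0^+)$ is contractible.) Up to an equivalence of $\Gamma$-categories I may then assume $X(0^+)$ is literally the terminal category; since the map $0^+ \to n^+$ in $\gop$ is unique, the normalization, computed as the pushout $(N(X))^{\textit{nor}}(n^+) \cong * \amalg_{N(X(0^+))} N(X(n^+))$, becomes the collapse of a single vertex, so that $(N(X))^{\textit{nor}}(n^+)$ is again the nerve of the groupoid $X(n^+)$, now pointed at the image of the object of $X(0^+)$. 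Conditions (1)--(4) of Definition~\ref{Sta-homo-OT} for $(N(X))^{\textit{nor}}$ then follow word for word from the corresponding statements for $N(X)$ established above, together with the fact from \cite{sharma4} that passing to the $\Gamma$-category with terminal value at $0^+$ and normalizing does not change the weak homotopy type of any level.

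The step I expect to be the main obstacle is precisely this interaction with the normalization functor: the strict normalization is a levelwise pushout, and the property of being a levelwise Kan complex is not preserved by arbitrary pushouts, so one has to use that the pushout is taken along the cofibration collapsing the contractible (indeed terminal) value $N(X(0^+))$, in which case the pushout agrees with the homotopy pushout and every level remains the nerve of a groupoid. Checking carefully that one may reduce to $X(0^+) = \ast$ without disturbing $(N(X))^{\textit{nor}}$ up to levelwise weak equivalence, and that conditions (2), (3), (4), being phrased in terms of homotopy groups and weak equivalences, are insensitive to such a replacement, is where the bookkeeping lies; it is routine given the results of \cite{sharma4}.
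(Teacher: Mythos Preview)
Your proposal is correct and follows essentially the same strategy as the paper: verify the four conditions of Definition~\ref{Sta-homo-OT} for the unnormalized $\gS$ $N(X)$ using that the nerve preserves products and takes equivalences of groupoids to weak homotopy equivalences, and then transport these through the normalization. The only difference is in the handling of the last step: the paper dispatches it in one line by citing \cite[Prop.~6.6]{sharma4}, which says the unit map $\eta_{N(X)}:N(X) \to U((N(X))^{\textit{nor}})$ is a strict $JQ$-equivalence, whereas you unpack this by arguing that $X(0^+)$ is contractible, reducing to $X(0^+) = \ast$, and observing that normalization then collapses a single point so each level remains the nerve of a groupoid --- your explicit analysis has the virtue of addressing head-on the concern (glossed over in the paper) that a levelwise weak equivalence alone does not guarantee condition~(1) for the target.
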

\begin{proof}
	The nerve functor preserves products and also maps groupoidal equivalences of categories to weak homotopy equivalences of simplicial sets therefore $N(X)$ is a coherently commutative monoidal quasi-category in which $N(X)(k^+)$ is a Kan complex for each $k^+ \in \gop$. Further the simplicial maps:
	\[
	N(X((m_2, \delta^2_1))):N(X(2^+)) \to N(X(1^+)) \times N(X(1^+)) \ \ 
	\]
	and
	\[
	N(X((m_2, \delta^2_2))):N(X(2^+)) \to N(X(1^+)) \times N(X(1^+))
	\]
	are both weak homotopy equivalences of Kan complexes. It follows from \cite[Prop. 6.6]{sharma4} that the unit simplicial map $\eta_{N(X)}:N(X) \to U((N(X))^{\textit{nor}})$ is a strict $JQ$-equivalence of $\gSs$. This implies that the normalized $\gS$ $(N(X))^{\textit{nor}}$ is a stable homotopy one-type.
	
\end{proof}

We recall that a \emph{relative category} $C= (C, W)$ consists of a pair of categories $(C, W)$ which have have the same set of objects and the set arrows of $W$ is a subset of arrows of $C$ and the maps of $W$ are called \emph{weak-equivalences} of $C$. A morphism of relative categories $F:(C, W) \to (D, X)$ is a functor $F:C \to D$ that preserves weak-equivalences. A morphism of relative categories is called a \emph{functor of relative categories}.
\begin{df}
	\label{Hom-Rel-Cat}
	A \emph{strict homotopy} between two functors of relative categories $F:(C, W) \to (D, X)$ and $G:(C, W) \to (D, X)$ is a natural transformation $H:F \Rightarrow G$ such that for each object $c \in C$, the map $H(c)$ lies in $X$, \emph{i.e.}, it is a weak-equivalence in $D$.
	
	More generally, $F$ and $G$ we will say that there exists a homotopy between $F$ and $G$ if they can be joined by a finite zig-zag of strict homotopies.
	\end{df}
Based on a homotopy, there is a notion of homotopy equivalence:
\begin{df}
	\label{Hom-Eq}
	A functor of relative categories $F:(C, W) \to (D, X)$ is called a \emph{strict homotopy equivalence} if there exists another functor of relative categories $\inv{F}:(D, X) \to (C, W)$ and two strict homotopies $\eta:id \Rightarrow \inv{F} \circ F$ and $\epsilon:F \circ \inv{F} \Rightarrow id$.
	
	$F$ will be called a \emph{homotopy equivalence} if $\eta$ and $\epsilon$ are just homotopies, namely, zig-zags of strict homotopies.
	\end{df}
\begin{rem}
	\label{Hom-Eq-Eq-Ho-Cat}
	A homotopy equivalence induces an equivalence on the homotopy categories of its domain and codomain relative categories.
	\end{rem}
Next we will construct three relative categories:
\begin{df}
	\label{Hom-th-Pic}
 We denote by $\PicH$ the relative category in which $\mathbf{Pic}$ is the category whose objects are permutative Picard groupoids and arrows are strict symmetric monoidal functors. The morphisms of $\textit{Str}$ are those strict symmetric monoidal functors whose underlying functors are equivalences of categories.
 \end{df}
 \begin{rem}
 	\label{Ho-eq-Perm-Pic}
 	The homotopy category of the relative category $\PicH$ is equivalent to the homotopy category of the model category $\MdlPCatP$.
 \end{rem}

\begin{df}
	\label{Hom-th-GCat}
	We denote by $\CCPicH$ the relative category in which $\gCAT^f$ is the full subcaregory of $\gCAT$ whose objects are coherently commutative Picard groupoids. The morphisms of $\textit{Str}$ are strict equivalences of $\gCats$.
\end{df}
\begin{rem}
	\label{Ho-eq-CC-Pic}
	The homotopy category of the relative category $\CCPicH$ is equivalent to the homotopy category of the model category of coherently commutative Picard groupoids.
\end{rem}

\begin{df}
	\label{Hom-th-SOT}
	We denote by $\SOTPicH$ the relative category in which $\pGSC^f[1]$ is the full subcategory of $\pGSC$, namely the category of normalized $\gSs$, see \cite{sharma4}, whose objects are stable homotopy one types, see definition \eqref{Sta-homo-OT}. The morphisms of $\textit{Str}$ are strict $JQ$-equivalences of normalized $\gSs$.
\end{df}
\begin{rem}
	\label{Ho-eq-SOT}
	The homotopy category of the relative category $\SOTPicH$ is equivalent to the full subcategory of the homotopy category of the  stable $Q$-model category, constructed in \cite{Schwede}, whose objects are normalized $\gSs$ having at most two non-zero stable homotopy groups only in degree zero or one.
\end{rem}

We recall the classical result that the homotypy theory of one-types \emph{i.e.} Kan complexes (fibrant simplicial sets) having at most two non-zero homotopy groups only in degree zero or one is equivalent to the homotopy theory of groupoids. This result can be expressed by the following (strict) equivalence of relative categories:
\begin{equation}
\label{Hom-hy-one}
\tau_1:(\sSets^1, \textit{WH}) \rightleftharpoons (\gpd, \textit{Eq.}):N
\end{equation}
where $\sSets^1$ denotes the full subcategory of $\sSets$ whose objects are one-types and the maps in $\textit{WH}$ are weak homotopy equivalences. The functors in $\textit{Eq.}$ are equivalences of categories.
\begin{nota}
	We denote by $\norN{-}$ the composite functor
	\[
	\gCAT \overset{N} \to \gSC \overset{(-)^{\textit{nor}}} \to \pGSC
	\]
	where $N$ denotes the functor $[\gop, N]:\gCAT \to \gSC$.
	\end{nota}
Proposition \ref{preserve-pic} above implies that the functor $\norN{-}$ restricts to:
\begin{equation}
\norN{-}:\gCAT^f \to \pGSC^f[1].
\end{equation}
\begin{lem}
	The functor $\norN{-}$ is a homotopy equivalence of relative categories.
	\end{lem}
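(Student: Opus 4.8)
The plan is to exhibit an explicit homotopy inverse to $\norN{-}$ together with the two strict homotopies required by Definition \ref{Hom-Eq}. Define a functor $\tau\colon\pGSC^f[1]\to\gCAT^f$ by applying the fundamental category functor degreewise, $\tau(X)(n^+):=\tau_1(U(X)(n^+))$, where $U$ is the forgetful functor from normalized $\gSs$ to $\gSs$. The first task is to check that $\tau$ is well defined, i.e. that $\tau(X)$ really is a coherently commutative Picard groupoid; I would do this by verifying the three conditions of Proposition \ref{char-cc-pic-gpd}. Condition $(1)$ is immediate, since $\tau_1$ of a Kan complex is its fundamental groupoid. For $(2)$ and $(3)$, conditions $(2)$ and $(3)$ of Definition \ref{Sta-homo-OT} guarantee that each $X(k^+)$ is a Kan complex which is a homotopy one-type and that the relevant structure maps of $X$ are weak homotopy equivalences; since $\tau_1$ preserves finite products and, being a left Quillen functor $\sSetsK\to\MdlCatG$ by Lemma \ref{N-htpy-ref}, carries all weak homotopy equivalences to groupoidal equivalences, the corresponding structure maps of $\tau(X)$ are groupoidal equivalences. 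The same left Quillen property shows that $\tau$ sends strict $JQ$-equivalences of normalized $\gSs$ (which are degreewise weak homotopy equivalences, all objects in sight being levelwise Kan) to strict equivalences of $\gCats$, so $\tau$ is a functor of relative categories $\SOTPicH\to\CCPicH$; dually, $\norN{-}$ is a functor of relative categories because the nerve carries equivalences of categories to weak homotopy equivalences and $(-)^{\textit{nor}}$ is left Quillen for the $JQ$-model structures.

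Next I would build the two strict homotopies out of the classical one-type equivalence \eqref{Hom-hy-one} read levelwise. Recall that the counit of $(\tau_1,N)$ is the identity, so $\tau_1 N=\id_{\Cat}$, and that for a Kan homotopy one-type $K$ the unit $K\to N\tau_1(K)$ is a weak homotopy equivalence. For $A\in\gCAT^f$, the unit $N(A)\to U\big((N(A))^{\textit{nor}}\big)$ of the adjunction $\big((-)^{\textit{nor}},U\big)$ is a strict $JQ$-equivalence by \cite[Prop. 6.6]{sharma4} (the input already used in the proof of Proposition \ref{preserve-pic}); applying $\tau_1$ degreewise and composing with the equality $\tau_1 N(A(n^+))=A(n^+)$ produces a natural transformation $A\Rightarrow\tau(\norN{A})$ that is degreewise a groupoidal equivalence, i.e. a strict homotopy $\id_{\gCAT^f}\Rightarrow\tau\circ\norN{-}$. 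In the other direction, for $X\in\pGSC^f[1]$ the degreewise units $X(n^+)\to N\tau_1(X(n^+))$ assemble into a map of $\gSs$ $X\to N\big([\gop,\tau_1]U(X)\big)$ which is a strict $JQ$-equivalence; since $[\gop,\tau_1]U(X)$ takes the terminal category as value at $0^+$, the $\gS$ $N\big([\gop,\tau_1]U(X)\big)$ is already normalized and hence canonically isomorphic to $\norN{\tau(X)}$, which yields a strict homotopy $\id_{\pGSC^f[1]}\Rightarrow\norN{-}\circ\tau$. A strict homotopy in either direction is a homotopy in the sense of Definition \ref{Hom-Rel-Cat}, so Definition \ref{Hom-Eq} shows that $\norN{-}$ is a homotopy equivalence of relative categories with homotopy inverse $\tau$.

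I expect the main obstacle to be the normalization bookkeeping: one must check that the comparison maps coming from the adjunctions $(\tau_1,N)$ and $\big((-)^{\textit{nor}},U\big)$ genuinely assemble into natural transformations of normalized $\gSs$, land among the strict $JQ$-equivalences, and are natural both in the variable $n^+\in\gop$ and in the object $A$ (respectively $X$). Conceptually, however, the statement is just the classical one-type equivalence \eqref{Hom-hy-one} applied levelwise, combined with the fact that the Segal condition and the Picard conditions are transported along $\tau_1$ because $\tau_1$ preserves finite products and weak homotopy equivalences; each individual verification is routine, but there are several of them.
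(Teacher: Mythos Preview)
Your proposal is correct and follows essentially the same route as the paper: the homotopy inverse is the degreewise fundamental-category functor $\tau_1\circ U$, and the two strict homotopies are assembled from the unit of $\big((-)^{\textit{nor}},U\big)$ and the unit of $(\tau_1,N)$ respectively, exactly as you describe. The only cosmetic difference is that for the homotopy $\id\Rightarrow\norN{-}\circ\tau$ the paper applies $(-)^{\textit{nor}}$ to the unit map and uses $(U(Y))^{\textit{nor}}=Y$, whereas you observe directly that the target $N\big([\gop,\tau_1]U(X)\big)$ is already normalized; these amount to the same thing.
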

\begin{proof}
	We begin by observing that the following composite functor:
	\[
	\pGSC \overset{U} \to \gSC \overset{\tau_1} \to \gCAT,
	\]
	which we denote by $\unorT{-}$, restricts to a functor
	\[
	\unorT{-}:\pGSC^f[1] \to \gCAT^f.
	\]
This follows by an argument similar to the one in the proof of Proposition \ref{preserve-pic} based on the fact that $U$ and $\tau_1$ preserve strict $JQ$-equivalences. We claim that this functor $\unorT{-}$ is a homotopy inverse of $\norN{-}$. We observe that the functor $\norN{-}$ is a functor of relative categories because $N = [\gop, N]$ is a right Quillen functor and therefore preserves weak-equivalences (strict equivalences) between fibrant objects. The functor $(-)^{\textit{nor}}$ preserves strict equivalences by  \cite[Prop. 6.2]{sharma4}. Similarly, the functor $\unorT{-}$ preserves strict equivalences because both $U$ and $\tau_1 = [\gop, \tau_1]$ do so.

Next , we will construct a homotopy $\beta^c:id \Rightarrow  \unorT{-} \circ \norN{-} $ with the identity (relative) functor on $\CCPicH$. For each $X \in Ob(\gCAT^f)$, the unit of the Quillen equivalence $((-)^{\textit{nor}}, U)$ provides a strict equivalence of $\gSs$ $\eta_X:N(X) \to U(N(X)^{\textit{nor}})$. Applying the left Quillen functor $\tau_1$, we get a weak equivalence in $\CCPicH$, namely, $\tau_1(\eta_X): X = \tau_1(N(X))  \to \tau(U(N(X)^{\textit{nor}}))$.
We define $\beta^c_X = \tau_1(\eta_X)$. One can easily check that this defines a natural transformation $\beta^c$.
Now we define a (strict) homotopy $\beta^u:id \Rightarrow \norN{-} \circ \unorT{-}$. Let $Y$ be a stable homotopy one type. The the unit map of the Quillen adjunction $(\tau_1, N)$
gives a map $\eta_Y: U(Y) \to N(\tau_1(U(Y)))$. Since $Y$ is a stable homotopy one type therefore this map is a weak homotopy equivalence by \eqref{Hom-hy-one}.
Now applying the functor $(-)^{\textit{nor}}$, we get a weak homotopy equivalence 
 \[
 (\eta_Y)^{\textit{nor}}:Y = (U(Y))^{\textit{nor}} \to N(\tau(U(Y)))^{\textit{nor}}.
 \]
  Now we define $\beta^u_Y = (\epsilon_Y)^{\textit{nor}}$. One can easily check that this defines a natural transformation. Thus we have established a (strict) homotopy equivalence.
	\end{proof}

It follows from corollary \ref{PNat-pr-pic} and corollary \cite[6.12]{Sharma} that the left adjoint functor $\PNat$ restricts to a functor of relative categories
\[
\PNat:\CCPicH \to \PicH.
\]
Further, it follows from proposition \ref{preserve-pic} and theorem \ref{main-result} that the right Quillen functor $\Kbar$  restricts to a functor of relative categories:
\[
\Kbar:\PicH \to \CCPicH.
\] 
This leads us to the final lemma of this section:
\begin{lem}
	\label{Hom-eq-Pic-CCP}
	The pair of functors of relative categories $(\PNat, \Kbar)$ determines a (strict) homotopy equivalence between the relative categories $\CCPicH$ and $\PicH$.
	\end{lem}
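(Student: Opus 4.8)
The plan is to use the unit and the counit of the adjunction $\PNat \dashv \Kbar$ as the two strict homotopies required by Definition \ref{Hom-Eq}. As noted just above the statement, $\PNat$ already restricts to a functor of relative categories $\PNat\colon \CCPicH \to \PicH$ and $\Kbar$ restricts to a functor of relative categories $\Kbar\colon \PicH \to \CCPicH$, so the composites $\Kbar \circ \PNat$ and $\PNat \circ \Kbar$ are endofunctors of relative categories of $\CCPicH$ and $\PicH$ respectively. It therefore remains only to check that the components of the unit and the counit are weak equivalences in the appropriate relative categories, i.e.\ that they lie in the corresponding subcategory $\textit{Str}$.

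First I would treat the unit $\eta\colon \id \Rightarrow \Kbar \circ \PNat$; naturality is automatic since $\eta$ is the unit of an adjunction. Fix a coherently commutative Picard groupoid $X$. Lemma \ref{PNat-pr-pic} shows $\PNat(X)$ is a permutative Picard groupoid, and then Proposition \ref{Th-SegN-Pic-to-Pic} shows $\Kbar(\PNat(X))$ is again a coherently commutative Picard groupoid, so $\Kbar \circ \PNat$ genuinely takes values in $\gCAT^{\textit{f}}$. Moreover $\eta_X\colon X \to \Kbar(\PNat(X))$ is a strict equivalence of $\gCats$ by \cite[Cor.~6.12]{Sharma} --- this is precisely the input used in the proof of Lemma \ref{PNat-pr-pic} --- so $\eta_X$ is a morphism of $\textit{Str}$. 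Hence $\eta$ is a strict homotopy $\id_{\CCPicH} \Rightarrow \Kbar \circ \PNat$.

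Next I would treat the counit $\epsilon\colon \PNat \circ \Kbar \Rightarrow \id$. Fix a permutative Picard groupoid $C$. Proposition \ref{Th-SegN-Pic-to-Pic} gives that $\Kbar(C)$ is a coherently commutative Picard groupoid, and then Lemma \ref{PNat-pr-pic} gives that $\PNat(\Kbar(C))$ is a permutative Picard groupoid, so $\PNat \circ \Kbar$ takes values in $\mathbf{Pic}$. It remains to see that the underlying functor of $\epsilon_C\colon \PNat(\Kbar(C)) \to C$ is an equivalence of categories. By \cite[Theorem~6.14]{Sharma} the counit $\epsilon_C$ is a weak equivalence in the natural model category $\PCat$; since weak equivalences in the natural model category of permutative categories are exactly the strict symmetric monoidal functors whose underlying functors are equivalences of categories, $\epsilon_C$ lies in $\textit{Str}$. (One may equivalently invoke Corollary \ref{hmtpy-func-K}.) Hence $\epsilon$ is a strict homotopy $\PNat \circ \Kbar \Rightarrow \id_{\PicH}$.

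Combining the last two paragraphs with Definition \ref{Hom-Eq} shows that $(\PNat, \Kbar)$ is a strict homotopy equivalence between $\CCPicH$ and $\PicH$. The two substantive points, both already available, are that (i) the unit and counit components stay inside the full subcategories $\gCAT^{\textit{f}}$ and $\mathbf{Pic}$, which is preservation of Picardness (Proposition \ref{Th-SegN-Pic-to-Pic}, Lemma \ref{PNat-pr-pic}), and (ii) that these components are \emph{strict} equivalences and not merely stable equivalences of $\gCats$, which comes from the sharpened descriptions of the unit and counit in \cite[Cor.~6.12 and Theorem~6.14]{Sharma}; I expect point (ii) --- distinguishing a strict equivalence from a stable one at the level of the unit/counit --- to be the one place where care is genuinely needed.
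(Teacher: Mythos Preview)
Your proof is correct and follows essentially the same approach as the paper: both use the unit and counit of the adjunction $\PNat \dashv \Kbar$ as the strict homotopies, invoking \cite[Cor.~6.12]{Sharma} for the unit and the fact that the counit is an equivalence of categories for the other direction. The only cosmetic difference is that the paper cites its own Quillen equivalence result (Theorem~\ref{Thick-Qu-Eq}/Corollary~\ref{main-result}) for the counit, whereas you cite \cite[Theorem~6.14]{Sharma} directly---but this is precisely the input already used in the proof of Corollary~\ref{hmtpy-func-K}, so the two justifications are interchangeable.
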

\begin{proof}
	For each coherently commutative Picard groupoid $X$, the unit map $\eta_X:X \to \Kbar(\PNat(X))$ is a strict equivalence of $\gCats$ by \cite[Cor. 6.12]{Sharma}. For each Picard groupoid $C$, it follows from theorem \ref{main-result} that the counit map $\epsilon_C:\PNat(\Kbar(C)) \to C$ is an equivalence of categories.
	\end{proof}

%
%
%
%
%
%
%
%
%
%
%
 \appendix

 \section[Some constructions on categories]{Some constructions on categories}
\label{path-object-Cat}
In this appendix we recall the construction of a mapping path object associated to a functor which provides a factorization of the functor into an acyclic cofibration followed by an isofibration.
\begin{prop}
The functor
\[
(\partial_0, \partial_1):[J, C] \to C \times C
\]
is a path fibration and the functor $\sigma:C\to [J,C]$ is an equivalence of categories. Moreover, the functors $\partial_1$ and $\partial_0$ are equivalences surjective on objects.
\end{prop}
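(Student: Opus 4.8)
The plan is to dispatch the claims about $\sigma$, $\partial_0$ and $\partial_1$ first, as these are soft $2$-categorical facts, and only then to treat the path-fibration claim, which I expect to be the one substantive point. Throughout I use the concrete description of $[J,C]$: an object is an isomorphism $f\colon a\xrightarrow{\ \cong\ }b$ of $C$, a morphism $f\to g$ is a pair of arrows of $C$ forming a commuting square, the functors $\partial_0,\partial_1$ are evaluation at the two objects of $J$, and $\sigma(c)=\mathrm{id}_c$.

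For $\sigma$ and the $\partial_i$: the functor $e\colon\ast\to J$ with $e(\ast)=0$ is an equivalence of categories (fully faithful since $J(0,0)=\ast$, essentially surjective since $0\cong 1$ in $J$), and since $[-,C]\colon\Catop\to\Cat$ is a $2$-functor it carries equivalences to equivalences; thus $[e,C]\colon[J,C]\to[\ast,C]\cong C$ is an equivalence of categories. But $[e,C]$ is exactly $\partial_0$, and the same argument with $e'(\ast)=1$ handles $\partial_1$. Since $\partial_0\circ\sigma=\mathrm{id}_C=\partial_1\circ\sigma$, each $\partial_i$ admits a section on objects, hence is surjective on objects; and two-out-of-three for equivalences of categories --- or the direct observation that $\sigma$ is fully faithful and that every isomorphism $f\colon a\xrightarrow{\cong}b$ is isomorphic in $[J,C]$ to $\sigma(a)$ through $(\mathrm{id}_a,f)$ --- shows $\sigma$ is itself an equivalence. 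Being an equivalence that is surjective on objects, each $\partial_i$ is a trivial fibration in the natural model structure on $\Cat$; since $\MdlCatG$ is a left Bousfield localization of that structure it has the same cofibrations and hence the same trivial fibrations, which yields the ``moreover'' clause. Together with $\sigma$ being monic on objects (so a cofibration) and $(\partial_0,\partial_1)\circ\sigma=\Delta_C$, this also exhibits $C\xrightarrow{\sigma}[J,C]\xrightarrow{(\partial_0,\partial_1)}C\times C$ as a path object for $C$.

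It remains to see that $(\partial_0,\partial_1)\colon[J,C]\to C\times C$ is a path fibration. I would first record that it is an isofibration: given $f\colon a\xrightarrow{\cong}b$ and an isomorphism $(\alpha,\beta)\colon(a,b)\to(a',b')$ in $C\times C$, the isomorphism $(\alpha,\beta)\colon f\to\beta\circ f\circ\alpha^{-1}$ of $[J,C]$ lifts it. To promote this to a path fibration I would use that $\MdlCatG$ is cartesian closed (Theorem \ref{cart-closed-gp-cat}): the inclusion $j\colon\ast\sqcup\ast\hookrightarrow J$ of the two objects is monic on objects, hence a cofibration, so the pullback-cotensor of $j$ against the terminal map $C\to\ast$ is a path fibration, and restriction of functors along $j$ identifies that map with $(\partial_0,\partial_1)$. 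The obstacle I anticipate sits precisely here: the terminal map $C\to\ast$ is a path fibration only when $C$ is fibrant, i.e.\ a groupoid, so either this proposition is to be read with $C$ ranging over groupoids --- which is the case one actually needs in order to build mapping path-object factorisations inside $\MdlCatG$ --- or one should settle for the ``isofibration'' version, which holds for every $C$ and already supplies the advertised factorisation in the natural model structure. Accordingly I would expect the write-up either to restrict to groupoids or to replace the cartesian-closedness shortcut by a direct lifting argument against the generating acyclic cofibrations of $\MdlCatG$.
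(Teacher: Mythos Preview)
Your treatment of $\sigma$ and the $\partial_i$ is correct and essentially the same as the paper's: the paper invokes the cartesian closedness of the natural model structure on $\Cat$ to conclude that $[j,C]$ is an equivalence whenever $j\colon 0\hookrightarrow J$ is, which is your $2$-functoriality argument in model-categorical dress; and surjectivity on objects is deduced from $\partial_i\sigma=\id_C$ in both accounts.

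On the fibration claim you have in fact been more careful than the paper. The paper's own proof establishes only that $(\partial_0,\partial_1)$ is an \emph{isofibration}, by exactly the explicit lift $(u_0,u_1)\colon a\to u_1\, a\, u_0^{-1}$ you wrote down, and makes no attempt to upgrade this to a path fibration in $\MdlCatG$. Your diagnosis of the obstacle is correct: for arbitrary $C$ one has only an isofibration, and the cartesian-closedness route to a path fibration requires $C\to\ast$ to be a fibration in $\MdlCatG$, i.e.\ $C$ to be a groupoid. This is precisely how the construction is later used in the paper --- the mapping path factorisation is applied with codomain $\Pi_1(D)$, a groupoid, and the text there explicitly says ``isofibration between groupoids and hence a path fibration'' --- so your proposed resolution (restrict to groupoids, or read ``path fibration'' as ``isofibration'') matches both what is actually proved and what is actually needed.
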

\begin{proof}
Let us show that the functor $(\partial_1,\partial_0)$ is an isofibration. Let $ a:A_0\to A_1$ be an object of $[J, C]$ and let $(u_0,u_1):(A_0,A_1)\to (B_0,B_1)$ be an isomorphism in $C \times C$. There is then a unique isomorphism $b:B_0\to B_1$ such that the square
\[
\xymatrix@C=11mm{
A_0  \ar[d]_{u_0} \ar[r]^a  &A_1 \ar[d]^{u_1}   \\
B_0 \ar[r]_{b} &B_1
}
\]
commutes, namely $b = u_1 \circ a \circ \inv{u_0}$. The pair $u=(u_0,u_1)$ defines an isomorphism $a\to b$ in the category $[J, C]$, and we have $(\partial_1,\partial_0)(u)=(u_0,u_1)$. This proves that $(\partial_1,\partial_0)$ is an isofibration. Next we will show that the functor $\sigma$ is an equivalences of categories. We recall from \cite{Sharma} that the natural model category $\Cat$ is cartesian closed and we observe that the inclusion functor $j:0 \hookrightarrow J$ is an equivalence of categories therefore the following chain of maps is an equivalence of categories:
\[
\sigma:C \cong [0, C] \overset{[\inv{j}, C]} \to [J, C]
\]
The functor $\partial_1$ is surjective on objects, since $\partial_1\sigma=id_{C}$. Similarly, the functor $\partial_0$ is surjective on objects. A argument similar to the one above shows that $\partial_0$ and $\partial_1$ are equivalences of categories.
\end{proof}
\begin{df}
The mapping path object associated to a functor $F:X \to Y$ is the category $\POb{F}$ defined by the following pullback square.
\begin{equation*}
\label{mapping-path-object}
\xymatrix@C=11mm{
X \ar@{-->}[rd]_{i_X} \ar@/^/[rrd]^{\sigma F}  \ar@/_/[rdd]_{(id_X, F)} \\
&\POb{F} \ar[d]^{(P_X, P_Y)} \ar[r]^P  &[J, Y] \ar[d]^{ (\partial_0, \partial_1)}   \\
&X \times Y \ar[r]_{F \times id_Y} &Y \times Y
}
\end{equation*}
\end{df}
There is a (unique) functor $ i_{X}:{X} \to \mathbf{P}(F) $ such that $Pi_X= \sigma F$,  $P i_{X} = \sigma F$ and $P_{{X}} i_{{X}}=id_{{X}}$ since square \eqref{mapping-path-object} is cartesian and we have $ \partial_1\sigma F=id_{{Y}} F =F id_{{X}}$. Let us put $P_{{Y}}=\partial_0 P$. Then we have
$F=P_{{Y}} i_{{X}}:{X}\to \mathbf{P}(F)\to {Y}$
since $P_{{Y}} i_{{X}}=\partial_0 P i_{{X}}=\partial_0 \sigma F=id_{{Y}} F =F$. This is the \emph{mapping path factorisation} of the functor $F$ in the category $\PCat$. We now present a concrete construction of the pullback above. An object of $\POb{F}$ is a triple $ (y,A,B)$, where $A$ is an object of $X$, $B$ is an object of $Y$ and $y:F(A)\to B$ is an isomorphism in $Y$. We have $P(y,A,B)=y$, $P_{X}(y,A,B)=A$ and $ P_{Y}(y,A,B)=B$. A morphism $(y,A,B)\to (y',A',B')$ in the category $ \POb{F}$ is a pair of maps $u:A\to A'$ and $v:B\to B'$ such that the  following diagram commutes:
\begin{equation*}
\label{mor-in-mapping-path-object}
\xymatrix@C=11mm{
F(A) \ar[r]^y \ar[d]_{F(u)} &B \ar[d]^{v}   \\
F(A') \ar[r]_{y'} & B'
}
\end{equation*}
\begin{lem}
\label{fact-acy-cof-perm}
The functor $P_{{Y}}$ in the mapping path factorisation
\[
F = P_Y i_X:X \to \mathbf{P}(F) \to Y
\]
is an isofibration and the functor $i_{{X}}$ is an equivalence of categories.
\end{lem}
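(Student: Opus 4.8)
The plan is to read everything off the concrete model of $\POb{F}$ recalled just above the statement: objects are triples $(y,A,B)$ with $A\in \Ob(X)$, $B\in \Ob(Y)$ and $y:F(A)\to B$ an isomorphism in $Y$; a morphism $(y,A,B)\to(y',A',B')$ is a pair $(u,v)$ with $u:A\to A'$ in $X$, $v:B\to B'$ in $Y$ and $v\circ y=y'\circ F(u)$; and $P_X(y,A,B)=A$, $P_Y(y,A,B)=B$. Combining $P i_X=\sigma F$ with $P_X i_X=\mathrm{id}_X$ identifies $i_X$ explicitly: $i_X(A)=(\mathrm{id}_{F(A)},A,F(A))$ and $i_X(u)=(u,F(u))$. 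With these formulas in hand both assertions become one-line verifications.

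For the isofibration claim I would argue directly: given an object $(y,A,B)$ and an isomorphism $v:B\xrightarrow{\ \sim\ }B'$ in $Y$, the pair $(\mathrm{id}_A,v)$ is an isomorphism $(y,A,B)\to(v\circ y,A,B')$ in $\POb{F}$ lying over $v$, since the required square reads $v\circ y=(v\circ y)\circ F(\mathrm{id}_A)$; hence $P_Y$ has the isomorphism-lifting property. (Alternatively: $(P_X,P_Y):\POb{F}\to X\times Y$ is a pullback of the path fibration $(\partial_0,\partial_1):[J,Y]\to Y\times Y$ of the preceding proposition, hence a path fibration and in particular an isofibration, and the projection $X\times Y\to Y$ is an isofibration, so $P_Y$ is a composite of isofibrations — but the direct lift is cleaner.)

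For the equivalence claim I would exhibit $P_X$ as a quasi-inverse of $i_X$. One composite is strict: $P_X\circ i_X=\mathrm{id}_X$. For the other, for each object $(y,A,B)$ the pair $(\mathrm{id}_A,y)$ is an isomorphism $i_X(A)=(\mathrm{id}_{F(A)},A,F(A))\to(y,A,B)$ (its square is $y\circ\mathrm{id}_{F(A)}=y\circ F(\mathrm{id}_A)$), and naturality of this family against a morphism $(u,v):(y,A,B)\to(y',A',B')$ is exactly the defining relation $v\circ y=y'\circ F(u)$; so these isomorphisms assemble into a natural isomorphism $i_X\circ P_X\Rightarrow\mathrm{id}_{\POb{F}}$, and $i_X$ is an equivalence of categories. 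As a by-product $P_X i_X=\mathrm{id}_X$ forces $i_X$ to be injective on objects, hence also a cofibration in the (natural, and groupoidal) model structure, which is what makes $F=P_Y i_X$ the advertised acyclic-cofibration/isofibration factorisation. I do not expect any real obstacle here — the content is bookkeeping; the only points needing care are matching the two slightly different names used for $P_Y$ (via $\partial_0 P$ versus the target of $y$), which is merely a convention for $\partial_0,\partial_1$ on $[J,Y]$, and, if $F$ is a strict symmetric monoidal functor of permutative categories, checking that $[J,Y]$ and the pullback $\POb{F}$ inherit permutative structures levelwise and that $P_Y$, $i_X$ are strict symmetric monoidal, which is immediate since these limits are created on underlying categories.
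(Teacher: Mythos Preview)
The paper does not actually supply a proof of this lemma; it is stated immediately after the concrete description of $\POb{F}$ and the appendix then moves on to the next section. Your direct argument is correct and complete: the explicit lift $(\mathrm{id}_A,v):(y,A,B)\to(v\circ y,A,B')$ verifies that $P_Y$ is an isofibration, and the natural isomorphism $(\mathrm{id}_A,y):i_X P_X(y,A,B)\to(y,A,B)$ together with $P_X i_X=\mathrm{id}_X$ shows $i_X$ is an equivalence. The alternative route you note in parentheses---deducing that $(P_X,P_Y)$ is an isofibration by pulling back $(\partial_0,\partial_1)$ from the preceding proposition, then composing with the projection---is presumably the argument the paper's setup is intended to suggest, and for the equivalence claim one could similarly use that $\sigma$ is an equivalence and that $P_X$ is a pullback of the trivial fibration $\partial_1$; but your hands-on verification from the explicit model is at least as clean and leaves nothing implicit.
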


  \section[Monoidal model categories]{Monoidal model categories}
\label{mon-mdl-cat}
\begin{df}
	\label{Q-adj-2-var}
	Given model categories $\C$, $\D$ and $\E$, an adjunction
	of two variables, $\left(\otimes, \bhom_\C, \map_\C, \phi, \psi \right):
	\C \times \D \to \E$, is called a \emph{Quillen adjunction of two variables}, if, given a
	cofibration $f:U \to V$ in $\C$ and a cofibration $g:W \to X$ in $\D$,
	the induced map
	\[
	f \Box g:(V \otimes W) \underset{U \otimes W} \coprod (U \otimes X) \to V \otimes X
	\]
	is a cofibration in $\E$ that is trivial if either $f$ or $g$ is.
	We will refer to the left adjoint of a Quillen adjunction of two
	variables as a \emph{Quillen bifunctor}.
	
\end{df}
The following lemma provides three equivalent characterizations
of the notion of a Quillen bifunctor. These will be useful in this paper
in establishing enriched model category structures.
\begin{lem}\cite[Lemma 4.2.2]{Hovey}
	\label{Q-bifunctor-char}
	Given model categories $\C$, $\D$ and $\E$, an adjunction
	of two variables, $\left(\otimes, \bhom_\C, \map_\C, \phi, \psi \right):
	\C \times \D \to \E$. Then the following conditions are equivalent:
	\begin{enumerate}
		\item [(1)] $\otimes:\C \times \D \to \E$ is a Quillen bifunctor.
		
		\item[(2)] Given a cofibration $g:W \to X$ in $\D$ and a fibration
		$p:Y \to Z$ in $\E$, the induced map
		\[
		\bhom_\C^{\Box}(g, p):\bhom_\C(X, Y) \to \bhom_\C(X, Z)
		\underset{\bhom_\C(W, Z)}\times \bhom_\C(W, Y)
		\]
		is a fibration in $\C$ that is trivial if either $g$ or $p$ is a
		weak equivalence in their respective model categories.
		
		\item[(3)] Given a cofibration $f:U \to V$ in $\C$ and a fibration
		$p:Y \to Z$ in $\E$, the induced map
		\[
		\map_\C^{\Box}(f, p):\map_\C(V, Y) \to \map_\C(V, Z) \underset{\map_\C(W, Z)}\times \map_\C(W, Y)
		\]
		is a fibration in $\C$ that is trivial if either $f$ or $p$ is a
		weak equivalence in their respective model categories.

	\end{enumerate}
	
\end{lem}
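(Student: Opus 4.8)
\noindent
The plan is to reduce all three equivalences to a single formal fact: the compatibility of the pushout-product operation $\Box$ with the two pullback-hom operations attached to a two-variable adjunction. So the first step, which carries all the content, is to prove that for any map $f:U\to V$ in $\C$, any map $g:W\to X$ in $\D$, and any map $p:Y\to Z$ in $\E$, the following three lifting statements are equivalent: (i) $f\Box g$ has the left lifting property with respect to $p$; (ii) $f$ has the left lifting property with respect to $\bhom_\C^{\Box}(g,p)$; (iii) $g$ has the left lifting property with respect to $\map_\C^{\Box}(f,p)$. This is obtained by pushing the defining natural isomorphisms
\[
\E(V\otimes W,-)\cong\D\bigl(W,\bhom_\C(V,-)\bigr)\cong\C\bigl(V,\map_\C(W,-)\bigr)
\]
through the colimit defining the domain of $f\Box g$ and the limit defining the codomain of the pullback-homs: one checks that the set of commutative squares underlying each of (i), (ii), (iii) is carried bijectively onto the next, and that under this bijection a diagonal filler on one side corresponds to a diagonal filler on the other. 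No model-category axiom enters here; it is purely the adjunction together with ordinary (co)limits.

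Granting this, each implication of the lemma becomes two lines, using only the standard recognition principles in a model category: a map is a cofibration (resp. a trivial cofibration) if and only if it has the left lifting property against every trivial fibration (resp. every fibration), and dually a map is a fibration (resp. a trivial fibration) if and only if it has the right lifting property against every trivial cofibration (resp. every cofibration). For $(1)\Rightarrow(2)$: let $g$ be a cofibration in $\D$ and $p$ a fibration in $\E$; to see that $\bhom_\C^{\Box}(g,p)$ is a fibration, test it against an arbitrary trivial cofibration $f$ in $\C$; by $(1)$ the map $f\Box g$ is a trivial cofibration, so it lifts against the fibration $p$, hence by the equivalence (i)$\Leftrightarrow$(ii) the map $f$ lifts against $\bhom_\C^{\Box}(g,p)$. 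Running the same argument with ``$f$ a cofibration and $p$ a trivial fibration'' shows that $\bhom_\C^{\Box}(g,p)$ is a trivial fibration whenever $p$ is, and with ``$f$ a cofibration and $g$ a trivial cofibration'' shows it is a trivial fibration whenever $g$ is. The implication $(2)\Rightarrow(1)$ is the same bookkeeping read in the opposite direction: to see that $f\Box g$ is a cofibration, test it against an arbitrary trivial fibration $p$, invoke $(2)$ to learn that $\bhom_\C^{\Box}(g,p)$ is a trivial fibration, and lift $f$ against it; the two triviality clauses are handled by the analogous variants. Finally $(1)\Leftrightarrow(3)$ is verbatim the same argument with the roles of $\C$ and $\D$ interchanged and $\map_\C^{\Box}$ in place of $\bhom_\C^{\Box}$, using the equivalence (i)$\Leftrightarrow$(iii).

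The only genuinely non-formal step, and the one I expect to be the main obstacle, is the first: writing down the natural bijection between the three classes of commutative squares carefully enough that the correspondence of diagonal fillers is visibly automatic. This is a simultaneous unwinding of the universal property of a pushout (for the domain of $f\Box g$), of a pullback (for the codomain of the pullback-homs), and of the two-variable adjunction; once that bijection is in place, everything else is mechanical.
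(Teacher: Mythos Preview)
The paper does not supply a proof of this lemma; it is quoted verbatim from Hovey with only the citation \cite[Lemma 4.2.2]{Hovey}. Your argument is correct and is precisely the standard one Hovey gives: first establish the three-way equivalence of lifting problems coming from the two-variable adjunction (this is Hovey's Lemma 4.2.3, proved exactly by the unwinding you describe), then read off each implication by testing against the appropriate class of (trivial) (co)fibrations. There is nothing to compare.
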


\begin{df}
	\label{enrich-model-cat}
	Let $\bigS$ be a monoidal model category. An \emph{$\bigS$-enriched
		model category} is an $\bigS$ enriched category $\bigA$ equipped with
	a model category structure (on its underlying category) such that
	there is a Quillen adjunction of two variables, see definition
	\ref{Q-adj-2-var}, $\left(\otimes, \bhom_{\bigA}, \map_{\bigA}, \phi, \psi \right):
	\bigA \times \bigS \to \bigA$.
	
\end{df}
  \section[Localization in model categories]{Localization in model categories}
\label{loc-Mdl-Cats}
 We begin by recalling the notion of
a \emph{left Bousfield localization}:

\begin{df}
	Let $\M$ be a model category and let $\S$ be a class of maps in $\M$.
	The left Bousfield localization of $\M$ with respect to $\S$
	is a model category structure $L_\S\M$ on the underlying category of $\M$
	such that
	\begin{enumerate}
		\item The class of cofibrations of $L_\S\M$ is the same as the
		class of cofibrations of $\M$.
		
		\item A map $f:A \to B$ is a weak equivalence in $L_\S\M$ if it is an $\S$-local equivalence,
		namely, for every fibrant $\S$-local object $X$, the induced map on homotopy
		function complexes
		\[
		f^\ast:Map_{\M}^h(B, X) \to Map_{\M}^h(A, X)
		\]
		is a weak homotopy equivalence of simplicial sets. Recall
		that an object $X$ is called fibrant $\S$-local if $X$ is fibrant
		in $\M$ and for every element
		$g:K \to L$ of the set $\S$, the induced map on
		homotopy function complexes
		\[
		g^\ast:Map_{\M}^h(L, X) \to Map_{\M}^h(K, X)
		\]
		is a weak homotopy equivalence of simplicial sets.
		
	\end{enumerate}
	
\end{df}

We recall the following theorem
which will be the main tool in the construction of the
desired model category. This theorem first appeared in an unpublished work \cite{smith}
but a proof was later provided by Barwick in \cite{CB1}.
\begin{thm} \cite[Theorem 2.11]{CB1}
	\label{local-tool}
	If $\M$ is a combinatorial model category and $\S$ is a small
	set of homotopy classes of morphisms of $\M$, the left Bousfield localization $L_\S\M$ of
	$\M$ along any set representing $\S$ exists and satisfies the following conditions.
	\begin{enumerate}
		\item The model category $L_\S\M$ is left proper and combinatorial.
		\item As a category, $L_\S\M$ is simply $\M$.
		\item The cofibrations of $L_\S\M$ are exactly those of $\M$.
		\item The fibrant objects of $L_\S\M$ are the fibrant $\S$-local objects $Z$ of $\M$.
		\item The weak equivalences of $L_\S\M$ are the $\S$-local equivalences.
	\end{enumerate}
\end{thm}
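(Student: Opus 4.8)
The plan is to bypass the cellular techniques of Hirschhorn and instead build $L_\S\M$ directly from Jeff Smith's recognition theorem for combinatorial model categories \cite{smith}, which is the engine underlying \cite{CB1}. First I would put $\S$ in convenient form: using the functorial cofibrant replacement and functorial factorizations available because $\M$ is cofibrantly generated, I replace each representative $g\colon K\to L$ of a class in $\S$ by a cofibration between cofibrant objects, which leaves the notion of fibrant $\S$-local object (and hence the class of $\S$-local equivalences) unchanged up to homotopy. I then declare the candidate structure $L_\S\M$ to have generating cofibrations $I$ those of $\M$ (so that $\mathrm{cof}(I)$ is exactly the class of cofibrations of $\M$, forcing conditions (2) and (3)), and to have weak equivalences $W$ the class of $\S$-local equivalences. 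The entire proof reduces to verifying the hypotheses of Smith's theorem for the pair $(W,I)$; the output is automatically combinatorial, giving (1).

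The formal hypotheses I would dispatch first. The class $W$ has the two-out-of-three property and is closed under retracts: both follow at once from the defining condition, namely that $\Map^h_\M(-,Z)$ send the map to a weak homotopy equivalence of simplicial sets for every fibrant $\S$-local $Z$, together with two-out-of-three and retract-closure for weak equivalences of simplicial sets. I would also record the inclusion $\mathrm{inj}(I)\subseteq W$: a map with the right lifting property against $I$ is an acyclic fibration of $\M$, hence a weak equivalence of $\M$, and every weak equivalence of $\M$ is an $\S$-local equivalence because it induces a weak equivalence on $\Map^h_\M(-,Z)$ for each fibrant $Z$ by homotopy-invariance of homotopy function complexes.

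The substantive hypothesis, and the step I expect to be the main obstacle, is that $W$ is an accessible, accessibly embedded subcategory of the arrow category $\M^{[1]}$; this is exactly where combinatoriality is indispensable and is what replaces cellularity. I would argue in three moves. (i) Because $\M$ is combinatorial, its own class of weak equivalences $W_\M$ is accessible and accessibly embedded in $\M^{[1]}$, a structural fact about combinatorial model categories. (ii) The class of fibrant $\S$-local objects is accessible and accessibly embedded in $\M$: fibrancy is an injectivity condition against the generating acyclic cofibrations of $\M$, and $\S$-locality asks that a small family of maps of homotopy function complexes be weak equivalences, a condition expressible through an accessible functor into simplicial sets followed by the inverse image of $W_\M$. (iii) I then characterize $\S$-local equivalences internally via a functorial fibrant-$\S$-local replacement $\M\to\M$, built by the small object argument from $I$ together with the maps $g$ and their pushout-products with cylinder inclusions: a map lies in $W$ precisely when its replacement lies in $W_\M$. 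Since all functors involved are accessible, and accessible subcategories are stable under inverse image and the relevant pseudo-limits in the theory of accessible categories (Makkai--Par\'e), I conclude that $W$ is accessible and accessibly embedded. This is the technical heart and the part that cannot be shortened.

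Finally I would verify that $W\cap\mathrm{cof}(I)$ is closed under pushout and transfinite composition. Closure under transfinite composition follows from the accessibility of $W$ together with filtered-colimit stability of $\S$-local equivalences; closure under cobase change along cofibrations is the standard gluing argument, obtained by applying $\Map^h_\M(-,Z)$ to the pushout square, using that pushouts along cofibrations are homotopy pushouts to produce a homotopy pullback of simplicial sets, and invoking two-out-of-three there. With all hypotheses in hand, Smith's theorem yields a combinatorial model structure on $\M$ with the prescribed cofibrations and weak equivalences, giving (1)--(3) and (5). For (4), the fibrant objects of $L_\S\M$ are exactly those with the right lifting property against the generating trivial cofibrations produced by Smith's construction, and by step (iii) these are precisely the objects that are both $\M$-fibrant and $\S$-local. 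Left properness of $L_\S\M$ is the very gluing statement just used, that a pushout of an $\S$-local equivalence along a cofibration is an $\S$-local equivalence; this and the bookkeeping for the general combinatorial case are carried out in \cite[Theorem 2.11]{CB1}, completing the proof.
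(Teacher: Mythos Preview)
The paper does not prove this theorem at all: it is quoted verbatim from Barwick \cite[Theorem~2.11]{CB1} (attributed originally to Smith) and used as a black box throughout. Your proposal is a faithful outline of exactly the argument Barwick gives---building the localized structure via Smith's recognition theorem, with the accessibility of the class of $\S$-local equivalences as the crux---so there is nothing to compare on the level of strategy.

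One small caveat worth flagging: your argument for left properness of $L_\S\M$ (pushouts of $\S$-local equivalences along cofibrations remain $\S$-local equivalences) relies on such pushout squares being homotopy pushouts in $\M$, which in general requires $\M$ itself to be left proper. The statement as recorded in the paper omits this hypothesis on $\M$, but Barwick's theorem does assume it, and your sketch implicitly uses it at that step. This is a defect of the paper's transcription rather than of your argument, but you should be aware that without left properness of $\M$ the conclusion in (1) need not hold.
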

Any functor category $[C, D]$ whose objects are functors between $C$ and $D$ and morphisms are natural transformations carries a symmetric monoidal closed model category structure under the Day convolution product if $D$ is symmetric monoidal closed. Further if $D$ is a combinatorial model category then $[C, D]$ carries a projective model category structure. We will denote the Day convolution product of two functors $F, G \in [C, D]$ by $F \ast G$.
The next theorem provides a condition for a localization to preserves the symmetric monoidal structure in functor model categories:

\begin{thm}
	\label{SM-closed-Func-Mdl}
	Let $\M = [C, D]$ be a functor category where $C$ is a small category and $D$ is a combinatorial symmetric monoidal closed model category. Let us further assume that the projective model category structure on $\M$ is symmetric monoidal closed under the Day convolution product. Let $\S$ be a set of maps in $\M$ whose domains and codomains are projective cofibrant objects of $\M$. Let us denote by $\M_\S$ the model category obtained upon localization, with respect to $\S$, of the projective model structure  on $\M$. If the internal mapping object $\MapC{X}{Y}{\M}$ is an $\S$-local object whenever $X$ is projective cofibrant and $Y$ is an $\S$-local object, then the model category $\M_\S$ is symmetric monoidal closed under the Day convolution product.
\end{thm}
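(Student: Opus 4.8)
The plan is to verify directly that the Day convolution $\ast$ together with its internal hom $\MapC{-}{-}{\M}$ makes $\M_\S$ a symmetric monoidal closed model category, carrying over as much of the structure on $\M$ as possible. By Theorem \ref{local-tool} the localization $\M_\S$ has the same underlying category and the same cofibrations as $\M$, and its fibrant objects are exactly the $\S$-local fibrant objects of $\M$. Hence the monoidal bifunctor $\ast$, its closed structure $\MapC{-}{-}{\M}$, and the monoidal unit $\unit{\M}$ are literally the same data as for $\M$; what must be re-established is their compatibility with the new weak equivalences, namely the pushout-product axiom of Definition \ref{Q-adj-2-var} and the unit axiom. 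The unit axiom is inherited for free: a cofibrant replacement $q\colon Q\unit{\M}\to\unit{\M}$ is an $\M$-weak equivalence, hence an $\S$-local equivalence, and for cofibrant $X$ the map $q\ast X$ is an $\M$-weak equivalence by the unit axiom for $\M$, hence again an $\S$-local equivalence. So the entire problem reduces to the pushout-product (Quillen bifunctor) axiom for $\M_\S$.

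Since the cofibrations of $\M_\S$ and $\M$ coincide, the pushout-product $f\Box g$ of two cofibrations is automatically a cofibration, and the only new content is that $f\Box g$ is an $\S$-local equivalence whenever $f$ is a cofibration and $g$ an acyclic cofibration of $\M_\S$ (by symmetry of $\Box$ one case suffices). I would handle this by the standard reduction for monoidal left Bousfield localizations: pushout-product with a fixed cofibration commutes with pushouts, transfinite composition and retracts, and the acyclic cofibrations of $\M_\S$ are retracts of transfinite composites of pushouts of $J_\M\cup\Lambda_\S$, where $J_\M$ generates the acyclic cofibrations of $\M$ and $\Lambda_\S$ are the horns manufactured from $\S$ in the construction of Theorem \ref{local-tool}. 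For $j\in J_\M$ the map $f\Box j$ is an acyclic cofibration in $\M$ (pushout-product axiom for $\M$), hence an $\S$-local equivalence; and the horns in $\Lambda_\S$ reduce, up to the cellular operations, to the maps of $\S$ themselves. Using that $\M_\S$ is combinatorial and left proper, one concludes that the whole axiom follows once the \emph{base case} is established: for every $s\in\S$ and every generating cofibration $i$ of $\M$, the cofibration $s\Box i$ is an $\S$-local equivalence.

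The base case is the crux, and the key is to split the relevant adjunction in the right direction. Fixing an $\S$-local fibrant object $W$ and using the two-variable adjunction $(\ast,\MapC{-}{-}{\M},\MapC{-}{-}{\M})$ of $\M$, there is a natural weak equivalence of function complexes
\[
\Map^h_{\M}(s\Box i,\,W)\;\simeq\;\Map^h_{\M}\bigl(s,\ \MapC{i}{W}{\M}\bigr),
\]
where $\MapC{i}{W}{\M}\colon\MapC{\mathrm{cod}(i)}{W}{\M}\to\MapC{\mathrm{dom}(i)}{W}{\M}$ is a fibration between fibrant objects of $\M$ by the closed-monoidal Quillen bifunctor structure of $\M$ (Lemma \ref{Q-bifunctor-char}). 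Since in the projective structure the generating cofibrations may be taken with cofibrant domain and codomain (automatic when all objects of $D$ are cofibrant, as for $D=\Cat$), the hypothesis of the theorem applies: both $\MapC{\mathrm{dom}(i)}{W}{\M}$ and $\MapC{\mathrm{cod}(i)}{W}{\M}$ are $\S$-local, so $\MapC{i}{W}{\M}$ is a fibration between $\S$-local fibrant objects. Now $s$ has cofibrant domain and codomain and is an $\S$-local equivalence, so each of $\Map^h_\M(\mathrm{cod}(s),-)\to\Map^h_\M(\mathrm{dom}(s),-)$ evaluated on the source and target of $\MapC{i}{W}{\M}$ is a weak equivalence; the corner map of a fibration both of whose ``horizontal'' comparison maps are weak equivalences is again a weak equivalence. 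Hence $\Map^h_\M(s\Box i,W)$ is a weak equivalence for every $\S$-local fibrant $W$, i.e. $s\Box i$ is an $\S$-local equivalence, completing the base case and thereby the pushout-product axiom.

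I expect the main obstacle to be exactly this base case, and specifically in avoiding circularity: one must not try to deduce that tensoring by a cofibrant object preserves $\S$-local equivalences from a statement of the very same shape. The decisive point is the choice of adjunction split above, placing the designated local equivalence $s$ (with its cofibrant domain and codomain guaranteed by the hypothesis on $\S$) in the probe slot, and the generating cofibration $i$ in the slot where the internal-hom hypothesis forces $\MapC{i}{W}{\M}$ to land among $\S$-local objects. It is precisely here that the \emph{internal} hypothesis $\MapC{X}{Y}{\M}$ being $\S$-local, rather than a weaker statement about homotopy function complexes, is needed, since it is what guarantees that $\Map^h_\M(s,-)$ may be applied to a fibration between local objects and still detect a weak equivalence. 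The cellular reduction of the second paragraph is then routine but necessary bookkeeping, relying only on the left properness and combinatoriality of $\M_\S$ supplied by Theorem \ref{local-tool}.
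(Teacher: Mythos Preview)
Your argument is essentially correct, but the paper takes a more direct route that bypasses the cellular reduction on the acyclic-cofibration side entirely. Rather than decomposing an arbitrary $\M_\S$-acyclic cofibration $g$ into cells from $J_\M\cup\Lambda_\S$ and then reducing the horns back to maps of $\S$, the paper keeps $j$ as an \emph{arbitrary} acyclic cofibration of $\M_\S$ and reduces only the plain cofibration to a generating one $i:U\to V$. The key device is a lifting characterization: a cofibration of $\M_\S$ is acyclic iff it lifts against all projective fibrations $p:W\to X$ between $\S$-local objects. By the two-variable adjunction, $i\Box j$ lifts against $p$ iff $j$ lifts against the pullback-hom $(i^\ast,p_\ast)$; since the generating cofibration $i$ has projective-cofibrant domain and codomain, the internal-hom hypothesis makes $(i^\ast,p_\ast)$ a projective fibration between $\S$-local objects, hence a fibration in $\M_\S$, and $j$ lifts simply because it is an $\M_\S$-acyclic cofibration. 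The only saturation needed is then on the cofibration variable $i$.

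Your route is the one taken in several general treatments of monoidal Bousfield localization, but the sentence ``the horns in $\Lambda_\S$ reduce, up to the cellular operations, to the maps of $\S$ themselves'' hides real bookkeeping: one must interchange the pushout-product for $\ast$ with the one for the simplicial (or $\Cat$-) tensor used to build $\Lambda_\S$, and one must already know that $\M_\S$ inherits that enrichment in order to conclude that $(\text{base case})\,\hat{\Box}\,(\partial\Delta^n\hookrightarrow\Delta^n)$ remains an $\M_\S$-acyclic cofibration. None of this is false, but the paper's argument sidesteps it completely by never needing an explicit description of the generating acyclic cofibrations of $\M_\S$. Note also that in the paper's version the cofibrancy hypothesis on the domains and codomains of maps in $\S$ is not actually invoked in the pushout-product argument---what is used is cofibrancy of the domain and codomain of the generating cofibration $i$---whereas in your argument that hypothesis on $\S$ is doing genuine work in the base case.
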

\begin{proof}
	Let $i:U \to V$ be a projective cofibration and $j:Y \to Z$ be another projective cofibration. We will prove the theorem by showing that the following \emph{pushout product} morphism
	\begin{equation*}
	i \Box j:U \ast Z \underset{U \ast Y} \coprod V \ast Y \to V \ast Z 
	\end{equation*}
	
	is a projective cofibration which is also an $\S$-local equivalence whenever either $i$ or $j$ is one.
	We first deal with the case of $i$ being a generating projective cofibration. The assumption of a  symmetric monoidal closed structure on the projective model category $\M$ implies that $i \Box j$ is a projective cofibration and we recall that the cofibrations in $\M_\S$ are exactly projective cofibrations. Let us assume that $j$ is an acyclic  cofibration \emph{i.e.} the projective cofibration $j$ is also an $\S$-local equivalence. We recall that the fibrant objects of $\M_\S$ are exactly $\S$-local objects and fibrations in $\M_\S$ between $\S$-local objects are projective fibrations. According to \cite[Proposition 4.22]{Sharma} the projective cofibration $i \Box j$ is an $\S$-local equivalence if and only if it has the left lifting property with respect to all projective fibrations between $\S$-local objects. Let $p:W \to X$ be a projective fibration between two $\S$-local objects. A (dotted) lifting arrow would exists in the following diagram
	\begin{equation*}
	\xymatrix{
		U \ast Z \underset{U \ast Y} \coprod V \ast Y \ar[r] \ar[d] & W \ar[d]^p \\
		V \ast Z \ar@{..>}[ru] \ar[r] & Y
	}
	\end{equation*}
	if and only if a (dotted) lifting arrow exists in the following adjoint commutative diagram
	\begin{equation*}
	\xymatrix{
		X \ar[r] \ar[d]_j & \MapC{V}{W}{\M} \ar[d]^{(i^*, p^*)} \\
		Y \ar@{..>}[ru] \ar[r] & \MapC{U}{X}{\M} \underset{\MapC{U}{Y}{\M}} \times \MapC{V}{Y}{\M}
	}
	\end{equation*}
	The map $(i^*, p^*)$ is a projective fibration in $\M$ by lemma \ref{Q-bifunctor-char} and the assumption that the projective model category structure on $\M$ is symmetric monoidal closed under the Day convolution product with internal Hom given by $\MapC{-}{-}{\M}$. Further the observation that both $V$ and $U$ are projective cofibrant and the assumption on the internal mapping objects together imply that $(j^*, p^*)$ is a projective fibration between $\S$-local objects and therefore a fibration in the model category $\M_\S$. Since $j$ is an acyclic cofibration by assumption therefore the (dotted) lifting arrow exists in the above diagram. Thus we have shown that if $i$ is a projective cofibration and $j$ is a projective cofibration which is also an $\S$-local equivalence then $i \Box j$ is an acyclic cofibration in the model category $\M_\S$.
	Now we deal with the general case of $i$ being an arbitrary projective cofibration. Consider the following set:
	\begin{equation*}
	\I = \lbrace i:U \to V | \ i \Box j \textit{ \ is an acyclic cofibration in \ }  \M_\S \rbrace
	\end{equation*}
	 We have proved above that the set $\I$ contains all generating projective cofibrations.
	We observe that the set $\I$ is closed under pushouts, transfinite compositions and retracts. Thus $S$ contains all projective cofibrations.
	Thus we have proved that $i \Box j$ is a cofibration which is acyclic if $j$ is acyclic. The same argument as above when applied to the second argument of the Box product (\emph{i.e.} in the variable $j$)
	shows that $i \Box j$ is an acyclic cofibration whenever $i$ is an acyclic cofibration in $\M_\S$.
	
\end{proof}

  \section[Tranfer model structure]{Tranfer model structure on locally presentable categories}
\label{trans-mdl-str-lp}
In this section we will show that one can always transfer a model category structure
 on a locally presentable category $C$ along an adjunction $F:D \rightleftharpoons C:G$, where $D$ is a cofibrantly generated model category.

This result is a special case of the following theorem:

\begin{thm} \cite[Theorem 3.6]{goer-sch}
	\label{mdl-str-transfer-tool}
	Let $F : D \rightleftharpoons C : G$ be an adjoint pair and suppose $D$ is a
	cofibrantly generated model category and $C$ is both complete and cocomplete. Let $I$ and $J$ be chosen sets of generating
	cofibrations and acyclic cofibrations of $D$, respectively. Define a morphism $f : X \to Y$
	in $C$ to be a weak equivalence or a fibration if $G(f)$ is a weak equivalence or fibration
	in $D$. Suppose further that
	\begin{enumerate}
		\item The right adjoint $G : C \to D$ commutes with sequential colimits; and
		\item Every cofibration in $C$ with the LLP with respect to all fibrations is a weak
		equivalence.
	\end{enumerate}
	Then $C$ becomes a cofibrantly generated model category. Furthermore the collections
	$\lbrace F(i) | i \in I \rbrace$ and $\lbrace F(j) | j \in J \rbrace$ generate the cofibrations and the acyclic cofibrations
	of $C$ respectively.
\end{thm}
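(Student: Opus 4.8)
The strategy is to derive the statement from Kan's recognition theorem for cofibrantly generated model categories (see, e.g., \cite[Theorem 2.1.19]{Hovey} or \cite[Theorem 11.3.1]{Hirchhorn}), applied with the candidate sets of generating cofibrations $F(I)$ and generating acyclic cofibrations $F(J)$ and with the class $\mathcal W$ consisting of those maps $f$ of $C$ for which $G(f)$ is a weak equivalence in $D$. Since $C$ is bicomplete by hypothesis, and since the weak equivalences of $D$ satisfy the two-out-of-three property and are closed under retracts, the functor $G$ (which preserves composites and retracts) immediately transports these properties to $\mathcal W$. So the whole content is to check the lifting and smallness hypotheses of the recognition theorem.

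First I would record the two basic adjunction computations. By $F \dashv G$, a map $p$ of $C$ has the right lifting property with respect to $F(I)$ (resp.\ $F(J)$) if and only if $G(p)$ has the right lifting property with respect to $I$ (resp.\ $J$), i.e.\ if and only if $G(p)$ is an acyclic fibration (resp.\ a fibration) in $D$; by the definitions made in the statement this says exactly that $p$ is an acyclic fibration (resp.\ a fibration) of $C$. Dually, for $i\in I$ (resp.\ $i\in J$) and any acyclic fibration $p$ of $C$, the map $F(i)$ has the left lifting property with respect to $p$ because $i$ has the left lifting property with respect to $G(p)$; hence every element of $F(I)$ and of $F(J)$, and therefore every relative $F(I)$- or $F(J)$-cell complex and every retract thereof, is a cofibration of $C$. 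In particular $F(J)$ lies in the class of $F(I)$-cofibrations.

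For smallness, this is where hypothesis (1) is used: the domains of $F(I)$ and $F(J)$ are images under $F$ of small objects of $D$, and for a small object $A$ of $D$ and a suitable transfinite sequence $X_\bullet$ in $C$ one has
\begin{multline*}
\Hom_C(F(A),\varinjlim X_\bullet)\cong\Hom_D(A,G(\varinjlim X_\bullet))\cong\Hom_D(A,\varinjlim G(X_\bullet))\\
\cong\varinjlim\Hom_D(A,G(X_\bullet))\cong\varinjlim\Hom_C(F(A),X_\bullet),
\end{multline*}
the second isomorphism being exactly hypothesis (1). Thus $F(I)$ and $F(J)$ permit the small object argument, and we obtain two functorial factorizations; by the adjunction computation above the first is a (cofibration, acyclic fibration) factorization and the second is a (relative $F(J)$-cell complex, fibration) factorization. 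A standard retract argument then identifies the cofibrations of $C$ with the retracts of relative $F(I)$-cell complexes, and shows that the $F(I)$-injective maps are precisely the maps of $\mathcal W$ that are fibrations.

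The one remaining --- and genuinely substantial --- point is that every relative $F(J)$-cell complex lies in $\mathcal W$, so that the second factorization is an (acyclic cofibration, fibration) factorization. Here hypothesis (2) is invoked: a relative $F(J)$-cell complex is a cofibration of $C$ (shown above) and, being a transfinite composite of pushouts of maps of $F(J)$, it has the left lifting property with respect to every $F(J)$-injective map, i.e.\ with respect to every fibration of $C$; hypothesis (2) then forces it to be a weak equivalence. With this all the hypotheses of the recognition theorem hold, and it yields the cofibrantly generated model structure on $C$ with generating cofibrations $F(I)$ and generating acyclic cofibrations $F(J)$. I expect the only delicate bookkeeping to be the precise form of the smallness hypothesis needed to run the small object argument at the correct cardinality (in the locally presentable setting of this appendix, where every object is small, this is automatic); once hypothesis (2) is available, the identification of the acyclic cofibrations with the relative $F(J)$-cell complexes is immediate.
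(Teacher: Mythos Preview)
The paper does not supply its own proof of this theorem: it is stated in the appendix as a known result, with a citation to \cite[Theorem 3.6]{goer-sch}, and no argument is given. So there is nothing in the paper to compare your proposal against.

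That said, your proof sketch is the standard one and is essentially how the cited reference (Goerss--Schemmerhorn) proves it: reduce to Kan's recognition theorem, use the adjunction to identify $F(I)$- and $F(J)$-injectives with acyclic fibrations and fibrations respectively, use hypothesis~(1) to run the small object argument, and use hypothesis~(2) to show that relative $F(J)$-cell complexes are weak equivalences. Your outline is correct and complete at the level of a proof sketch.
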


Now we provide a statement of this special case which will be useful throughout this paper:

\begin{thm}
	\label{trans-mdl-lp-cat}
	Let $F : D \rightleftharpoons C : G$ be an adjoint pair and suppose $D$ is a
	combinatorial model category and $C$ is a locally presentable category. Then there exists a cofibrantly generated model category structure on $C$ in which a map $f$ is
	\begin{enumerate}
		\item a weak equivalence if $G(f)$ is a weak equivalence in $D$
		
		\item a fibration if the map $G(f)$ is a fibration in $D$
		
		\item a cofibration if it has the left lifting property with respect to maps which are both fibrations and weak equivalences.
		
		\end{enumerate}
		 Let $I_D$ and $J_D$ be a chosen class of generating cofibrations and generationg acyclic cofibrations of $D$ respectively. Then the collections $F(I_D) = \lbrace F(i) | i \in I_D \rbrace$ and $F(J_D) = \lbrace F(j) | j \in J_D \rbrace$ generate the cofibrations and the acyclic cofibrations of $C$ respectively.
	\end{thm}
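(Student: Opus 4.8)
The plan is to deduce this from Theorem~\ref{mdl-str-transfer-tool} applied to the adjunction $F : D \rightleftharpoons C : G$, taking $I = I_D$ and $J = J_D$. The two structural prerequisites of that theorem are immediate: since $C$ is locally presentable it is both complete and cocomplete, and since $D$ is combinatorial it is in particular cofibrantly generated, so we may fix generating sets $I_D$ and $J_D$. With these choices, $f$ is declared a weak equivalence (resp.\ fibration) in $C$ exactly when $G(f)$ is one in $D$, and cofibrations are then forced as in~(3).

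Next I would dispose of hypothesis~(1) of Theorem~\ref{mdl-str-transfer-tool}. Its only role in the proof of that theorem is to move smallness of the domains of $I$ and $J$ from $D$ across the adjunction into $C$, so that the small object argument can be run for $F(I_D)$ and $F(J_D)$; indeed $\mathrm{Hom}_C(F(\mathrm{dom}\, i), -) \cong \mathrm{Hom}_D(\mathrm{dom}\, i, G(-))$, and commutation of $G$ with the relevant colimits then transfers $\omega$-smallness of $\mathrm{dom}\, i$. When $C$ is locally presentable this is unnecessary: every object of $C$ is $\lambda$-presentable for some regular $\lambda$, hence small relative to all morphisms of $C$, so the small object argument is available for $F(I_D)$ and $F(J_D)$ directly (at a possibly larger cardinal than $\omega$). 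This yields the two functorial factorizations and, by the standard retract argument, shows that $F(I_D)$ and $F(J_D)$ generate the cofibrations and the acyclic cofibrations of $C$.

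The substantive point, which I expect to be the main obstacle, is hypothesis~(2) of Theorem~\ref{mdl-str-transfer-tool}: every cofibration of $C$ with the left lifting property against all $G$-fibrations must be sent by $G$ to a weak equivalence of $D$. Running the small object argument for $F(J_D)$ and invoking the retract argument, any such map is a retract of a relative $F(J_D)$-cell complex, so it suffices to verify the acyclicity condition, namely that $G$ carries relative $F(J_D)$-cell complexes to weak equivalences of $D$. I would attack this by tracking how $G$ interacts with the pushouts and transfinite composites assembling such a cell complex, transporting them through $G$ by a ladder argument built from a compatible choice of cylinder or path objects in $D$, or, where one is available, by comparing with a fibrant replacement functor in $C$ and applying two-out-of-three. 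This is exactly the non-formal verification performed by hand for the concrete transfers in the body of the paper (for instance in the proof of Theorem~\ref{model-str-Perm-Gpds}); once it is in place, Theorem~\ref{mdl-str-transfer-tool} delivers the cofibrantly generated model structure on $C$ with weak equivalences, fibrations and cofibrations as described, generated by $F(I_D)$ and $F(J_D)$.
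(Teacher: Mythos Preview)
Your analysis is more careful than the paper's own treatment. The paper offers no proof of this statement beyond the sentence ``Now we provide a statement of this special case'' following Theorem~\ref{mdl-str-transfer-tool}; it simply records the result as a corollary and moves on. Your handling of hypothesis~(1) is correct and is the genuine simplification that local presentability buys: every object of $C$ is small, so the small object argument runs for $F(I_D)$ and $F(J_D)$ without any colimit-preservation assumption on $G$.

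Where your proposal hesitates is exactly the right place. Hypothesis~(2) of Theorem~\ref{mdl-str-transfer-tool}---the acyclicity condition for $F(J_D)$-cell complexes---is \emph{not} a consequence of the hypotheses listed in the present statement, and there is no general argument of the kind you sketch (ladder diagrams, path objects, fibrant replacement) that will produce it from local presentability of $C$ alone. Transfer along an adjunction into a locally presentable category can and does fail in practice; the acyclicity condition is a genuine extra hypothesis that must be checked in each instance. You are right that this is precisely what the paper does by hand in its applications (e.g.\ the proof of Theorem~\ref{model-str-Perm-Gpds} constructs an explicit lift against a path-object fibration). So the honest conclusion is that the statement as written is missing a hypothesis: one still needs to assume, or verify separately, that relative $F(J_D)$-cell complexes are carried by $G$ to weak equivalences. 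Your proposal does not close this gap, but neither does the paper; you have correctly located it.
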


 \bibliographystyle{amsalpha}
 \bibliography{EinGammaCat}

\end{document}